\newcommand{\degree}{\ensuremath{^\circ}}	
\numberwithin{equation}{section}		
\newtheorem{theorem}{Theorem}[section]
\newtheorem{prop}[theorem]{Proposition}
\newtheorem{corollary}[theorem]{Corollary}
\newtheorem{lemma}[theorem]{Lemma}
\theoremstyle{definition}				
\newtheorem{remark}[theorem]{Remark}
\newtheorem{notation}[theorem]{Notation}
\newtheorem{definition}[theorem]{Definition}
\theoremstyle{theorem}					
\newcounter{mtheorem}
\newtheorem{mtheorem}[mtheorem]{Theorem}
\newtheorem{mcor}[mtheorem]{Corollary}
\newcommand{\N}{\ensuremath{\mathbb{N}}}
\newcommand{\Z}{\ensuremath{\mathbb{Z}}}
\newcommand{\R}{\ensuremath{\mathbb{R}}}
\newcommand{\W}{\ensuremath{\mathcal{W}}}
\newcommand{\OO}{\ensuremath{\mathcal{O}}}
\newcommand{\X}{\ensuremath{\mathfrak{X}}}
\newcommand{\UU}{\ensuremath{\mathfrak{U}}}
\newcommand{\C}{\ensuremath{\mathbb{C}}}
\newcommand{\CC}{\ensuremath{\mathscr{C}}}
\newcommand{\CCbar}{\ensuremath{\overline{\mathscr{C}}}}
\newcommand{\CCone}{\ensuremath{\mathcal{C}}}
\newcommand{\T}{\ensuremath{\mathbb{T}}}
\newcommand{\E}{\ensuremath{\mathcal{E}}}
\newcommand{\LL}{\ensuremath{\mathscr{L}}}
\newcommand{\p}{\ensuremath{\mathfrak{p}}}
\newcommand{\ttt}{\ensuremath{\mathfrak{t}}}
\newcommand{\gl}{\ensuremath{\mathfrak{gl}}}
\newcommand{\uu}{\ensuremath{\mathfrak{u}}}
\newcommand{\norm}[1]{\ensuremath{\left\lVert #1 \right\rVert}}
\newcommand{\Norm}[2]{\ensuremath{\left\lVert #1 \right\rVert_{#2}}}
\newcommand{\gen}[1]{\ensuremath{\left\langle #1 \right\rangle}}
\renewcommand{\leq}{\leqslant }
\renewcommand{\geq}{\geqslant }
\renewcommand{\phi}{\varphi}
\renewcommand{\rho}{\varrho}
\renewcommand{\P}{\textbf{P}}
\DeclareMathOperator{\U}{U}
\DeclareMathOperator{\SU}{SU}
\DeclareMathOperator{\re}{Re}
\DeclareMathOperator{\Lie}{Lie}
\DeclareMathOperator{\Span}{span}
\DeclareMathOperator{\dist}{dist}
\DeclareMathOperator{\length}{length}
\DeclareMathOperator{\diam}{diam}
\DeclareMathOperator{\tr}{Tr}
\DeclareMathOperator{\Rm}{Rm}
\DeclareMathOperator{\Aut}{Aut}
\DeclareMathOperator{\Isom}{Isom}
\DeclareMathOperator{\GL}{GL}
\DeclareMathOperator{\Id}{Id}
\DeclareMathOperator{\Ric}{Ric}
\DeclareMathOperator{\vol}{Vol}
\DeclareMathOperator{\dvol}{dvol}
\DeclareMathOperator{\Hess}{Hess}
\DeclareMathOperator{\Scal}{Scal}
\DeclareMathOperator{\diag}{diag}
\DeclareMathOperator{\eucl}{Eucl}
\DeclareMathOperator{\Scl}{Scl}
\DeclareMathOperator{\loc}{loc}
\newcommand{\del}{\partial}
\newcommand{\delbar}{\overline{\partial}}
\newcommand{\spliturl}[1]{%
	\StrBefore{#1}{,}[\urlfirstpart]%
	\StrBehind{#1}{,}[\urlsecondpart]%
	\href{https://www.ams.org/mathscinet-getitem?mr=\urlfirstpart}{MR\urlfirstpart}%
	\ifdefstring{\urlsecondpart}{}{}{, \href{https://zbmath.org/?q=an:\urlsecondpart}{Zbl \urlsecondpart}}%
}
\title{A Liouville Theorem and $C^{\alpha}$-Estimate for Calabi-Yau Cones}
\author[Klemmensen]{Johan Jacoby Klemmensen}
\address{Department of Mathematics\\
	University of Münster\\
	48149 Münster, Germany}
\email{\href{mailto:j.klemmensen@uni-muenster.de}{j.klemmensen@uni-muenster.de}}
\begin{document}

\maketitle

\begin{abstract}
	Let $(\CC, \omega_\CC)$ be a Ricci-flat, simply connected, conical Kähler manifold. We establish a Liouville theorem for constant scalar curvature Kähler (cscK) metrics on $\CC$. The theorem asserts that any cscK metric $\omega$ satisfying the uniform bound $\frac{1}{C} \omega_\CC \leq \omega \leq C \omega_\CC$ for some $C\geq1$ is equal to $\omega_\CC$ up to a holomorphic automorphism that commutes with the scaling action of the cone structure.
	
	Next, we develop a $C^{0,\alpha}$-estimate for uniformly bounded Kähler metrics on a ball around the apex, using a Hölder-type seminorm inspired by Krylov. This estimate applies for small $\alpha > 0$ under the assumption of uniformly bounded scalar curvature. 
	
	As a corollary of this result, we show that such a Kähler metric $\omega$ is asymptotic to the Ricci-flat cone metric $\omega_\CC$, with polynomial decay rate $r^\alpha$ and for sufficiently small $\alpha > 0$.
\end{abstract}

\tableofcontents

\section{Introduction}

\subsection{Background and Motivation}
One of the first use cases of Liouville Theorems and scaling arguments to prove Schauder estimates goes back to the work of Simon \cite{simonSchauder1997} in his work on linear elliptic operators on $\R^m$. Later, similar results were developed for Kähler metrics satisfying the complex Monge-Ampère equation. In this context, the classical Liouville Theorem for Kähler metrics on $\C^m$ dates back to Riebesehl-Schulz \cite{riebesehlpriori1984}, which states that any Kähler metric with a constant determinant and uniformly equivalent to the Euclidean metric, is the pullback of the Euclidean metric by an element in $\GL(m,\C)$. Utilizing their results and rescaling methods, Chen-Wang provided a new proof \cite{chenalpha2015} of the $C^{2,\alpha}$-estimate for solutions of the complex Monge-Ampère equation on Euclidean space. A similar statement for Kähler metrics on $\C^m$ with singularities along a divisor was given in \cite{chenregularity2017} by Chen-Wang, with the Liouville Theorem stemming from a strengthening of the Liouville Theorems found in \cite{chenKahlerEinstein2014a} and \cite{chenlong2018}. To prove $C^{k,\alpha}$-estimates on product Kähler manifolds with collapsing fibers, more elaborate scaling arguments were presented in \cite{heinHigherorder2020} by Hein-Tosatti. The necessary Liouville Theorem is found in \cite{heinLiouville2019} by Hein, with a simplified proof by Li-Li-Zhang \cite{liMean2020}.\\

Tangentially to this, Hein-Sun proved \cite{heinCalabiYau2017} the following result: Given a complex projective variety such that the singularities are isolated and locally isomorphic to Calabi-Yau cones, then, if these cones are in addition smoothable and strongly regular, any Ricci-flat Kähler metric on the variety asymptotically approaches the given Ricci-flat Kähler cone metric. This work was later extended by Chiu-Székelyhidi in \cite{chiuHigher2023} to the case of a tangent cone that is not necessarily locally isomorphic to the germ of the singularity and with a non-smooth cross-section, thus in particular dropping the strongly regular but not the smoothable assumption.\\

This paper shows that, on any simply-connected Calabi-Yau cone with a smooth cross-section, polynomial convergence follows from uniformly bounded scalar curvature and uniform equivalence to the cone metric. For $\alpha>0$ small enough, then any such $\omega$ is asymptotic to $\omega_\CC$ at polynomial rate $r^{\alpha}$ and up to some holomorphic automorphism $\Psi$ commuting with scaling. Smoothability of the cone and Donaldson-Sun theory \cite{donaldsonGromov2017} are not needed for this. In a sense, these assumptions get replaced by the uniform equivalence, and we then prove an Evans-Krylov estimate, which ultimately implies polynomial convergence at the apex as a corollary.
The proof proceeds using the scaling methods presented in \cite{heinHigherorder2020}, where the proof relies on blowup arguments and applying Liouville theorems. The quintessential Liouville Theorem for this argument is Theorem \ref{LiouvilleTheoremM}, which describes the global behavior of constant scalar curvature Kähler (cscK) metrics on Calabi-Yau conical manifolds $\CC$ (see Definition \ref{CalabiYauConeDefinition}).

\subsection{Structure of the Paper}

We present the following Liouville Theorem for cscK metrics on Ricci-flat Kähler/Calabi-Yau cones $(\CC,\omega_\CC)$. For $\CC = \C^m\setminus\{0\}$, the proof presented is novel and independent of \cite{riebesehlpriori1984} and \cite{liMean2020}, where Ricci-flatness instead of cscK is also required. Let $\Aut_\Scl(\CC)$ denote the group of holomorphic automorphisms commuting with the action of the scaling vector field.

\begin{mtheorem}[Theorem \ref{LiouvilleTheorem}]\label{LiouvilleTheoremM}
	Let $(\CC,\omega_\CC)$ be a Calabi-Yau cone. Take any cscK metric $\omega$ on $\CC$ such that there exists some constant $C>0$ with
	\begin{equation*}
		\frac{1}{C} \omega_\CC \leq \omega \leq C \omega_\CC.
	\end{equation*}
	Then $\omega =  \Psi^*\omega_\CC$ for some cone automorphism $\Psi\in \Aut_\Scl(\CC)$.
\end{mtheorem}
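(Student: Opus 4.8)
The plan is to read off from the cscK condition a linear elliptic equation for the density ratio, use it to force Ricci-flatness, and only then confront the conical rigidity. Write $f = \log\!\big(\omega^m/\omega_\CC^m\big)$. Since $\omega_\CC$ is Ricci-flat, $\Ric(\omega) = -i\partial\bar\partial f$, so tracing with $\omega$ gives $\Scl(\omega) = -\Delta_\omega f$, where $\Delta_\omega = \tr_\omega i\partial\bar\partial$ is the complex Laplacian. The two-sided bound $\tfrac1C\omega_\CC \leq \omega \leq C\omega_\CC$ forces the eigenvalues of $\omega$ relative to $\omega_\CC$ into $[1/C,C]$, hence $|f|\leq m\log C$; it also makes $\Delta_\omega$ uniformly elliptic with respect to $\omega_\CC$ and guarantees that $(\CC,\omega)$ has Euclidean volume growth. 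Thus the cscK hypothesis becomes: $f$ is a bounded solution of $\Delta_\omega f = -s$, with $s$ the constant scalar curvature.

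First I would show $s=0$. Fix a radial cutoff $\chi_R$ (with $r$ the cone radius) equal to $1$ on the annulus $\{1/R\leq r\leq R\}$ and supported in $\{1/(2R)\leq r\leq 2R\}$, so that $\chi_R$ has compact support in $\CC$, vanishing near both the apex and infinity. Integrating $\Delta_\omega f = -s$ against $\chi_R$ and moving both derivatives onto the cutoff gives
\begin{equation*}
-\,s\int_\CC \chi_R\,dV_\omega \;=\; \int_\CC f\,\Delta_\omega\chi_R\,dV_\omega .
\end{equation*}
The left-hand side is $\gtrsim |s|\,R^{2m}$ by Euclidean volume growth, while $|\Delta_\omega\chi_R|\lesssim r^{-2}$ on the two transition annuli, whose contributions are $\lesssim R^{2m-2}$ (outer) and $\lesssim R^{2-2m}$ (inner); bounding $|f|\leq m\log C$ yields $|s|\,R^{2m}\lesssim R^{2m-2}$, so $|s|\lesssim R^{-2}\to 0$ as $R\to\infty$. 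Hence $\omega$ is scalar-flat and $f$ is harmonic.

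Next I would upgrade \emph{bounded and harmonic} to \emph{constant}. Passing to the Riemannian Laplacian of the Kähler metric $g_\omega$ (twice $\Delta_\omega$ on functions), $f$ is a bounded $g_\omega$-harmonic function, and $g_\omega$ is bi-Lipschitz to the cone metric. Since the metric cone over a smooth compact cross-section is volume-doubling and supports a scale-invariant Poincar\'e inequality, and both properties are bi-Lipschitz invariant, Moser's Harnack inequality holds for $g_\omega$ at every scale, yielding a scale-invariant oscillation estimate $\operatorname{osc}_{B_r}f \lesssim (r/\rho)^\alpha\,\|f\|_{L^\infty}$ on balls $B_r\subset B_\rho$; letting $\rho\to\infty$ with $f$ bounded forces $f$ to be constant (the apex, a single point of zero capacity, is removable). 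Therefore $\Ric(\omega) = -i\partial\bar\partial f = 0$ and $\omega^m = c\,\omega_\CC^m$ for a constant $c>0$: the metric is Ricci-flat.

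It remains to prove the conical rigidity, which is where I expect the real difficulty. After rescaling so that $c=1$, the dilations $\omega_\lambda := \lambda^{-2}\delta_\lambda^*\omega$ are Ricci-flat, uniformly equivalent to $\omega_\CC$, and --- because they solve $\omega_\lambda^m = \omega_\CC^m$ on compact subsets of $\CC$ against a fixed smooth background --- enjoy uniform local higher-order estimates, so the family is precompact in $C^\infty_{\mathrm{loc}}(\CC)$. Ricci-flatness gives Bishop--Gromov monotonicity of $R\mapsto V_\omega(o,R)/R^{2m}$ based at the apex $o$ of the metric completion, and the blow-down ($\lambda\to\infty$) and blow-up ($\lambda\to0$) limits are Ricci-flat Kähler metric cones, each bi-Lipschitz to $(\CC,\omega_\CC)$. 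The crux is to show these two tangent cones have equal volume density, which pins the Bishop--Gromov ratio to a constant and hence, by the volume-cone-implies-metric-cone theorem, makes $(\CC,\omega)$ itself a metric cone; a Ricci-flat Kähler cone uniformly equivalent to $\omega_\CC$ is then identified with $\omega_\CC$ by a biholomorphism that, intertwining the two scaling actions, lies in $\Aut_\Scl(\CC)$. The hard part is precisely this last circle of ideas --- proving that the tangent cones agree with $\omega_\CC$ (so that their densities coincide) and promoting the resulting metric identification to a scaling-commuting biholomorphism --- carried out using only the smoothness of the cross-section and the uniform two-sided bound, without appealing to smoothability of the singularity or to Donaldson--Sun theory.
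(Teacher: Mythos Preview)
Your first two steps are correct, and the first is in fact more economical than the paper's route. To kill the constant scalar curvature the paper blows down at infinity, extracts a limit on $\C^m$, and runs a pointwise second-derivative walk on the limiting potential (Proposition~\ref{LiouvilleUniformLaplace}); your cutoff integration-by-parts reaches $s=0$ in one line. The Liouville step for $f$ is the same in both accounts: the paper builds Moser's parabolic Harnack on the cone following Saloff-Coste and applies it to the time-trivial extension of $f$, which is your oscillation decay argument in different dress.

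The gap is in part three. You have the right architecture---extract tangent cones at $0$ and $\infty$, identify each with $\omega_\CC$ up to an element of $\Aut_\Scl(\CC)$, then use a monotone quantity with equal endpoint values to force $\omega$ itself to be conical---but you do not provide the identification step, and that is where essentially all the content of the paper lives. Two specific points. First, that the $C^\infty_\loc$ limits $\omega_0,\omega_\infty$ are \emph{cone} metrics on $\CC$ is not automatic from Bishop--Gromov; the paper uses Perelman's $\W$-entropy for the heat kernel based at the apex (Theorem~\ref{TangentConesConeStructure}), whose rigidity yields $\nabla^2 f_\infty = \tfrac{1}{2t}g_\infty$ directly, and the heat-kernel framework absorbs the apex singularity cleanly in a way your Bishop--Gromov proposal would still have to justify. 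Second, and more substantially, the sentence ``a Ricci-flat K\"ahler cone uniformly equivalent to $\omega_\CC$ is then identified with $\omega_\CC$ by a biholomorphism in $\Aut_\Scl(\CC)$'' hides the entire chain of Sections~\ref{SectionEqualReebFields}--\ref{SectionEqualTangentCones}: one must show the new Reeb field commutes with $\xi$ (via the Hein--Sun classification of holomorphic vector fields commuting with $r\partial_r$), embed equivariantly into $\C^N$ to place both Reeb fields in a common moment cone, invoke the strict convexity of the Martelli--Sparks--Yau volume functional to force the Reeb fields to coincide, and finally apply Nitta--Sekiya's Bando--Mabuchi-type uniqueness to produce the automorphism. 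Without this, you have no reason the two tangent cones share a volume density, and your monotonicity argument cannot close.
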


The outline of the proof is as follows: By rescaling, the basic theory of pluriharmonic functions and a Harnack inequality in a neighborhood of the apex show that $\omega^m = \omega_\CC^m$. Section \ref{SectionLiouvilleTangentCones} uses heat kernel estimates to show that the asymptotic limits of $\omega$ at $o$ and $\infty$ are Riemannian cone metrics, henceforth called the tangent cones. The tangent cone metrics have associated Reeb fields, and the classification by Hein-Sun \cite{heinCalabiYau2017} of holomorphic vector fields commuting with $r\del_r$ restricts the choice of Reeb fields. This is presented in Section \ref{SectionEqualReebFields}. Section \ref{SectionEqualTangentCones} uses the work of Martelli-Sparks-Yau \cite{martelliSasaki2008} to conclude that the tangent cones have the same Reeb field as $\omega_\CC$. Applying the arguments of Bando-Mabuchi \cite{bandoUniqueness1987}, suitably generalized by Nitta-Sekiya \cite{nittaUniqueness2012}, proves in Section \ref{SectionEqualTangentCones} that the tangent cones and $\omega_\CC$ are related by an automorphism in $\Aut_\Scl(\CC)$. This is exactly the statement of Theorem \ref{LiouvilleTheoremM} applied to the tangent cones. Finally, Section \ref{SectionCombiningTangentConesToLiouville} reapplies the heat kernel estimates from Section \ref{SectionLiouvilleTangentCones} to $\omega$ to prove the Liouville Theorem in full generality. \\

In Section \ref{SectionCalpha}, a novel $C^{0,\alpha}$-type seminorm is presented and used to prove a $C^{0,\alpha}$-estimate of another Kähler metric at the apex $o$.

\begin{mtheorem}[Theorem \ref{TheoremHolderBound}]\label{mTheoremHolderBound}
	Let $(\CC,\omega_\CC)$ be a Calabi-Yau cone with cone metric $\omega_\CC$, and let $\omega$ be a Kähler metric on $B_3(o)\subset \CC$ such that
	\begin{equation}\label{ConditionUniformBoundm}
		\frac{1}{C}\omega_\CC \leq \omega\leq C\omega_\CC, \quad \Norm{\Scal(\omega)}{0, B_3(o)} \leq D,
	\end{equation}
	for some constants $C,D>0$. Then for any $\alpha = \alpha(\CC,\omega_\CC)>0$ small enough, there exists a constant $C'$ with
	\begin{equation*}
		[\omega]'_{\alpha,B_1(o),\Sigma_{3C}^2\times \Sigma_{\loc}^2}\leq C',
	\end{equation*}
	where $\alpha= \alpha(\CC,\omega_\CC)$ and $C'= C'(\CC,C, D, \alpha,\omega_\CC, \UU)$ are independent of $\omega$.
\end{mtheorem}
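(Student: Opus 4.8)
The plan is to argue by contradiction using a blow-up (rescaling) argument in the spirit of Simon and of Hein--Tosatti \cite{heinHigherorder2020}, with the Liouville Theorem \ref{LiouvilleTheorem} supplying the rigidity that forces the contradiction. Suppose the asserted bound fails for the given $\alpha$. Then there is a sequence of Kähler metrics $\omega_j$ on $B_3(o)$, all satisfying \eqref{ConditionUniformBoundm} with the same constants $C,D$, yet with $[\omega_j]'_{\alpha,B_1(o),\Sigma_{3C}^2\times\Sigma_{\loc}^2}\to\infty$. Since the Krylov-type seminorm is a supremum of weighted second-order difference quotients over pairs of points and scales in $B_1(o)$, I would first select configurations realizing the seminorm up to a factor $1/2$, and then use the scaling vector field of the cone to rescale each $\omega_j$ about its near-maximizing scale $r_j$: setting $\tilde\omega_j = r_j^{-2}s_{r_j}^*\omega_j$, where $s_{r_j}$ denotes the scaling map of the cone. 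By construction the rescaled seminorm is normalized to lie in $[1/2,1]$ at unit scale, with the offending configuration brought to a fixed standard position.

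The rescaling is compatible with the hypotheses \eqref{ConditionUniformBoundm}. Because $\omega_\CC$ is a cone metric, $s_{r_j}^*\omega_\CC = r_j^2\,\omega_\CC$, so the uniform equivalence $\tfrac1C\omega_\CC\le\tilde\omega_j\le C\omega_\CC$ is exactly scale-invariant and survives the limit. The scalar-curvature bound, by contrast, improves: $\Scal(\tilde\omega_j)=r_j^2\,s_{r_j}^*\Scal(\omega_j)$, whence $\Norm{\Scal(\tilde\omega_j)}{0}\le r_j^2 D\to 0$ once $r_j\to 0$. The next step is to extract a convergent subsequence: uniform equivalence controls the metrics in $C^0$, and I would upgrade this to local higher-order convergence on compact subsets of $\CC$ using the interior regularity available for uniformly equivalent Kähler metrics with controlled scalar curvature. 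Since the rescaled domains $s_{r_j}^{-1}(B_3(o))=B_{3/r_j}(o)$ exhaust $\CC$, the limit $\omega_\infty$ is a cscK metric — indeed scalar-flat — defined on the \emph{whole} cone $\CC$ and still satisfying $\tfrac1C\omega_\CC\le\omega_\infty\le C\omega_\CC$.

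At this point the Liouville Theorem \ref{LiouvilleTheorem} applies verbatim and forces $\omega_\infty=\Psi^*\omega_\CC$ for some $\Psi\in\Aut_\Scl(\CC)$. Because any such $\Psi$ commutes with the scaling action, $\Psi^*\omega_\CC$ is itself self-similar under the cone scaling, and the Krylov-type seminorm — which measures precisely the failure of scale-self-similarity of the metric — vanishes on it, so $[\omega_\infty]'_{\alpha}=0$. On the other hand, the normalization together with lower semicontinuity of the seminorm under the mode of convergence obtained yields $[\omega_\infty]'_\alpha\ge 1/2$, a contradiction. This establishes the estimate.

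The main obstacle is twofold. First, the compactness and passage to the limit: one must guarantee that the near-maximizing scales $r_j$ actually tend to $0$ (so that the scalar curvature is killed) rather than remaining bounded away from $0$ or escaping, and that the two-scale structure of the seminorm ($\Sigma_{3C}^2$ against $\Sigma_{\loc}^2$) does not permit the dominant contribution to slip into a degenerate configuration — this is where the precise definition of the seminorm must be matched against the interior estimates, and where the limit must be shown to be global on $\CC$ rather than merely local. Second, the smallness of $\alpha$ enters exactly here: $\alpha$ must be taken below the geometric threshold set by the indicial roots of the linearized operator on the Sasakian cross-section, so that no nonzero scale-invariant contribution to the seminorm can survive — equivalently, so that the only limits compatible with the rigidity of Theorem \ref{LiouvilleTheorem} have vanishing seminorm. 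Verifying lower semicontinuity of the Krylov seminorm under this convergence, and confirming that it annihilates the family $\{\Psi^*\omega_\CC : \Psi\in\Aut_\Scl(\CC)\}$, is the technical heart of the contradiction.
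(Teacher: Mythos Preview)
Your high-level strategy (contradiction, rescale to normalize, extract a limit on $\CC$, invoke Theorem~\ref{LiouvilleTheorem}) matches the paper's skeleton, but there is a genuine gap: the scheme you describe only handles one of three regimes, and it is precisely the missing regime that carries most of the work.

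After normalizing the seminorm of $\tilde\omega_j$ to equal $1$, the paper selects near-maximizing data $(x_i,\rho_i,\nu_i)$ and splits according to the behaviour of $\rho_i$. Your argument corresponds to the case where $\rho_i$ stays bounded away from $0$ and $\infty$: there the metrics converge to $\Psi^*\omega_\CC$ on a fixed compact set containing the maximizing ball, and one checks directly that the near-maximizing term tends to $0$ (this is \emph{not} lower semicontinuity of the seminorm; the infimum over $\Sigma_{3C}^2\times\Sigma_{\loc}^2$ makes that false in general---you estimate the specific term using the specific comparison element $\Psi^*\omega_\CC$). The case $\rho_i\to\infty$ is trivial. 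The substantial gap is $\rho_i\to 0$: here a \emph{second} rescaling by $\rho_i$ is required, and after subtracting the best comparison element $\pi_i\in\Sigma_{3C}^2$ and dividing by $\rho_i^\alpha$, the limit $\check\omega_\infty$ is no longer a K\"ahler metric at all but a closed $(1,1)$-form with $\tr_{\hat\omega_\infty}\check\omega_\infty=\text{const}$ and growth $O(r^\alpha)$ at infinity (and controlled decay at the apex). Theorem~\ref{LiouvilleTheorem} is useless for such an object. The paper instead invokes the classification of 2-homogeneous harmonic $(1,1)$-forms on the cone (Proposition~\ref{PropGrowthRateToHomogeneousForm}, which is where the smallness of $\alpha$ genuinely enters) to write $\check\omega_\infty=i\partial\bar\partial(cr^2+u)$, and then needs Corollary~\ref{CorollaryAutomorphismDerivative} to show that any such form can be approximated by elements of the \emph{tangent space} $\rho_i^{-\alpha}(\Sigma_{3C}^2-\pi_i)$---this is what produces the contradiction with the near-maximizer.

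There is a second, smaller gap: a priori the seminorm $[\omega]'_{\alpha,B_1(o),\Sigma_{3C}^2\times\Sigma_{\loc}^2}$ need not be finite for an $\omega$ satisfying only \eqref{ConditionUniformBoundm}, because we have no regularity at the apex yet. The paper handles this by inserting a weight function $f_\delta$ that vanishes at the apex and letting $\delta\to 0$ along the sequence; your argument would need an analogous device before the contradiction can even be set up.
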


A similar estimate for the Laplacian acting on functions can be found in Proposition \ref{SchauderLinear}. $\UU$ is a choice of covering given in Section \ref{SectionHolderIntro}. The $L^\infty$-scalar curvature bound ensures that a blowup of $\omega$ at the apex is a cscK metric, allowing us to use Theorem \ref{LiouvilleTheoremM} in the proof of Theorem \ref{mTheoremHolderBound}.\\

The definition of $[\omega]'_{\alpha,B_1(o),\Sigma_{3C}^2\times \Sigma_{\loc}^2}$ can be found in Definition \ref{DefinitionHolderNorm}. It aims to measure the weighted distance between $\omega$ and the comparison set $\Sigma_{3C}^2 \times \Sigma_{\loc}^2$ in the $C^0$-norm. $\Sigma_{3C}^2$ contains pullbacks of the cone metric $\omega_\CC$ by holomorphic automorphisms commuting with scaling. $\Sigma_{\loc}^2$ contains notions of constant 2-form that are only locally defined, and $\UU$ is a set of open sets where elements in $\Sigma_{\loc}^2$ are defined.. If $\CC\cup \{o\} = \C^m$ and the comparison set consists of constant 2-forms, the seminorm is equivalent to the usual $C^{0,\alpha}$-seminorm (see \cite[Theorem 3.3.1]{krylovLectures1996}). To prove the theorem, we follow the ideas outlined in \cite{heinHigherorder2020} by first assuming that $[\omega_i]'_{\alpha,B_1(o),\Sigma_{3C}^2 \times \Sigma_{\loc}^2}$ is unbounded for some sequence $(\omega_i)$, and then blowing up the space such that $[\tilde \omega_i]'_{\alpha,B_{(\epsilon_i^{-1})}(o), \Sigma_{3C}^2 \times \Sigma_{\loc}^2} = 1$ for the blow-up sequence $(\tilde\omega_i)$ with $\epsilon_i\to 0$. We select points $x_i\in \CC \cup \{o\}$ and radii $\rho_i>0$ to maximize the seminorm for each $i$. Depending on whether $\rho_i\to 0, \rho_i \to \infty$, or $\rho_i$ remains bounded away from these values, we apply different Liouville theorems (including Theorem \ref{LiouvilleTheoremM}) to obtain a contradiction. The proof is found in Section \ref{SectionCalpha}.\\

An application of Theorem \ref{mTheoremHolderBound} gives the asymptotic behavior of such Kähler metrics near the apex. The corollary also proves that the tangent cone at $o$ of any such $\omega$ is unique.

\begin{mcor}[Corollary \ref{CorollaryOfCalpha}]\label{mCorollaryOfCalpha}
	Let $(\CC,\omega_\CC)$ be a Calabi-Yau cone with cone metric $\omega_\CC$, and let $\omega$ be a Kähler metric on $B_3(o)\subset \CC$ such that
	\begin{equation*}
		\frac{1}{C}\omega_\CC \leq \omega\leq C\omega_\CC, \quad \Norm{\Scal(\omega)}{0,B_3(o)} \leq D,
	\end{equation*}
	for some constants $C,D>0$. Then, for any $\alpha = \alpha(\CC,\omega_\CC)>0$ small enough, there exists an automorphism $\Psi\in \Aut_\Scl(\CC)$ with
	\begin{equation*}
		|\Psi^*\omega- \omega_\CC|_{\omega_\CC} \leq C' r^{\alpha}
	\end{equation*}
	for $r\leq 1$. $\alpha = \alpha(\CC,\omega_\CC)$ and $C'= C'(\CC,C, D, \alpha,\omega_\CC, \UU)$ are independent of $\omega$.
	
	If $\Scal(\omega) = 0$, then
	\begin{equation*}
		|\nabla_{\omega_\CC}^k (\Psi^*\omega) - \omega_\CC |_{\omega_\CC} \leq C_k' r^{\alpha - k}, \qquad k \in \mathbb{N}_0,
	\end{equation*}
	for $r\leq 1$ and $C_k' = C_k'(\CC, C, D, k, \alpha, \omega_\CC, \UU)$.
\end{mcor}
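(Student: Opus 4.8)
The plan is to upgrade the scale-invariant seminorm bound of Theorem \ref{mTheoremHolderBound} into genuine polynomial decay by a telescoping argument over dyadic annuli, from which a single limiting automorphism is extracted. First I would unwind Definition \ref{DefinitionHolderNorm} to record the concrete meaning of $[\omega]'_{\alpha,B_1(o),\Sigma_{3C}^2\times\Sigma_{\loc}^2}\leq C'$: for every dyadic radius $r_k = 2^{-k}$ with $k\geq 0$, finiteness of the seminorm furnishes a comparison element whose $\Sigma_{3C}^2$-component is a pullback $\Psi_k^*\omega_\CC$ with $\Psi_k\in\Aut_\Scl(\CC)$, such that on the annulus $A_k = B_{2r_k}(o)\setminus B_{r_k/2}(o)$ one has $|\omega - \Psi_k^*\omega_\CC|_{\omega_\CC}\leq C' r_k^{\alpha}$. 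The locally-defined forms in $\Sigma_{\loc}^2$ only enter the gauge transverse to the radial comparison and can be absorbed; near the apex it is the $\Sigma_{3C}^2$-factor that governs the decay.

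The heart of the argument is to compare the automorphisms at consecutive scales. On the overlap $A_k\cap A_{k+1}$, both $\Psi_k^*\omega_\CC$ and $\Psi_{k+1}^*\omega_\CC$ lie within $C' r_k^{\alpha}$ of $\omega$, hence within $2C' r_k^{\alpha}$ of one another in the $\omega_\CC$-norm. Since $\Aut_\Scl(\CC)$ is a finite-dimensional Lie group acting compatibly with the scaling, the orbit map $\Psi\mapsto\Psi^*\omega_\CC$ is an immersion with a uniform lower bound on its differential transverse to the stabilizer, giving $d_{\Aut_\Scl}(\Psi_k,\Psi_{k+1})\leq C 2^{-k\alpha}$. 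Summing this geometric series shows $(\Psi_k)$ is Cauchy, so $\Psi_k\to\Psi\in\Aut_\Scl(\CC)$ with $d_{\Aut_\Scl}(\Psi_k,\Psi)\leq C 2^{-k\alpha}$; feeding this back through the triangle inequality on each $A_k$ yields $|\Psi^*\omega - \omega_\CC|_{\omega_\CC}\leq C' r^{\alpha}$ for all $r\leq 1$. Uniqueness of the tangent cone at $o$ is then immediate, since $\Psi^*\omega\to\omega_\CC$ as $r\to 0$.

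For the higher-order estimates under the hypothesis $\Scal(\omega)=0$, I would run a rescaled interior estimate on each annulus. Writing $\hat\omega = \Psi^*\omega$, which is still scalar-flat, I would dilate $A_k$ by $r_k^{-1}$ to the fixed annulus $A_1$, where scale invariance of $\omega_\CC$ preserves the background and the $C^0$-bound reads $|\hat\omega_{r_k} - \omega_\CC|_{\omega_\CC}\leq C' r_k^{\alpha}$. Expressing $\hat\omega_{r_k} - \omega_\CC = i\del\delbar\phi_k$ locally and using that $\Scal = 0$ is a fourth-order elliptic equation for $\phi_k$ with coefficients uniformly controlled by the two-sided bound in \eqref{ConditionUniformBoundm}, bootstrapped interior Schauder regularity (in the spirit of Proposition \ref{SchauderLinear}) bounds every derivative of $\hat\omega_{r_k}$ on $A_1$ by its $C^0$-norm. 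Scaling back, the $k$-th covariant derivative acquires a factor $r_k^{-k}$, producing $|\nabla_{\omega_\CC}^k(\Psi^*\omega) - \omega_\CC|_{\omega_\CC}\leq C_k' r^{\alpha - k}$.

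I expect the main obstacle to be the non-degeneracy estimate of the second paragraph, namely bounding $d_{\Aut_\Scl}(\Psi_k,\Psi_{k+1})$ by the metric discrepancy uniformly in $k$. The difficulty is the non-compactness of $\Aut_\Scl(\CC)$: one must first normalize the $\Psi_k$ at a fixed reference scale, dividing out the scaling flow and the stabilizer of $\omega_\CC$, so that the comparison lives in a fixed compact part of the group where the orbit map admits a scale-independent lower bound, and then check that this normalization is consistent with the scaling equivariance already encoded in $\Sigma_{3C}^2$. Once that is in place, the summability of $r_k^{\alpha}$ makes the telescoping and the final decay routine.
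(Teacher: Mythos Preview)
Your telescoping approach is correct and will work, but the paper takes a different and somewhat cleaner route that sidesteps precisely the obstacle you flag. Instead of comparing consecutive group elements $\Psi_k,\Psi_{k+1}$ and summing a Cauchy estimate in $\Aut_\Scl(\CC)$, the paper works entirely at the level of the pullback metrics $\pi_\rho=\Psi_\rho^*\omega_\CC\in\Sigma_{3C}^2$. It first extracts a sublimit $\Psi^*\omega_\CC$ of the blow-ups $\rho^{-2}\Phi_\rho^*\omega$ via the Liouville theorem (Theorem \ref{LiouvilleTheorem}) and compactness (Lemma \ref{LemmaCompactnessAutomorphisms}), observes that $\pi_{\rho_i}\to\Psi^*\omega_\CC$, and then proves $\|\omega-\Psi^*\omega_\CC\|_{0,B_\rho(o)}\leq 4C'\rho^\alpha$ by a pure norm argument: defining $\|\cdot\|_\rho\coloneqq\rho^{-\alpha}\|\cdot\|_{0,B_\rho(o)}$ and exploiting that $|\pi_{\rho_k}-\pi_{\rho_{i_k}}|_{\omega_\CC}$ is scale-invariant (both lie in $\Sigma_{3C}^2$), one gets $\|\pi_{\rho_k}-\pi_{\rho_{i_k}}\|_{\rho_k}\leq(\rho_k/\rho_{i_k})^{-\alpha}\|\pi_{\rho_k}-\pi_{\rho_{i_k}}\|_{\rho_{i_k}}\leq 2C'$ directly from the seminorm bound, without ever inverting the orbit map. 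Uniqueness of $\Psi^*\omega_\CC$ (hence validity for all $\rho$, not just a subsequence) follows by comparing any two sublimits.

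What each buys: your approach is more constructive and makes the Cauchy structure of $(\Psi_k)$ explicit, but needs the bi-Lipschitz estimate for the orbit map modulo the isometry stabilizer---this is available since Lemma \ref{LemmaCompactnessAutomorphisms} confines the $\Psi_k$ to a compact set, so your worry about non-compactness is already handled by the $3C$-bound built into $\Sigma_{3C}^2$. The paper's approach is shorter because scale-invariance of $|\cdot|_{\omega_\CC}$ on $\Sigma_{3C}^2$ replaces the group-level comparison entirely. Your treatment of the higher-order estimates under $\Scal(\omega)=0$ (rescale each annulus to unit size, apply interior Schauder/bootstrap, scale back) matches the paper's proof essentially verbatim.
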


\vspace{5mm}

\addtocontents{toc}{\protect\setcounter{tocdepth}{0}}
\subsection*{Acknowledgments}

I am highly grateful to my advisor Hans-Joachim Hein for for his continued support. The project was funded by the Deutsche Forschungsgemeinschaft (DFG, German Research Foundation) – Project-ID 427320536 - SFB 1442, and under Germany's Excellence Strategy EXC 2044 390685587, Mathematics Münster: Dynamics–Geometry–Structure.

\addtocontents{toc}{\protect\setcounter{tocdepth}{2}}

\section{A Liouville Theorem for Calabi-Yau Cones}\label{SectionLiouville}

This section proves a Liouville theorem for Calabi-Yau cones by demonstrating that any cscK metric uniformly equivalent to the cone metric is equal to the cone metric up to a biholomorphism commuting with scaling. First, Calabi-Yau cones are defined as follows:

\begin{definition}\label{CalabiYauConeDefinition}\
	\begin{itemize}
		\item Let $(\CC,J, g_{\CC})$ be a complex $m$-dimensional Kähler manifold with complex structure $J$, metric $g_{\CC}$, and associated $(1,1)$-form (or Kähler form) $\omega_{\CC}$. We will often not distinguish between a metric and its Kähler form. The pair $(\CC,g_{\CC})$ (also written $(\CC,\omega_{\CC})$) is a Kähler cone if there exists a diffeomorphism $\CC \simeq \R_+\times L$ with $\R_+ \coloneqq (0,\infty)$ and for some closed and connected $(2m-1)$-dimensional manifold $L$, called the link. Furthermore, on $\R_+\times L$, $g_{\CC}$ takes the form $g_{\CC} = dr^2 + r^2h$ with $r$ the coordinate on $\R_+$ and $h$ a metric on the link $L$. The completion of the cone $\CC$ by including the apex $o$ is denoted by $\overline{\CC} \coloneqq \CC \cup \{o\}$.
		\item A Kähler cone $(\CC,\omega_{\CC})$ is a Calabi-Yau cone if $\CC$ is simply-connected and the associated Kähler metric $\omega_{\CC}$ is Ricci-flat.
		\item We denote by $\Aut_\Scl(\CC)$ the holomorphic automorphisms of $\CC$ commuting with scaling. Equivalently, $\Aut_\Scl(\CC)$ are the holomorphic automorphisms preserving the scaling vector field $r\del_r$ (and thereby also the Reeb field $\xi = Jr\del_r$). The action extends to $\overline{\CC}$ by fixing $o\in \overline{\CC}$.
		\item All constants in the estimates below may change from line to line.
	\end{itemize}
\end{definition}

\begin{notation}
	We always assume that the pair $(\CC,\omega_\CC)$ is Calabi-Yau.
\end{notation}

As $(\CC,\omega_{\CC})$ is a Kähler cone, the pair $(L,h)$ forms a Sasaki structure. Ricci-flatness implies that $(L,h)$ is Sasaki-Einstein with an Einstein constant of $2m-2$. The vector field $r\del_r$ is called the scaling vector field of $({\CC},\omega_{\CC})$ and the cone metric $\omega_{\CC}$ satisfies $\LL_{r\partial_r} \omega_{\CC} = 2\omega_{\CC}$. The Reeb field $\xi \coloneqq J(r\del_r)$ is the image of the scaling vector field $r\del_r$ under the complex structure $J$. Furthermore, the Reeb field $\xi$ restricts to $L$ and generates a torus action on this space. Using this setup, we prove the following theorem:

\begin{theorem}[Liouville Theorem for Calabi-Yau cones]\label{LiouvilleTheorem}
	Let $(\CC,\omega_\CC)$ be a Calabi-Yau cone, and take a cscK metric $\omega$ on $\CC$ such that there exists some constant $C>0$ with
	\begin{equation}\label{LiouvilleConditions}
		\frac{1}{C} \omega_\CC \leq \omega \leq C \omega_\CC.
	\end{equation}
	Then $\omega =  \Psi^*\omega_\CC$ for some cone automorphism $\Psi\in \Aut_\Scl(\CC)$.
\end{theorem}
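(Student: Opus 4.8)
The plan is to split the argument into three stages: a local Monge-Ampère reduction that upgrades the cscK hypothesis to Ricci-flatness, a tangent-cone analysis at the two ends obtained by rescaling, and a rigidity step that propagates the cone model from the ends to all of $\CC$.

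First I would convert the scalar-curvature equation into an equation for the relative volume form. Writing $f = \log(\omega^m/\omega_\CC^m)$, the uniform equivalence \eqref{LiouvilleConditions} makes $f$ a bounded function, and since $\omega_\CC$ is Ricci-flat one has $\Ric(\omega) = -i\del\delbar f$ and hence the Kähler identity $\Scal(\omega) = -\Delta_\omega f$. The cscK hypothesis therefore reads $\Delta_\omega f = -c$ for a constant $c$. I would first rule out $c \neq 0$ by a scaling argument: the rescalings $\omega_\lambda = \lambda^{-2} s_\lambda^*\omega$, with $s_\lambda$ the flow of $r\del_r$, remain uniformly equivalent to $\omega_\CC$ with the same constant but satisfy $\Scal(\omega_\lambda) = \lambda^2 c$, while the relative volume stays uniformly bounded; testing $\Delta_\omega f = -c$ against cut-offs adapted to the parabolic end at the apex forces $c = 0$. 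Once $f$ is bounded and $\omega$-harmonic, the basic theory of (pluri)harmonic functions together with a Harnack inequality in a punctured neighborhood of $o$ forces $f$ to be constant, and in fact to vanish, so that $\omega^m = \omega_\CC^m$ and in particular $\Ric(\omega) = 0$. This reduces the theorem to a uniqueness statement for Ricci-flat Kähler metrics uniformly equivalent to $\omega_\CC$.

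Next I would study the asymptotic models at the two ends. Because $\omega$ is now Ricci-flat and, through the uniform equivalence to a cone, volume-noncollapsed, Li-Yau type heat kernel estimates provide uniform local control, and the rescalings $\omega_\lambda$ subconverge as $\lambda \to 0$ and $\lambda \to \infty$ to limits $\omega_0$ and $\omega_\infty$; the monotonicity coming from the Ricci-flat condition forces each limit to be a Ricci-flat Kähler cone, the tangent cones of $\omega$ at $o$ and at $\infty$. Each carries a Reeb field, and the delicate point is that a priori this Reeb data need not coincide with that of $\omega_\CC$, even though the rescaling uses the fixed $\omega_\CC$-dilation $s_\lambda$. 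Here I would invoke the Hein-Sun classification of holomorphic vector fields commuting with $r\del_r$ to confine the candidate Reeb fields to a finite-dimensional torus, and then the Martelli-Sparks-Yau volume-minimization characterization of the Reeb field to pin it down: since the tangent cones carry the same volume density as $\omega_\CC$, they must share its Reeb field $\xi = J r\del_r$.

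With the Reeb fields matched, each tangent cone and $\omega_\CC$ become Sasaki-Einstein structures with the same Reeb field, so the Bando-Mabuchi uniqueness argument in the form generalized by Nitta-Sekiya yields $\omega_0 = \Psi_0^*\omega_\CC$ and $\omega_\infty = \Psi_\infty^*\omega_\CC$ for automorphisms $\Psi_0,\Psi_\infty \in \Aut_\Scl(\CC)$, which is exactly the assertion of the theorem for the (exactly conical) tangent cones. The final step is to pass from the ends back to $\omega$ itself: I would reapply the heat kernel estimates, now to $\omega$ rather than to the rescalings, and combine the identification of both asymptotic models with the global identity $\omega^m = \omega_\CC^m$ to run a continuity/rigidity argument forcing the conical behavior imposed at both ends to hold everywhere, so that $\omega = \Psi^*\omega_\CC$. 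I expect the main obstacle to lie in this tangent-cone stage and the concluding rigidity: extracting genuine metric cones from the rescalings with only uniform equivalence and no a priori curvature bounds requires the full heat-kernel/almost-rigidity machinery, and identifying the Reeb field through volume minimization, rather than simply inheriting it from $s_\lambda$, is the conceptual crux on which the uniqueness theorems can finally be brought to bear.
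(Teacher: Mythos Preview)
Your proposal follows essentially the same architecture as the paper's proof: reduce cscK to $\omega^m=\omega_\CC^m$ via a Liouville argument for $\Delta_\omega f=\mathrm{const}$, extract conical tangent models at $o$ and $\infty$ through heat-kernel estimates and a monotone quantity, pin down their Reeb fields via Hein--Sun plus Martelli--Sparks--Yau, identify them with $\omega_\CC$ up to $\Aut_\Scl(\CC)$ via Nitta--Sekiya, and then close up by rigidity. Two points deserve sharpening. First, your scaling/cut-off sketch for ruling out $c\neq 0$ is not quite the paper's mechanism: the paper instead passes to a limit at spatial infinity (where $\omega$ has uniform $C^{1,\alpha}$ regularity by Monge--Amp\`ere estimates), obtains a bounded $\phi_\infty$ with $\Delta_{\omega_\infty}\phi_\infty=c$ on $\C^m$, and derives a contradiction by integrating along a direction where the Hessian is bounded below; the $c=0$ case then uses a parabolic Harnack inequality valid across the apex. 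Second, the ``monotonicity'' you invoke is made concrete as Perelman's pointed entropy $\W(t)$ for the heat kernel based at $o$; the limits $\W_0$ and $\W_\infty$ are computed explicitly to be the \emph{same} number $\log(\vol_{\omega_\CC}(L)/\vol(S^{2m-1}))$ (since both tangent cones are isometric to $\omega_\CC$ up to $\Aut_\Scl$), and it is this equality of endpoint values, together with monotonicity, that forces $\W(t)$ constant and hence $\omega$ itself conical---this is the precise content of your ``continuity/rigidity'' step.
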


\begin{notation}\
	\begin{itemize}
		\item Throughout the paper, all balls are measured with respect to $\omega_{\CC}$, i.e. for $p\in {\CC}$ and $r>0$, $B_r(p) = \{ x\in {\CC} \mid \dist_{\omega_{\CC}}(p,x)<r \}$ is the open ball with center $p$ and radius $r$ with respect to $\omega_{\CC}$.
		
		We extend the balls to measure distances from the apex. Let $o\in \overline{\CC}$ be the apex, and take $r>0$. $\dist_{\omega_{\CC}}$ extends from ${\CC}$ to a unique distance metric on $\overline{\CC}$, also denoted by $\dist_{\omega_{\CC}}$. Then
		\begin{equation*}
			B_r(o) \coloneqq \{x\in {\CC} \mid \dist_{\omega_{\CC}}(o,x) < r \}.
		\end{equation*}
		Notice that $o\notin B_r(o)$.
		\item The closure $\overline{B_r(p)}\subset \CCbar$ is with respect to $\dist_{\omega_\CC}$ extended to $\CCbar$. Hence, if $\dist_{\omega_\CC}(0,B_r(p)) = 0$, then $o\in \overline{B_r(p)}$.
		\item All norms considered in this paper are with respect to $\omega_{\CC}$ unless otherwise specified, i.e. $|\cdot| = |\cdot|_{\omega_{\CC}}$.
		\item When convenient, we identify any point $p = (r,x)\in \R_+ \times L$ under the identification $\CC \cong \R_+ \times L$.
	\end{itemize}
\end{notation}

\subsection{From Scalar Curvature to the Complex Monge-Ampère Equation}

This section transforms the cscK condition in the Liouville Theorem into a complex Monge-Ampère equation for $\omega$. In order to accomplish this, we need to review the Hölder norms on manifolds:

\begin{definition}[{\cite[Definition 3.1]{heinHigherorder2020}}]\label{DefHolderUsual}
	Let $(X,g)$ be a Riemannian manifold and $E\to X$ a vector bundle on $X$ with bundle metric $h$ and $h$-preserving connection $\nabla$. If $x,y\in X$ and if there is a unique minimal $g$-geodesic $\gamma$ joining $y$ to $x$, let $\P_{xy}$ denote $\nabla$-parallel transport on $E$ along $\gamma$. $\P_{xy}$ is undefined if there is no unique minimal geodesic. If $B_\lambda(p)$ is the $g$-geodesic ball of radius $\lambda$ at $p$, define:
	\begin{equation*}
		[f]_{\alpha, B_\lambda(p)} \coloneqq \sup \left\{\frac{|f(x) - \P_{xy}(f(y))|_{h(x)}}{d(x,y)^{\alpha}} \mid x\neq y, \P_{xy} \text{ is defined} \right\},
	\end{equation*}
	for all sections $f \in C^{0,\alpha}_\loc(B_{2\lambda}(x),E)$. Here we use the fact that any minimal geodesic in $B_\lambda(x)$ is contained in $B_{2\lambda}(x)$. If $\Norm{f}{0,B_\lambda(p)}$ is the usual $C^0$-norm with respect to $g$, the full $C^{0,\alpha}$-norm is:
	\begin{equation*}
		\Norm{f}{0,\alpha,B_\lambda(p)} \coloneqq \Norm{f}{0,B_\lambda(p)} + [f]_{\alpha, B_\lambda(p)}.
	\end{equation*}
	The $C^{k,\alpha}$-norm is:
	\begin{equation*}
		\Norm{f}{k,\alpha,B_\lambda(p)} \coloneqq \sum_{l=0}^k \Norm{\nabla^l f}{0,B_\lambda(p)} + [\nabla^k f]_{\alpha, B_\lambda(p)}.
	\end{equation*}
\end{definition}

We will only consider $E = \CC \times \C$ or $E = \bigwedge^l(\CC)$, where $h$ is induced from the metric $g$.

\begin{lemma}\label{LemmaGeodesics}
	For any two points $x,y\in\CC$, there exists a length-minimizing geodesic $\gamma$ with $x$ and $y$ as endpoints such that $\length_{\omega_\CC}(\gamma) = \dist_{\omega_\CC}(x,y)$. If $\CC = \C^m\setminus \{0\}$ and $\omega_\CC = \omega_{\eucl}$, the theorem is only true after adding the smooth point $0\in \C^m$.
\end{lemma}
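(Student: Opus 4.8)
The plan is to pass to the metric completion $\CCbar = \CC \cup \{o\}$ and exploit the cone structure. Since the link $(L,h)$ is closed, it is a complete Riemannian manifold, so $\CCbar$, being a metric cone over a compact length space, is a complete and locally compact length space. By the Hopf--Rinow--Cohn--Vossen theorem, any two points of $\CCbar$ are joined by a minimizing geodesic in the metric-space sense. The only thing that can go wrong is that such a minimizer runs through the apex $o$, where $\CCbar$ fails to be smooth; away from $o$ a global minimizer is a local minimizer inside the smooth open manifold $\CC$, hence an honest smooth geodesic of $(\CC,g_\CC)$. Thus the statement reduces entirely to deciding when a minimizing geodesic between two points of $\CC$ must meet $o$.

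Here I would invoke the standard description of geodesics on a metric cone. Writing $x = (r_1,x_1)$ and $y = (r_2,x_2)$ and setting $\ell = d_L(x_1,x_2)$ for the link distance, one has
\[
	\dist_{\omega_\CC}(x,y) = \sqrt{r_1^2 + r_2^2 - 2r_1 r_2 \cos(\min(\ell,\pi))},
\]
and a minimizing path passes through $o$, with length $r_1+r_2$, precisely when $\ell \geq \pi$. When $\ell < \pi$ the minimizer lifts a minimizing $L$-geodesic from $x_1$ to $x_2$, its radial coordinate stays bounded below by a positive constant (since $r_1,r_2>0$ and $\sin\ell>0$), and it therefore lies entirely in $\CC$. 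So everything hinges on the diameter of the link.

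The key geometric input is Myers' theorem. Because $(\CC,\omega_\CC)$ is Calabi--Yau, $(L,h)$ is Sasaki--Einstein of dimension $2m-1$ with $\Ric_h = (2m-2)h$, and Myers' theorem yields $\diam(L) \leq \pi$; in particular $\ell \leq \diam(L) \leq \pi$ always. If $\diam(L) < \pi$ then no minimizing geodesic ever meets $o$, and the minimizers produced above are smooth geodesics of $(\CC,g_\CC)$ with $\length_{\omega_\CC}(\gamma) = \dist_{\omega_\CC}(x,y)$, proving the lemma. The borderline case $\diam(L)=\pi$ is handled by Cheng's maximal diameter rigidity theorem: equality forces $(L,h)$ to be the unit round sphere $S^{2m-1}$, so the cone is flat and $\CC = \C^m\setminus\{0\}$ with $\omega_\CC = \omega_{\eucl}$. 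For antipodal directions $\ell = \pi$ the minimizer is the Euclidean segment through $0$, which leaves $\CC$, and adding back the smooth point $0\in\C^m$ restores existence exactly as the statement allows.

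The main obstacle is precisely this dichotomy---minimizer avoids the apex versus passes through it---which is the content of the diameter bound. The remaining ingredients (Hopf--Rinow--Cohn--Vossen existence, the identification of metric-space minimizers avoiding $o$ with smooth Riemannian geodesics, and the explicit cone distance formula) are standard; the crux is to rule out $\diam(L)=\pi$ except in the Euclidean case, for which Myers' theorem together with Cheng rigidity is the right tool.
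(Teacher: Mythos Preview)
Your proposal is correct and follows essentially the same approach as the paper: both arguments use the Sasaki--Einstein condition to apply Bonnet--Myers for the diameter bound $\diam(L)\leq\pi$, invoke Cheng's rigidity for the equality case, and appeal to the cone geodesic structure when $\diam(L)<\pi$. The only difference is that you spell out the cone distance formula and the Hopf--Rinow--Cohn--Vossen step explicitly, whereas the paper simply cites \cite[Theorem 3.6.17]{buragoCourse2001} for the subcase $\diam(L)<\pi$.
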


\begin{proof}
	As $\Ric(\omega_\CC) = 0$, the link $L$ has Ricci curvature $\Ric(\omega_\CC|_L) = 2m-2$ as it is Sasaki-Einstein, implying that
	\begin{equation*}
		\diam_{\omega_\CC}(L) \leq \pi.
	\end{equation*}
	By Bonnet-Myers, if $\diam_{\omega_\CC}(L) = \pi$, then $L \cong S^{2m-1}$ and $(\CCbar,\omega_\CC) \cong (\C^m, \omega_{\eucl})$ by a rigidity theorem of Cheng \cite[Theorem 3.1]{chengEigenvalue1975}. In this case, the lemma is trivial after adding the apex $0\in \C^m$. For $\diam_{\omega_\CC}(L)<\pi$, the proof is given in \cite[Theorem 3.6.17]{buragoCourse2001}.
\end{proof}

\begin{remark}
	On any Ricci-flat Kähler cone $(\CC,\omega_\CC)$, any two points are connected by a minimizing geodesic (Lemma \ref{LemmaGeodesics}). For any points $p = (r,x), q = (s,y)\in \CC \cong \R_+ \times L$, the minimizing geodesic has the form
	\begin{equation*}
		\tilde \gamma(t) = (r(t), \gamma(t)),
	\end{equation*}
	where $\gamma(t)$ is a minimizing geodesic in $L$ between $x$ and $y$. This is unique away from the cut locus of $x$, hence any point $p\in \CC$ is connected to any $q\in \CC$ except for a set of Hausdorff dimension at most $2m-1$.
\end{remark}

The first step is to prove the following regularity theorem for the complex Monge-Ampère equation.

\begin{prop}\label{PropRegCMA}
	Let $\omega$ be a Kähler metric on a ball $B_6(0)\subset \C^m$. Assume that there exists constants $C,D>0$ such that
	\begin{equation*}
		\frac{1}{C}\omega_{\eucl} \leq \omega \leq C \omega_\eucl, \quad \Norm{\Scal(\omega)}{L^p(B_6(0))} \leq D,
	\end{equation*}
	for $p>2m$. Then
	\begin{equation*}
		\Norm{\omega}{1,\alpha, B_1(0)} \leq C_1
	\end{equation*}
	for $\alpha = 1-\frac{2m}{p}$ and constants $C_1 = C_1(m,C,D,p)$.
\end{prop}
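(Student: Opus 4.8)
The plan is to translate both hypotheses into elliptic information about a local Kähler potential and then bootstrap. Since $B_6(0)$ is contractible, the local $\partial\bar\partial$-lemma produces a smooth real potential $\phi$ with $\omega = \sqrt{-1}\,\partial\bar\partial\phi$, so that $g_{i\bar j} = \partial_i\partial_{\bar j}\phi$ and $\det(g_{i\bar j}) = e^u$ with $u := \log\det(g_{i\bar j})$. The uniform equivalence $\frac1C\omega_\eucl \leq \omega \leq C\omega_\eucl$ says exactly that $\frac1C\delta_{i\bar j} \leq (g_{i\bar j}) \leq C\delta_{i\bar j}$, so the complex Hessian of $\phi$ is bounded and uniformly elliptic and $|u| \leq m\log C$. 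The scalar-curvature identity reads $g^{i\bar j}\partial_i\partial_{\bar j}u = -\Scal(\omega)$, a linear second-order equation in non-divergence form whose Hermitian coefficient matrix $g^{i\bar j}$ is bounded and uniformly elliptic, with right-hand side in $L^p \subset L^{2m}$ on the bounded ball.

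The first task is to make the metric Hölder continuous. I would apply the Krylov–Safonov interior Hölder estimate to $g^{i\bar j}\partial_i\partial_{\bar j}u = -\Scal(\omega)$: with merely bounded measurable elliptic coefficients and right-hand side in $L^{2m}$ (the real dimension), this yields $u \in C^{0,\beta_0}(B_4(0))$ for some $\beta_0 = \beta_0(m,C) > 0$, with norm controlled by $m\log C$ and $\|\Scal(\omega)\|_{L^{2m}}$. Hence the volume ratio $f := \det(g_{i\bar j}) = e^u$ is Hölder and bounded between positive constants. Feeding this into the complex Monge–Ampère equation $\det(\partial_i\partial_{\bar j}\phi) = f$, the interior $C^{2,\alpha}$-estimate for complex Monge–Ampère (the complex Evans–Krylov/Caffarelli theory, as used by Chen–Wang \cite{chenalpha2015}) applies: the a priori ellipticity it requires is precisely the uniform equivalence, and the Hölder control on $f$ gives $\phi \in C^{2,\beta_1}(B_3(0))$, i.e. $g_{i\bar j} = \partial_i\partial_{\bar j}\phi \in C^{0,\beta_1}$ for some $\beta_1 = \beta_1(m,C,\beta_0) > 0$.

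With $g^{i\bar j}$ now continuous, I would return to $g^{i\bar j}\partial_i\partial_{\bar j}u = -\Scal(\omega)$ and apply the interior Calderón–Zygmund $W^{2,p}$-estimate (valid for VMO/continuous coefficients), obtaining $u \in W^{2,p}(B_2(0))$; Morrey's embedding $W^{2,p}\hookrightarrow C^{1,1-2m/p}$ then gives $u \in C^{1,\alpha}$ with exactly $\alpha = 1-\tfrac{2m}{p}$, and in particular $f = e^u \in C^{1,\alpha}$. Finally I differentiate the Monge–Ampère equation: for any first-order real derivative $\partial_k$, the function $v := \partial_k\phi$ solves the linear equation $g^{i\bar j}\partial_i\partial_{\bar j}v = \partial_k u$, whose coefficients lie in $C^{0,\beta_1}$ and whose right-hand side lies in $C^{0,\alpha}$. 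Interior Schauder gives $v \in C^{2,\min(\alpha,\beta_1)}$, hence $g_{i\bar j} \in C^{1,\min(\alpha,\beta_1)}$; since this already renders the coefficients $g^{i\bar j}$ Lipschitz (hence $C^{0,\alpha}$), one further application of Schauder—now with $C^{0,\alpha}$ right-hand side—upgrades to $v \in C^{2,\alpha}$, i.e. $g_{i\bar j} \in C^{1,\alpha}$. Tracking constants through the finitely many nested balls $B_6 \supset \dots \supset B_1$ yields $\|\omega\|_{1,\alpha,B_1(0)} \leq C_1(m,C,D,p)$.

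The main obstacle is the nonlinear step: a priori the equivalence gives only $g_{i\bar j} \in L^\infty$, so the passage from a Hölder volume ratio to a Hölder \emph{metric} genuinely requires the concavity structure of complex Monge–Ampère (complex Evans–Krylov), and one must verify that the uniform equivalence supplies exactly the ellipticity and second-order bound its hypotheses demand. The remaining work is bookkeeping—chaining Krylov–Safonov, Calderón–Zygmund and Schauder on a shrinking family of balls—while checking that the final Hölder exponent is governed by Morrey's embedding, and hence equals $1-\tfrac{2m}{p}$ rather than the auxiliary exponents $\beta_0,\beta_1$.
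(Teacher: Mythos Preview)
Your proposal is correct and follows essentially the same route as the paper's proof: both extract the log volume ratio $u$ (the paper calls it $f$, with $\omega^m=e^f\omega_\eucl^m$), apply a De Giorgi--Nash--Moser/Krylov--Safonov type estimate to $\Delta_\omega u=-\Scal(\omega)$ to get $u\in C^{0,\beta}$, feed this into the complex Evans--Krylov estimate (Chen--Wang \cite{chenalpha2015}) to make $\omega\in C^{0,\beta}$, then use $W^{2,p}$ theory plus Morrey to get $u\in C^{1,\alpha}$ with $\alpha=1-\tfrac{2m}{p}$, and finally differentiate the Monge--Amp\`ere equation and apply Schauder. The only notable difference is that you are more explicit about the possible mismatch between the auxiliary H\"older exponent $\beta_1$ and the target $\alpha$, and add one extra Schauder iteration to close this gap; the paper glosses over this point.
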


\begin{proof}
	If $\omega^m = e^f \omega_{\eucl}^m$, the scalar curvature condition implies that
	\begin{equation}\label{Laplacianf}
		\Norm{\Delta_{\omega} f}{L^p(B_6(0))} \leq D.
	\end{equation}
	According to \cite[Theorem 4.18]{hanElliptic1997}, there exists a constant $\alpha = \alpha (m,p,C)$ such that
	\begin{equation*}
		\Norm{f}{0,\alpha,B_5(0)} \leq C_1
	\end{equation*}
	for $0<\alpha<1$. The $C^{0,\alpha}$-estimate for the complex Monge-Ampère equation \cite[Theorem 1.1]{chenalpha2015} (see also \cite{caffarelliInterior1989} and \cite{safonovClassical1989}) shows that
	\begin{equation}\label{omegaHolderEstimate}
		\Norm{\omega}{0,\alpha, B_4(0)} \leq C_2.
	\end{equation}
	Due to the uniform continuity of $\omega$ resulting from \eqref{omegaHolderEstimate}, \cite[Theorem 9.11]{gilbarg_Elliptic_2001} and \eqref{Laplacianf} imply that
	\begin{equation*}
		\Norm{f}{W^{2,p}(B_3(0))} \leq C_3.
	\end{equation*}
	Morrey's inequality \cite[Theorem 2.30]{aubinNonlinear1982}. shows that $\Norm{f}{1,\alpha, B_3(0)} \leq C_4$ for $\alpha = 1-\frac{2m}{p}>0$. By \cite[Lemma 2.1]{chenalpha2015}, there exists a potential $\phi \in C^{\infty}(B_2(0))$ such that $i\del\delbar \phi = \omega$ and $\Norm{\phi}{2,\alpha, B_2(0)} \leq C\Norm{\omega}{0,\alpha, B_3(0)}$. Differentiating the complex Monge-Ampère equation $\omega^m = e^f\omega_\eucl^m$ in direction $x_i$, $i=1,\dots,2m$, we obtain:
	\begin{equation*}
		m \omega^{m-1} \wedge i\del\delbar \frac{\partial \phi}{\partial x_i} = \frac{\partial e^f}{\partial x_i} \omega_{\eucl}^m,
	\end{equation*}
	so
	\begin{equation}\label{LaplacianCMABootstrap}
		m\Delta_{\omega} \frac{\partial \phi}{\partial x_i} = \frac{\partial e^f}{\partial x_i} \frac{\omega_\eucl^m}{\omega^m},
	\end{equation}
	with the right-hand side uniformly bounded in $C^{0,\alpha}(B_3(0))$ The Schauder estimates \cite[Theorem 9.19]{gilbarg_Elliptic_2001} imply that $\frac{\partial \phi}{\partial x_i} $ is bounded in $ C^{2,\alpha}(B_1(0))$ and hence $\omega$ is bounded in $ C^{1,\alpha}(B_1(0))$.
\end{proof}

\begin{corollary}\label{HolderBoundFunction1Alpha}
	Let $\omega$ be a Kähler metric on a ball $B_2(0)\subset \C^m$. Assume that there exists constants $C,D>0$ such that
	\begin{equation*}
		\frac{1}{C}\omega_{\eucl} \leq \omega \leq C \omega_\eucl, \quad \Norm{\Scal(\omega)}{0,B_2(0)} \leq D,
	\end{equation*}
	For any function $f\colon C^2(B_{\frac{3}{2}}(0))\to \R$ and $\alpha \in (0,1)$, there exists a constant $C' = C'(m,C,D,\alpha)>0$ such that
	\begin{equation*}
		\Norm{f}{1,\alpha,B_1(0)} \leq  C'(\Norm{\Delta_\omega f}{0,B_{\frac{3}{2}}(0)} + \Norm{f}{0,B_{\frac{3}{2}}(0)}).
	\end{equation*}
\end{corollary}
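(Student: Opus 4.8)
The plan is to recognize $\Delta_\omega$, acting on functions, as a uniformly elliptic linear operator of second order whose leading coefficients are controlled in $C^{0,\alpha}$ \emph{independently of} $\omega$, and then to feed this into standard interior $W^{2,p}$-theory together with Morrey's embedding. In local complex coordinates the Kähler condition makes all first-order terms drop out, because the mixed Christoffel symbols $\Gamma^\lambda_{i\bar j}$ vanish; thus
\[
	\Delta_\omega f = 2 g^{i\bar j}\,\partial_i\partial_{\bar j} f,
\]
where $(g^{i\bar j})$ is the inverse of the coefficient matrix of $\omega$ (the factor is convention-dependent and irrelevant here). The uniform equivalence $\frac1C\omega_\eucl \leq \omega \leq C\omega_\eucl$ gives uniform ellipticity with constant depending only on $C$, so the whole problem reduces to controlling the modulus of continuity of the coefficients $g^{i\bar j}$ by $(m,C,D,\alpha)$ alone.

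First I would upgrade the $C^0$-bound on scalar curvature to an $L^p$-bound: on the bounded domain one has $\Norm{\Scal(\omega)}{L^p(B_2(0))} \leq c(m,p)\,D$ for every finite $p$. Choosing $p = \tfrac{2m}{1-\alpha}>2m$ and applying Proposition \ref{PropRegCMA} — after a routine covering-and-rescaling argument transferring its conclusion from the inclusion $B_6(0)\supset B_1(0)$ to the inclusion $B_2(0)\supset B_{3/2}(0)$ — yields $\Norm{\omega}{1,\alpha,B_{7/4}(0)} \leq C_1(m,C,D,\alpha)$. In particular the coefficients $g^{i\bar j}$ are uniformly $C^{0,\alpha}$ and uniformly elliptic on $B_{3/2}(0)$, with all constants depending only on $(m,C,D,\alpha)$.

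With these coefficients in hand, and since $f\in C^2(B_{3/2}(0))$ lies in $W^{2,p}_{\loc}$, the interior Calderón–Zygmund estimate \cite[Theorem 9.11]{gilbarg_Elliptic_2001} applies and gives
\[
	\Norm{f}{W^{2,p}(B_1(0))} \leq C_2\big(\Norm{\Delta_\omega f}{L^p(B_{3/2}(0))} + \Norm{f}{L^p(B_{3/2}(0))}\big),
\]
with $C_2 = C_2(m,C,D,\alpha)$, because that estimate sees the coefficients only through their ellipticity and their modulus of continuity. Morrey's inequality \cite[Theorem 2.30]{aubinNonlinear1982} then embeds $W^{2,p}(B_1(0)) \hookrightarrow C^{1,\alpha}(B_1(0))$ for $\alpha = 1-\tfrac{2m}{p}$, exactly the chosen exponent. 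Finally, on the bounded domain $B_{3/2}(0)$ the $L^p$-norms are dominated by the sup-norms, $\Norm{h}{L^p(B_{3/2}(0))} \leq c(m,p)\Norm{h}{0,B_{3/2}(0)}$, so combining the three displays and absorbing constants produces
\[
	\Norm{f}{1,\alpha,B_1(0)} \leq C'\big(\Norm{\Delta_\omega f}{0,B_{3/2}(0)} + \Norm{f}{0,B_{3/2}(0)}\big).
\]

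The only genuine content beyond bookkeeping is the a priori $C^{0,\alpha}$-control of the coefficients of $\Delta_\omega$ with a constant that does not see the smoothness of $\omega$; this is precisely what Proposition \ref{PropRegCMA} supplies, and it is what forces the final constant to depend only on $(m,C,D,\alpha)$. Everything else is classical linear theory. The minor nuisances I anticipate are the domain/rescaling bookkeeping needed to invoke Proposition \ref{PropRegCMA} on the correct balls, and the harmless comparison between Hölder norms taken with respect to $\omega_\eucl$ versus $\omega$, which costs only constants depending on $C$ and the already-established $C^{0,\alpha}$-bound on $\omega$.
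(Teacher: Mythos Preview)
Your proposal is correct and follows essentially the same route as the paper: invoke Proposition \ref{PropRegCMA} (after the obvious rescaling) to obtain uniform $C^{1,\alpha}$-control on $\omega$, then apply the interior $W^{2,p}$-estimate \cite[Theorem 9.11]{gilbarg_Elliptic_2001} for $\Delta_\omega$, Morrey's inequality, and bound $L^p$-norms by sup-norms on the bounded domain. The paper's version is terser but the logic is identical.
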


\begin{proof}
	After rescaling, Lemma \ref{PropRegCMA} shows that $\omega$ is uniformly bounded in $C^{1,\alpha}(B_{\frac{3}{2}}(0))$ for all $\alpha\in (0,1)$. Using the $W^{2,p}$-estimates for the Laplacian for $p\in (1,\infty)$, then
	\begin{equation*}
		\Norm{f}{W^{2,p}(B_{\frac{5}{4}}(0))} \leq C'(\Norm{\Delta_\omega f}{L^{p}(B_{\frac{3}{2}}(0))} + \Norm{f}{L^{p}(B_{\frac{3}{2}}(0)})) \leq C'(\Norm{\Delta_\omega f}{0,B_{\frac{3}{2}}(0)} + \Norm{f}{0,B_{\frac{3}{2}}(0)}).
	\end{equation*}
	Morrey's inequality \cite[Theorem 2.30]{aubinNonlinear1982} shows that
	\begin{equation*}
		\Norm{f}{1,1-\frac{2m}{p}, B_1(0)} \leq C'\Norm{f}{W^{2,p}(B_{\frac{5}{4}}(0))}
	\end{equation*}
	for $p>2m$, finishing the proof as $p\in (2m,\infty)$ can be chosen arbitrarily.
\end{proof}

\begin{prop}\label{LiouvilleUniformLaplace}
	Let $\omega$ be a Kähler metric on $\CC$ satisfying \eqref{LiouvilleConditions} and with scalar curvature $\Scal(\omega)$ bounded in $C^{0}(\CC\setminus B_R(o))$ for some radius $R>0$. Let $\phi\colon \CC \to \R$ be a bounded function such that
	\begin{equation*}
		\Delta_\omega \phi = A
	\end{equation*}
	for some constant $A\in \R$. Then $A=0$ and $\phi$ is a constant.\\
	If $A=0$ is given, the requirement $\Scal(\omega) $ bounded in $ C^{0}(\CC\setminus B_R(o))$ is not necessary.
\end{prop}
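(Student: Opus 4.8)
The plan is to treat the two assertions separately: first that the constant $A$ must vanish, and then that a bounded $\Delta_\omega$-harmonic function is constant. Throughout I would use that the uniform equivalence \eqref{LiouvilleConditions} makes $\Delta_\omega = \tr_\omega(i\del\delbar\,\cdot\,)$ a uniformly elliptic operator with respect to $\omega_\CC$ and makes $\dvol_\omega$ uniformly comparable to $\dvol_{\omega_\CC}$; since $(\CC,\omega_\CC)$ is a Ricci-flat cone it has Euclidean volume growth $\vol_{\omega_\CC}(B_R(o))\asymp R^{2m}$, is volume doubling, and supports a scale-invariant Poincaré inequality (these pass to $\dvol_\omega$ and to the completion $\CCbar$ across the apex). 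I would also record the barrier computation $\Delta_\omega r^2 = \tr_\omega(i\del\delbar r^2) = c\,\tr_\omega\omega_\CC \in [\kappa,\kappa']$ for positive constants $\kappa,\kappa'$, which controls the growth of solutions.

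For $A=0$ I would test the equation against cutoffs. Choose $\chi_R$ equal to $1$ on $B_R(o)$, supported in $B_{2R}(o)$, with $|\nabla\chi_R|\le C/R$, and a second cutoff $\eta_\epsilon$ vanishing on $B_\epsilon(o)$ and equal to $1$ outside $B_{2\epsilon}(o)$ to excise the apex. Integrating $\Delta_\omega\phi = A$ against $\chi_R\eta_\epsilon$ and using the divergence form of the Kähler Laplacian gives
\[
A\int \chi_R\eta_\epsilon\,\dvol_\omega = -\int \nabla(\chi_R\eta_\epsilon)\cdot\nabla\phi\,\dvol_\omega .
\]
A Caccioppoli estimate (multiply by $\chi_R^2\phi$ and use boundedness of $\phi$) bounds $\int_{B_{2R}}|\nabla\phi|^2$ by $C(|A|\,R^{2m}+R^{2m-2})$, and the analogous estimate near the apex, with $\vol_\omega(B_{2\epsilon})\asymp\epsilon^{2m}$, shows the $\eta_\epsilon$-boundary contribution is $O(\epsilon^{2m-2})\to 0$ as $\epsilon\to0$ (the apex is a removable singularity for the bounded function $\phi$). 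Feeding the Caccioppoli bound and Cauchy–Schwarz into the displayed identity yields $|A|\,\vol_\omega(B_R(o))\le C(|A|^{1/2}R^{2m-1}+R^{2m-2})$, i.e. $|A|\le C(|A|^{1/2}R^{-1}+R^{-2})$; letting $R\to\infty$ forces $A=0$. The role of the scalar-curvature bound at infinity is exactly to guarantee, via the regularity theory of Proposition \ref{PropRegCMA} applied after rescaling, that $\omega$ is smooth enough at all large scales for this divergence-form integration by parts to be legitimate; an alternative avoiding regularity is to note that polynomial volume growth makes $(\CC,\omega)$ stochastically complete, whence $e^{t\Delta_\omega}\phi=\phi+At$ is an $L^\infty$-contraction and $|\phi+At|\le\Norm{\phi}{0,\CC}$ for all $t$ again gives $A=0$.

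With $A=0$ in hand, $\phi$ is a bounded $\Delta_\omega$-harmonic function, and I would conclude it is constant by a scale-invariant Harnack/oscillation-decay argument using only the uniform ellipticity of $\Delta_\omega$: the De Giorgi–Nash–Moser (resp.\ Krylov–Safonov) theory on the doubling Poincaré space $(\CCbar,\omega_\CC,\dvol_\omega)$ gives
\[
\operatorname{osc}_{B_r(x_0)}\phi \le C\Big(\tfrac{r}{R}\Big)^{\delta}\operatorname{osc}_{B_R(x_0)}\phi \le 2C\Big(\tfrac{r}{R}\Big)^{\delta}\Norm{\phi}{0,\CC}
\]
for $0<r<R$, with $C,\delta$ independent of scale. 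Fixing $x_0$ and $r$ and letting $R\to\infty$ shows $\operatorname{osc}_{B_r(x_0)}\phi=0$, so $\phi$ is locally and hence globally constant. Because this step invokes no property of $\omega$ beyond \eqref{LiouvilleConditions}, the scalar-curvature bound is unnecessary once $A=0$ is assumed, which is precisely the final clause of the statement.

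I expect the main obstacle to be the apex: every ball large enough to drive the limits $R\to\infty$ contains the cone point $o$, where $\omega$ is genuinely singular and $\phi$ is only known to be bounded. The two devices above — the excision cutoff $\eta_\epsilon$ with the near-apex Caccioppoli estimate (which needs $2m-2>0$, i.e.\ $m\ge2$; the case $m=1$ is the smooth model $\CCbar=\C^m$ by Lemma \ref{LemmaGeodesics}) and the removable-singularity/doubling property of $\CCbar$ across $o$ — are what make the integration by parts and the Harnack inequality valid on the completed cone. Verifying that the Poincaré inequality and the Harnack constant are genuinely scale-invariant up to and across the apex is the technical heart of the argument.
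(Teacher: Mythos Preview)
Your proposal is correct, and for the step $A=0$ it takes a genuinely different route from the paper. The paper argues by contradiction: using the scalar-curvature bound it invokes Proposition~\ref{PropRegCMA} to get uniform $C^{1,\alpha}$ control on $\omega$ away from the apex, hence uniform $C^{3,\alpha}$ bounds on $\phi$ via Schauder; it then takes a blow-down at infinity to obtain limits $(\omega_\infty,\phi_\infty)$ on $\C^m$ and runs a pointwise second-derivative argument (diagonalize $\omega_\infty(0)$, find a coordinate direction where $\partial^2\phi_\infty$ is bounded below by a positive constant, and integrate along that direction using the $C^{3,\alpha}$ bound to produce unbounded values of $\phi_\infty$). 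Your integration-by-parts/Caccioppoli argument is more elementary and global. In fact it is \emph{stronger} than you realize: since $\omega$ is already a smooth K\"ahler metric on $\CC$, the divergence-form identity $\int\psi\,\Delta_\omega\phi\,\omega^m=-\int\langle\nabla\psi,\nabla\phi\rangle_\omega\,\omega^m$ holds for any $\psi\in C_0^\infty(\CC)$ with no regularity hypothesis on $\omega$ beyond smoothness and the uniform equivalence \eqref{LiouvilleConditions}. So your argument does \emph{not} use the scalar-curvature bound at all, whereas the paper's argument genuinely does (to extract the $C^{1,\alpha}$ sublimit). Your stochastic-completeness alternative makes the same point.

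For the second step (bounded $\Delta_\omega$-harmonic $\Rightarrow$ constant) the two proofs are essentially the same idea phrased differently: the paper extends $\phi$ constantly in time and applies the parabolic H\"older estimate \eqref{EqHolderHeat} from its later heat-kernel section (a forward reference to the Saloff-Coste/Moser machinery of Proposition~\ref{HeatKernelBounds}), while you invoke the elliptic De~Giorgi--Nash--Moser oscillation decay on the doubling Poincar\'e space $(\CCbar,\omega_\CC,\dvol_\omega)$ directly. Both rely on the same scale-invariant Harnack inequality across the apex, and your identification of the apex as the key technical point matches the paper's treatment. One small care point: in your Caccioppoli step you should include the apex cutoff $\eta_\epsilon$ as well (test with $(\chi_R\eta_\epsilon)^2\phi$, not $\chi_R^2\phi$), since $\chi_R^2\phi$ is not compactly supported in $\CC$; the annular estimate you sketch then gives $\int_{B_{2\epsilon}\setminus B_\epsilon}|\nabla\phi|^2=O(\epsilon^{2m-2})$ and the apex contribution vanishes for $m\ge2$.
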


\begin{proof}
	
\end{proof}

\begin{proof}
	Assume that $A\neq 0$ and, without loss of generality, assume $A>0$ by changing the sign of $\phi$ if necessary. We will now show that $A\neq 0$ is not possible. Proposition \ref{PropRegCMA}, applied to balls $B_1(p)\subset \CC\setminus B_R(o)$, shows that $\omega$ is bounded in $ C^{1,\alpha}_\loc(\CC\setminus B_{2R}(o))$ for any $0<\alpha<1$. The Schauder estimates \cite[Theorem 9.19]{gilbarg_Elliptic_2001}, applied to $\phi$, imply that $\Norm{\phi}{C^{3,\alpha}_{\loc}(\CC\setminus B_{2R}(o))} \leq C$. The estimates are uniform away from the apex on balls of fixed radii. Therefore, by selecting points $p_i\in \CC$ with $\dist_{\omega_{\CC}}(o,p_i)\to \infty$ and increasing balls $B_{r_i}(p_i)$ with $r_i\to \infty$ not growing too fast compared to $\dist_{\omega_{\CC}}(o,p_i)$, we obtain a $C^{1,\alpha}_{\loc}$-sublimit $\omega|_{B_{r_i}(p_i)} \to \omega_\infty$ and a $C^{3,\alpha}_{\loc}$-sublimit $\phi|_{B_{r_i}(p_i)} \to \phi_\infty\in C_{\loc}^{3,\alpha}(\C^m)$ satisfying
	\begin{equation*}
		\frac{1}{C}\omega_{\eucl} \leq \omega_\infty \leq C \omega_{\eucl}, \quad \Norm{\omega_\infty}{0,\alpha,\C^m} \leq D, \quad \Delta_{\omega_\infty} \phi_\infty = A.
	\end{equation*}
	
	To obtain a contradiction with $A\neq 0$, consider
	\begin{equation}\label{TracePhi}
		\Delta_{\omega_\infty}\phi_\infty = i\tr_{\omega_\infty} (\del\delbar \phi_\infty) = i\tr (\omega_\infty^{-1}\del\delbar \phi_\infty) = i\tr (U\omega_\infty^{-1}U^*U (\del\delbar\phi_\infty) U^* ) = A,
	\end{equation}
	where $U = \SU(m)$ is a matrix such that $U \omega_\infty^{-1} U^*$ is diagonal at $0\in \C^m$. By the uniform bound \eqref{LiouvilleConditions}, the eigenvalues are bounded from below by $\frac{1}{C}$ and from above by $C$, i.e.
	\begin{equation*}
		iU\omega_\infty(0)^{-1} U ^* = (a_1, \dots, a_m ), \quad \frac{1}{C} \leq a_i \leq C \quad \text{for all } i=1,\dots,m.
	\end{equation*}
	Since $U$ defines a complex change of coordinates, $U (\del\delbar\phi_\infty) U^*$ represents the complex Hessian of $\phi_\infty$ in the new coordinates defined by $U$, denoted by $(z_1 = x_1+iy_1,\dots,z_m = x_m +iy_m)$. Let $\del_i = \del_{z_i}$ be the $\del$-operator in the direction of $z_i$, and let $\del_{\overline{i}}$ be the $\delbar$-operator in the direction of $\overline{z_i}$. Since $U\omega_\infty(0)^{-1} U^*$ is diagonal, \eqref{TracePhi} evaluates to
	\begin{equation*}
		\Delta_{\omega_\infty} \phi_\infty = i\tr (U\omega_\infty^{-1}U^*U (\del\delbar\phi_\infty) U^* ) = i\sum_{i=1}^m a_i \del_{i}\del_{\overline i}\phi_\infty = A>0.
	\end{equation*}
	For all $a_i$, $C\geq a_i \geq \frac{1}{C}$, so there is a direction $z_i$ such that $i\del_{i}\del_{\overline i}\phi_\infty = (\frac{\partial^2}{\partial x_i^2} +\frac{\partial^2}{\partial y_i^2}) \phi_\infty \geq \frac{A}{Cm}$. Therefore, either $\frac{\partial^2}{\partial x_i^2}\phi_\infty \geq \frac{A}{2Cm}$ or $\frac{\partial^2}{\partial y_i^2}\phi_\infty \geq \frac{A}{2Cm}$. Assuming that $\frac{\partial^2}{\partial x_i^2}\phi_\infty \geq \frac{A}{2Cm}$ and $\frac{\partial}{\partial x_i}\phi_\infty \geq 0$ (otherwise consider $-\frac{\partial}{\partial x_i}\phi_\infty$), the idea is to integrate along $x_i$ as long as $\frac{\partial^2}{\partial x_i^2}\phi_\infty \geq \frac{A}{4Cm}$. The uniform $C^{3,\alpha}_\loc$-estimate on $\phi_\infty$ guarantees that there always exists a uniform minimum length $l_{\min}$ for which this is true. Let $p_{\min}$ be the point reached after moving along $x_i$ for $l_{\min}$ units. Integrate along this path to find
	\begin{equation*}
		\phi_\infty(p_{\min}) - \phi_\infty(0) \geq \frac{A}{8Cm}l_{\min}^2.
	\end{equation*} 
	Starting from $p_{\min}$, the same procedure can be iterated as many times as needed to show that, for all $b>0$, there is a point $p\in \C^m$ such that
	\begin{equation*}
		\phi_\infty(p) - \phi_\infty(0) > b.
	\end{equation*}
	This contradicts $\phi_\infty$ being a bounded function and so $\Delta_{\omega} \phi = 0$. \\
	
	Assume now that $\Delta_{\omega} \phi = 0$ on $(\CC,\omega_\CC)$. As we will see in the proof of Theorem \ref{HeatKernelBounds} for the heat kernel, one can prove a Harnack inequality for the heat equation by adopting the standard proof for parabolic differential equations on domains $\Omega\subset \R^m$ to neighborhoods $B_r(o)\subset \CC$. The Harnack inequality only requires the uniform bound \eqref{LiouvilleConditions} on $\omega$. Extend $\phi$ constantly along $\R$ to the domain $\R\times \CC$ such that it solves the heat equation:
	\begin{equation*}
		(\del_t - \Delta_{\omega})\phi= 0,
	\end{equation*}
	since $\phi$ and $\omega$ are invariant in $t$ on the manifold $\R \times \CC$. Setting $t=0$ we obtain by \eqref{EqHolderHeat}:
	\begin{equation}\label{HarnackElliptic}
		|\phi(x)-\phi(y)|\leq E \left(\frac{\dist_{\omega_\CC}(x,y) }{r}\right)^\gamma\Norm{\phi}{L^\infty_{\omega_\CC}([-r^2,0]\times B_r(o))},
	\end{equation}
	for all $r<\infty$, some $\gamma,\delta>0$, all points $x,y\in B_{\delta r}(o)$, and some constant $E = E(\CC,\gamma,m, \delta)>0$. Taking $r\to\infty$, it follows that $\phi(x)-\phi(y) = 0$ for all $x,y\in \CC$ and so $\phi$ is a constant.
\end{proof}

\begin{corollary}\label{LemmaMetricRicciFlat}
	Any metric $\omega$ from Theorem \ref{LiouvilleTheorem} satisfies
	\begin{equation}\label{VolFormEquality}
		\omega^m = \omega_\CC^m
	\end{equation}
	after rescaling. In particular, $\omega$ is Ricci-flat.
\end{corollary}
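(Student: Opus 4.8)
The plan is to reduce the corollary to Proposition \ref{LiouvilleUniformLaplace} by exhibiting the correct bounded function solving a Laplace equation with constant right-hand side. Since $\omega_\CC$ is Ricci-flat, I would first record the volume form of $\omega$ relative to the cone by writing
\[
\omega^m = e^f \omega_\CC^m
\]
for a smooth function $f$ on $\CC$. The key preliminary observation is that $f$ is automatically bounded: taking the $m$-th wedge power of the uniform equivalence \eqref{LiouvilleConditions} yields $C^{-m}\omega_\CC^m \leq \omega^m \leq C^m\omega_\CC^m$, so $|f| \leq m\log C$ everywhere on $\CC$.

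Next I would translate the cscK hypothesis into a Laplace equation for $f$. Using $\Ric(\omega_\CC) = 0$, the Ricci forms are related by $\Ric(\omega) = \Ric(\omega_\CC) - i\del\delbar f = -i\del\delbar f$, and taking the trace with respect to $\omega$ gives, up to the usual normalization constant,
\[
\Scal(\omega) = -\Delta_\omega f.
\]
Hence the cscK condition $\Scal(\omega) \equiv A'$ is precisely $\Delta_\omega f = A$ for the constant $A = -A'$ (absorbing the normalization). Being constant, $\Scal(\omega)$ is in particular bounded on $\CC \setminus B_R(o)$ for every $R>0$, so all hypotheses of Proposition \ref{LiouvilleUniformLaplace} are satisfied with $\phi = f$.

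Applying Proposition \ref{LiouvilleUniformLaplace} then forces $A = 0$ and $f \equiv c$ for some constant $c$. Rescaling $\omega$ by the constant factor $e^{-c/m}$ — which preserves the uniform equivalence \eqref{LiouvilleConditions} with an adjusted constant and preserves the cscK property, since scalar curvature merely scales by a constant — gives $\omega^m = \omega_\CC^m$, i.e. \eqref{VolFormEquality}. Finally, with $f$ now constant, $\Ric(\omega) = -i\del\delbar f = 0$, so $\omega$ is Ricci-flat.

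Because Proposition \ref{LiouvilleUniformLaplace} already absorbs all of the analytic difficulty (the Harnack inequality and the blowup argument), I do not expect a genuine obstacle. The only points requiring care are the sign and normalization conventions in the identity $\Scal(\omega) = -\Delta_\omega f$, so that the constant lands on the intended side and the proposition applies verbatim, and verifying that the constant rescaling legitimately preserves the hypotheses, so that the normalization $\omega^m = \omega_\CC^m$ is achieved rather than merely $\omega^m = e^c\omega_\CC^m$.
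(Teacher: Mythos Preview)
Your proposal is correct and follows essentially the same approach as the paper's own proof: write $\omega^m = e^F\omega_\CC^m$, observe $F$ is bounded by the uniform equivalence, convert the cscK condition into $-\Delta_\omega F = A$, and apply Proposition \ref{LiouvilleUniformLaplace} to conclude $F$ is constant and $A=0$, then rescale.
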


\begin{proof}
	Take $\omega$ as in Theorem \ref{LiouvilleTheorem} and write $\omega^m = e^{F}\omega_\CC^m$. The uniform bound \eqref{LiouvilleConditions} implies that $F$ is bounded, and constant scalar curvature $\Scal(\omega)=A\in \R$ is equivalent to
	\begin{equation}\label{ScalarFlat}
		-\Delta_{\omega} F = A.
	\end{equation}
	By Proposition \ref{LiouvilleUniformLaplace}, the function $F$ is constant and $A=0$. Therefore, $\omega$ satisfies \eqref{VolFormEquality} after rescaling and is, in particular, Ricci-flat.
\end{proof}

By Corollary \ref{LemmaMetricRicciFlat} we assume for the remainder of the section that $\omega$ satisfies \eqref{VolFormEquality}.

\subsection{A Liouville Theorem for the Tangent Cones at $\infty$ and $o$}\label{SectionLiouvilleTangentCones}

\subsubsection{Cone Structure of the Asymptotic Limits}

Denote by $\Phi_\lambda \colon \CC\to \CC$ the rescaling $(r,x) \mapsto (\lambda r,x)\in \R_+\times L$. Take a Kähler metric $\omega$ satisfying the properties of Theorem \ref{LiouvilleTheorem}. Form the sequence of blow-down metrics
\begin{equation}\label{Rescaling}
	\omega_{\epsilon_i} = \Phi^*_{\epsilon_i}(\epsilon_i^{-2}\omega),
\end{equation}
for some sequence $\epsilon_i\to \infty$. Since the cone metric $\omega_\CC$ is invariant under rescaling, i.e. $\Phi^*_{\epsilon_i}(\epsilon_i^{-2}\omega_\CC) = \omega_\CC$, the conditions
\begin{equation*}
	\frac{1}{C} \omega_\CC \leq \omega_{\epsilon_i} \leq C \omega_\CC, \qquad \omega_{\epsilon_i}^m = \omega_\CC^m,
\end{equation*}
are preserved. The associated a priori estimate of the complex Monge-Ampère equation $\omega_{\epsilon_i}^m = \omega_\CC^m$ implies that there exists a $C^\infty_{\text{loc}}(\CC)$-sublimit $\omega_\infty$ satisfying \eqref{LiouvilleConditions} (regularity follows by bootstrapping \eqref{LaplacianCMABootstrap}). Taking the limit $\epsilon_i\to0$ gives an asymptotic limit $\omega_0$ at $o$, both $\omega_0$ and $\omega_\infty$ possibly depending on the sequence. Using heat kernel estimates, this section shows that $\omega_0$ and $\omega_\infty$ are (a priori, possibly different) Kähler cone metrics.\\

To proceed, we introduce the heat kernel $H_t$ and prove certain regularity statements for $H_t$.

\begin{definition}\label{DefHeatKernel}
	Let  $(M,g)$ be a Riemannian manifold with Laplacian $\Delta$. A nonnegative continuous function $H_t(x,y)\colon \R_+\times M \times M\to \R$ is a heat kernel of $(M,g)$ if the following holds: let $f\in C_0^\infty(M)$, then
	\begin{itemize}
		\item The function
		\begin{equation*}
			(P_t f)(x) \coloneqq \int_M H_t(x,y)f(y) \dvol_g(y)
		\end{equation*}
		solves the heat equation on $M$: 
		\begin{equation*}
			(\del_t -\Delta_g)(P_t f)(x) = 0.
		\end{equation*}
		\item As $t\to 0$, then
		\begin{equation*}
			(P_t f)(x) \to f(x)
		\end{equation*}
		in $C_0^\infty(M)$.
	\end{itemize}
\end{definition}

\begin{theorem}\label{HeatKernelProperties}
	\cite[Theorem 7.7, 7.13, and 7.20]{grigoryanHeat2009} For any Riemannian manifold $(M,g)$, there is a unique and smooth heat kernel $H_t(x,y)$ such that $H_t(\cdot,y)\in L^2(M)$ for every fixed $t>0,y\in M$, and satisfying:
	\begin{itemize}
		\item Symmetry: $H_t(x,y) = H_t(y,x)$.
		\item The semigroup identity:
		\begin{equation*}
			H_{t+s}(x,y) = \int_M H_t(x,z)H_s(z,y) \dvol_g(z).
		\end{equation*}
		\item For fixed $x\in M$, the function $H_t(x,y)$ satisfies the heat equation in $(t,y)$:
		\begin{equation}\label{HeatKernelHSolution}
			(\partial_t - \Delta_g) H_t(x,y) = 0.
		\end{equation}
		\item For any $x\in M$ and $t>0$:
		\begin{equation*}
			\int_M H_t(x,y) \dvol_g(y) \leq 1.
		\end{equation*}
	\end{itemize}
\end{theorem}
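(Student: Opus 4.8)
Since the statement is cited verbatim from \cite[Theorem 7.7, 7.13, and 7.20]{grigoryanHeat2009}, my plan is to reconstruct the standard argument rather than treat it as a black box. The natural starting point is the $L^2$-theory. The Laplace--Beltrami operator $\Delta_g$, initially defined on $C_0^\infty(M)$, admits a self-adjoint (Friedrichs/Dirichlet) extension, and the spectral theorem then produces a strongly continuous, self-adjoint, sub-Markovian contraction semigroup $P_t = e^{t\Delta_g}$ on $L^2(M)$. The content of the theorem is precisely that $P_t$ is realized by integration against a nonnegative smooth kernel $H_t(x,y)$, so the task reduces to constructing this kernel and reading off the four listed properties from the semigroup.

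The cleanest construction of $H_t$ uses an exhaustion $\Omega_1 \Subset \Omega_2 \Subset \cdots$ of $M$ by relatively compact open sets with smooth boundary. On each $\Omega_k$ the Dirichlet Laplacian has discrete spectrum $0 < \lambda_1^{(k)} \leq \lambda_2^{(k)} \leq \cdots$ with orthonormal eigenfunctions $\phi_j^{(k)}$, so the Dirichlet heat kernel $H_t^{\Omega_k}(x,y) = \sum_j e^{-\lambda_j^{(k)} t}\,\phi_j^{(k)}(x)\,\phi_j^{(k)}(y)$ is smooth and symmetric. The parabolic maximum principle shows that these kernels increase in $k$ (enlarging the domain can only enlarge the kernel), so $H_t(x,y) \coloneqq \lim_{k\to\infty} H_t^{\Omega_k}(x,y)$ exists as a monotone limit. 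Finiteness of the limit, together with the identification of $H_t$ as the integral kernel of $P_t$, follows from the $L^2$-contraction property of the truncated semigroups and their monotone convergence to $P_t$.

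Each individual property then passes to the limit. Symmetry $H_t(x,y)=H_t(y,x)$ and the semigroup identity are inherited from the corresponding identities for $H_t^{\Omega_k}$ via monotone convergence in the defining integrals. The mass bound $\int_M H_t(x,y)\,\dvol_g(y) \leq 1$ holds because each $P_t^{\Omega_k}$ satisfies $P_t^{\Omega_k} 1 \leq 1$ (again by the maximum principle), and Fatou's lemma preserves the inequality in the limit. Smoothness of $H_t$ and the fact that it solves $(\partial_t - \Delta_g)H_t = 0$ in $(t,y)$ are local statements, obtained from interior parabolic (Schauder / hypoellipticity) estimates: the monotone sequence $H_t^{\Omega_k}$ converges locally uniformly together with all derivatives on compact subsets of $\R_+ \times M$, so the limit is smooth and inherits the equation. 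Uniqueness among kernels with $H_t(\cdot,y)\in L^2(M)$ is then a consequence of the uniqueness of the integral kernel of the self-adjoint $L^2$-semigroup $P_t$.

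The main obstacle is the passage from bounded domains to all of $M$, namely showing that the monotone limit is finite and genuinely represents $e^{t\Delta_g}$ rather than being a merely formal pointwise limit. This is exactly where self-adjointness of the Friedrichs extension and the sub-Markovian property are essential, and it is the technical heart of the construction in \cite{grigoryanHeat2009}; the remaining assertions are comparatively routine consequences of the maximum principle and interior parabolic regularity.
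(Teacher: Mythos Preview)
Your outline is correct and faithfully sketches the standard construction in Grigor'yan's book (exhaustion by relatively compact domains, monotone limit of Dirichlet heat kernels via the parabolic maximum principle, identification with the $L^2$-semigroup of the Friedrichs Laplacian, and passage of symmetry, semigroup, mass and regularity properties to the limit). The paper itself gives no proof of this theorem---it is stated purely as a citation of \cite[Theorems 7.7, 7.13, 7.20]{grigoryanHeat2009}---so there is nothing further to compare.
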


In order to prove regularity of the heat kernel of $\omega$, we need to control the curvature tensor.

\begin{lemma}\label{CurvatureBoundLemma}
	Let $\omega$ be a cscK metric on $\CC$ with a uniform bound $\frac{1}{C}\omega_\CC \leq \omega \leq C\omega_\CC$. For the curvature tensor $\Rm$ of $\omega$ and for any $m\in \N$, there exists a constant $C_m$ such that
	\begin{equation}\label{CurvatureBound}
		\sum_{k=0}^{m} R^{2+k} |\nabla^k_{\omega_\CC} \Rm|_{\omega_\CC}(p) \leq C_m,
	\end{equation}
	where $\nabla_{\omega_\CC}$ is the connection of $\omega_\CC$ and $R = \dist_{\omega_\CC}(o,p)$ is the distance to the apex.
\end{lemma}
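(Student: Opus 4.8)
The plan is to trade the weighted estimate for a fixed-scale, uniform interior estimate, exploiting the exact scale-invariance of $\omega_\CC$. Recall that by Corollary \ref{LemmaMetricRicciFlat} we may assume $\omega^m = \omega_\CC^m$; in particular $\omega$ is a smooth Ricci-flat Kähler metric solving this complex Monge-Ampère equation, so $\Rm(\omega)$ is smooth and all covariant derivatives below are well defined. Fix $p\in\CC$, write $p = (R,x_0)\in\R_+\times L$ with $R=\dist_{\omega_\CC}(o,p)$, and set $q=\Phi_R^{-1}(p)=(1,x_0)$, a point at distance $1$ from the apex. Introduce the rescaled metric $\omega_R := R^{-2}\Phi_R^*\omega$. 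Since $\LL_{r\del_r}\omega_\CC=2\omega_\CC$ yields $\Phi_R^*\omega_\CC=R^2\omega_\CC$, the metric $\omega_R$ again satisfies $\frac1C\omega_\CC\leq\omega_R\leq C\omega_\CC$ and $\omega_R^m=\omega_\CC^m$.

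First I would record how the quantity to be bounded transforms under $\Phi_R$. The Levi-Civita connection of $\omega_\CC$ equals that of $\Phi_R^*\omega_\CC=R^2\omega_\CC$, so pullback commutes with $\nabla^k_{\omega_\CC}$; combined with naturality of curvature and $\Rm(R^2\omega_R)=R^2\Rm(\omega_R)$ as a $(0,4)$-tensor, this gives $\Phi_R^*(\nabla^k_{\omega_\CC}\Rm(\omega))=R^2\,\nabla^k_{\omega_\CC}\Rm(\omega_R)$. Measuring norms, and using that a $(0,4+k)$-tensor satisfies $|\cdot|_{R^2\omega_\CC}=R^{-(4+k)}|\cdot|_{\omega_\CC}$, one obtains the scaling identity
\begin{equation*}
	R^{2+k}\,|\nabla^k_{\omega_\CC}\Rm(\omega)|_{\omega_\CC}(p)=|\nabla^k_{\omega_\CC}\Rm(\omega_R)|_{\omega_\CC}(q).
\end{equation*}
Thus each weighted term on the left equals the corresponding \emph{unweighted} curvature quantity of $\omega_R$ at the fixed-scale point $q\in L\times\{1\}$.

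It therefore suffices to prove a uniform bound $|\nabla^k_{\omega_\CC}\Rm(\tilde\omega)|_{\omega_\CC}(q)\leq C_k$ valid for every metric $\tilde\omega$ with $\frac1C\omega_\CC\leq\tilde\omega\leq C\omega_\CC$ and $\tilde\omega^m=\omega_\CC^m$ and every $q$ at distance $1$ from the apex; summing over $k=0,\dots,m$ then yields the lemma. This is a purely local, fixed-scale claim. I would argue it as follows: the restriction of $\omega_\CC$ to the compact annulus $L\times[\frac12,2]$ is a fixed smooth metric, so it can be covered by finitely many holomorphic charts in which $\omega_\CC$ is uniformly equivalent to $\omega_{\eucl}$ with uniformly $C^j$-bounded coefficients, and $B_{1/4}(q)$ lies in this annulus whenever $\dist_{\omega_\CC}(o,q)=1$. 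In such a chart $\tilde\omega$ solves $\tilde\omega^m=e^{\psi}\omega_{\eucl}^m$ with $\psi$ smooth and uniformly bounded in every $C^j$, and $\tilde\omega$ is uniformly equivalent to $\omega_{\eucl}$. Proposition \ref{PropRegCMA} then supplies a uniform $C^{1,\alpha}$ bound on $\tilde\omega$, and differentiating the Monge-Ampère equation as in \eqref{LaplacianCMABootstrap} and iterating the Schauder estimates \cite[Theorem 9.19]{gilbarg_Elliptic_2001} upgrades this to uniform $C^j$ bounds on $\tilde\omega$ for every $j$. Hence $\Rm(\tilde\omega)$ and its $\omega_\CC$-covariant derivatives up to any fixed order are uniformly controlled at $q$, with constants depending only on $(\CC,\omega_\CC)$, $C$ and the order.

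The main obstacle is this last step: making the higher-order interior estimates genuinely \emph{uniform} in both the base point $q$ and the metric $\tilde\omega$. Uniformity in $q$ is delicate because the link $L$ need not be homogeneous, but it is secured by the compactness of the annulus together with a finite atlas of charts of comparable geometry; uniformity in $\tilde\omega$ holds because Proposition \ref{PropRegCMA} and the bootstrap depend on $\tilde\omega$ only through the equivalence constant $C$ and the uniformly controlled datum $\psi$. Once both uniformities are established, the scale-invariance identity carries out the rest of the argument automatically.
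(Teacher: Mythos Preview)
Your proof is correct and follows essentially the same approach as the paper: rescale so the point sits at unit distance from the apex, use the scale-invariant uniform equivalence and $\omega^m=\omega_\CC^m$ together with Monge--Amp\`ere interior regularity (Proposition~\ref{PropRegCMA} plus the bootstrap \eqref{LaplacianCMABootstrap}) to control all derivatives of the rescaled metric on a finite cover of the link, and then scale back. The paper works with a local potential $\phi$ via \cite[Lemma~2.1]{chenalpha2015} rather than citing Proposition~\ref{PropRegCMA} directly, but this is a minor presentational difference; your explicit scaling identity $R^{2+k}|\nabla^k_{\omega_\CC}\Rm(\omega)|_{\omega_\CC}(p)=|\nabla^k_{\omega_\CC}\Rm(\omega_R)|_{\omega_\CC}(q)$ is a nice way to package the same reduction.
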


\begin{proof}
	Pick a point $p\in \CC$ with $\dist_{\omega_\CC}(o,p) = R$. Pull back by $\Phi_{R}$ such that $\tilde p \coloneqq \Phi_{R}^*(p)$ with $\dist_{\omega_\CC}(o,\tilde p) = 1$. Assume that $B_\delta(\tilde p)$ is trivial for some small enough $\delta > 0$, and pick holomorphic normal coordinates $z_1 = x_1 + ix_2,\dots,z_m = x_{2m-1} + ix_{2m}$ with respect to $\omega_\CC$. Corollary \ref{LemmaMetricRicciFlat} implies that $\omega^m = \omega_{\CC}^m$ (potentially after rescaling). By \cite[Lemma 2.1]{chenalpha2015}, there exists a potential $\phi$ such that
	\begin{equation*}
		i\del\delbar \phi = \omega, \quad \Norm{\phi}{C^{2,\alpha}(B_{\delta}(p))} \leq C,
	\end{equation*}
	where $C$ depends only on $\omega_\CC$ and the uniform bound of $\omega$ due to the regularity of the complex Monge-Ampère equation \cite[Theorem 1.1]{chenalpha2015}. Proceeding as in Proposition \ref{PropRegCMA} by differentiating $\omega^m = \omega_\CC^m$ (specifically, as in \eqref{LaplacianCMABootstrap} but with a different background metric), we obtain the equation:
	\begin{equation*}
		m\Delta_{\omega} \frac{\partial \phi}{\partial x_i} = \tr_\omega \left(\frac{\partial}{\partial x_i} \omega_\CC^m \right).
	\end{equation*}
	The right-hand side is uniformly bounded in $C^{0,\alpha}(B_{\delta}(\tilde p))$. Bootstrapping this equation shows that $\Norm{\nabla^k_{\omega_\CC} \phi}{0,B_{\frac{\delta}{2}}(p)} \leq C_k$ for all $k\in \N$, with a constant depending only on $k$ and $C$ (cf. \cite[Theorem 6.17]{gilbarg_Elliptic_2001} for details on the estimates). Covering $\{r=1\}$ with finitely many such balls, it follows that $|\nabla^k_{\omega_\CC} \omega|(\tilde p) \leq C_k$ for any $\tilde p$ with $\dist_{\omega_\CC}(o,\tilde p) = 1$. Expressing $\Rm$ as a combination of second derivatives of $\omega$ and scaling back shows that $|\nabla^k_{\omega_\CC} \omega|(p) \leq C_k R^{-k}$.
\end{proof}

\begin{prop}\label{HeatKernelExistence}
	Let $H_t(x,y)$ be the heat kernel of $(\CC,\omega)$. Then, for any fixed $t > 0$ and $x \in \CC$, $\Norm{H_t(x,\cdot)}{L^\infty(\CC)}\leq Ct^{-\frac{1}{m}}$ with $C$ independent of $x \in \CC$. Furthermore, $|\nabla_\omega^k H_t(x,\cdot)|_\omega \leq C_k t^{-\frac{1}{m}} r^{-k}$.
\end{prop}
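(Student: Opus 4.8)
The plan is to obtain the $L^\infty$ bound from a uniform Sobolev inequality via Nash--Moser iteration, and the derivative bounds from interior parabolic regularity after rescaling to unit scale by the homotheties $\Phi_\lambda$. For the first part I would establish a global $L^2$-Sobolev inequality for $(\CC,\omega)$. It suffices to prove it for the cone metric $\omega_\CC$, since the hypothesis $\frac1C\omega_\CC\le\omega\le C\omega_\CC$ makes the volume forms $\dvol_\omega$, $\dvol_{\omega_\CC}$ and the gradient norms $|\nabla f|_\omega$, $|\nabla f|_{\omega_\CC}$ comparable up to constants depending only on $C$ and $m$, so a Sobolev inequality for $\omega_\CC$ transfers to $\omega$ with a controlled constant. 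Now $(\CC,\omega_\CC)$ is a complete metric cone with $\Ric(\omega_\CC)=0$; in particular it has nonnegative Ricci curvature and Euclidean volume growth $\vol_{\omega_\CC}(B_\rho(p))\ge c\,\rho^{2m}$ (the ball about the apex has volume $\tfrac{\vol(L)}{2m}\rho^{2m}$). By the theory of Varopoulos and Saloff-Coste for manifolds with $\Ric\ge 0$ and maximal volume growth, this yields a global Sobolev inequality $\Norm{f}{L^{2n/(n-2)}}\le C_S\Norm{\nabla f}{L^2}$ with $n=2m$ (for $m\ge 2$; the case $m=1$ is $\C$ and is classical).

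Next, the standard equivalence between this Sobolev (equivalently Nash) inequality and on-diagonal heat-kernel upper bounds --- via Moser iteration, or the semigroup arguments of Carlen--Kusuoka--Stroock and Davies --- gives $\sup_{x,y\in\CC}H_t(x,y)\le C\,t^{-n/2}=C\,t^{-m}$ (with $n/2=m$ the half real dimension, $\dim_\R\CC=2m$), uniformly in $t>0$. The uniformity in $x$ is inherited from the uniform Sobolev constant, which is itself invariant under the scaling $\Phi_\lambda$. Using symmetry and the semigroup identity of Theorem \ref{HeatKernelProperties}, Cauchy--Schwarz gives $H_t(x,y)\le(H_t(x,x)H_t(y,y))^{1/2}\le\sup_z H_t(z,z)$, so the diagonal bound already controls $\Norm{H_t(x,\cdot)}{L^\infty(\CC)}$, which is the first assertion.

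For the derivative bounds I would localize and rescale using Lemma \ref{CurvatureBoundLemma}. Fix $x\in\CC$ and a point $p$ at distance $r=\dist_{\omega_\CC}(o,p)$, and pull back by $\Phi_r$, setting $\tilde\omega\coloneqq r^{-2}\Phi_r^*\omega$. Then $\Phi_r^*\omega_\CC=r^2\omega_\CC$, so $\tilde\omega$ still satisfies $\frac1C\omega_\CC\le\tilde\omega\le C\omega_\CC$, while $\tilde p\coloneqq\Phi_r^{-1}(p)$ lies at distance $1$ from the apex; by Lemma \ref{CurvatureBoundLemma} the rescaled curvature and all its covariant derivatives are bounded on the unit ball $B_1(\tilde p)$, with constants independent of $r$ and of $\omega$, so $(\tilde\omega,B_1(\tilde p))$ has uniformly bounded geometry. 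Since $(s,y)\mapsto H_s(x,y)$ solves $(\partial_s-\Delta_\omega)H=0$ in $(s,y)$ by Theorem \ref{HeatKernelProperties}, the parabolically rescaled function $\tilde H(\tilde s,\tilde y)\coloneqq H_{r^2\tilde s}(x,\Phi_r\tilde y)$ solves the heat equation for $\tilde\omega$. Interior parabolic (Schauder/Bernstein-type) estimates on a unit parabolic cylinder with bounded geometry bound $\tilde\nabla^k\tilde H$ at the centre by the supremum of $|\tilde H|$ over the cylinder. Converting back via the homothety scaling $|\tilde\nabla^k\tilde H|_{\tilde\omega}(\tilde p)=r^k|\nabla_\omega^k H_t(x,\cdot)|_\omega(p)$ produces the factor $r^{-k}$, and estimating the cylinder supremum by the on-diagonal bound $\sup_s\sup H_s\le C t^{-m}$ yields the claimed $|\nabla_\omega^k H_t(x,\cdot)|_\omega\le C_k t^{-m}r^{-k}$.

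The main obstacle is twofold. First, one must set up the Sobolev inequality robustly: the cone case reduces to the standard but nontrivial statement that $\Ric\ge 0$ together with maximal volume growth yields a global Sobolev inequality, and one must check that uniform metric equivalence preserves it with a constant independent of $\omega$. Second, applying the interior parabolic estimates uniformly relies on the bounded-geometry input of Lemma \ref{CurvatureBoundLemma} at every scale $r$, and one must track the interplay between the geometric scale $r$ (at which the curvature is controlled) and the parabolic scale $\sqrt t$ when choosing the cylinder, so that the backward-in-time window remains in $s>0$ while its supremum is still controlled by $C t^{-m}$.
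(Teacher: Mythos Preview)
Your strategy coincides with the paper's: a Nash/Sobolev inequality gives the on-diagonal bound, and interior parabolic regularity at scale $r$ (using Lemma \ref{CurvatureBoundLemma}) gives the derivative estimate. The paper even proves the Sobolev and Nash inequalities on the cone in its appendix (Propositions \ref{SobolevInequality} and \ref{NashInequality}) and then runs the Nash energy-decay argument $\partial_t\|H_t(x,\cdot)\|_{L^2}^2\le -C\|H_t(x,\cdot)\|_{L^2}^{2(1+1/m)}$ directly, followed by the semigroup identity and Cauchy--Schwarz, rather than invoking a black-box equivalence; for the derivatives it uses a Bernstein-type maximum-principle estimate (Proposition \ref{GradientEstimate}) with an explicit space-time cutoff and auxiliary function $\beta=r^2$, which is the hands-on version of your ``interior parabolic Schauder after rescaling''.

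There is one genuine slip in your write-up: $(\CC,\omega_\CC)$ is \emph{not} complete --- the apex is at finite distance and is not a smooth point (unless $\CC=\C^m$). The Varopoulos/Saloff-Coste/Davies statements you cite, and the usual Moser iteration, are formulated on complete manifolds, so you cannot simply invoke them. The paper handles this by working on truncated cones $\CC_\alpha=(\alpha,\alpha^{-1})\times L$ with Dirichlet boundary, proving the $L^\infty$ bound there with constants depending only on the Nash constant, and then letting $\alpha\to 0$ using $H_t^\alpha\to H_t$ in $C^\infty_{\loc}$. Its appendix also proves the Sobolev inequality on the cone directly (by a dyadic-annulus gluing of Neumann-Sobolev and Poincar\'e inequalities), bypassing any completeness assumption. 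Your obstacle about matching the geometric scale $r$ with the parabolic scale $\sqrt t$ is real; the paper's cutoff-and-rescale argument addresses it but is stated rather tersely.
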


\begin{proof}
	First assume that $\dim_{\C} \CC \geq 3$. For the $L^\infty$-bound, fix $\alpha > 0$ and consider the truncated cone $\CC_\alpha \coloneqq (\alpha,\alpha^{-1})\times L$ with associated heat kernel $H^\alpha_t$. By \cite[Section 8.2.3]{grigoryanEstimates1999}, the heat kernel $H^\alpha_t$ extends to the closure $\CCbar_\alpha = [\alpha,\alpha^{-1}]\times L$ satisfying Dirichlet boundary conditions. Fix $x\in \CC_\alpha$ and set $E_t = \int_{\CC_\alpha} H^\alpha_t(x,y)^2 \dvol_\omega(y)$ for $t>0$ and $x\in \CC_\alpha$. Consider:
	\begin{equation*}
		\del_t E_t = \int_{\CC_\alpha} 2H^\alpha_t(x,y)\del_t H^\alpha_t(x,y) \dvol_{\omega}(y) = -2\int_{\CC_\alpha} |\nabla_y H^\alpha_t(x,y)|^2 \dvol_{\omega}(y)\leq -CE_t^{1+\frac{1}{m}},
	\end{equation*}
	by the Nash inequality (see Proposition \ref{NashInequality}). Recall that $m$ denotes the complex dimension of $\CC_\alpha$. ODE comparison implies that $E_t \leq Ct^{-\frac{1}{m}}$. By the semigroup property of $H^\alpha_t$:
	\begin{equation*}
		H_{t}^\alpha(x,y) = \int_{\CC_\alpha} H^\alpha_{\frac{t}{2}}(x,z) H^\alpha_{\frac{t}{2}}(z,y) \dvol_\omega(z) \leq \Norm{H^\alpha_{\frac{t}{2}}(x,\cdot)}{L^2_\omega(\CC)}\Norm{H^\alpha_{\frac{t}{2}}(\cdot,y)}{L^2_\omega(\CC)} \leq Ct^{-\frac{1}{m}}.
	\end{equation*}
	The constant $C$ only depends on the Nash inequality and is independent of $x$ and $y$. Thus, ${H_t}^\alpha \in L^\infty_{\omega}(\CC_\alpha \times \CC_\alpha)$ by the uniform bound on $\omega$. As $H^\alpha_t \to H_t$ in $C_\loc^{\infty}(\R_+ \times \CC)$ \cite[Exercise 7.40]{grigoryanHeat2009} and as the Nash constant is uniformly bounded for all $\alpha > 0$ and $t^{-\frac{1}{m}}$, we obtain $\Norm{H_t}{ L^\infty(\CC \times \CC)} \leq Ct^{\frac{-1}{m}}$ for all $t>0$. \\
	
	To show that $|\nabla^k_\omega H_t(x,\cdot)|_\omega \leq C t^{-\frac{1}{m}} r^{-k}$, fix $t > 0$ and $x \in \CC$. We combine Lemma \ref{CurvatureBoundLemma}, Proposition \ref{GradientEstimate}, and the $L^\infty$-estimate above as follows: identify $\CC \cong \R_+ \times L$, and let $\chi$ be a spacetime bump function on $\R \times \CC$ with the following properties: $\chi$ has support in $[\frac{1}{2},\frac{3}{2}]\times([\frac{1}{2},\frac{3}{2}]\times L)$, $\chi \equiv 1$ on $[\frac{3}{4},\frac{5}{4}]\times ([\frac{3}{4},\frac{5}{4}]\times L)$, and satisfies
	\begin{equation*}
		\chi^{-\frac{1}{2}} |\nabla \chi| + (\Delta \chi)^- + |\del_t \chi| \leq C
	\end{equation*}
	for some constant $C > 0$. Furthermore, define $\beta\colon [\frac{1}{2},\frac{3}{2}]\times L \to \R$ by $\beta(r,x) = r^2$ with $(r,x)\in \R_+\times L \cong \CC$. For $R = 1$, Proposition \ref{GradientEstimate} shows that $\Norm{\nabla^k_{\omega_\CC} H_t(x,\cdot)}{L_{\omega_\CC}^\infty([\frac{3}{4},\frac{5}{4}])} \leq C_k$. Pulling back by $\Phi_R$, the same argument shows that $|\nabla_{\omega_\CC}^k H_t(x,\cdot)| \leq C_k R^{-k}$ on $[\frac{3R}{4},\frac{5R}{4}]\times L$.
\end{proof}

It will be helpful to obtain local upper and lower Gaussian bounds on $H_t$. These were proved by Saloff-Coste in \cite[Theorem 6.1]{saloff-costeUniformly1992} for complete manifolds. The following proposition states and proves a Gaussian upper bound on $\CC$ by modifying his arguments, which are inspired by Moser's \cite{moserharnack1964, moserpointwise1971} proof of the parabolic Harnack inequality on $\R^n$. Corollary \ref{GaussianBoundsDerivative} extends the result to $\CCbar$.

\begin{prop}\label{HeatKernelBounds}
	For all $t > 0$ and $y, y' \in \CC$, there is the following upper bound for the heat kernel of $(\CC,\omega_\CC)$:
	\begin{equation}\label{GaussianBounds}
		H_{t}(y,y') \leq \frac{C}{t^m} \exp\left(- C_1 \frac{\dist_{\omega_\CC}^2(y,y')}{t}\right).
	\end{equation}
	The constants $C,C_1$ only depend on the uniform constant in \eqref{LiouvilleConditions}.
\end{prop}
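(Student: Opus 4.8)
The plan is to combine the on-diagonal bound already established in Proposition~\ref{HeatKernelExistence} with an off-diagonal weighted $L^2$-estimate in the spirit of Davies and Grigor'yan, which is the analytic heart of Saloff-Coste's argument. Proposition~\ref{HeatKernelExistence} gives $H_t(x,x)\leq Ct^{-m}$ with $C$ controlled by the Nash constant and hence, through Proposition~\ref{NashInequality}, only by the uniform bound \eqref{LiouvilleConditions}; this already supplies the prefactor $t^{-m}$. The remaining task is to produce the Gaussian factor $\exp(-C_1\dist_{\omega_\CC}^2(y,y')/t)$. Since $\omega\sim\omega_\CC$ uniformly, the associated distances are comparable, $\frac{1}{\sqrt{C}}\dist_{\omega_\CC}\leq\dist_\omega\leq\sqrt{C}\,\dist_{\omega_\CC}$, so it suffices to prove the bound with $\dist_\omega$ in the exponent and then absorb the resulting factor into $C_1$.

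For the off-diagonal decay I would fix $x\in\CC$, write $u(t,y)=H_t(x,y)$, and for a bounded Lipschitz function $\psi$ with $|\nabla_\omega\psi|_\omega\leq 1$ and a parameter $\sigma\in\R$ consider the twisted energy
\begin{equation*}
	I_\psi(t)=\int_{\CC}u(t,y)^2\,e^{2\sigma\psi(y)}\,\dvol_\omega(y).
\end{equation*}
Differentiating in $t$, using $\del_t u=\Delta_\omega u$, integrating by parts, and estimating the cross term by $4\sigma|u|\,|\nabla_\omega u|\leq 2|\nabla_\omega u|^2+2\sigma^2u^2$ yields the differential inequality $I_\psi'(t)\leq 2\sigma^2 I_\psi(t)$. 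Feeding this into the semigroup identity $H_t(y,y')=\int_{\CC}H_{t/2}(y,z)H_{t/2}(z,y')\,\dvol_\omega(z)$, taking $\psi$ to be (a smoothing of) $z\mapsto\dist_\omega(z,y')$, and optimizing over $\sigma$ in the standard Davies manner produces
\begin{equation*}
	H_t(y,y')\leq\frac{C}{t^m}\exp\!\left(-C_1\frac{\dist_\omega^2(y,y')}{t}\right),
\end{equation*}
which is the claim after converting $\dist_\omega$ to $\dist_{\omega_\CC}$.

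I expect the main obstacle to be that $\CC$ is incomplete: the apex $o$ is missing, so Saloff-Coste's theorem for complete manifolds does not apply verbatim and the integration by parts above could a priori contribute a boundary term near $o$. I would handle this exactly as in Proposition~\ref{HeatKernelExistence}, by first running the estimate on the truncated cones $\CC_\alpha=(\alpha,\alpha^{-1})\times L$ with Dirichlet boundary conditions, where the Dirichlet heat kernel $H^\alpha_t$ vanishes on $\del\CC_\alpha$ and every integration by parts is boundary-term free. The decisive point is uniformity in $\alpha$: the Nash constant is uniform for all $\alpha>0$ and depends only on \eqref{LiouvilleConditions}, while the conditions $|\nabla_\omega\psi|_\omega\leq 1$ and the distance comparison are scale-invariant because $\omega_\CC$ is a cone metric and $\omega\sim\omega_\CC$ uniformly. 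One then lets $\alpha\to 0$ using the monotone convergence $H^\alpha_t\to H_t$ in $C^\infty_{\loc}$ from Proposition~\ref{HeatKernelExistence}, so the $\alpha$-uniform Gaussian bound passes to $H_t$.

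Finally, as in Proposition~\ref{HeatKernelExistence}, the low-dimensional cases excluded there are treated separately, using the appropriate form of the Nash/Sobolev inequality; the same truncation-and-limit scheme applies. An equally valid route, closer to the Moser iteration that Saloff-Coste emphasizes, would be to verify volume doubling and a scale-invariant Poincaré inequality for $\omega$ --- both inherited with uniform constants from the cone $\omega_\CC$ via \eqref{LiouvilleConditions} --- and to deduce the parabolic Harnack inequality and thence the Gaussian bound; I prefer the Davies estimate above since it reaches the upper bound directly.
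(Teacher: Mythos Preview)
Your approach is correct and genuinely different from the paper's. You reach the Gaussian upper bound directly via the Davies--Grigor'yan weighted $L^2$ method, combining the on-diagonal bound from Proposition~\ref{HeatKernelExistence} with the twisted energy inequality $I_\psi'(t)\leq 2\sigma^2 I_\psi(t)$ and the semigroup factorization; the incompleteness is handled by the same truncation scheme already used in Proposition~\ref{HeatKernelExistence}, and all constants are uniform in $\alpha$ because the Nash constant is. This is shorter and requires only the Nash/Sobolev inequality on balls.

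The paper instead follows Saloff-Coste's full Moser-iteration route: it verifies the scale-invariant Sobolev and weighted Poincar\'e inequalities on balls whose closure may contain the apex (the latter via a Whitney covering adapted so that the covering balls shrink near $o$), then runs Moser's argument on integrals of the form \eqref{MoserIntegral} with bump functions $\chi_i$ cutting off near $o$, obtains the parabolic Harnack inequality \eqref{HarnackOriginal}, and only then deduces the Gaussian upper bound via \cite[Theorem~6.3]{saloff-costeUniformly1992}. The extra work is not wasted: the Harnack inequality itself, the log-bound \eqref{EqLogBound}, and the H\"older estimate \eqref{EqHolderHeat} are all byproducts of this route and are used essentially later in the paper --- \eqref{EqHolderHeat} is invoked in Proposition~\ref{LiouvilleUniformLaplace} and in Corollary~\ref{GaussianBoundsDerivative} to extend $H_t$ continuously to the apex, and \eqref{HarnackOriginal}--\eqref{EqLogBound} drive the Gaussian \emph{lower} bound in Corollary~\ref{GaussianLowerBounds}. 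Your Davies argument delivers only the upper bound and would leave these tools unproved; the paper's longer path is chosen because the intermediate results are indispensable downstream.
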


\begin{proof}
	The proof in \cite{saloff-costeUniformly1992} starts with a complete Riemannian manifold $(M,h)$. On this space, Saloff-Coste studies the heat equation $(\del_t - \tilde\Delta)u = 0$ for a Riemannian metric $\tilde h$ (with Laplacian $\tilde\Delta$) uniformly equivalent to $h$. No curvature assumption is placed on $\tilde h$, only a lower bound on the Ricci curvature of $h$. In our setup, the manifold $(M,h)$ is the cone $(\CC,\omega_\CC)$, and $\omega$ is the uniformly equivalent metric $\tilde h$.
	
	The proof in \cite{saloff-costeUniformly1992} first shows the existence of Sobolev and weighted Poincaré inequalities \cite[Theorem 3.1 + 3.2]{saloff-costeUniformly1992} on balls $B_{r}(p)$. The proof of the weighted Poincaré inequality begins with the regular Poincaré inequality on smaller balls $B_s(q)\subset B_r(p)$. The Poincaré constant is uniformly bounded by \cite[p. 214]{busernote1982}. By choosing a Whitney covering of small balls \cite[Lemma 5.6]{jerisonPoincare1986} and applying the Poincaré inequality on each covering ball, the weighted Poincaré inequality is obtained. For further details, see \cite[Appendix]{saloff-costeOperateurs1991}. The suitable Sobolev inequality is proven in \cite[Theorem 10.3]{saloff-costeUniformly1992}.\\
	
	All of these arguments go through in our setting if $o \notin \overline{B_r(p)}$. We claim that they still hold if $o\in \overline{B_r(p)}$. For this, we first need the Sobolev inequality (see Proposition \ref{SobolevInequality} for the proof), which is as follows:
	\begin{equation*}
		\left(\int_{B_r(p)} |f|^{2q} \dvol_{\omega_\CC}\right)^{1/2q} \leq C \int_{B_r(p)} |\nabla_{\omega_\CC} f|^2 \dvol_{\omega_\CC},
	\end{equation*}
	for $f\in C_0^\infty(B_r(p))$ and such that the finite ball $B_r(p)$ may have closure in $\CCbar$ containing the apex $o\in \CCbar$, and $q = m/(m-1)$ (recall that $m$ is the complex dimension). This is a stronger version than \cite[Theorem 3.1]{saloff-costeUniformly1992}, as the integral on the right-hand side only contains $|\nabla f|^2$.\\
	
	For the weighted Poincaré inequality \cite[Theorem 3.2]{saloff-costeUniformly1992}, we know that the regular Poincaré inequality has a uniformly bounded constant on small balls around the link $\{r=1\}$. As the Poincaré inequality is scale-invariant, we may scale the distance and radius of the balls uniformly to get arbitrarily close to the apex with associated smaller radii. As the Whitney covering in \cite{saloff-costeOperateurs1991} constructs balls whose radii are proportional to the distance to the boundary, and considering $o\in \CCbar$ to be part of the boundary for this purpose, this exactly gives the necessary uniform bound on the Poincaré constants of the Whitney covering. The proof, therefore, goes through as in \cite{saloff-costeOperateurs1991}.\\
	
	The Harnack inequality \cite[Theorem 5.3]{saloff-costeUniformly1992} for positive solutions of the heat equation is proved using the methods in Moser \cite{moserpointwise1971}. For a test function $\phi\in C^\infty_0(B_r(p))$, Moser considers integrals of the form:
	\begin{equation}\label{MoserIntegral}
		\int_{B_{r}(p)} \gen{\nabla_{\omega_\CC} \phi, \nabla_{\omega_\CC} u^p}_{\omega_\CC} \dvol_{\omega_\CC},
	\end{equation}
	where $p\geq 1$ and $u$ is a weak solution of the heat equation. For balls such that $o \notin \overline{B_r(p)} \subset \CCbar$, nothing needs to be changed. If the closure of $B_r(p)$ in $\CCbar$ contains the apex, we replace $\phi$, which may not be in $C^\infty_0(B_r(p))$, with $\chi_i \phi$, where $(\chi_i)$ is a family of smooth bump functions on $\CC$ with supports away from $B_{\frac{1}{i}}(o)$ and such that $\chi_i \to 1$ pointwise on $B_r(p)$ as $i\to\infty$. By basic scaling, we can fix $\chi_i$ such that $|\nabla \chi_i| \leq \frac{C}{i}$ on $B_{\frac{2}{i}}(o)$ for some $C>0$ and $\nabla \chi_i = 0$ on $\CC\setminus B_{\frac{2}{i}}(o)$. Then:
	\begin{align*}
		&\int_{B_{r}(p)} \gen{\nabla_{\omega_\CC} (\chi_i\phi), \nabla_{\omega_\CC} u^p}_{\omega_\CC} \dvol_{\omega_\CC}\\
		=& \int_{B_{r}(p)} pu^{p-1}\chi_i\gen{\nabla_{\omega_\CC} \phi, \nabla_{\omega_\CC} u} \dvol_{\omega_\CC} + \int_{B_{r}(p)} pu^{p-1}\phi \gen{\nabla_{\omega_\CC} \chi_i, \nabla_{\omega_\CC} u} \dvol_{\omega_\CC}.
	\end{align*}
	Assume that $u \in L^\infty_{\omega_\CC}(B_r(p))$ and that both $u$ and its weak gradient $\nabla_{\omega_\CC}u$ have finite $L^2(B_r(p))$-norm. Use Cauchy-Schwarz, dominated convergence, and the bound above to show that the second integral on the right-hand side vanishes in the limit $i\to \infty$. Therefore, all arguments in Moser can be carried out for bump functions with support away from the apex. Afterwards, we take the limit $i\to \infty$ to include the apex.\\
	
	Let $Q = [0,\tau]\times B_R(p)$ for some $p\in\CCbar$, and define $Q_- = (\tau_1^-, \tau_2^-) \times B_{R'}(p'), Q_+ = (\tau^+, \tau) \times B_{R'}(p')$ for $R'<R$ and $0<\tau_1^- < \tau_2^- < \tau^+ < \tau$. The obtained Harnack inequality (see \cite[Theorem 1]{moserharnack1964} and \cite[Theorem 5.3]{saloff-costeUniformly1992}) now reads:
	\begin{equation}\label{HarnackOriginal}
		\sup_{Q_-} u \leq C \inf_{Q_+} u,
	\end{equation}
	for any nonnegative solution $u$ of the heat equation on $Q$, where $C= C(R,R',\tau_1^- , \tau_2^- , \tau^+ , \tau)$ and the constant of the uniform bound in \eqref{LiouvilleConditions}. If $u$ is positive, a chaining argument (see \cite[Proof of Theorem 2]{moserharnack1964}) shows that:
	\begin{equation}\label{EqLogBound}
		\log\left(\frac{u(t',y')}{u(t,y)}  \right) \leq C \left(\frac{\dist_{\omega_\CC}(y,y')^2}{t-t'} + 1\right),
	\end{equation}
	where $C$ only depends on the constant of the uniform bound in \eqref{LiouvilleConditions}. Furthermore, for any positive solution $u$ (see \cite[Corollary 5.3]{saloff-costeUniformly1992}):
	\begin{equation}\label{EqHolderHeat}
		|u(t',y')-u(t,y)|\leq C \left(\frac{\max\{\dist_{\omega_\CC}(y,y'),\sqrt{|t-t'|}\}}{r}\right)^\gamma\Norm{u}{L^\infty_{\omega_\CC}([s- r^2,s]\times B_r(p))},
	\end{equation}
	for $t,t'\in [s-\delta r^2,s]$, $y,y'\in B_{\delta r}(p)$, any $s\in \R_+$, and where $C,\gamma$ only depend on the constant in \eqref{LiouvilleConditions} and $0<\delta<1$.\\
	
	To finally obtain the Gaussian upper bounds in \cite[Theorem 6.1 and 6.3]{saloff-costeUniformly1992}, the proof in \cite[Theorem 6.3]{saloff-costeUniformly1992} only necessitates that the heat kernel generates the heat flow. Saloff-Coste combines this with simple results in complex analysis and the Harnack inequality to show the upper bound. Our bounds simplify compared to the work of Saloff-Coste as $\Ric(\omega_\CC) = 0$.
\end{proof}

\begin{corollary}\label{GaussianBoundsDerivative}
	The heat kernel $H_t(x,y)$ on $\CC$ has a unique continuous extension to $\CCbar$ satisfying the Gaussian upper bounds:
	\begin{equation}\label{GaussianBoundsDerivativeEq}
		|\nabla_{\omega,y}^k H_t(y,y')| \leq \frac{C_k}{t^m} \dist_{\omega_\CC}(o,y)^{-k}\exp\left(- C_1 \frac{\dist_{\omega_\CC}^2(y,y')}{t}\right),
	\end{equation}
	for all $y'\in \CCbar$ and $y\in \CC$. Furthermore, $\Norm{H_t(x,\cdot)}{L_\omega^1(\CC)} = 1$ for all $x\in \CCbar$.
\end{corollary}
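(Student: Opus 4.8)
The plan is to prove the three assertions in turn: the derivative Gaussian bounds, the continuous extension across the apex, and the mass conservation $\Norm{H_t(x,\cdot)}{L^1_\omega} = 1$. The derivative bounds are obtained by feeding the sharp $L^\infty$-Gaussian bound of Proposition \ref{HeatKernelBounds} into the same localized gradient-estimate machinery already used in Proposition \ref{HeatKernelExistence}. Fix $y' \in \CC$ and $t > 0$; by symmetry of the kernel (Theorem \ref{HeatKernelProperties}) the function $u(s,y) := H_s(y,y')$ solves $(\partial_s - \Delta_\omega)u = 0$ in $(s,y)$. Fix $y_0$ with $R := \dist_{\omega_\CC}(o,y_0)$ and pull back by $\Phi_R$ so that the base point lands on the link $\{r=1\}$. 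On the rescaled picture Lemma \ref{CurvatureBoundLemma} provides uniformly bounded geometry, so Proposition \ref{GradientEstimate} applied on a unit parabolic cylinder bounds $\sup|\nabla^k u|$ by the supremum of $u$ over a slightly larger cylinder. Scaling back by $\Phi_R$ reintroduces the factor $R^{-k} = \dist_{\omega_\CC}(o,y_0)^{-k}$ on spatial derivatives, while the supremum of $u$ is estimated by Proposition \ref{HeatKernelBounds}. The only point requiring care is that the Gaussian weight is essentially constant over the localization cylinder: on $B_{R/4}(y_0)$ one has $\dist_{\omega_\CC}(y,y') \geq \tfrac12\dist_{\omega_\CC}(y_0,y')$ whenever $\dist_{\omega_\CC}(y_0,y') \geq R/2$, and otherwise the exponential is bounded below, so $\exp(-C_1\dist_{\omega_\CC}^2(y,y')/s)$ over the cylinder is comparable to $\exp(-C_1'\dist_{\omega_\CC}^2(y_0,y')/t)$ after adjusting the constant. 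Since $\omega$ and $\omega_\CC$ are uniformly equivalent with controlled Christoffel differences, $\nabla_\omega^k$- and $\nabla_{\omega_\CC}^k$-derivatives are interchangeable up to constants, which yields \eqref{GaussianBoundsDerivativeEq}.

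For the continuous extension to $\CCbar$, I would use the Hölder estimate \eqref{EqHolderHeat}, which Proposition \ref{HeatKernelBounds} established to hold on cylinders whose closure contains the apex. Applied to the positive solution $u = H_s(\cdot,y')$ it yields a uniform modulus of continuity on balls $B_{\delta r}(o)$ as $r \to 0$, so $H_t(\cdot,y')$ has a limit at $o$ and extends continuously to $\CCbar$; the extension is unique since $\CC$ is dense in $\CCbar$. Symmetry extends continuity in the other variable as well, and the bound \eqref{GaussianBoundsDerivativeEq} for $y' = o$ (and its $k=0$ form for $y = o$) then follows by continuity from its validity on $\CC$.

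The conservation statement is where the derivative bound pays off. Set $M(t) := \int_\CC H_t(x,y)\,\dvol_\omega(y) \leq 1$. Using the smoothness and the Gaussian bounds for $H_t$ and $\nabla_\omega H_t$ to justify differentiation under the integral and integration by parts on the annuli $B_{1/\epsilon}(o)\setminus B_\epsilon(o)$, I would compute
\[
M'(t) = \int_\CC \Delta_\omega H_t\,\dvol_\omega = \lim_{\epsilon \to 0}\Big(\int_{\partial B_{1/\epsilon}(o)} - \int_{\partial B_\epsilon(o)}\Big)\partial_\nu H_t\,dA.
\]
The outer flux vanishes by the Gaussian decay in \eqref{GaussianBounds}, while the inner flux is controlled by the $k=1$ case of \eqref{GaussianBoundsDerivativeEq}: one has $|\nabla_\omega H_t| \lesssim t^{-m}\epsilon^{-1}$ on $\partial B_\epsilon(o)$ and $\area_\omega(\partial B_\epsilon(o)) \sim \epsilon^{2m-1}$, so the inner flux is $O(\epsilon^{2m-2}) \to 0$ as $\epsilon \to 0$ (using the codimension $2m \geq 4$ of the apex). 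Hence $M'(t) \equiv 0$, so $M$ is constant; testing the defining approximate-identity property of $H_t$ against a bump function equal to $1$ near $x$ gives $M(t) \to 1$ as $t \to 0^+$, whence $M \equiv 1$. Continuity of the extended kernel propagates this to $x = o$.

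The main obstacle I anticipate is the bookkeeping in the first step: keeping the parabolic rescaling by $\Phi_R$ consistent between the spatial ($R$) and temporal ($R^2$) scales while verifying that the Gaussian weight survives the localization with only a harmless change of the constant $C_1$. Everything else is either a direct appeal to results already in hand (Hölder continuity up to the apex) or the short flux computation, which is made possible precisely by the derivative bound together with the vanishing capacity of the single apex point.
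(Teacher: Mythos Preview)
Your proposal is correct and follows essentially the same route as the paper: the derivative bound is obtained by rerunning the Bernstein-type argument of Proposition \ref{HeatKernelExistence} with the Gaussian bound from Proposition \ref{HeatKernelBounds} as input, the continuous extension comes from the H\"older estimate \eqref{EqHolderHeat}, and mass conservation is the same flux computation on exhausting annuli (the paper is terser here, but the mechanism---boundary terms vanishing by the $k=1$ bound near the apex and Gaussian decay at infinity---is exactly what you wrote). Your ordering of the three steps differs from the paper's, and your justification of $M(t)\to 1$ via a bump test is phrased slightly differently from the paper's Young-inequality version, but these are cosmetic.
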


\begin{proof}
	By Proposition \ref{HeatKernelExistence}, we know that $\Norm{H_t(x,\cdot)}{L_{\omega_\CC}^\infty(\CC)}\leq C$, with $C$ independent of $x$. Using the Hölder continuity property \eqref{EqHolderHeat}, $H_t(x,y)$ has a unique continuous extension to $\CCbar$, and $H_t(x,\cdot)\to H_t(o,\cdot)$ as $x\to o$ in $C^{0,\gamma}_\loc(\CCbar)$ by \eqref{EqHolderHeat}. By parabolic regularity, $H_t(o,\cdot)$ satisfies the heat equation \eqref{HeatKernelHSolution} and $H_t(x,\cdot)\to H_t(o,\cdot)$ in $C^\infty_\loc(\CC)$. As $H_t(x,\cdot)$ satisfies the Gaussian upper bounds of Theorem \ref{GaussianBounds} uniformly, so does the extension $H_t(o,\cdot)$. Finally, dominated convergence shows that $\Norm{H_t(o,\cdot)}{L^1_\omega(\CC)}\leq 1$. Hence, from now on, we assume that $H_t$ is a function $H_t \colon \R_+ \times \CCbar \times \CCbar \to \R_+$.\\
	
	The estimates on the derivatives follows by the same argument as in Proposition \ref{HeatKernelExistence}.\\
	
	Finally, the equality $\Norm{H_t(x,\cdot)}{L^1_\omega(\CC)} = 1$ for all $x\in \CCbar$ is a direct consequence of the nonnegativity of $H_t$ and the Gaussian bounds in \eqref{GaussianBoundsDerivativeEq}:
	\begin{align*}
		\del_t \Norm{H_t(x,\cdot)}{L^1_\omega(\CC)} &= \del_t \int_\CC H_t(x,y)\dvol_{\omega}(y) = \int_\CC \del_t H_t(x,y)\dvol_{\omega}(y) \\
		&= \int_\CC \Delta_{\omega} H_t(x,y)\dvol_{\omega}(y)\\
		&= \lim_{\alpha\to 0} \int_{\partial([\alpha,\alpha^{-1}]\times L)} \gen{\nabla_\omega H_t(x,y),\nu_\omega}\dvol_\omega(y) = 0,
	\end{align*}
	so $\Norm{H_t(x,\cdot)}{L^1_\omega(\CC)}$ is invariant in time. Finally, the property $(P_t f) \to f$ in $C^\infty_\loc(\CC)$ for all bump functions $f\in C^\infty_0(\CC)$ as $t\to 0^+$ shows that $\Norm{P_tf}{L^\infty_\omega(\CC)} \to \Norm{f}{L^\infty_\omega(\CC)}$. Young's inequality $\Norm{P_tf}{L^\infty_\omega(\CC)} \leq \Norm{H_t(x,\cdot)}{L^1_\omega(\CC)}\Norm{f}{L^\infty_\omega(\CC)}$ hence concludes that $\Norm{H_t(x,\cdot)}{L^1_\omega(\CC)}\to 1$ as $t\to 0^+$. For $x = o$, take a sequence $x_i \to o$ and use dominated convergence and the Gaussian upper bounds of \eqref{GaussianBoundsDerivativeEq} to conclude that the $L_\omega^1$-norm does not escape to infinity, thus $\Norm{H_t(o,\cdot)}{L^1_\omega(\CC)} = 1$.
\end{proof}

\begin{corollary}\label{GaussianLowerBounds}
	The heat kernel $H_t$ satisfies the Gaussian lower bound:
	\begin{equation*}
		H_t(y,y') \geq \frac{C}{t^m} \exp\left(- C_1\frac{\dist_{\omega_\CC}^2(y,y')}{t}\right),
	\end{equation*}
	for all $y,y'\in \CCbar$.
\end{corollary}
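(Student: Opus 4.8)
The plan is to derive the lower bound from three ingredients already established: the mass conservation $\Norm{H_t(x,\cdot)}{L^1_\omega(\CC)}=1$ and the Gaussian \emph{upper} bound \eqref{GaussianBounds} (Corollary \ref{GaussianBoundsDerivative} and Proposition \ref{HeatKernelBounds}), together with the logarithmic Harnack inequality \eqref{EqLogBound}. The first step is an on-diagonal lower bound $H_t(x,x)\geq c\,t^{-m}$. To obtain it I would first show that a definite fraction of the mass of $H_t(x,\cdot)$ sits in $B_{A\sqrt t}(x)$: integrating \eqref{GaussianBounds} over the complement and splitting into dyadic annuli, the tail is controlled by $\int_A^\infty e^{-C_1 s^2}s^{2m-1}\,ds$, which is $\leq\tfrac12$ once $A$ is large enough. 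The only nontrivial input is the uniform volume estimate $\vol_\omega(B_\rho(x))\leq C\rho^{2m}$, which holds on the Ricci-flat cone by Bishop–Gromov comparison (the completion $\CCbar$ being a metric cone with synthetic $\Ric\geq0$, its link being Sasaki–Einstein with diameter $\leq\pi$, cf. Lemma \ref{LemmaGeodesics}) combined with the uniform equivalence \eqref{LiouvilleConditions} relating $\dvol_\omega$, $\dvol_{\omega_\CC}$ and the two distance functions. Since the total mass is $1$, we then have $\int_{B_{A\sqrt t}(x)}H_t(x,y)\,\dvol_\omega\geq\tfrac12$, and Cauchy–Schwarz on this ball gives $\int_{B_{A\sqrt t}(x)}H_t(x,y)^2\,\dvol_\omega\geq\tfrac{1}{4\vol_\omega(B_{A\sqrt t}(x))}\geq c\,t^{-m}$. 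The symmetry and semigroup identities of Theorem \ref{HeatKernelProperties} give $H_{2t}(x,x)=\int_\CC H_t(x,y)^2\,\dvol_\omega(y)$, so $H_t(x,x)\geq c'\,t^{-m}$ after relabelling $2t\mapsto t$.

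The second step propagates this off the diagonal by chaining in time. Fix $y,y'\in\CC$ and apply \eqref{EqLogBound} to the positive solution $u(s,z)=H_s(y,z)$ of the heat equation (strictly positive by the strong minimum principle), comparing the earlier spacetime point $(t/2,y)$ with the later point $(t,y')$:
\[
\log\!\left(\frac{H_{t/2}(y,y)}{H_t(y,y')}\right)\leq C\left(\frac{2\,\dist_{\omega_\CC}^2(y,y')}{t}+1\right).
\]
Exponentiating and inserting the on-diagonal bound $H_{t/2}(y,y)\geq c'\,(t/2)^{-m}$ yields
\[
H_t(y,y')\geq \frac{C}{t^m}\exp\!\left(-C_1\frac{\dist_{\omega_\CC}^2(y,y')}{t}\right),
\]
with $C_1=2C$ and the factor $e^{-C}$ absorbed into $C$; this is exactly the claimed estimate on $\CC\times\CC$. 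To reach all of $\CCbar$ I would pass to the limit through the continuous extension of $H_t$ to $\CCbar$ furnished by Corollary \ref{GaussianBoundsDerivative}, taking interior sequences $y_i\to o$ (resp.\ $y_i'\to o$) and using that the right-hand side is continuous in $(y,y')$.

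The main obstacle is ensuring that the two ingredients hold \emph{uniformly} on balls whose closure meets the apex $o$. The volume bound is the milder of the two, handled by the scaling self-similarity of $\omega_\CC$. The Harnack inequality \eqref{EqLogBound}, however, is the crux: it was already arranged in Proposition \ref{HeatKernelBounds} to hold for balls $B_r(p)$ with $o\in\overline{B_r(p)}$ via the bump-function truncation argument there, so the chaining step applies verbatim near the apex. The remaining care is purely bookkeeping: one must track that every constant depends only on the uniform bound in \eqref{LiouvilleConditions}, and not on $t$ or on the base point, so that the final $C,C_1$ are legitimately uniform.
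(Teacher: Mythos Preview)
Your proposal is correct and follows a well-known alternative to the paper's argument. Both proofs use the Gaussian upper bound plus mass conservation to trap a definite fraction of the heat kernel's mass in a ball of radius $\sim\sqrt{t}$, and both finish by feeding an on-diagonal lower bound into the log-Harnack inequality \eqref{EqLogBound} to reach off-diagonal points. The difference lies in how the on-diagonal bound $H_t(x,x)\geq c\,t^{-m}$ is extracted. You use the semigroup identity $H_{2t}(x,x)=\int H_t(x,\cdot)^2$ together with Cauchy--Schwarz on $B_{A\sqrt t}(x)$ and the volume upper bound; the paper instead parabolically rescales so that $t'\mapsto 1$, applies the Harnack inequality \eqref{HarnackOriginal} a second time to dominate $H_{t'}(y',y')$ from below by $\sup_{B_R(y')}H_{t'/2}(y',\cdot)$, and then bounds the supremum by the average over $B_R$ (which carries at least half the mass). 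Your route is arguably more elementary---it avoids the rescaling trick and the second Harnack application---at the cost of invoking the semigroup identity and the volume growth bound explicitly. The paper's route keeps everything inside the Harnack framework and makes the dependence on the parabolic scaling more transparent. Both are standard in the heat-kernel literature and the constants in either case depend only on the uniform equivalence constant in \eqref{LiouvilleConditions}.
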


\begin{proof}
	By picking a ball $B_R(o)$ large enough, the Harnack inequality \eqref{HarnackOriginal} and  $\Norm{H_t(p,\cdot)}{L_\omega^1(\CC)} = 1$ for all $p\in \CC$ show that $H_t(p,\cdot)>0$ everywhere and for any $p\in \CCbar$. Pick $y,y'\in \CCbar$, $0 < t' = \frac{t}{2}$, and set $u(t,\cdot) = H_t(y',\cdot)$ The log-inequality \eqref{EqLogBound} states that
	\begin{equation}\label{EqLogBoundTransformed}
		u(t,y) \geq C \exp\left(-C_1\frac{\dist_{\omega_\CC}(y,y')^2}{t-t'}\right) u(t',y').
	\end{equation}
	Since \eqref{EqLogBoundTransformed} is invariant under a parabolic rescaling, rescale $(t,x) \mapsto (\tilde t, \tilde x) = (\frac{t}{t'}, \frac{x}{\sqrt{t'}})$  such that $t' \mapsto 1$, and let $\tilde u(\tilde t, \tilde x) = u(t,x)$. Under this rescaling, the Harnack inequality \eqref{HarnackOriginal} further shows that
	\begin{align*}
		\tilde u(2,\tilde y) &\geq  C \exp\left(-2C_1\dist_{\omega_\CC}(\tilde y,\tilde y')^2 \right) \tilde u(1,\tilde y') \\
		&\geq C \exp\left(-2C_1\dist_{\omega_\CC}(\tilde y,\tilde y')^2\right) \sup_{x\in B_R(\tilde y')} \tilde u\left(\frac{1}{2},x\right).
	\end{align*}
	The constant in the Harnack inequality only depends on $R>0$. Pick $R$ such that
	\begin{equation*}
		\int_{B_R(\tilde y')} \tilde u\left(\frac{1}{2},x\right) \, \dvol_{\tilde \omega}(x) \geq \frac{1}{2(t')^m} = \frac{2^{m-1}}{t^{m}}.
	\end{equation*}
	Here, $\tilde \omega = (t')^{-1}\Phi^*_{\sqrt{t'}} \omega$ is uniformly equivalent to $\omega_\CC$. Such an $R$ exists and is independent of $\tilde y'$ by the Gaussian estimates of Corollary \ref{GaussianBoundsDerivative} and $\Norm{H_{\frac{t'}{2}}(p,\cdot)}{L_\omega^1(\CC)} = 1$. The factor $\frac{1}{(t')^m}$ arises from the parabolic rescaling. Hence:
	\begin{align*}
		\tilde u(2,\tilde y) &\geq C \exp\left(-2 C_1 \dist_{\omega_\CC}(\tilde y,\tilde y')^2\right) \sup_{x\in B_R(\tilde y')} \tilde u\left(\frac{1}{2},x \right)\\
		& \geq C \exp\left(-2 C_1 \dist_{\omega_\CC}(\tilde y,\tilde y')^2\right) \frac{1}{\vol_{\tilde \omega}(B_R(\tilde y'))} \int_{B_R(\tilde y')} \tilde u\left(\frac{1}{2}, x\right) \, \dvol_{\tilde \omega}(x)\\
		& \geq \frac{C}{t^m} \exp\left(-2C_1\dist_{\omega_\CC}(\tilde y,\tilde y')^2\right),
	\end{align*}
	where $C$ absorbs the factor $\vol_{\tilde \omega}(B_R(\tilde y'))^{-1}$, which is uniformly bounded. Scale back and the result follows.
\end{proof}

If $\dim_{\C} \CC= 2$, the heat kernel extends smoothly over the apex.

\begin{corollary}\label{CorollaryUniformW51}
	$H_t$ extends smoothly over the apex when $\dim_{\C} \CC = 2$.
\end{corollary}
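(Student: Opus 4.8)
The plan is to exploit a fact peculiar to complex dimension two: the apex is secretly a smooth point, across which both the metric $\omega$ and the heat kernel extend. I would first identify the cone. For $\dim_\C\CC = 2$ the link $(L,h)$ is a $3$-dimensional Sasaki-Einstein manifold, hence Einstein with $\Ric_h = 2h$. In real dimension three an Einstein metric has constant sectional curvature, so here $L$ has curvature $1$; since $\CC$, and thus $L$, is simply connected, $L$ is the round unit sphere $S^3$, for which $\diam_{\omega_\CC}(L) = \pi$. By the rigidity theorem of Cheng already invoked in the proof of Lemma \ref{LemmaGeodesics}, this forces $(\CCbar, \omega_\CC)\cong(\C^2,\omega_\eucl)$; in particular $o$ is a genuine smooth point of $\C^2$ and $\omega_\CC$ is flat.

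The crux is the second stage: a removable-singularity argument upgrading $\omega$ to a smooth Kähler metric across $o$. By Corollary \ref{LemmaMetricRicciFlat} we may assume $\omega^2 = \omega_\eucl^2$ with $\frac{1}{C}\omega_\eucl \leq \omega \leq C\omega_\eucl$ on a punctured ball $B\setminus\{0\}\subset\C^2$. I would first extend $\omega$ trivially across $0$: the extension is a closed positive $(1,1)$-current on $B$, the only possible defect in closedness being a boundary term over spheres $\del B_\epsilon(0)$ whose $3$-dimensional volume is $O(\epsilon^3)$ and hence vanishes as $\epsilon\to0$. Because $B$ is a ball, this current has a global psh potential $u$, and the current bound $i\del\delbar u\leq C\omega_\eucl$ gives $\Delta_\eucl u\in L^\infty(B)$, so $u\in C^{1,\alpha}(B)$. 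Since $u$ is bounded and its Bedford-Taylor Monge-Ampère measure charges no pluripolar set, the equation $(i\del\delbar u)^2 = \omega_\eucl^2$ valid on $B\setminus\{0\}$ extends across the point, so that $u$ solves the complex Monge-Ampère equation on all of $B$ with a uniformly nondegenerate, smooth right-hand side. The interior $C^{0,\alpha}$-estimate of Chen-Wang \cite[Theorem 1.1]{chenalpha2015}, followed by the bootstrap of Proposition \ref{PropRegCMA}, then yields $u\in C^\infty(B)$. Hence $\omega = i\del\delbar u$ extends to a smooth Kähler metric near $o$, and $(\CCbar,\omega)$ is a smooth, complete Riemannian $4$-manifold.

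Finally I would transfer smoothness to the heat kernel. As $(\CCbar,\omega)$ is now a smooth complete Riemannian manifold, Theorem \ref{HeatKernelProperties} provides a unique smooth heat kernel $\tilde H_t$ on $\R_+\times\CCbar\times\CCbar$. The apex is a single point, hence of zero capacity (polar) in real dimension $4$, so deleting it leaves the Dirichlet form and its minimal heat kernel unchanged; thus $\tilde H_t$ restricts on $\CC\times\CC$ to the heat kernel $H_t$ of $(\CC,\omega)$. Matching this against the continuous extension of $H_t$ to $\CCbar$ furnished by Corollary \ref{GaussianBoundsDerivative} and using uniqueness, we get $H_t = \tilde H_t$ on $\CCbar$, so $H_t$ is smooth across $o$.

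The main obstacle is the second stage: one must genuinely remove the point singularity of the Ricci-flat Kähler metric, for which the pluripotential input (closed positive currents on a ball admit psh potentials, and bounded Monge-Ampère measures ignore points) combines with interior Monge-Ampère regularity. Once $\omega$ is shown to be smooth at $o$, the heat-kernel statement is essentially formal, relying only on the smoothness and completeness of $(\CCbar,\omega)$ together with the capacity-theoretic fact that a point does not affect the heat flow in dimension $4$.
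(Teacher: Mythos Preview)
Your proposal is correct and arrives at the same conclusion, but the paper's proof of the crucial second stage is considerably shorter. Rather than extending $\omega$ as a current, producing a psh potential, and invoking Bedford--Taylor theory to push the Monge--Amp\`ere equation across the origin, the paper exploits the log of the volume ratio directly: writing $\omega^m = e^F\omega_\eucl^m$, Ricci-flatness of $\omega$ gives $i\del\delbar F = 0$, so $F$ is pluriharmonic and in particular $\Delta_{\omega_\eucl}F = 0$ on $\C^2\setminus\{0\}$. Since $F$ is bounded (by uniform equivalence), the classical removable-singularity theorem for harmonic functions extends $F$ smoothly over $0$; the right-hand side of the Monge--Amp\`ere equation is then smooth on the full ball, and the regularity theory of Proposition~\ref{PropRegCMA} (Chen--Wang plus bootstrapping) yields the smooth extension of $\omega$. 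This bypasses the pluripotential layer entirely. Your approach is more robust---it would survive without knowing Ricci-flatness in advance---but here the shortcut is available and the paper takes it.

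For the final stage the paper simply cites parabolic regularity: once $(\CCbar,\omega)$ is a smooth complete manifold, the continuous extension of $H_t$ furnished by Corollary~\ref{GaussianBoundsDerivative} is a weak solution of the heat equation across $o$, hence smooth. Your capacity argument (a point is polar in real dimension $4$, so removing it does not change the minimal heat kernel) reaches the same conclusion and makes the identification of the two kernels more explicit, but is not needed once one is willing to invoke interior parabolic regularity for the extended solution.
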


\begin{proof}
	If $\dim_{\C} \CC = 2$, then $\CCbar \cong \C^2 / \Gamma$ for a finite group $\Gamma\subset \SU(m)$. As $\CC$ is simply-connected, then $\Gamma = \{ \Id \}$. If $\omega^m = e^F \omega_\CC^m$, then $\Delta_{\omega_\eucl} F = 0$ as $\omega$ is Ricci-flat. Therefore, the Harnack inequality for harmonic functions on $\C^m$, Proposition \ref{PropRegCMA}, and bootstrapping imply that $\omega$ extends smoothly over $0$. Parabolic regularity \cite[Theorem 2.12]{krylovLectures2008} shows that $H_t$ extends to a smooth function on $\C^2$.
\end{proof}

\begin{corollary}\label{CorollaryEstimatesHeatKernelLimit}
	Let $H_{\epsilon_i,t}$ be the heat kernel of $\omega_{\epsilon_i}$, and denote by $H_{\infty,t}$ a $C^\infty_\loc(\CC)$ sublimit of $H_{\epsilon_i,t}$ as $\epsilon_i\to\infty$. Then $H_{\infty,t}$ satisfies:
	\begin{itemize}
		\item Gaussian upper and lower bounds of Corollaries \ref{GaussianBoundsDerivative} and \ref{GaussianLowerBounds}.
		\item Smooth extension over $o$ if $\dim_{\C} \CC = 2$.
		\item $\Norm{H_{\infty,t}(x,\cdot)}{L^1_{\omega_\infty}(\CC)} = 1$.   
	\end{itemize}
\end{corollary}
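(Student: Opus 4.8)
The plan is to transfer each of the three properties from the approximating kernels $H_{\epsilon_i,t}$ to the sublimit $H_{\infty,t}$, using that every $\omega_{\epsilon_i}$ satisfies \eqref{LiouvilleConditions} with the \emph{same} constant $C$ and that $\omega_{\epsilon_i}^m = \omega_\CC^m$, so that $\dvol_{\omega_{\epsilon_i}} = \dvol_{\omega_\CC}$ for every $i$; passing to the $C^\infty_\loc$ limit also gives $\omega_\infty^m = \omega_\CC^m$ and hence $\dvol_{\omega_\infty} = \dvol_{\omega_\CC}$. The structural point is that all constants in Corollaries \ref{GaussianBoundsDerivative} and \ref{GaussianLowerBounds} depend only on the uniform constant in \eqref{LiouvilleConditions}, while the distance $\dist_{\omega_\CC}$ entering the Gaussian exponent is independent of $i$ by scale-invariance of $\omega_\CC$. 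Thus each $H_{\epsilon_i,t}$ obeys those corollaries with $i$-independent constants, and since $H_{\epsilon_i,t}\to H_{\infty,t}$ in $C^\infty_\loc(\CC)$ these are pointwise estimates stable under the limit. For the derivative bounds one additionally uses that the connections and norms of $\omega_{\epsilon_i}$ converge in $C^\infty_\loc$ to those of $\omega_\infty$, so $\nabla^k_{\omega_{\epsilon_i}} H_{\epsilon_i,t}\to \nabla^k_{\omega_\infty} H_{\infty,t}$ and \eqref{GaussianBoundsDerivativeEq} survives. This gives both Gaussian bounds for $H_{\infty,t}$.

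For the $L^1$-identity I would argue by domination. By Corollary \ref{GaussianBoundsDerivative}, $\int_\CC H_{\epsilon_i,t}(x,y)\,\dvol_{\omega_\CC}(y) = 1$ for every $i$. The uniform upper bound furnishes the single dominating function $G(y) = C t^{-m}\exp(-C_1\dist_{\omega_\CC}^2(x,y)/t)$, which is integrable against $\dvol_{\omega_\CC}$ because the cone has polynomial volume growth, so the Gaussian decay dominates. Pointwise convergence $H_{\epsilon_i,t}(x,\cdot)\to H_{\infty,t}(x,\cdot)$ with dominated convergence then yields $\int_\CC H_{\infty,t}(x,y)\,\dvol_{\omega_\CC}(y)=\lim_i \int_\CC H_{\epsilon_i,t}(x,y)\,\dvol_{\omega_\CC}(y)=1$, and since $\dvol_{\omega_\infty}=\dvol_{\omega_\CC}$ this is precisely $\Norm{H_{\infty,t}(x,\cdot)}{L^1_{\omega_\infty}(\CC)}=1$.

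For the smooth extension over $o$ when $\dim_\C \CC=2$, I would recall from Corollary \ref{CorollaryUniformW51} that $\CCbar\cong\C^2$ and each $\omega_{\epsilon_i}$ is Ricci-flat and extends smoothly across $0$. The regularity there (Proposition \ref{PropRegCMA} and bootstrapping) depends only on the uniform bound, so the $\omega_{\epsilon_i}$ are uniformly bounded in $C^k_\loc(\C^2)$ across the apex. Parabolic interior regularity \cite[Theorem 2.12]{krylovLectures2008} with these uniformly bounded coefficients yields uniform smoothness of $H_{\epsilon_i,t}$ in a fixed neighborhood of $o$, which is inherited by the $C^\infty_\loc$ limit $H_{\infty,t}$.

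I expect the $L^1$-identity to be the main obstacle: $C^\infty_\loc$ convergence by itself permits loss of mass either at spatial infinity or into the apex, and the whole content is that the $i$-independent Gaussian upper bound forbids this by providing an integrable majorant. The one genuine quantitative check the argument rests on is that this Gaussian is integrable on $\CC$ against $\omega_\CC$, i.e. that the polynomial volume growth of the cone controls the Gaussian decay; Fatou's lemma alone would only give $\leq 1$, and it is domination that upgrades this to equality.
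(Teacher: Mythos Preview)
Your proposal is correct and follows essentially the same approach as the paper: uniform Gaussian bounds (depending only on the constant in \eqref{LiouvilleConditions}) pass to the $C^\infty_\loc$ limit, and the $L^1$-identity follows by dominated convergence with the Gaussian majorant. The paper's proof is more terse but identical in substance; the only minor difference is that the paper invokes the H\"older estimate \eqref{EqHolderHeat} to extend $H_{\infty,t}$ continuously to the apex (needed for the Gaussian bounds on $\CCbar$), whereas you handle the $\dim_\C\CC=2$ smooth extension via uniform parabolic regularity---both are valid and essentially the same mechanism as in Corollaries \ref{GaussianBoundsDerivative} and \ref{CorollaryUniformW51}.
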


\begin{proof}
	As $H_{\infty,t}$ is the $C^\infty_\loc(\CC)$-limit of $H_{\epsilon_i,t}$, the Gaussian upper and lower bounds in Proposition \ref{GaussianBounds} hold uniformly and so pass along the sequence. The extension to $o$ follows by the Hölder estimate \eqref{EqHolderHeat}. Then $\Norm{H_{\infty,t}}{L^1_{\omega_\infty}(\CC)} = 1$ as $\Norm{H_{\epsilon_i,t}}{L^1_{\omega_{\epsilon_i}}(\CC)} = 1$ by dominated convergence and the Gaussian upper bounds.
\end{proof}

\begin{theorem}\label{TangentConesConeStructure}
	Define $f_{\infty,t}\colon \CCbar\times \CCbar \times \R_+ \to \R$ via
	\begin{equation}\label{Eq:f}
		H_{\infty,t}(x,y) = (4\pi t)^{-m} e^{-f_{\infty,t}(x,y)}.
	\end{equation}
	Then, the asymptotic limit $\omega_\infty$ is a cone metric with link given by the level set $\{f_{\infty,t}(o,\cdot) = c\}$ for any fixed $t>0$ and some constant $c$ depending on $t$.
\end{theorem}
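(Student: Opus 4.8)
The plan is to exploit the scaling structure of the blow-down limit $\omega_\infty$ together with the heat kernel characterization to show that $\omega_\infty$ is scale-invariant, hence a cone metric. First I would recall that $\omega_\infty$ arises as a $C^\infty_\loc$-limit of the rescaled metrics $\omega_{\epsilon_i} = \Phi^*_{\epsilon_i}(\epsilon_i^{-2}\omega)$ with $\epsilon_i \to \infty$, and that the cone metric $\omega_\CC$ is itself invariant under $\Phi_\lambda$. The key observation is that since each $\omega_{\epsilon_i}$ is obtained by precomposing $\omega$ with a further rescaling, the limit $\omega_\infty$ should inherit an exact invariance: for the scaling action $\Phi_\lambda$, one expects $\Phi_\lambda^*(\lambda^{-2}\omega_\infty) = \omega_\infty$ for all $\lambda > 0$. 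This is the diagonal/subsequence argument: given $\lambda$, the sequence $\Phi_\lambda^*(\lambda^{-2}\omega_{\epsilon_i}) = \omega_{\lambda\epsilon_i}$ is a further rescaling, and by uniqueness of the $C^\infty_\loc$ sublimit along a suitable diagonal subsequence one identifies its limit with $\omega_\infty$.

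The heat kernel enters to make this rigorous and to pin down the cone structure, since a priori scale-invariance of the metric alone gives a Riemannian cone only after one knows the radial coordinate. The plan is to use the function $f_{\infty,t}$ defined by \eqref{Eq:f}. By Corollary \ref{CorollaryEstimatesHeatKernelLimit}, $H_{\infty,t}$ satisfies the Gaussian upper and lower bounds and has unit $L^1_{\omega_\infty}$-mass, so $f_{\infty,t}(o,\cdot)$ is comparable to $\dist^2_{\omega_\CC}(o,\cdot)/t$ from both sides; in particular it is a proper exhaustion function whose level sets $\{f_{\infty,t}(o,\cdot) = c\}$ are compact hypersurfaces. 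I would then show that the scaling invariance of $\omega_\infty$ forces the heat kernel based at the apex to transform homogeneously under $\Phi_\lambda$: since $H_{\infty,t}(o,\cdot)$ solves the heat equation for $\omega_\infty$ and $\Phi_\lambda$ acts by a parabolic rescaling $(t,x) \mapsto (\lambda^2 t, \Phi_\lambda x)$ that fixes $o$ and preserves $\omega_\infty$, uniqueness of the heat kernel gives $H_{\infty,\lambda^2 t}(o,\Phi_\lambda x) = \lambda^{-2m} H_{\infty,t}(o,x)$. Translating this into $f_{\infty,t}$ via \eqref{Eq:f} yields $f_{\infty,\lambda^2 t}(o,\Phi_\lambda x) = f_{\infty,t}(o,x)$, i.e. the level sets of $f_{\infty,t}(o,\cdot)$ are carried to level sets by the flow of the scaling field, so the level sets at different values of $c$ (and different $t$) are all mutually diffeomorphic via $\Phi_\lambda$ and foliate $\CC\setminus\{o\}$.

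Having established the homogeneity, I would then reconstruct the cone metric explicitly. Fix $t>0$ and choose a regular value $c$; set $L_\infty := \{f_{\infty,t}(o,\cdot) = c\}$. Using the gradient flow of $f_{\infty,t}(o,\cdot)$ (or equivalently the scaling field, once one checks the two agree up to reparametrization), I would build a diffeomorphism $\CC \cong \R_+ \times L_\infty$ under which $\omega_\infty$ takes the form $dr^2 + r^2 h$ for a metric $h$ on $L_\infty$. The scale-invariance $\Phi_\lambda^*(\lambda^{-2}\omega_\infty) = \omega_\infty$ is exactly what guarantees the metric splits in this self-similar fashion: the radial direction contributes $dr^2$ and the cross-sectional metric scales as $r^2$. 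The main obstacle I anticipate is the passage from the parabolic homogeneity of $H_{\infty,t}$ to a genuine geometric radial coordinate: one must verify that the level sets of $f_{\infty,t}(o,\cdot)$ are smooth compact hypersurfaces (which follows from the two-sided Gaussian bounds plus elliptic/parabolic regularity of $H_{\infty,t}$ away from $o$, as in Corollary \ref{GaussianBoundsDerivative}), that $o$ is the unique critical point structure of the exhaustion, and that the flow lines of the scaling field are complete and sweep out $\CC$ exactly once — in short, controlling the behavior of the limiting heat kernel near the apex $o$ where the a priori estimates degenerate. Establishing that the limit $\omega_\infty$ is \emph{exactly} scaling-invariant (not merely asymptotically so) via the diagonal subsequence extraction is the delicate point, and I would treat it carefully using the uniform $C^\infty_\loc$ convergence guaranteed by the Monge–Ampère a priori estimates in \eqref{LaplacianCMABootstrap}.
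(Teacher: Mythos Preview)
Your approach has a genuine gap at the step where you claim scale-invariance of $\omega_\infty$. The diagonal-subsequence argument you sketch does not work: from $\Phi_\lambda^*(\lambda^{-2}\omega_{\epsilon_i}) = \omega_{\lambda\epsilon_i}$ you only learn that $\omega_{\lambda\epsilon_i}$ subconverges to \emph{some} limit metric, but there is no reason this limit should coincide with the particular sublimit $\omega_\infty$ you started with. Uniqueness of the blow-down limit is precisely what has not been established at this point of the argument, and in comparable settings (Ricci limit spaces, minimal surfaces) tangent cones can a priori depend on the sequence. So the ``key observation'' $\Phi_\lambda^*(\lambda^{-2}\omega_\infty) = \omega_\infty$ is circular: it presupposes a uniqueness you do not have, and no amount of diagonal extraction will supply it.

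The paper circumvents this by introducing Perelman's entropy $\W(t) = \int_\CC (t|\nabla_\omega f_t|^2 + f_t - 2m)\,u\,\dvol_\omega$ for the \emph{original} metric $\omega$ with heat kernel $u(t,\cdot) = H_t(o,\cdot)$. The key identity $\frac{d}{dt}\W(t) = -\int_\CC 2t\,|\nabla_\omega^2 f_t - \tfrac{1}{2t}g|_\omega^2\, u\,\dvol_\omega$ shows $\W$ is monotone nonincreasing, so $\lim_{t\to\infty}\W(t)$ exists unconditionally. Since rescaling by $\epsilon$ corresponds to the time shift $\W_\epsilon(t) = \W(\epsilon^2 t)$, every sublimit $\omega_\infty$ has $\W_\infty(t)$ equal to this common constant, and rigidity in the monotonicity formula forces $\nabla^2_{\omega_\infty} f_{\infty,t}(o,\cdot) = \tfrac{1}{2t}g_\infty$. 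This Hessian equation is what actually produces the cone structure, with scaling field $\nabla_{\omega_\infty} f_{\infty,t}(o,\cdot)$ and link equal to a level set. Note that even if your scale-invariance claim were granted, the reconstruction step would still be incomplete: the relation $\LL_{r\partial_r}\omega_\infty = 2\omega_\infty$ only says that the scaling field of $\omega_\CC$ acts by homotheties on $g_\infty$, which in general yields a metric of the form $a(y)\,dr^2 + r\,b_i(y)\,dr\,dy^i + r^2 h_{ij}(y)\,dy^i dy^j$ rather than $dr^2 + r^2 h$; identifying $r\partial_r$ with a gradient for $\omega_\infty$ is exactly what the Hessian equation provides and what your outline does not.
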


\begin{proof}
	Define $f_t$ for the heat kernel of $\omega$ as in \eqref{Eq:f} and
	\begin{equation*}
		u(t,y) \coloneqq H_t(o,y)
	\end{equation*}
	as the heat kernel of $\omega$ with base point $o$. Following Perelman \cite{perelmanentropy2002}, define the entropy-like quantity:
	\begin{equation}\label{HeatKernelW}
		\W(t) \coloneqq \int_\CC \left(t|\nabla_\omega f_t(o,y)|_\omega^2 +  f_t(o,y) - 2m\right)u(t,y) \dvol_\omega(y). 
	\end{equation} 
	$\W(t)$ exists due to the Gaussian upper and lower bounds of Corollaries \ref{GaussianBoundsDerivative} and \ref{GaussianLowerBounds} for $\dim_{\C} \CC \geq 3$. If $\dim_{\C} = 2$, $H_t$ extends smoothly over $o$ and the integral also exists due to the Gaussian upper bounds at infinity.\\
	
	Using the Bochner formula and $\Norm{u(t,\cdot)}{L^1_\omega(\CC)} = 1$, \cite[proof of Theorem 16.8]{chowRicci2007} shows that
	\begin{align*}
		\frac{d}{dt}\W(t) =& \int_\CC (u(t,\cdot) \Delta_\omega W(t) + 2\gen{\nabla_\omega u(t,\cdot), \nabla_\omega W(t)}_\omega + W \Delta_\omega u(t,\cdot) )\dvol_\omega\\
		 &- \int_\CC 2t\left|\nabla_\omega^2 f_t(o,\cdot) - \frac{1}{2t}g \right|^2_\omega\dvol_\omega,
	\end{align*}
	where $\Ric(g) = 0$, $g$ is the metric tensor of $\omega$, and $W(t) = t(\Delta_\omega f_t(o,y) - |\nabla_\omega f_t(o,y)|_\omega^2) + f_t(o,y) - 2m$. The proof uses nothing but $u(t,\cdot)$ being a positive solution of the heat equation and $\Norm{u(t,\cdot)}{L^1_\omega(\CC)} = 1$. Restricting the integral to $\CC_\alpha = [\alpha,\alpha^{-1}]\times L$ and using integration by parts cancels terms inside the integral, leaving boundary terms. These boundary terms scale as $R^{-3}$ at the apex if $\dim_{\C} \geq 3$, so combining this with the Gaussian bounds \eqref{GaussianBounds} for the boundary at infinity shows that the integral of the boundary terms vanishes as $\alpha \to 0$. For $\dim_{\C} \CC = 2$, the heat kernel extends smoothly over $o$ and the boundary integral therefore also vanishes. Thus:
	\begin{equation}\label{WTimeDerivative}
		\frac{d}{dt}\W(t) = - \int_\CC 2t\left|\nabla_\omega^2 f_t(o,\cdot) - \frac{1}{2t}g \right|^2_\omega\dvol_\omega.
	\end{equation}
	
		Repeat the above procedure for the rescaled metric $\omega_\epsilon$ with associated heat kernel $H_{\epsilon,t}(x,y) = \Phi_{\epsilon_i}^* H_{\epsilon_i^2t}(x,y)$, where $\Phi_{\epsilon_i}(x,y) = (\epsilon_ix,\epsilon_iy)$. Since $x=o$ is fixed, $\W(t)$ is invariant under this rescaling in the following sense: let $\W_{\epsilon}(t)$ be the quantity \eqref{HeatKernelW} for $\omega_\epsilon$, then
	\begin{equation}\label{RescalingD}
		\W_{\epsilon}(t) = \W(\epsilon^{2}t).
	\end{equation}
	The integrals $\W_{\epsilon}(t)$ are uniformly bounded independent of $t>0$ due to the uniform Gaussian bounds on all $H_{\epsilon_i,t}$. Letting $\epsilon_i\to \infty$, the sequence $\omega_{\epsilon_i}\to \omega_\infty$ converges in $C_\loc^\infty(\CC)$, and the heat kernels $H_{\epsilon_i,t} \to H_{\infty,t}$ converge in $C_\loc^\infty(\CC)$. Since the integrand of $\W_{\epsilon}(t)$ is uniformly bounded by the Gaussian bounds, the limit $\W_{\epsilon}(t) \to \W_\infty(t)$ exists in $C^0_\loc((0,\infty))$ by dominated convergence. The monotonicity \eqref{RescalingD} implies that $\W_\infty(t)$ is constant in $t$. Hence $\partial_t \W_\infty(t) = 0$, and \eqref{WTimeDerivative} shows that $\nabla_{\omega_\infty}^2f_{\infty,t}(o,\cdot) = \frac{\omega_\infty}{2t}$, i.e.,
	\begin{equation}\label{GradientFlowEq}
		\LL_{\nabla_{\omega_\infty} f_{\infty,t}(o,\cdot)} \omega_\infty = \frac{\omega_\infty}{4t},
	\end{equation}
	where $\nabla_{\omega_\infty}$ is the connection with respect to $\omega_\infty$. \\
	
	We now show that $\omega_\infty$ is isometric to a conical metric. For the proof on complete manifolds, see \cite[Theorem 1]{tashiroComplete1965}. To show that $\omega_\infty$ is a conical metric, first assume $\nabla_{\omega_\infty} f_{\infty,t}(o,\cdot) \neq 0$ everywhere on $\CC$. Then all level sets are smooth, and the gradient flow of $f_{\infty,t}(o,\cdot)$ defines a local isometry near any level set $\{f_{\infty,t}(o,\cdot) = c\}$. Since $\LL_{\nabla_{\omega_\infty} f_{\infty,t}(o,\cdot)} f_{\infty,t}(o,\cdot) = |\nabla_{\omega_\infty} f_{\infty,t}(o,\cdot)|^2 >0$, $f_{\infty,t}(o,\cdot)$ is strictly increasing along its flow. If the flow stopped, $\LL_{\nabla_{\omega_\infty} f_{\infty,t}(o,\cdot)} f_{\infty,t}(o,\cdot) = 0$, which leads to a contradiction. Thus, $\omega_\infty$ is a conical metric.
	
	If $\nabla_{\omega_\infty} f_t(o,z) = 0$ at some point $z\in \CC$, then \eqref{GradientFlowEq} and the nondegeneracy of $\omega_\infty$ show that $z$ is an isolated critical point. Assume that $B_\epsilon(z)$ has no other critical points. By the Morse lemma, the level sets of $f_{\infty}$ are diffeomorphic to $S^{2m-1}$, and the previous argument shows that $\omega_\infty|_{B_\epsilon(z)}$ is isometric to a cone metric. As the metric extends smoothly over $z$, $\omega_\infty|_{B_\epsilon(z)}$ is flat. As $\omega_\infty$ is Ricci-flat, \cite[Theorem 5.2]{deturckRegularity1981} shows that $|\Rm_\omega|^2$ is analytic, and as the curvature vanishes on $B_\epsilon(z)$, $|\Rm_\omega|^2$ vanishes everywhere. We conclude that $\omega_\infty$ is a cone metric on $\CC$. 
\end{proof}

\begin{corollary}\label{TangentConesConeStructureCorollary}
	By letting $\epsilon_i \to 0$, the same proof as in Theorem \ref{TangentConesConeStructure} shows that $\omega_0$ is isometric to a conical metric.
\end{corollary}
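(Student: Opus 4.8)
The plan is to run the argument of Theorem \ref{TangentConesConeStructure} mutatis mutandis, with $\epsilon_i\to 0$ in place of $\epsilon_i\to\infty$, exploiting that the rescaling \eqref{Rescaling}, the uniform bound \eqref{LiouvilleConditions}, and the normalized equation $\omega^m=\omega_\CC^m$ are all scale-invariant. First I would record that the blow-up metrics $\omega_{\epsilon_i}=\Phi_{\epsilon_i}^*(\epsilon_i^{-2}\omega)$ with $\epsilon_i\to 0$ satisfy exactly the same hypotheses as in the blow-down case, so the a priori estimates for $\omega_{\epsilon_i}^m=\omega_\CC^m$ furnish a $C^\infty_\loc(\CC)$-sublimit $\omega_0$, and the heat kernels $H_{\epsilon_i,t}$ converge in $C^\infty_\loc(\CC)$ to a limit $H_{0,t}$. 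Since the Gaussian upper and lower bounds of Corollaries \ref{GaussianBoundsDerivative} and \ref{GaussianLowerBounds}, the $L^1$-normalization, and (for $\dim_{\C}\CC=2$) the smooth extension over $o$ depend only on the uniform constant in \eqref{LiouvilleConditions}, they are insensitive to the direction of the sequence and pass to $\omega_0$ precisely as in Corollary \ref{CorollaryEstimatesHeatKernelLimit}.

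Next I would define $f_{0,t}$ and $u(t,y)=H_t(o,y)$ as in \eqref{Eq:f}, form the Perelman entropy from \eqref{HeatKernelW}, and invoke the monotonicity identity \eqref{WTimeDerivative}, whose derivation uses only that $u(t,\cdot)$ is a positive solution of the heat equation with $\Norm{u(t,\cdot)}{L^1_\omega(\CC)}=1$ together with the vanishing of the boundary terms near $o$ and at infinity, all of which hold here by the same Gaussian estimates. In particular $\W$ is non-increasing and, by the uniform Gaussian bounds, bounded on $(0,\infty)$, so the one-sided limit $\W(0^+)=\lim_{s\to 0^+}\W(s)$ exists.

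The one genuine point of difference is which end of the monotone function $\W$ is probed. By the scaling relation \eqref{RescalingD}, $\W_{\epsilon_i}(t)=\W(\epsilon_i^2 t)$; as $\epsilon_i\to 0$ the argument $\epsilon_i^2 t\to 0^+$ for every fixed $t>0$, so $\W_{\epsilon_i}(t)\to\W(0^+)$, a value independent of $t$. On the other hand dominated convergence gives $\W_{\epsilon_i}(t)\to\W_0(t)$, the entropy of $\omega_0$, whence $\W_0(t)\equiv\W(0^+)$ is constant in $t$ and $\partial_t\W_0=0$. Feeding this into \eqref{WTimeDerivative} for $\omega_0$ forces $\nabla_{\omega_0}^2 f_{0,t}(o,\cdot)=\frac{\omega_0}{2t}$, i.e. the Hessian equation \eqref{GradientFlowEq} with $\omega_\infty$ replaced by $\omega_0$.

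Finally I would repeat the Tashiro-type argument at the end of the proof of Theorem \ref{TangentConesConeStructure}: the Hessian equation makes the gradient flow of $f_{0,t}(o,\cdot)$ a homothety of the level sets, so away from critical points $\omega_0$ is a cone metric, while an isolated critical point is handled by the Morse lemma together with the analyticity of $|\Rm|^2$ for the Ricci-flat metric $\omega_0$ (via \cite[Theorem 5.2]{deturckRegularity1981}), forcing flatness near that point and hence on the relevant region. This yields that $\omega_0$ is isometric to a conical metric. I expect no substantive obstacle beyond confirming that the boundary-term estimates in the integration by parts remain uniform under the $\epsilon_i\to 0$ rescaling; since those estimates come from the scale-invariant Gaussian bounds, this is routine.
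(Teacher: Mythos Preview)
Your proposal is correct and follows exactly the approach the paper has in mind; indeed the paper offers no separate proof for this corollary beyond the phrase ``the same proof,'' and you have faithfully unpacked what that entails. You have correctly isolated the single point where the argument is not literally identical---that $\W_{\epsilon_i}(t)=\W(\epsilon_i^2 t)$ now probes the limit $\W(0^+)$ rather than $\W(+\infty)$---and handled it via the boundedness and monotonicity of $\W$, which is precisely what is needed.
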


By Theorem \ref{TangentConesConeStructure} and Corollary \ref{TangentConesConeStructureCorollary}, $\omega_\infty$ and $\omega_0$ are both isometric to, a priori different, conical metrics on $\CC$.

\subsection{The Tangent Cones and $\omega_{\mathscr{C}}$ have equal Reeb fields}\label{SectionEqualReebFields}

This section shows that the tangent cones $\omega_0$, $\omega_\infty$, and the original cone metric $\omega_\CC$ have equal Reeb fields (resp. scaling vector fields). All of the arguments in this section are done for $\omega_\infty$, but work equally well for $\omega_0$.

\subsubsection{Commuting Reeb Fields}

Let $V$ be the scaling vector field of $\omega_\infty$, and $r\del_r$ the scaling vector field of $\omega_\CC$. The first step is to show that $[V,r\del_r]=0$. The proof follows arguments outlined in \cite{heinCalabiYau2017}, wherein a classification of holomorphic vector fields commuting with $r\partial_r$ is presented. Employing the fact that $V$ is holomorphic and has linear growth due to the uniform bound \eqref{LiouvilleConditions}, i.e. $\frac{r}{C} \leq |V|_{\omega_\CC} \leq C r$ for some $C>0$, we modify the proofs to accommodate our situation and thereby show that $V$ is homogeneous and is given by this classification. Define
\begin{equation*}
	\psi(\cdot) \coloneqq g_\CC(V,\cdot) = \omega_\CC(J V,\cdot)\in \Omega^1(\CC)
\end{equation*}
to be the dual of $V$ with respect to the cone metric $\omega_\CC$.

\begin{lemma}
	$V$ is holomorphic for the complex structure $J$ of $\CC$, and the one-form $\psi$ dual to $V$ is harmonic with respect to $\omega_\CC$.
\end{lemma}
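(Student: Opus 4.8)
The plan is to prove the two assertions separately: I would extract holomorphicity of $V$ from the cone structure of $\omega_\infty$, and then feed that single fact into a Weitzenböck-type computation carried out entirely with respect to $\omega_\CC$.

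For holomorphicity, recall that by Theorem \ref{TangentConesConeStructure} the limit $\omega_\infty$ is isometric to a metric cone, and it is Kähler for the fixed complex structure $J$ of $\CC$, being a $C^\infty_\loc$-limit of the metrics $\omega_{\epsilon_i}$, each of which is Kähler for $J$. In view of \eqref{GradientFlowEq} the scaling field may be written $V = 2t\,\nabla_{\omega_\infty}f_{\infty,t}(o,\cdot)$, and the relation $\nabla^2_{\omega_\infty}f_{\infty,t}(o,\cdot) = \tfrac{1}{2t}\omega_\infty$ established in the proof of that theorem is precisely the statement that $V$ is concircular, i.e. $\nabla^{\omega_\infty}_Y V = Y$ for every tangent vector $Y$. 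Applying this with $JY$ in place of $Y$ gives $\nabla^{\omega_\infty}_{JY}V = JY = J\nabla^{\omega_\infty}_Y V$, which is the defining identity for a real holomorphic vector field on a Kähler manifold; hence $\LL_V J = 0$. Since holomorphicity is a property of $J$ alone, this conclusion is independent of the metric used to derive it.

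For harmonicity I would work in local holomorphic coordinates with $\omega_\CC = i g_{i\bar j}\,dz^i\wedge d\bar z^j$ and decompose $\psi = \psi^{1,0}+\psi^{0,1}$, where $\psi^{0,1} = g_{i\bar k}V^i\,d\bar z^k = \overline{\psi^{1,0}}$ and $V = V^i\del_i + \overline{V^i}\del_{\bar i}$ with $\del_{\bar l}V^i = 0$ by the previous paragraph. The first step is to observe that $\delbar\psi^{0,1} = 0$: its coefficient $\del_{\bar l}(g_{i\bar k}V^i) = (\del_{\bar l}g_{i\bar k})V^i$ is symmetric in the indices $k,l$ by the Kähler condition $\del_{\bar l}g_{i\bar k} = \del_{\bar k}g_{i\bar l}$, hence is annihilated upon wedging with $d\bar z^l\wedge d\bar z^k$. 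Using the Kähler identity $\Delta_d = 2(\delbar\delbar^* + \delbar^*\delbar)$ for $\omega_\CC$, this reduces the claim to $\Delta_d\psi^{0,1} = 2\delbar\delbar^*\psi^{0,1} = 2\delbar h$, where $h \coloneqq \delbar^*\psi^{0,1} = -\nabla_i V^i$ is the holomorphic divergence of $V$.

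It then remains to show that $h$ is itself holomorphic. Differentiating and using $\del_{\bar l}V^i = \del_{\bar l}\del_i V^i = 0$ together with $\Gamma^i_{ij} = \del_j\log\det g$, one finds $\del_{\bar l}\nabla_i V^i = (\del_{\bar l}\del_i\log\det g)\,V^i = -R_{i\bar l}V^i$, which vanishes since $\Ric(\omega_\CC)=0$. Hence $\delbar h = 0$, so $\Delta_d\psi^{0,1} = 0$, and adding the conjugate equation for $\psi^{1,0}$ gives $\Delta_d\psi = 0$; all of this is pointwise, so the noncompactness of $\CC$ causes no difficulty. The point to get right is conceptual rather than computational: although $V$ is the scaling field of the auxiliary metric $\omega_\infty$, the harmonicity is measured against $\omega_\CC$, and the only datum transported between the two metrics is the $J$-holomorphicity of $V$; the genuine analytic input that makes $\psi$ harmonic is the Ricci-flatness of $\omega_\CC$, which is exactly what forces the holomorphic divergence of a holomorphic field to be holomorphic.
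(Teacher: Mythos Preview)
Your proof is correct and reaches the same conclusion as the paper, but by a genuinely different route.

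For holomorphicity, the paper simply observes that $\omega_\infty$ is K\"ahler for $J$ (as a $C^\infty_\loc$-limit of the K\"ahler metrics $\omega_{\epsilon_i}$) and then invokes the standard fact that the scaling field of any K\"ahler cone is holomorphic. Your concircular argument $\nabla^{\omega_\infty}_Y V = Y$ is a clean explicit verification of this fact and is arguably more self-contained.

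For harmonicity, the approaches diverge more substantially. The paper applies the pair of Bochner--Weitzenb\"ock identities
\[
2\delbar^*\delbar X = \nabla_{\omega_\CC}^*\nabla_{\omega_\CC} X - \Ric_{\omega_\CC}(X), \qquad (dd^*+d^*d)\alpha = \nabla_{\omega_\CC}^*\nabla_{\omega_\CC}\alpha + \Ric_{\omega_\CC}(\alpha),
\]
so that $\delbar V = 0$ and $\Ric_{\omega_\CC}=0$ give $\nabla^*\nabla V = 0$, and dualizing yields $\Delta_d\psi = 0$ in one stroke. You instead work in local holomorphic coordinates: first showing $\delbar\psi^{0,1}=0$ via the K\"ahler symmetry $\del_{\bar l}g_{i\bar k}=\del_{\bar k}g_{i\bar l}$, then reducing to holomorphicity of the divergence $h=-\nabla_iV^i$, which you extract from $\del_{\bar l}\nabla_iV^i = -R_{i\bar l}V^i = 0$. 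Both arguments isolate the same mechanism---Ricci-flatness of $\omega_\CC$ is precisely what passes from $\delbar V=0$ to $\Delta_d\psi=0$---but the paper's Bochner route is coordinate-free and packages this in two lines, while your computation is more elementary and makes the role of the Ricci tensor visible at the level of Christoffel symbols.
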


\begin{proof}
	Since $\omega_{\epsilon_i}$ are all Kähler with respect to $J$ and $\omega_\infty$ is the $C^\infty_{\loc}$--limit of $(\omega_{\epsilon_i})$, then $\omega_\infty$ is Kähler for $J$ and $V$ is holomorphic for the complex structure. Recall the Bochner formulas for vector fields $X\in \X(\CC)$ and one-forms $\alpha\in \Omega^1(\CC)$, cf. \cite[eq. (4.80)]{ballmannLectures2006}:
	\begin{equation*}
		\begin{aligned}
			2\delbar^*\delbar X &= \nabla_{\omega_\CC}^*\nabla_{\omega_\CC} X - \Ric_{\omega_\CC}(X),\\
			(dd^* + d^*d)\alpha &= \nabla_{\omega_\CC}^*\nabla_{\omega_\CC} \alpha + \Ric_{\omega_\CC}(\alpha).
		\end{aligned}
	\end{equation*}
	Since $\delbar V = 0$ and $\Ric_{\omega_\CC} = 0$, we have $\nabla_{\omega_\CC}^*\nabla_{\omega_\CC} V= 0$. Finally, $ \nabla_{\omega_\CC}^*\nabla_{\omega_\CC} \psi =0$ as $\psi$ is the dual of $V$, so the second equation implies that $(dd^* + d^*d)\psi = 0$ and thus $\psi$ is harmonic with respect to $\omega_\CC$.
\end{proof}

To use the classification by Hein-Sun \cite[Theorem 2.14]{heinCalabiYau2017} for $V$, it is necessary to show that the following lemma applies to $\psi$. Lemma \ref{HajoLemmaB.1} was originally formulated by Cheeger-Tian in \cite{cheegercone1994}, but we use the following formulation:

\begin{lemma}[{\cite[Lemma B.1]{heinCalabiYau2017}}] \label{HajoLemmaB.1} Let $C(Y)$ be a Riemannian cone of dimension $n\geq3$ such that $\Ric C(Y) \geq 0$. Let $\alpha$ be a homogeneous 1-form on $C(Y)$ with growth rate in $[0, 1]$. Then $(dd^* + d^*d)\alpha = 0$ holds if and only if, up to linear combination, either $\alpha = d(r^\mu\kappa)$, where $\kappa$ is a $\lambda$-eigenfunction on Y for some $\lambda\in [n - 1, 2n]$ and $\mu$ is chosen so that $r^\mu\phi$ is a harmonic function on $C(Y)$, or $\alpha = r^2\eta$, where $\LL_{r\partial_r}\eta = 0$ and, at $\{r = 1\}$, $\eta^\sharp$ is a Killing field on Y with $\Ric_Y \eta^\sharp = (n -2)\eta^\sharp$, or $\alpha = rdr$.
\end{lemma}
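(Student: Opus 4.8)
The plan is to establish the classification by separation of variables adapted to the warped-product structure $g = dr^2 + r^2 g_Y$ of the cone, where I write $n = \dim C(Y)$ and $\dim Y = n-1$. A preliminary reduction records that $\Ric_{C(Y)} \geq 0$ is equivalent to $\Ric_Y \geq (n-2) g_Y$ on the link; in particular the link has strictly positive Ricci curvature for $n \geq 3$, so by Bochner $b_1(Y) = 0$ and there are no nonzero harmonic $1$-forms on $Y$. Since $\alpha$ is homogeneous of growth rate $\mu \in [0,1]$, I would first split it into radial and tangential parts,
\[
	\alpha = r^{\mu} f\, dr + r^{\mu+1}\beta, \qquad f \in C^\infty(Y), \quad \beta \in \Omega^1(Y),\ \beta(\partial_r) = 0,
\]
the powers of $r$ being forced by $|\alpha|_g \sim r^\mu$ together with $|dr|_g = 1$ and $|\beta|_g = r^{-1}|\beta|_{g_Y}$. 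The first computational task is to express $d\alpha$, $d^*\alpha$, and hence $\Delta \alpha = (dd^* + d^*d)\alpha$, through the link operators $d_Y$, $d_Y^*$, $\Delta_Y$ and the algebraic terms coming from the shape operator of the slices $\{r = \mathrm{const}\}$. The result is an $r$-homogeneous linear system coupling $f$ and $\beta$ via $d_Y f$, $d_Y^* \beta$, $\Delta_Y \beta$ and an indicial polynomial in $\mu$.

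Next I would diagonalize this system using Hodge theory on the closed manifold $Y$. Decompose $\beta = d_Y u + \zeta$ with $d_Y^* \zeta = 0$; since $b_1(Y) = 0$ there is no harmonic summand, and I expand $u$ in $\Delta_Y$-eigenfunctions and $\zeta$ in coclosed eigenforms. Harmonicity $\Delta\alpha = 0$ then decouples into a single indicial equation per mode. On the gradient modes, where $f$ and $u$ are proportional to a $\lambda$-eigenfunction $\kappa$, the system collapses to the scalar relation $\mu(\mu + n - 2) = \lambda$ that makes $r^\mu \kappa$ harmonic, and the reconstructed $1$-form is precisely $\alpha = d(r^\mu\kappa)$. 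Its growth rate equals $\mu - 1$, so the window $[0,1]$ forces $\mu \in [1,2]$, which is exactly $\lambda \in [n-1, 2n]$ — this is the first case. The Lichnerowicz--Obata estimate, fed by $\Ric_Y \geq (n-2)g_Y$, shows the nonzero eigenvalues satisfy $\lambda \geq n-1$, confirming the lower endpoint. The constant-function mode $\lambda = 0$ contributes no gradient $1$-form within the window, but the radial form $r\,dr = \tfrac{1}{2}d(r^2)$ is itself harmonic, being closed with constant codifferential $d^*(r\,dr) = -\Div(r\partial_r) = -n$; this is the exceptional third case.

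It then remains to treat the coclosed modes and to exclude everything else, which is where $\Ric \geq 0$ is genuinely used. For a coclosed eigenform $\zeta$ with $\Delta_Y \zeta = \nu\zeta$, the tangential ansatz $r^{\mu+1}\zeta$ has growth rate $\mu$, and the cone indicial equation relates $\mu$ to $\nu$; requiring $\mu$ in the window $[0,1]$ together with the Bochner lower bound on $\nu$ coming from $\Ric_Y \geq (n-2)g_Y$ leaves only the growth rate $\mu = 1$ and the eigenvalue $\nu = 2(n-2)$. By the Bochner identity $\Delta_Y \eta = \nabla^*\nabla\eta + \Ric_Y(\eta)$, this eigenvalue corresponds exactly to Killing $1$-forms with $\Ric_Y \eta^\sharp = (n-2)\eta^\sharp$, giving $\alpha = r^2\eta$ with $\LL_{r\partial_r}\eta = 0$ — the second case. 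I expect the principal obstacles to be the bookkeeping in the Hodge-Laplacian computation on the cone, namely correctly capturing the cross-terms that couple $f$ and $\beta$ and checking there are no spurious indicial solutions at the endpoints $\mu = 0, 1, 2$, and the spectral exclusion of non-Killing coexact modes within the growth window, where the Ricci lower bound must be invoked rather than merely assumed.
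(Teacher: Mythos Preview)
The paper does not supply its own proof of this lemma: it is quoted verbatim from \cite[Lemma B.1]{heinCalabiYau2017} (attributed originally to Cheeger--Tian \cite{cheegercone1994}), and only the modification in Lemma~\ref{HajoLemmaB.1Modified} receives any argument here. From the description the paper gives of the cited proof --- decomposing $\alpha$ into link-eigenmodes $f(x)r^p\log^q r$ and solving the resulting indicial equations mode by mode --- your separation-of-variables strategy, Hodge decomposition on $Y$, and use of Bochner/Lichnerowicz to restrict the admissible eigenvalues are exactly the approach taken in the source, so your outline is on target.
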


As $V$ is the scaling vector field of $\omega_\infty$, it has linear growth with respect to this metric. Furthermore, the uniform boundedness in \eqref{LiouvilleConditions} implies that $|V|_{\omega_\CC}$ is comparable to $r$, and hence this also holds for the dual $\psi$. Thus, to apply Lemma \ref{HajoLemmaB.1} to $\psi$, it is necessary to prove the following:

\begin{lemma}\label{HajoLemmaB.1Modified}
	The conclusion of Lemma \ref{HajoLemmaB.1} holds if "homogeneous 1-form $\alpha$ with growth rate in $[0,1]$" is replaced by "1-form $\alpha$ with upper and lower bound $\frac{r}{C} \leq |\alpha|_{\omega_\CC} \leq Cr$".
\end{lemma}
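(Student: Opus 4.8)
The plan is to reduce the statement to the homogeneous setting of Lemma \ref{HajoLemmaB.1} by expanding $\alpha$ into homogeneous harmonic pieces and using the two-sided bound $\frac{r}{C}\le|\alpha|_{\omega_\CC}\le Cr$ to control which pieces occur. The first step is to exploit the scaling symmetry. Writing $g=\omega_\CC=dr^2+r^2h$ and recalling that $\Phi_\lambda(r,x)=(\lambda r,x)$ satisfies $\Phi_\lambda^*g=\lambda^2 g$, the map $\Phi_\lambda$ is an isometry $(\CC,\lambda^2 g)\to(\CC,g)$. Since the Hodge Laplacian on $1$-forms obeys $\Delta_{\lambda^2 g}=\lambda^{-2}\Delta_g$, the rescaled forms $\hat\alpha_\lambda:=\lambda^{-2}\Phi_\lambda^*\alpha$ are again $\omega_\CC$-harmonic, and a direct computation gives $|\hat\alpha_\lambda|_{\omega_\CC}(r,x)=\lambda^{-1}|\alpha|_{\omega_\CC}(\lambda r,x)$, so the entire family $\{\hat\alpha_\lambda\}_{\lambda>0}$ satisfies the same bound $\frac{r}{C}\le|\hat\alpha_\lambda|_{\omega_\CC}\le Cr$. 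This makes precise that the hypothesis "$|\alpha|\sim r$" is exactly the homogeneous growth rate $1$ tolerated by the classification.

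Next I would diagonalize $\Delta_g$ by separation of variables. Splitting a $1$-form on the cone as $\alpha=a(r,x)\,dr+\gamma(r,x)$ with $\gamma$ tangential to $L$, and expanding $a$ and $\gamma$ in the discrete spectrum of the link — using the Hodge decomposition of $1$-forms on $L$ into $d_L$-exact, $d_L^*$-coexact, and harmonic parts, the last of which is absent because $L$ is simply connected so $H^1(L)=0$ — reduces $\Delta_g\alpha=0$ to a decoupled system of Euler (equidimensional) ODEs in $r$, one for each link mode. The solutions are powers $r^{\mu}$, with $\mu$ ranging over the indicial roots attached to each link eigenvalue; each summand $r^{\mu}(\cdots)$ is a homogeneous harmonic $1$-form of a definite growth rate, and these growth rates form a discrete set since the spectrum of $L$ is discrete.

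Now the two-sided bound pins down the modes. Writing $\alpha=\sum_k\alpha_k$ with $\alpha_k$ homogeneous of growth rate $g_k$ (i.e.\ $|\alpha_k|_{\omega_\CC}\sim r^{g_k}$), the bound $|\alpha|\le Cr$ as $r\to\infty$ forces $g_k\le 1$ for every surviving mode, while the bound $|\alpha|\le Cr$ as $r\to 0$ forces $g_k\ge 1$ (for $r<1$ the smaller exponent dominates). Hence the Fourier–Mellin content of $\alpha$ is concentrated at the finitely many indicial roots of growth rate exactly $1$, and the complementary part — harmonic, decaying faster than $r$ at infinity and vanishing faster than $r$ at the apex — must vanish by a maximum-principle/energy argument, while the lower bound $|\alpha|\ge r/C$ guarantees the sum is nonempty. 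Thus $\alpha=\sum_k\alpha_k$ is a genuine \emph{finite} sum of homogeneous harmonic $1$-forms of growth rate $1\in[0,1]$, and applying Lemma \ref{HajoLemmaB.1} to each $\alpha_k$ expresses it, up to linear combination, as one of $d(r^\mu\kappa)$, $r^2\eta$, or $r\,dr$; summing yields the conclusion for $\alpha$.

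The main obstacle I anticipate is making the homogeneous decomposition fully rigorous: one must justify the convergence of the Fourier–Mellin expansion for a merely polynomially bounded (rather than $L^2$) harmonic form, correctly handle the coupling between the $dr$-component and the tangential component in $\Delta_g$ so that the modes are genuinely harmonic with well-defined growth rates, and verify that the only indicial roots producing growth rate $1$ are the finitely many ones feeding the three families of Lemma \ref{HajoLemmaB.1}. Since the scaling step already shows the bound is scale-invariant, an alternative route, should the full expansion prove awkward, is to extract homogeneous blow-up and blow-down limits of $\hat\alpha_\lambda$ — uniformly $C^k$-bounded by elliptic estimates — and run a frequency/monotonicity argument in the spirit of the $\W$-functional used earlier to force these limits to be homogeneous of rate $1$ and to rule out intermediate terms; the two-sided bound is precisely what closes off both the faster-than-$r$ behaviour at infinity and the slower-than-$r$ behaviour at the apex.
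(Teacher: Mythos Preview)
Your approach is essentially the same as the paper's: decompose $\alpha$ into homogeneous harmonic modes via separation of variables on the link, then use the two-sided bound $\frac{r}{C}\le|\alpha|_{\omega_\CC}\le Cr$ at both $r\to 0$ and $r\to\infty$ to force every surviving mode to have growth rate exactly $1$, whereupon Lemma~\ref{HajoLemmaB.1} applies termwise. One point the paper makes explicit that you omit: the indicial analysis can produce terms of the form $f(x)r^p\log^q r$ with $q\in\{0,1\}$ when indicial roots are repeated, so your claim that ``the solutions are powers $r^\mu$'' is incomplete; the two-sided bound is still what eliminates the $\log$ terms (since $|r\log r|/r$ is unbounded as $r\to 0$ and $r\to\infty$), but you should say so.
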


\begin{proof}
	The proof \cite[Lemma B.1]{heinCalabiYau2017} proceeds by decomposing the 1-form $\alpha$ into a sum of eigenfunctions $f(x)r^p\log^q r$, where $f$ is a one-form depending on the link (but could contain a $dr$-term), $p\in \N_0$ and $q\in \{0,1\}$. After solving $(dd^* + d^*d)f(x)r^p\log^q r = 0$ for each term separately, the uniform bound $\frac{r}{c} \leq |\alpha|_{\omega_\CC} \leq Cr$ implies that all terms except the linear growth term with no $\log$-factor vanish and the remaining term is homogeneous by construction. The argument is very similar to the proof of Lemma \ref{PropGrowthRateToHomogeneousFunction}. The rest of the proof follows as for Lemma \ref{HajoLemmaB.1}.
\end{proof}

Now, the following theorem severely restricts the form of the scaling vector field $V$ of $\omega_\infty$.

\begin{theorem}\label{DecompositionHoloFields}
	Let $(\CC,\omega_\CC)$ be a Calabi-Yau cone and $\p$ be the span of $r\del_r$ and the gradient fields of all $\xi$-invariant 2-homogeneous harmonic functions (see Lemma \ref{HajoLemmaB.1}) of $\omega_\CC$. Then the space of all holomorphic vector fields commuting with $r\del_r$ is equal to $\p\oplus J\p$. Furthermore, the scaling vector field $V$ of $\omega_\infty$ satisfies $[r\del_r,V] = 0$. Therefore, $V \in \p \oplus J\p$ with all elements in $J\p$ Killing fields of $\omega_\CC$.
\end{theorem}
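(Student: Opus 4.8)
The plan is to split the theorem into its two assertions and treat them in order: first the structural identity, that the holomorphic vector fields commuting with $r\del_r$ are exactly $\p\oplus J\p$, and then the membership of the scaling field $V$ of $\omega_\infty$, together with the commutation $[r\del_r,V]=0$ that feeds into it.

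For the classification I would follow Hein--Sun \cite[Theorem 2.14]{heinCalabiYau2017}. Let $W$ be a real holomorphic vector field with $[r\del_r,W]=0$ and dual $1$-form $\psi_W=g_\CC(W,\cdot)$. The same Bochner argument as in the preceding lemma, using $\Ric_{\omega_\CC}=0$, shows that $\psi_W$ is harmonic, and the Kähler identities split $W$ into a gradient part $\nabla_{\omega_\CC}h$ and a Killing part. Scaling invariance forces linear growth: in cone coordinates a field with $[r\del_r,W]=0$ reads $r\tilde a(\theta)\del_r+b^i(\theta)\del_{\theta_i}$, so $|W|_{\omega_\CC}\sim r$ and $\psi_W$ has growth rate exactly $1$. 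Lemma \ref{HajoLemmaB.1} then retains only the $2$-homogeneous model forms: $r\,dr$, with dual $r\del_r$; $d(r^2\kappa)$ with $\kappa$ a top eigenfunction (eigenvalue $2n$ in the notation of Lemma \ref{HajoLemmaB.1}, i.e. $r^2\kappa$ a $2$-homogeneous harmonic function), necessarily $\xi$-invariant because $W$ also commutes with $\xi=Jr\del_r$; and $r^2\eta$ with $\eta^\sharp$ a Killing field of the link. The duals $r\del_r$ and $\nabla_{\omega_\CC}(r^2\kappa)$ span $\p$, their $J$-images span $J\p$, and the link-Killing duals are identified with $J\p$, yielding the equality $\p\oplus J\p$.

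For the statement about $V$ I would argue directly. The preceding lemma shows the dual $\psi=g_\CC(V,\cdot)$ is harmonic, and since $V$ is the scaling field of $\omega_\infty$ the uniform bound \eqref{LiouvilleConditions} gives $\frac{r}{C}\leq |\psi|_{\omega_\CC}\leq Cr$. Lemma \ref{HajoLemmaB.1Modified} therefore writes $\psi$ as a combination of the $2$-homogeneous model $1$-forms above, each satisfying $\LL_{r\del_r}\alpha=2\alpha$, so $\LL_{r\del_r}\psi=2\psi$. Differentiating $\psi=g_\CC(V,\cdot)$ along $r\del_r$ and using $\LL_{r\del_r}g_\CC=2g_\CC$ gives $g_\CC([r\del_r,V],\cdot)=\LL_{r\del_r}\psi-2\psi=0$, hence $[r\del_r,V]=0$. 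Since $V$ is holomorphic and now commutes with $r\del_r$, the classification places $V\in\p\oplus J\p$. Finally, the $J\p$ summand is spanned by $\xi=Jr\del_r$, which is Killing on the Sasaki--Einstein cone, and by the fields $J\nabla_{\omega_\CC}(r^2\kappa)$, which are Hamiltonian Killing fields because the Hessians of the holomorphy-generating potentials $r^2\kappa$ are $J$-invariant; equivalently, these are the duals of the model forms $r^2\eta$ of Lemma \ref{HajoLemmaB.1}, which extend link Killing fields to cone Killing fields.

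The hard part will be the classification step: producing the splitting of a scaling-invariant holomorphic field into a harmonic-gradient part and a Killing part, and then checking that the three model forms of Lemma \ref{HajoLemmaB.1} match $\p\oplus J\p$ under the two constraints imposed by commuting with $r\del_r$, namely the linear growth rate (which pins $\mu=2$) and the $\xi$-invariance of the eigenfunctions. Once this structural input is in hand, the assertion about $V$ reduces to the short homogeneity computation above, with Lemma \ref{HajoLemmaB.1Modified} supplying the analytic content.
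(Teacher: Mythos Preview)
Your proposal is correct and follows essentially the same approach as the paper: both cite \cite[Theorem 2.14]{heinCalabiYau2017} for the classification $\p\oplus J\p$, and both deduce $[r\del_r,V]=0$ from the homogeneity of $\psi$ supplied by Lemma \ref{HajoLemmaB.1Modified}. The only cosmetic difference is that you compute $\LL_{r\del_r}\psi=2\psi$ on the dual $1$-form and undualize, whereas the paper stays on the vector field side, noting that homogeneity gives $\LL_{r\del_r}V=\mu V$ and then reads off $\mu=0$ from the tensor scaling rule $|T|=\OO(r^{\mu+p-q})$ for a $(1,0)$-tensor of linear growth.
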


\begin{proof}
	\cite[Theorem 2.14]{heinCalabiYau2017} shows that the space of all holomorphic vector fields commuting with $r\del_r$ is equal to $\p\oplus J\p$. Next, as $V$ is homogeneous in $r$ by Lemma \ref{HajoLemmaB.1Modified}, then
	\begin{equation*}
		[r\partial_r, V] = \LL_{r\partial_r} V = \mu V.
	\end{equation*}
	For any $(p,q)$-tensor $T$, $\LL_{r\partial_r} T = \mu T$ implies that $|T| = \OO(r^{\mu + p-q})$. $V$ is a $(1,0)$-tensor with linear growth, i.e. $\mu + 1 = 1$, thus $\mu=0$ and $[r\partial_r, V]=0$, and we conclude that $V\in \p \oplus J\p$.
\end{proof}

\subsubsection{Reeb fields are Killing fields}

The next step is to prove that $\xi$, the Reeb field of $\omega_\CC$, acts by isometries on the tangent cone $\omega_\infty$, and vice versa for the Reeb field $JV$ of $\omega_\infty$ on $\omega_\CC$. Since $JV$ is holomorphic and commutes with $r\del_r$, it is enough to show that $JV$ has no $\p$-component in the decomposition $\p\oplus J\p$ from Theorem \ref{DecompositionHoloFields} (recall that $J\p$ consists of Killing fields for $\omega_\CC$). This is accomplished by embedding the cone into $\C^N$ and applying the Jordan-Chevalley decomposition to $JV$.

Ornea-Verbitsky first proved the following lemma in \cite[Theorem 1.2]{orneaEmbeddings2007}. However, we use the formulation of van Coevering \cite[Theorem 3.1]{vancoeveringExamples2011} and slightly strengthen it to include an embedding of the holomorphic automorphism $\Aut_\Scl(\CC)$, which commutes with scaling.

\begin{lemma}\label{ConeEmbedding}
	Let $(\CC,\omega_\CC)$ be a Calabi-Yau cone with a torus $\T^k$ acting by holomorphic isometries. Assume furthermore that the Reeb vector field $\xi \in \Lie(\T^k)$ lie in the Lie algebra of $\T^k$ and all elements in $\T^k$ commute with $\Aut_{\Scl}(\CC)$. Then, for some $N\in\N$, there exists a holomorphic embedding $\Phi \colon \CC \hookrightarrow \C^N$ equivariant with respect to some homomorphism $\varphi\colon \Aut_\Scl(\CC) \hookrightarrow \GL(N,\C)$. Furthermore, there is a choice of hermitian inner product on $\C^N$ such that $\Isom(\omega_\CC)\cap \Aut_\Scl(\CC) \hookrightarrow \U(N)$ and $\T^k\hookrightarrow \U(1)^N$.
\end{lemma}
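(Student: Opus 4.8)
The plan is to take the non-equivariant embedding supplied by van Coevering \cite[Theorem 3.1]{vancoeveringExamples2011} (itself based on Ornea-Verbitsky \cite{orneaEmbeddings2007}) and upgrade it so that the coordinate functions span an $\Aut_\Scl(\CC)$-invariant subspace of $\C^N$. Let $R = \bigoplus_\lambda R_\lambda$ be the algebra of holomorphic functions on $\CC$ of polynomial growth, graded by the $\T^k$-weight lattice, so that $f\in R_\lambda$ means $f(t\cdot x) = t^\lambda f(x)$ for $t\in\T^k$. By van Coevering's result, $R$ is finitely generated, each weight space $R_\lambda$ is finite-dimensional (the hypothesis $\xi\in\Lie(\T^k)$ ties the weight $\lambda$ to the $r\del_r$-homogeneity degree of $f$), and a finite set of weight vectors in $R$ realizes a proper holomorphic embedding $\CC\hookrightarrow\C^N$.

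The key observation for equivariance is that $\Aut_\Scl(\CC)$ acts on $R$ by pullback, $g\cdot f = (g^{-1})^*f$, and that this action preserves the weight grading. Indeed, since every $g\in\Aut_\Scl(\CC)$ commutes with $\T^k$ by hypothesis (and with $r\del_r$ by definition), for $f\in R_\lambda$ one computes $(g\cdot f)(t\cdot x) = t^\lambda (g\cdot f)(x)$, so $g\cdot f\in R_\lambda$; thus each finite-dimensional $R_\lambda$ is an $\Aut_\Scl(\CC)$-representation. First I would decompose the original generating set into its homogeneous components and then replace it by the union of the finitely many entire weight spaces $R_{\lambda_1},\dots,R_{\lambda_s}$ in which these components lie. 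These still generate $R$, and their direct sum $\C^N \coloneqq \bigoplus_i R_{\lambda_i}$ is $\Aut_\Scl(\CC)$-invariant. Taking $\Phi$ to be the tuple of a basis of $\C^N$ then yields, by construction, a homomorphism $\varphi\colon\Aut_\Scl(\CC)\to\GL(N,\C)$ with $\Phi(g\cdot x) = \varphi(g)\Phi(x)$, while $\Phi$ remains an embedding since it still contains the original generators.

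Finally, for the unitary structure I would choose a Hermitian inner product on $\C^N$ by averaging an arbitrary one over the compact group $K\coloneqq\Isom(\omega_\CC)\cap\Aut_\Scl(\CC)$, which is compact as a closed subgroup of the isometry group of the compact link and contains $\T^k$. This makes $K\hookrightarrow\U(N)$ immediately. Since $\T^k\subseteq K$, distinct weight spaces are orthogonal for the averaged inner product, so selecting an orthonormal basis inside each $R_{\lambda_i}$ — on which $\T^k$ acts by the single character $t\mapsto t^{\lambda_i}$ — diagonalizes the torus action and gives $\T^k\hookrightarrow\U(1)^N$.

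The main obstacle is the equivariance step: because $\Aut_\Scl(\CC)$ is in general non-compact and non-reductive, one cannot average, nor split off an invariant set of generators, by any representation-theoretic averaging. The resolution is precisely that the grading allows one to enlarge the generating set to entire finite-dimensional weight spaces, whose span is automatically invariant. This is where both hypotheses are essential: $\xi\in\Lie(\T^k)$ forces the weight spaces to be finite-dimensional, and the commutativity of $\T^k$ with $\Aut_\Scl(\CC)$ forces the grading — and hence this collection of invariant generators — to be preserved.
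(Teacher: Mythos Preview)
Your proposal is correct and follows essentially the same strategy as the paper: start from van Coevering's embedding by $\T^k$-weight vectors, enlarge the coordinates to the full (finite-dimensional) weight spaces so that $\Aut_\Scl(\CC)$ acts linearly, and then average a Hermitian inner product over the compact group $\Isom(\omega_\CC)\cap\Aut_\Scl(\CC)$. The only cosmetic differences are that the paper phrases the embedding via the dual space and evaluation map, and makes the finite-dimensionality of each weight space explicit by identifying homogeneous holomorphic functions with eigenfunctions of $\Delta_L$ on the link.
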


\begin{proof}
	The proof in \cite[Theorem 3.1]{vancoeveringExamples2011} provides a weight-space decomposition of functions on $\CC$ as follows: let $\T^k$ be the given torus acting by holomorphic isometries on $(\CC,\omega_\CC)$. Let $f$ be a holomorphic function on some open set $U\subset \CC$ with closure in $\CCbar$ containing $o$ and invariant under the action of $\T^k$. Let $\mathbf{a} = (a_1,\dots,a_k)\in \Z^k$ and $\mathbf{t} = (t_1,\dots,t_k)\in \T^k$. Define
	\begin{equation}\label{WeightSpaceDecomp}
		f_{\mathbf{a}}(z) \coloneqq \frac{1}{(2\pi)^k} \int_{\T^k} \prod_{j=1}^{k} t_j^{-1-a_j} f(t\cdot z) dt.
	\end{equation}
	By a change of variable, one easily sees that
	\begin{equation*}
		f_{\mathbf{a}}(\mathbf{t}\cdot z) = \mathbf{t}^{\mathbf{a}} f_{\mathbf{a}}(z) \coloneqq t_1^{a_1}\cdots t_k^{a_k}f_{\mathbf{a}}(z).
	\end{equation*}
	Then there is the decomposition
	\begin{equation*}
		f(z) = \sum_{(a_1,\dots,a_k)\in \Z^k} f_{(a_1,\dots,a_k)}(z),
	\end{equation*}
	with the right-hand side converging in $C_\loc^\infty(\CC)$. We call each $f_{\mathbf{a}}$ an $\mathbf{a}$-eigenfunction of $\T^k$. The proof of \cite[Theorem 3.1]{vancoeveringExamples2011} shows that for a finite set of functions $f_1,\dots, f_l$ defining an embedding around a small neighborhood of $o\in \CCbar$, we can pick finitely many components $(f_j)_{\mathbf{a}_j}$, $j=1,\dots,d$, and for different $\mathbf{a}_j = (a_{1,j},\dots,a_{k,j})$. All functions in $(f_j)_{\mathbf{a}_j}$ are holomorphic, and as $r\del_r$ is in the complexification $\Lie(\T^k)\otimes \C$, all $(f_j)_{\mathbf{a}_j}$ are homogeneous and hence determine an embedding by
	\begin{equation*}
		\tilde \Phi \colon \CC \hookrightarrow \C^{d}, \quad
		p \mapsto \left((f_1)_{\mathbf{a}_1}(p), \dots, (f_d)_{\mathbf{a}_d}(p) \right).
	\end{equation*}
	Next, we extend the embedding to some larger $\C^N$ for $N \geq d$ to define the homomorphism $\varphi\colon \Aut_\Scl(\CC) \to \GL(N,\C)$ as follows: let $V$ be the span of all functions $f\colon \CC \to \C$ such that $f = f_{\mathbf{a}}$ is an $\mathbf{a}$-eigenfunction of $\T^k$ for a sequence $\mathbf{a}$ coming from one of the embedding coordinates of $\tilde \Phi$. There are only finitely many different embedding functions $f_{\mathbf{a}}$, and so finitely many $\mathbf{a}$ for $\tilde \Phi$. Each function $f_{\mathbf{a}}$ is $d$-homogeneous for some $d$ depending on $\mathbf{a}$, and as $f_{\mathbf{a}}$ is furthermore holomorphic and hence harmonic, then
	\begin{equation*}
		\Delta_{\omega_\CC} f_{\mathbf{a}}= \frac{1}{r^2}(d(d-1) + d(2m-1) + \Delta_L) f_{\mathbf{a}} = 0.
	\end{equation*}
	Thus, $f_{\mathbf{a}}|_L$ is an eigenfunction of $\Delta_L$ with eigenvalue $-d(d-1) - d(2m-1)$. As the space of such eigenfunctions is finite-dimensional for each $d$, $V$ is finite-dimensional.
	
	Next, if $f_{\mathbf{a}} \in V$ is an $\mathbf{a}$-eigenfunction, $\Psi^*f_{\mathbf{a}}$ for $\Psi\in \Aut_{\Scl}(\CC)$ is also an $\mathbf{a}$-eigenfunction as $\Aut_\Scl(\CC)$ commutes with $\T^k$. Thus, $\Aut_\Scl(\CC)$ defines a natural linear action on $V$ by pullback. Define an embedding into the dual space by evaluation:
	\begin{align*}
		\CC &\hookrightarrow V^*\\ p &\mapsto (V \ni f \mapsto f(p)).
	\end{align*}
	The action of $\Aut_\Scl(\CC)$ on $V$ extends to the dual space $V^*$, preserves $\CC\subset V^*$, and is compatible with the action on $\CC$. More specifically, let $\varphi\colon \Aut_\Scl(\CC) \to \GL(V^*)$ denote the induced homomorphism; then, the action on $p\in V^*$ is
	\begin{equation*}
		(\varphi(\Psi)p)(f) = p(\Psi^*f),
	\end{equation*}
	and if $p\in \CC \subset V^*$, then
	\begin{equation*}
		(\varphi(\Psi)p)(f) = p(\Psi^*f) = f(\Psi(p)).
	\end{equation*}
	\sloppy Consider $V^* = \C^N$ for some $N$ to obtain the embedding $\Phi \colon \CC \to \C^N$ and homomorphism $\varphi \colon \Aut_\Scl(\CC) \to \GL(N,\C)$. $\varphi$ is injective as $\Phi$ is injective.
	
	Finally, $\Isom(\omega_\CC)\cap \Aut_\Scl(\CC)$ is a compact subgroup of $\GL(N,\CC)$, so averaging the standard Euclidean metric $g_{\text{Eucl}}$ on $\C^N$ over this group makes it invariant under $\Isom(\omega_\CC)\cap \Aut_\Scl(\CC)$, hence $\Isom(\omega_\CC)\cap \Aut_\Scl(\CC)\to\U(N)$ for this choice of metric. As the coordinates $(f_j)_{\mathbf{a}_j}$ are eigenvectors of the $\T^k$-action, then $\T^k\hookrightarrow \U(1)^N$.
\end{proof}

\begin{prop}\label{ReebFieldKilling}
	The Reeb vector field $JV$ of $\omega_\infty$ is a Killing field for $\omega_\CC$, and the Reeb vector field $\xi$ of $\omega_\CC$ is a Killing field for $\omega_\infty$.
\end{prop}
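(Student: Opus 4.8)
The plan is to reduce the statement to a linear–algebraic fact about a single holomorphic field and then settle that fact by embedding the cone into $\C^N$ as in Lemma \ref{ConeEmbedding}. Since $J\p$ consists precisely of the Killing fields of $\omega_\CC$ inside $\p\oplus J\p$, and since $\p\oplus J\p$ is $J$-invariant (Theorem \ref{DecompositionHoloFields}), I would first write $JV = A + B$ with $A\in\p$ and $B\in J\p$; then $JV$ is Killing for $\omega_\CC$ if and only if $A=0$, i.e. $JV$ has no $\p$-component. The second assertion, that $\xi=Jr\del_r$ is Killing for $\omega_\infty$, follows from the identical argument with the roles of $\omega_\CC$ and $\omega_\infty$ interchanged (note $r\del_r\in\p$, and that $V$ and $\xi$ commute with one another and with $r\del_r$), so I would carry out only the first claim in detail.

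Next I would fix the embedding $\Phi\colon\CC\hookrightarrow\C^N$ and the inner product provided by Lemma \ref{ConeEmbedding} applied to the Reeb torus $\T^k$ of $\omega_\CC$, recording three structural consequences. First, since $\Phi$ is holomorphic, $d\varphi$ intertwines $J$ with multiplication by $i$, so $d\varphi(JX)=i\,d\varphi(X)$. Second, because $\Isom(\omega_\CC)\cap\Aut_\Scl(\CC)\hookrightarrow\U(N)$, the Cartan decomposition $\gl(N,\C)=\uu(N)\oplus i\,\uu(N)$ pulls back so that $d\varphi(J\p)\subset\uu(N)$ (anti-Hermitian) and $d\varphi(\p)\subset i\,\uu(N)$ (Hermitian). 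Third, $D:=d\varphi(r\del_r)$ is a positive Hermitian operator whose eigenvalues are the homogeneity weights, since $r\del_r$ generates scaling and lies in $\Lie(\T^k)\otimes\C$. Writing $M:=d\varphi(JV)=d\varphi(A)+d\varphi(B)=H+S$, the second consequence identifies $H$ as the Hermitian part and $S$ as the anti-Hermitian part of $M$, so the problem becomes: show $H=0$.

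To constrain $M$ I would use that $JV$ is the Reeb field of the cone metric $\omega_\infty$, hence a complete Killing field for $\omega_\infty$ whose flow has relatively compact closure, a torus $\T_\infty\subset\Isom(\omega_\infty)$. Because $[r\del_r,V]=0$ (Theorem \ref{DecompositionHoloFields}), hence also $[JV,r\del_r]=J[V,r\del_r]=0$ as scaling is holomorphic, this flow commutes with scaling and so lies in $\Aut_\Scl(\CC)$; its continuous image $\varphi(\T_\infty)=\overline{\{\exp(tM)\}}$ is a compact, hence bounded, subgroup of $\GL(N,\C)$. By the multiplicative Jordan–Chevalley decomposition in the reductive group $\Aut_\Scl(\CC)$, boundedness of $\exp(tM)$ forces the hyperbolic and unipotent parts of $JV$ to vanish, so $M$ is semisimple with purely imaginary spectrum (equivalently, $JV$ is elliptic). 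Moreover $[M,D]=0$, so $M$, $H$ and $S$ are all block-diagonal with respect to the orthogonal weight decomposition $\C^N=\bigoplus_d E_d$.

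The main obstacle is the final step $H=0$. Ellipticity only shows that $M$ is \emph{conjugate} into $\uu(N)$, and a bounded one-parameter group can have nonzero Hermitian part (e.g. an elliptic generator of a split $\sll(2,\R)$), so boundedness alone does not suffice; equivalently, averaging $\omega_\CC$ over $\T_\infty$ yields a Killing property only for a cone metric comparable to, but a priori distinct from, $\omega_\CC$. The additional input I would exploit is that $H$ arises from $\p$, hence from the gradient field $A$ of an $r\del_r$-homogeneous harmonic potential (Lemma \ref{HajoLemmaB.1}), together with the positivity $\nabla^2_{\omega_\infty}\rho=g_\infty>0$ furnished by Theorem \ref{TangentConesConeStructure}. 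Working blockwise on the weight spaces $E_d$ (legitimate since $[M,D]=0$), I would argue that a nonzero symmetric part $H$ forces the harmonic potential of $A$ to be strictly monotone along the flow of $JV$, contradicting the recurrence of the compact torus $\T_\infty$; this rigidity is exactly what upgrades ``conjugate into $\uu(N)$'' to ``$H=0$ for the given inner product,'' and it is where essentially all the work lies. Once $H=0$ we obtain $A=0$, so $JV\in J\p$ is Killing for $\omega_\CC$, and the symmetric argument (embedding adapted to $\omega_\infty$, using $[\xi,V]=0$) shows that $\xi$ is Killing for $\omega_\infty$.
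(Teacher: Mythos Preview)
Your overall strategy coincides with the paper's: embed $\CC\hookrightarrow\C^N$ via Lemma~\ref{ConeEmbedding} so that $J\p\subset\uu(N)$ and $\p\subset i\,\uu(N)$, then use that the flow of $JV$ has compact closure to run a Jordan--Chevalley argument and conclude $JV\in J\p$. The paper carries this out by writing $JV=D+K$ (semisimple plus nilpotent), showing $K=0$ from polynomial growth and showing that all eigenvalues of $D$ are purely imaginary from the boundedness of orbits on $\CC$; it then asserts ``We conclude that $D\in\uu(N)$'' and hence $JV\in J\p$. The symmetric argument for $\xi$ is handled by interchanging the roles of $(\omega_\CC,\xi)$ and $(\omega_\infty,JV)$, exactly as you propose.

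Where you diverge is that you explicitly isolate the passage from ``$M$ semisimple with purely imaginary spectrum'' to ``$M\in\uu(N)$ for the given inner product'' as the crux, correctly noting that in general this only yields $M$ \emph{conjugate} into $\uu(N)$ (your $\sll(2,\R)$ remark is apt). The paper does not pause here; it simply makes the assertion. Your proposed fix---arguing that the harmonic potential of $A\in\p$ is strictly monotone along the $JV$-flow and then invoking recurrence of $\T_\infty$---is only sketched, and the naive computation $(JV)f=|A|^2_{g_\CC}+g_\CC(A,B)$ does not have a definite sign, so the monotonicity claim is not established. In short, your proof follows the paper's line but leaves the same final step open; you have diagnosed the difficulty more carefully than the paper's write-up, yet your resolution remains incomplete.
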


\begin{proof}
	By Lemma \ref{ConeEmbedding}, there is an embedding $\CC \hookrightarrow \C^N$ such that $J\p\hookrightarrow \uu(N)$ as $J\p$ consists of holomorphic Killing fields whose flows preserve the scaling vector field $r\del_r$ and fix $o$. Because $JV\in \p\oplus J\p$, the pushforward of $JV$ to $\C^N$ lies in $\gl(N,\C)$. Thus, we regard $J\p \subset \uu(N)$, $\p \subset i\uu(N)$, and $JV\in \gl(N,\C)$. By the Jordan-Chevalley decomposition of $\gl(N,\C)$, we find $D$ and $K$ such that $D$ is semisimple, $K$ is nilpotent, $[D,K] = 0$, and $JV = D+K$. As $JV$ has compact orbits, the same is true for $D$ and $K$ separately as $D$ has exponential flow and $K$ has polynomial flow. This means the flows cannot combine to form compact orbits if both flows do not already have compact orbits. As $D$ is diagonalizable, all eigenvalues are strictly imaginary; otherwise, write
	\begin{equation*}
		ADA^{-1} = \diag (a_1,\dots, a_N),
	\end{equation*}
	and assume that $\re(a_N) >0$ ($<0$ is the same proof by considering $t\to-\infty)$. If $\textbf{f} = (f_1,\dots,f_N)^t$ are the coordinate functions of the embedding $\CC \to \C^N$, then
	\begin{equation*}
		A \exp(tD) \textbf{f} = \diag (e^{a_1t} \dots, e^{a_Nt}) A\textbf{f}.
	\end{equation*}
	Therefore, for some $b_1,\dots, b_N$ not all zero and determined by $A$, the last coordinate of $A \exp(tD)\textbf{f}$ is $e^{a_Nt}(b_1 f_1 + \cdots + b_N f_N)$. As $t\to\infty$, then $|e^{a_Nt}(b_1 f_1 + \cdots + b_N f_N)| \to\infty$. As all $f_1,\dots,f_N$ are linearly independent, then some $f_i \to 0$ with $b_i\neq 0$, contradicting $\exp(tD)$ having compact orbits. We conclude that $D\in \uu(N)$. Furthermore, the flow of $K$ is polynomial and cannot only have compact orbits unless $K=0$. It follows that $JV \in \uu(N)$ and hence $JV\in J\p$, i.e. $JV$ is a Killing field for $\omega_\CC$.
	
	Finally, in all of the above arguments, we can interchange the roles of $(\omega_\CC,\xi)$ and $(\omega_\infty,JV)$, i.e. consider $\omega_\infty$ as the reference metric and apply the decomposition in Theorem \ref{DecompositionHoloFields} to $\xi$. This shows the second part of the proposition.
\end{proof}

\subsubsection{Deformations of the Sasakian Structure}

Recall that the Sasakian manifold associated with $(\CC,\omega_\CC)$ is the compact manifold $L$ equipped with a tuple $(g_L,\eta,\xi,\Phi)$ satisfying the following conditions: $g_L$ is the cone metric $g_\CC$ on $\CC$ pulled back to $\{r=1\}\cong L$, and $\xi$ is the associated Reeb field parallel to $\{r=1\}$. $\eta$ is the contact form defined by $\eta(\cdot) = \frac{1}{r^2} g_L(\xi,\cdot)$, and $\Phi$ is an endomorphism of $TL$ given by $\Phi(X) = J(X-\eta(X)\xi)$. Since $\xi$ and $JV$ commute and are Reeb fields associated with some Kähler cone metrics, the combined flow generates a torus $\T$ that acts by holomorphic isometries on $\omega_\CC$ and $\omega_\infty$. It is important to note that the torus $\T$ generated by $\xi$ and $JV$ preserves $(g_L,\eta,\xi,\Phi)$. As the associated $(1,1)$-form $\omega_\CC$ is a symplectic form, the action by $\T$ is symplectic on $(\CC,\omega_\CC)$ and generates a moment map
\begin{equation}\label{MomentMap}
	\mu \colon \CC \to \ttt^*,
\end{equation}
where $\ttt^*$ is the dual of the Lie algebra $\ttt$ of $\T$. By \cite[p. 18]{vancoeveringStability2013} and in terms of $\omega_\CC$ and $p=(r,x)\in \R_+ \times L$, the map is given by $\mu(p)(X) = r^2 (\omega_\CC)_x(\xi,X)$, where $X\in \ttt$ is regarded as the induced vector field on $\CC$ by the action $\T\times \CC\to \CC$. According to \cite[Theorem 1]{demoraesMoment1997}, the image of $\CC$ under $\mu$ is a strongly convex rational polyhedral cone $\CCone^*\subset \ttt^*$. Denote the interior of the dual cone by $\CCone_0\subset \ttt$. It is worth noticing that $\xi \in \CCone_0$ since $\eta(\xi) = 1$, but we do not know if $JV \in \CCone_0$ a priori. The goal of this section is to prove this. \\

In doing so, the transverse Kähler deformations must be defined. As the name suggests, they are deformations along the transverse structure obtained by the quotient of the Reeb field $\xi$ on $L$ and thus leave $\xi$ invariant. They have the property of leaving the cone $\CC_0$ invariant and are given in the following lemma: 

\begin{lemma}[{\cite[Lemma 7]{vancoeveringStability2013}}]\label{TransverseDeformationLemma}
	The space of all Sasakian structures with Reeb field $\xi$ and transverse complex structure $ J$ is an affine space modeled on $(C_b^\infty (L)/\R) \times (C_b^\infty(L)/\R)$ ($C_b^\infty$ denotes basic functions). Given a Sasakian structure $(L,g_L,\xi,\eta,\Phi)$, a new structure $(L,\tilde g_L,\xi,\tilde \eta,\tilde \Phi)$ with the same Reeb vector field $\xi$ is obtained by
	\begin{align*}
		\tilde\eta &= \eta + d^c\phi + d\psi, \\
		\tilde \Phi &= \Phi -\xi\otimes \tilde\eta\circ \Phi,\\
		\tilde g_L &= \frac{1}{2}d\tilde\eta \circ(\Id \otimes \tilde \Phi) + \tilde\eta \otimes \tilde\eta,\\
		\tilde \omega_L &= \omega_L + \frac{1}{2} dd^c \phi,
	\end{align*}
	where $\phi$ and $\psi$ are two basic smooth functions.
\end{lemma}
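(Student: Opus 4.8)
The plan is to reduce everything to the transverse Kähler geometry of the Reeb foliation $\mathcal{F}_\xi$ on $L$, whose leaf space carries a transverse Kähler structure for the complex structure induced by $J$. I would first determine which contact forms share the Reeb field $\xi$, then use the basic $\del_B\delbar_B$-lemma to exhibit the two potentials, and finally reconstruct $\tilde\Phi$ and $\tilde g_L$ from Sasakian compatibility. For the first step, suppose $\tilde\eta$ is a contact form whose Reeb field is again $\xi$ and set $\zeta \coloneqq \tilde\eta - \eta$. Then $\zeta(\xi) = \tilde\eta(\xi) - \eta(\xi) = 0$ and $\iota_\xi d\zeta = \iota_\xi d\tilde\eta - \iota_\xi d\eta = 0$, so Cartan's formula $\LL_\xi\zeta = d\iota_\xi\zeta + \iota_\xi d\zeta = 0$ shows that $\zeta$ is basic. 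Hence the entire freedom in $\tilde\eta$ lies in the space of basic $1$-forms on $L$.

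Next I would impose that the transverse complex structure stays equal to $J$. The transverse Kähler form of the deformed structure is $\frac12 d\tilde\eta = \frac12 d\eta + \frac12 d\zeta$, and $J$-compatibility forces it to be of type $(1,1)$. Writing $\zeta = \zeta^{1,0} + \zeta^{0,1}$ in the transverse bigrading with $\zeta^{0,1} = \overline{\zeta^{1,0}}$, the vanishing of the $(2,0)$- and $(0,2)$-parts of $d\zeta$ is exactly $\del_B\zeta^{1,0} = 0$ and $\delbar_B\zeta^{0,1} = 0$, where $\del_B,\delbar_B$ are the basic Dolbeault operators. Because the transverse structure is Kähler, the basic $\del_B\delbar_B$-lemma applies; moreover, in the present Sasaki--Einstein setting the transverse first Betti number vanishes by a transverse Bochner argument, so there are no basic holomorphic $1$-forms to obstruct exactness. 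The lemma then yields a basic complex function $h$ with $\zeta^{1,0} = \del_B h$; writing $h = \psi - i\phi$ with $\phi,\psi$ real basic functions gives precisely $\zeta = d^c\phi + d\psi$, and correspondingly $d\tilde\eta = d\eta + dd^c\phi$, i.e. $\tilde\omega_L = \omega_L + \frac12 dd^c\phi$. Conversely, for any basic $\phi,\psi$ the forms $d^c\phi$ and $d\psi$ are basic (using $\Phi\xi = 0$ for the former), and a direct check gives $\tilde\eta(\xi) = 1$ and $\iota_\xi d\tilde\eta = 0$, so $\tilde\eta = \eta + d^c\phi + d\psi$ is again a contact form with Reeb field $\xi$ whenever $\phi$ is small enough that $\omega_L + \frac12 dd^c\phi$ stays positive.

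It remains to recover $\tilde\Phi$ and $\tilde g_L$. The horizontal distribution changes from $\ker\eta$ to $\ker\tilde\eta$, so $\Phi$ must be corrected to annihilate $\xi$ and to restrict to the fixed $J$ on the transverse bundle $TL/\langle\xi\rangle$; projecting $\Phi$ along $\xi$ onto $\ker\tilde\eta$ produces $\tilde\Phi = \Phi - \xi\otimes(\tilde\eta\circ\Phi)$. The metric is then forced by the Sasakian compatibility relation $\tilde g_L = \frac12 d\tilde\eta\circ(\Id\otimes\tilde\Phi) + \tilde\eta\otimes\tilde\eta$, which is positive-definite exactly when the transverse form $\omega_L + \frac12 dd^c\phi$ is positive. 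Finally, adding a constant to $\phi$ or $\psi$ alters neither $d^c\phi$ nor $d\psi$, so the assignment $(\phi,\psi)\mapsto\tilde\eta$ descends to an affine parametrization by $(C_b^\infty(L)/\R)\times(C_b^\infty(L)/\R)$, as claimed.

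The main obstacle is the converse direction in the second step: proving that \emph{every} deformation preserving $\xi$ and $J$ is captured by exactly two basic potentials. This rests on the basic $\del_B\delbar_B$-lemma for the Reeb foliation together with the vanishing of basic holomorphic $1$-forms, and on keeping careful track of the transverse Dolbeault decomposition of the real $1$-form $\zeta$. One must also monitor the positivity constraint on $\phi$ throughout, so that $\tilde g_L$ is a genuine Sasakian metric rather than a merely formal deformation; this is what confines the construction to an open neighborhood of $\phi = 0$ inside the affine model.
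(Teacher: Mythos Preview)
The paper does not prove this lemma; it is quoted verbatim from van Coevering \cite[Lemma 7]{vancoeveringStability2013} and used as a black box. So there is no ``paper's own proof'' to compare your proposal against.

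That said, your sketch is a reasonable reconstruction of the standard argument, with one point worth flagging. You correctly reduce to showing that a basic $1$-form $\zeta$ with $d\zeta$ of transverse type $(1,1)$ can be written as $d^c\phi + d\psi$. The cleaner route is: since $\zeta$ is basic, $d\zeta$ is basic-exact, so $[\tfrac12 d\tilde\eta]_B = [\tfrac12 d\eta]_B$ in basic cohomology; then the transverse $\del_B\delbar_B$-lemma (El Kacimi-Alaoui, valid for any transversally K\"ahler foliation) gives a basic $\phi$ with $d\zeta = dd^c\phi$, so $\zeta - d^c\phi$ is a \emph{closed} basic $1$-form. Writing this as $d\psi$ then requires $H^1_B(L) = 0$. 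You invoke the Sasaki--Einstein hypothesis for this vanishing, which is fine for the paper's purposes (the link is Sasaki--Einstein throughout), but note that the lemma as stated makes no such assumption. In van Coevering's original the affine parametrization should be read as describing the deformations one can \emph{produce} from $(\phi,\psi)$, with surjectivity onto all same-$(\xi,J)$ structures holding under the appropriate cohomological vanishing; your proof handles exactly the case relevant to the present paper.
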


\begin{prop}\label{AllReebFieldsinOneCone}
	Let $(\CC,\omega_\CC)$ and $(\CC, \omega_\infty)$ be two Kähler cone metrics with commuting Reeb fields $\xi$ and $JV$. Let $\T$ denote the torus generated by the flows of $\xi$ and $JV$ with Lie algebra $\ttt$, and let $\CCone_0$ be the interior of the dual cone for $(\CC,\omega_\CC)$. Then $JV \in \CCone_0$.
\end{prop}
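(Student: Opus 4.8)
The plan is to realize $\CCone_0$ concretely as the set of vectors whose moment-map component is positive, and then to prove positivity of that component for $X = JV$. Since $\CCone^* = \overline{\mu(\CC)}$ and $\CCone_0$ is the interior of its dual cone, a vector $X \in \ttt$ lies in $\CCone_0$ if and only if $\langle \mu(p), X\rangle > 0$ for every $p \in \CC$; for $X = \xi$ this is exactly the observation $\langle \mu(\cdot),\xi\rangle = \tfrac12 r^2 > 0$ that was used to place $\xi$ in $\CCone_0$. I would therefore set $u \coloneqq \langle \mu(\cdot), JV\rangle$ and reduce the proposition to the single inequality $u > 0$ on $\CC$.

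The first key step is to identify the gradient of $u$. Fixing the sign convention of the moment map \eqref{MomentMap} by the normalization $\langle \mu(\cdot), \xi\rangle = \tfrac12 r^2$ (equivalently $d\langle\mu(\cdot),X\rangle = -\iota_X \omega_\CC$), the defining identity of the moment map gives $du = -\iota_{JV}\omega_\CC = g_\CC(V, \cdot)$, so that $\nabla_{\omega_\CC} u = V$. (The same conclusion follows independently from Proposition \ref{ReebFieldKilling}: since $JV$ is Killing for $\omega_\CC$ we have $JV \in J\p$, hence $V \in \p$, so that $V = \nabla_{\omega_\CC} u$ for a $\xi$-invariant $2$-homogeneous potential $u$.) Two properties of $u$ then drive the argument. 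First, $u$ is homogeneous of degree $2$ with respect to $r\del_r$, because $\xi$ and $JV$ commute with $r\del_r$ and $\LL_{r\del_r}\omega_\CC = 2\omega_\CC$; consequently $|u(p)| \leq C\,\dist_{\omega_\CC}(o,p)^2$ and $u \to 0$ as $p \to o$. Second, $V(u) = g_\CC(V, \nabla_{\omega_\CC} u) = |V|^2_{\omega_\CC} > 0$ everywhere, since $V = r_\infty\del_{r_\infty}$ is the nonvanishing scaling field of the cone $\omega_\infty$; thus $u$ is strictly increasing along the flow of $V$.

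To conclude, I would run the flow of $V$ through an arbitrary point $p \in \CC$. As $V$ is the scaling field of $\omega_\infty$, its flow scales the radial function $r_\infty$ by $e^s$, so as $s \to -\infty$ the point approaches the apex $o$; by the uniform equivalence \eqref{LiouvilleConditions} the radial functions are comparable, $\tfrac{1}{C'} r \leq r_\infty \leq C' r$, and hence $r \to 0$ along the flow as well. Combining $u \to 0$ at this apex end with the strict monotonicity $V(u) > 0$ forces $u(p) > 0$; since $p$ was arbitrary, $u > 0$ on $\CC$, i.e. $\langle \mu(p), JV\rangle > 0$ for all $p$, and therefore $JV \in \CCone_0$. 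The main obstacle is precisely this pointwise positivity of $u$: comparability of the radial functions only shows that $u$ is positive in an averaged sense along each flow line, and it is the interplay of exact degree-$2$ homogeneity (which pins the boundary value to $0$ at the apex) with strict monotonicity along the $\omega_\infty$-scaling flow that upgrades this to a pointwise statement. A secondary but genuine subtlety is fixing the moment-map sign convention via $\langle\mu(\cdot),\xi\rangle = \tfrac12 r^2$ so that $\nabla_{\omega_\CC} u = +V$; the opposite sign would make $u$ decrease along the flow and reverse the inequality.
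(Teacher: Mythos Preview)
Your argument is correct and is a genuinely different route from the paper's. The paper proves the proposition extrinsically: it uses Lemma~\ref{ConeEmbedding} to realize $\CC\hookrightarrow\C^N$ so that both scaling fields become diagonal, $r\del_r=\sum a_iz_i\del_{z_i}$ and $V=\sum b_iz_i\del_{z_i}$ with all $a_i,b_i>0$; it then replaces $\omega_\CC$ by the explicit model $\tilde\omega_0=\frac{i}{2}\del\delbar\sum|z_i|^{2/a_i}$, invokes that transverse K\"ahler deformations preserve $\CCone_0$, and checks $\tilde g_0(r\del_r,V)=\sum a_i^{-1}b_i|z_i|^{2/a_i}>0$ by hand. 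Your proof is intrinsic and dynamical: you stay with $\omega_\CC$, identify the Hamiltonian $u=\langle\mu,JV\rangle=\tfrac12 g_\CC(r\del_r,V)$ via $\nabla_{\omega_\CC}u=V$, and obtain $u>0$ from $V(u)=|V|^2_{\omega_\CC}>0$ together with $u\to0$ along the $V$-flow toward the apex. This avoids the embedding, the model metrics, and the transverse-deformation invariance of the moment cone entirely.

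One remark on scope: you invoke the uniform equivalence \eqref{LiouvilleConditions} to get $r\sim r_\infty$ and hence $r\to0$ along the $V$-flow. That hypothesis is not part of the proposition as stated (only commuting Reeb fields are assumed), though it is available in the paper's application. In fact you can drop it: since $[r\del_r,V]=0$, the function $|V|_{\omega_\CC}/r$ descends to a continuous positive function on the compact link, so $|V|_{\omega_\CC}\asymp r$ automatically; and completeness of the $V$-flow (it is the $\omega_\infty$-scaling) plus $r_\infty\to0$ forces convergence to the common apex $o\in\CCbar$, whence $r\to0$. With this tweak your argument matches the generality of the paper's, and is somewhat more elementary.
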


\begin{proof}
	First, embed $\omega_\CC$ and $\omega_\infty$ into $\C^N$ via the embedding from Lemma \ref{ConeEmbedding} with the torus-action $\T$ coming from $\xi$ and $JV$. On $\C^N$, the scaling vector fields $r\del_r$ and $V$ are given by
	\begin{equation}\label{DiagonalReebFields}
		r\del_r = \sum_{i=1}^N \left(a_i z_i \frac{\del}{\del z_i} + a_i \overline{z_i} \frac{\del}{\del \overline{z_i}}\right), \quad V = \sum_{i=1}^N \left(b_i z_i \frac{\del}{\del z_i} + b_i \overline{z_i} \frac{\del}{\del \overline{z_i}}\right),
	\end{equation}
	for $a_i,b_i\in \R$. As the functions $(f_j)$ are eigenfunctions of the torus-action $\T$, they are also eigenfunctions of the complexified torus $\T_\C$, which includes the action of the scaling vector fields $r\del_r$ and $V$. As the flow of the scaling vector fields transports any point towards $o$ on $\CCbar$ as $t\to-\infty$, it follows that $a_i,b_i>0$ for all $i=1,\dots,N$. We may therefore construct Kähler metrics $\tilde \omega_0$ and $\tilde \omega_1$ on $\C^N$ with the same Reeb vector fields $\xi_0 = \xi$ and $\xi_1 = JV$ and invariant under $\U(1)^N$:
	\begin{align*}
		\tilde \omega_0 &\coloneqq \frac{i}{2}\del\delbar (|z_1|^{2a_1^{-1}} + \cdots+ |z_N|^{2a_N^{-1}}), \quad \xi_0 = \xi,\\
		\tilde \omega_1 &\coloneqq \frac{i}{2}\del\delbar (|z_1|^{2b_1^{-1}} + \cdots+ |z_N|^{2b_N^{-1}}), \quad \xi_1 = JV.
	\end{align*}
	Restrict $\tilde \omega_0$ and $\tilde \omega_1$ to $\CC \subset \C^N$. As $\omega_\CC$ and $\tilde \omega_0$ (resp. $\omega_\infty$ and $\tilde \omega_1$) have the same Reeb field and transverse complex structure, they are transverse Kähler deformations of one another, and so have the same dual cone $\CCone_0$ \cite[p. 18]{vancoeveringStability2013}. By \cite[eq. 45]{vancoeveringStability2013}, to show that $JV \in \CCone_0$, it is enough to show that
	\begin{equation*}
		\tilde \omega_0(J(r\del_r),V) = \tilde g_0(r\del_r,V)>0,
	\end{equation*}
	where $\tilde g_0$ is the Riemannian metric associated with $\tilde\omega_0$. But this is shown by direct evaluation:
	\begin{equation*}
		\tilde g_0(r\del_r,V) = \sum_{i=1}^N a_i^{-1}b_i |z_i|^{2a_i^{-1}}>0.
	\end{equation*}
	The same argument shows that $\xi \in \CCone_0$.
\end{proof}

\subsection{The Tangent Cones equal $ \omega_{\mathscr{C}}$}\label{SectionEqualTangentCones}

The previous section proved that the dual moment cone $\CCone_0$ of $ \omega_\CC$ contains $\xi$ and $JV$, both of which are associated with Sasaki-Einstein metrics. Therefore, the results in \cite{martelliSasaki2008} will show that $\omega_\CC$ and $\omega_\infty$ are equal up to pullback by an element in $\Aut_\Scl(\CC)$. To prove this, we first set up the necessary notation.

\begin{lemma}\label{SameCalabiYauForm}
	There exists a holomorphic $(m,0)$-form $\Omega_\CC$ such that the Reeb fields $\xi$ and $JV$ of $\omega_\CC$ and $\omega_\infty$, respectively, satisfy $\LL_\xi \Omega_\CC = \LL_{JV} \Omega_\CC = i m \Omega_\CC$.
\end{lemma}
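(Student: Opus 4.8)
The plan is to construct $\Omega_\CC$ explicitly from the Calabi-Yau structure of $(\CC,\omega_\CC)$ and then verify the two Lie-derivative identities separately, exploiting the fact that both $\xi$ and $JV$ lie in the Lie algebra $\ttt$ of the torus $\T$ acting by holomorphic isometries. Since $(\CC,\omega_\CC)$ is Calabi-Yau, it is Ricci-flat and admits a nowhere-vanishing parallel holomorphic $(m,0)$-form $\Omega_\CC$, unique up to a constant multiple. The homogeneity $\LL_\xi\Omega_\CC = im\,\Omega_\CC$ for the \emph{reference} Reeb field is standard Sasaki-Einstein theory: the scaling field $r\del_r$ satisfies $\LL_{r\del_r}\Omega_\CC = m\,\Omega_\CC$ (as $\Omega_\CC$ is an $(m,0)$-form that is $m$-homogeneous under the cone scaling), and applying $J$ to pass from $r\del_r$ to $\xi = J(r\del_r)$ converts the real eigenvalue $m$ into the imaginary eigenvalue $im$ on the holomorphic form, using that $\Omega_\CC$ is of type $(m,0)$.

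The substance of the lemma is therefore the identity $\LL_{JV}\Omega_\CC = im\,\Omega_\CC$ for the tangent-cone Reeb field. First I would record that $JV$ is a holomorphic Killing field for $\omega_\CC$ (Proposition \ref{ReebFieldKilling}), hence preserves the complex structure $J$; consequently $\LL_{JV}\Omega_\CC$ is again a holomorphic $(m,0)$-form, and by the one-dimensionality of the space of such forms we must have $\LL_{JV}\Omega_\CC = c\,\Omega_\CC$ for some constant $c\in\C$. It then remains to compute $c$. The key leverage is that $V$ is the scaling field of the Calabi-Yau cone metric $\omega_\infty$, which is itself Ricci-flat with its own parallel holomorphic volume form $\Omega_\infty$ satisfying $\LL_V\Omega_\infty = m\,\Omega_\infty$ and $\LL_{JV}\Omega_\infty = im\,\Omega_\infty$. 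Since both $\Omega_\CC$ and $\Omega_\infty$ are nowhere-vanishing holomorphic $(m,0)$-forms on $\CC$, they differ by a nowhere-vanishing holomorphic function $h$, i.e. $\Omega_\infty = h\,\Omega_\CC$; I would show $h$ is constant by using that both forms are eigenforms under the torus $\T$ (whose Lie algebra contains $\xi$, $JV$, $r\del_r$, and $V$), forcing $h$ to lie in a single weight space, and homogeneity then pins down $c = im$.

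To make the weight-space argument precise, I would work in the embedding $\CC\hookrightarrow\C^N$ of Lemma \ref{ConeEmbedding}, where $\T$ acts diagonally and both $r\del_r$ and $V$ take the diagonal form \eqref{DiagonalReebFields} with positive weights $a_i,b_i$. The ratio $h = \Omega_\infty/\Omega_\CC$ is a $\T_\C$-eigenfunction, so $\LL_{r\del_r}h$ and $\LL_V h$ are each scalar multiples of $h$; combining the homogeneities $\LL_{r\del_r}\Omega_\CC = m\Omega_\CC$ and $\LL_V\Omega_\infty = m\Omega_\infty$ shows that $h$ is $0$-homogeneous under $r\del_r$, hence bounded near both $o$ and $\infty$ by the uniform equivalence \eqref{LiouvilleConditions}, hence constant. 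I expect the main obstacle to be precisely this rigidity step: carefully justifying that a holomorphic function on $\CC$ that is homogeneous of weight zero (and bounded by the Gaussian/uniform-equivalence estimates) must be constant, rather than merely basic. Once $h$ is constant, the identity $\LL_{JV}\Omega_\CC = \LL_{JV}(h^{-1}\Omega_\infty) = h^{-1}\,\LL_{JV}\Omega_\infty = im\,\Omega_\CC$ follows immediately, completing the proof.
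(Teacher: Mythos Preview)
Your overall strategy---construct $\Omega_\CC$ and $\Omega_\infty$, write $\Omega_\infty = h\,\Omega_\CC$ for a holomorphic $h$, show $h$ is constant, then read off $\LL_{JV}\Omega_\CC = im\,\Omega_\CC$---is exactly the paper's. The difference lies in how you argue that $h$ is constant, and your version has a gap.

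You claim that ``combining the homogeneities $\LL_{r\del_r}\Omega_\CC = m\Omega_\CC$ and $\LL_V\Omega_\infty = m\Omega_\infty$ shows that $h$ is $0$-homogeneous under $r\del_r$.'' This does not follow: the two identities involve \emph{different} scaling fields acting on \emph{different} forms, and there is no way to combine them without already knowing the $r\del_r$-weight of $\Omega_\infty$ (equivalently, of $h$). What you can say is that $h$ is a $\T_\C$-eigenfunction, so $\LL_{r\del_r}h = \lambda h$ for some a priori unknown $\lambda$; pinning down $\lambda = 0$ requires an independent input. Your subsequent appeal to uniform equivalence \eqref{LiouvilleConditions} \emph{can} be made to give $|h|$ bounded above and below, which forces $\Re\lambda = 0$; but you would then still need to argue $\Im\lambda = 0$ (e.g.\ via holomorphicity $\LL_\xi h = i\LL_{r\del_r}h$ and the fact that $\T$-eigenvalues under $\xi\in\ttt$ are purely imaginary) and finally use the maximum modulus principle on the compact link. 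All of this is repairable, but it is not what you wrote.

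The paper short-circuits this entirely by invoking Corollary~\ref{LemmaMetricRicciFlat}: the tangent-cone metric satisfies the exact volume equality $\omega_\infty^m = \omega_\CC^m$, not merely a uniform equivalence. Combined with the calibration identities $\omega_\CC^m = i^m(-1)^{m(m-1)/2}\,\Omega_\CC\wedge\overline{\Omega_\CC}$ and the analogous one for $\omega_\infty$, this gives $|h|^2 \equiv 1$ immediately. A holomorphic map $\CC\to S^1$ is constant (open mapping), and the lemma follows in two lines. You never use the volume equality, which is the key simplification the paper has already set up.
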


\begin{proof}
	As both $\omega_\CC$ and $\omega_\infty$ are Ricci-flat Kähler cone metrics on the Calabi-Yau cone $\CC$, there exist $\Omega_\CC$ and $\Omega_\infty$ such that $\LL_{JV} \Omega_\infty = i m\Omega_\infty$ and $\LL_\xi \Omega_\CC = im \Omega_\CC$ \cite[p. 624]{martelliSasaki2008}. These have the property
	\begin{equation*}
		\omega_\CC^m = i^m (-1)^{m(m-1)/2} \Omega_\CC \wedge \overline\Omega_\CC, \quad \omega_\infty^m = i^m (-1)^{m(m-1)/2} \Omega_\infty \wedge \overline\Omega_\infty.
	\end{equation*}
	Using the fact that $\Omega_\CC$ and $\Omega_\infty$ are both holomorphic sections of the line bundle $\bigwedge^{(m,0)}(\CC)$, there is a holomorphic function $h\colon \CC \to \CC$ such that
	\begin{equation*}
		\Omega_\infty = h \Omega_\CC.
	\end{equation*}
	By the volume condition $\omega_\infty^m = \omega_\CC^m$, we have
	\begin{equation*}
		i^{-m} (-1)^{-m(m-1)/2}\omega_\CC^m = \Omega_\CC\wedge\bar\Omega_\CC = \Omega_\infty \wedge \bar\Omega_\infty = |h|^2 \Omega_\CC\wedge\bar\Omega_\CC.
	\end{equation*}
	Since $h$ is a mapping $\CC \to S^1$ and is holomorphic, $h$ is a constant. The lemma follows.
\end{proof}

\begin{definition}[{\cite[p. 624]{martelliSasaki2008}}]\label{ReebFieldsConditions}
	Given the cone metric $(\CC, \omega_\CC)$ with a torus $\T$ acting by holomorphic isometries, define the set
	\begin{equation*}
		\Sigma_\CC = \{\tilde\xi \in \CCone_0 \, | \, \LL_{\tilde\xi} \Omega_\CC = im\Omega_\CC \}.
	\end{equation*}
\end{definition}

\begin{prop}[{\cite[p. 632]{martelliSasaki2008}}]
	$\Sigma_\CC$ is convex.
\end{prop}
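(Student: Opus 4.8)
The plan is to reduce the convexity of $\Sigma_\CC$ to the elementary fact that the intersection of a convex set with an affine hyperplane is convex. Concretely, I would show that the normalization $\LL_{\tilde\xi}\Omega_\CC = im\Omega_\CC$ cuts out an affine hyperplane inside $\ttt$, so that $\Sigma_\CC = \CCone_0 \cap \{\tilde\xi \in \ttt : \LL_{\tilde\xi}\Omega_\CC = im\Omega_\CC\}$ is the intersection of the open convex cone $\CCone_0$ with this hyperplane. Since $\CCone_0$ is the interior of the dual of a (convex) rational polyhedral cone, it is convex, and the statement follows immediately once the linearity of the defining condition is established.

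The heart of the argument is therefore to produce a linear functional $\ell\colon \ttt \to i\R$ satisfying $\LL_{\tilde\xi}\Omega_\CC = \ell(\tilde\xi)\,\Omega_\CC$ for every $\tilde\xi \in \ttt$; the condition defining $\Sigma_\CC$ then reads $\ell(\tilde\xi) = im$, which is visibly affine-linear. First I would note that for any holomorphic vector field $X$ tangent to the $\T$-action, $\LL_X\Omega_\CC$ is again a holomorphic $(m,0)$-form, hence equals a holomorphic multiple $g_X\,\Omega_\CC$ of the nowhere-vanishing form $\Omega_\CC$. To see that $g_X$ is in fact a constant, I would integrate the two infinitesimal relations of Lemma \ref{SameCalabiYauForm}: from $\LL_\xi\Omega_\CC = im\Omega_\CC$ one obtains $(\exp(t\xi))^*\Omega_\CC = e^{imt}\Omega_\CC$, and likewise for $JV$. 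Because $\T$ is the compact torus generated by the flows of $\xi$ and $JV$, these relations propagate to a character $\chi\colon \T \to \U(1)$ with $t^*\Omega_\CC = \chi(t)\,\Omega_\CC$, the image lying in $\U(1)$ by compactness of $\T$. Differentiating $\chi$ yields the desired functional $\ell = d\chi\colon \ttt \to i\R$, and its linearity in $\tilde\xi$ is automatic from the linearity of the Lie derivative in its vector-field argument.

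With $\ell$ in hand, the set $\{\ell = im\}$ is a genuine affine subspace of the real vector space $\ttt$ (since $im$ lies in $i\R$, the codomain of $\ell$), so $\Sigma_\CC = \CCone_0 \cap \ell^{-1}(im)$ is convex as an intersection of convex sets; it is moreover nonempty, as it contains both $\xi$ and $JV$. The step I expect to be the main obstacle is verifying that $\LL_{\tilde\xi}\Omega_\CC$ is a \emph{constant} multiple of $\Omega_\CC$ for all $\tilde\xi$ — equivalently, that $\Omega_\CC$ is a genuine $\T$-eigenform and not merely rescaled by a nonconstant holomorphic function. This is precisely where Lemma \ref{SameCalabiYauForm} together with the compactness of $\T$ enters, upgrading the a priori holomorphic factor $g_X$ to a constant and forcing it to be purely imaginary.
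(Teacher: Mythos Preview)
Your proof is correct, and the overall shape (affine slice of the convex cone $\CCone_0$) matches the paper's. However, the paper's argument is considerably more direct and sidesteps precisely the step you flag as ``the main obstacle.'' The paper never claims that $\LL_{\tilde\xi}\Omega_\CC$ is a \emph{constant} multiple of $\Omega_\CC$ for every $\tilde\xi\in\ttt$; it only uses that the assignment $Y\mapsto \LL_Y\Omega_\CC$ is $\R$-linear in $Y$ (immediate from the definition of the Lie derivative), so that $\{Y\in\ttt:\LL_Y\Omega_\CC=0\}$ is a linear subspace. If $\tilde\xi,\tilde\xi'\in\Sigma_\CC$, then $\LL_{\tilde\xi'-\tilde\xi}\Omega_\CC=im\Omega_\CC-im\Omega_\CC=0$, so $\Sigma_\CC$ is a coset of this subspace intersected with $\CCone_0$, hence convex.

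What your route buys is a sharper structural statement: by producing the character $\chi\colon\T\to\U(1)$ you show that $\Omega_\CC$ is a genuine $\T$-eigenform and that the affine slice is in fact a \emph{hyperplane} $\ell^{-1}(im)$ (codimension one, since $\ell(\xi)=im\neq 0$). This is true and useful elsewhere in the Martelli--Sparks--Yau framework, but for the bare convexity assertion it is unnecessary machinery; the paper's two-line argument does not need to know whether the Lie derivative is a scalar multiple of $\Omega_\CC$ at all, only that the defining condition is the level set of a linear map into $\Omega^{m,0}(\CC)$.
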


\begin{proof}
	Let $\tilde\xi,\tilde\xi'\in\Sigma_\CC$. Then there exists $Y\in \ttt$ such that $\tilde\xi' = \tilde\xi + Y$ and
	\begin{equation*}
		\LL_Y \Omega_\CC = 0.
	\end{equation*}
	The space of all $Y\in \ttt$ for which $\LL_Y \Omega_\CC = 0$ forms a vector space in $\ttt$. Thus, $\Sigma_\CC$ is an affine space intersected with the convex cone $\CCone_0$ and hence also convex.
\end{proof}

The following theorem is crucial in showing that $\xi = JV$.

\begin{theorem}[{\cite[p. 635 and p. 638]{martelliSasaki2008}}]\label{SparksUniqueness}
	Define the volume function $\vol\colon \Sigma_\CC \to \R$ via
	\begin{equation*}
		\vol(\tilde \xi) = \int_{\tilde r\leq 1}\frac{\tilde\omega^m}{m!}.
	\end{equation*}
	for any Kähler cone metric $\tilde \omega = \frac{i}{2}\del\delbar \tilde r^2$ with Reeb field $\tilde \xi$. This is well-defined and strictly convex, hence has a unique critical point. Furthermore, $\tilde \xi\in \Sigma_\CC$ is a critical point if and only if there is a Ricci-flat Kähler cone metric with Reeb vector field $\tilde\xi$.
\end{theorem}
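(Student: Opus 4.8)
The plan is to follow Martelli-Sparks-Yau by re-expressing the volume as an integral over the moment cone, which at once gives well-definedness and reduces convexity to a positivity statement. Recall the moment map $\mu\colon\CC\to\CCone^*$ of \eqref{MomentMap}: for any candidate Reeb field $\tilde\xi\in\CCone_0$ the Reeb flow is Hamiltonian with Hamiltonian $\tfrac12\tilde r^2 = \langle\mu,\tilde\xi\rangle$, so the unit ball is the sublevel set $\{\langle\mu,\tilde\xi\rangle\leq\tfrac12\}$. Pushing the Liouville measure $\tfrac{\tilde\omega^m}{m!}$ forward under $\mu$ to the Duistermaat-Heckman measure $d\nu$ on $\CCone^*$ yields
\[
	\vol(\tilde\xi) \;=\; \int_{\{y\in\CCone^*\,:\,\langle y,\tilde\xi\rangle\leq 1/2\}} d\nu(y).
\]
Since $\CCone^*$ and $d\nu$ are fixed by the complex cone and the torus action and do not see the transverse Kähler representative, the right-hand side depends only on $\tilde\xi$; this is well-definedness. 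Equivalently, invariance under the transverse deformations of Lemma \ref{TransverseDeformationLemma} follows from Stokes' theorem, as there $\tfrac12 d\eta$ changes only by the exact term $\tfrac12 dd^c\phi$.

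For strict convexity I would use the homogeneity of $\CCone^*$ and $d\nu$ to convert the sublevel integral, by the Laplace-transform identity, into
\[
	\vol(\tilde\xi) \;=\; \frac{c}{m!}\int_{\CCone^*} e^{-2\langle y,\tilde\xi\rangle}\,d\nu(y)
\]
for a fixed constant $c>0$. Differentiating twice under the integral sign gives the Hessian
\[
	\partial_i\partial_j\,\vol(\tilde\xi) \;=\; 4c'\int_{\CCone^*} y_i\,y_j\,e^{-2\langle y,\tilde\xi\rangle}\,d\nu(y),
\]
the Gram matrix of the coordinate functions $y_i$ against the positive measure $e^{-2\langle y,\tilde\xi\rangle}d\nu$. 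As $\CCone^*$ is full-dimensional the $y_i$ are linearly independent on $\supp\nu$, so the Hessian is positive-definite and $\vol$ is strictly convex; these are the Martelli-Sparks-Yau first- and second-variation formulas $\partial_i\vol\propto-\int_L y_i$ and $\partial_i\partial_j\vol\propto\int_L y_i y_j$ in moment-cone form.

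Uniqueness of the critical point is then immediate from strict convexity. For existence I would argue coercivity: $\Sigma_\CC$ is a non-empty affine slice of the open convex cone $\CCone_0$, and as $\tilde\xi$ tends to $\partial\CCone_0$ it becomes tangent to a supporting hyperplane of $\CCone^*$, so the decay of $e^{-2\langle y,\tilde\xi\rangle}$ is lost along the corresponding ray and $\vol(\tilde\xi)\to\infty$. A strictly convex, proper function on the convex set $\Sigma_\CC$ attains its infimum at a unique relative-interior point, which is the unique critical point.

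Finally, to match critical points with Ricci-flat cone metrics I would identify the first variation of $\vol$ along $\Sigma_\CC$ with the transverse Futaki invariant: a direct computation gives $\tfrac{d}{ds}\vol(\tilde\xi+sY)\big|_{s=0} = -\,\mathrm{const}\cdot\mathrm{Fut}_{\tilde\xi}(Y)$ for every $Y\in\ttt$ with $\LL_Y\Omega_\CC=0$. A Ricci-flat (Sasaki-Einstein) cone metric has vanishing transverse Futaki invariant, so its Reeb field kills the first variation and is the critical point; this forward implication, together with uniqueness, is exactly what forces $\xi=JV$ in the application. I expect the reverse implication to be the main obstacle: realizing the critical Reeb field by an actual Ricci-flat metric requires solving the transverse Monge-Ampère equation in the fixed transverse class, i.e. the full existence theory rather than the convexity argument. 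Fortunately the present paper never needs it, since both $\xi$ and $JV$ are a priori Reeb fields of genuine Ricci-flat cone metrics and hence both critical.
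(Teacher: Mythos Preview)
The paper does not prove this theorem; it is stated as a citation to Martelli--Sparks--Yau \cite{martelliSasaki2008} and used as a black box in Corollary~\ref{Liouvilleinfinity}. So there is no ``paper's own proof'' to compare against beyond the reference.

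Your sketch is a faithful outline of the Martelli--Sparks--Yau argument: the moment-map/Duistermaat--Heckman representation gives well-definedness, the Laplace-transform formula yields the positive-definite Hessian and hence strict convexity, and the first variation is identified with the transverse Futaki invariant. Your observation at the end is apt and worth emphasizing: the converse direction (critical point $\Rightarrow$ existence of a Ricci-flat cone metric) is genuinely harder and is \emph{not} used in this paper, since both $\xi$ and $JV$ arrive already equipped with Ricci-flat cone metrics. One small caveat: strict convexity alone gives uniqueness \emph{if} a critical point exists; your coercivity argument supplies existence, but in the paper's application existence is already granted by $\xi$, so coercivity is not strictly needed either.
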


\begin{corollary}\label{Liouvilleinfinity}
	The Reeb vector field $JV$ for the tangent cone at infinity $ \omega_\infty$ is equal to $\xi$. Therefore, there exists an automorphism $\Psi\in \Aut_\Scl(\CC)$ such that\ $\Psi^* \omega_\infty =  \omega_\CC$.
\end{corollary}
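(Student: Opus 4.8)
The plan is to combine the two preceding results of the section into the required conclusion. By Lemma \ref{SameCalabiYauForm} we have a single holomorphic $(m,0)$-form $\Omega_\CC$ with $\LL_\xi \Omega_\CC = \LL_{JV}\Omega_\CC = im\Omega_\CC$, and by Proposition \ref{AllReebFieldsinOneCone} both Reeb fields $\xi$ and $JV$ lie in the interior $\CCone_0$ of the dual moment cone of $(\CC,\omega_\CC)$. Consequently both $\xi$ and $JV$ belong to the set $\Sigma_\CC$ of Definition \ref{ReebFieldsConditions}. The first step is therefore to record that $\Sigma_\CC$ contains both fields.

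The heart of the argument is then to invoke Theorem \ref{SparksUniqueness}. Since $\omega_\CC$ is Ricci-flat with Reeb field $\xi$, the field $\xi$ is a critical point of the volume functional $\vol\colon \Sigma_\CC\to\R$. By Corollary \ref{LemmaMetricRicciFlat} the metric $\omega$ (and hence its blow-down $\omega_\infty$) is itself Ricci-flat, and $\omega_\infty$ is a Ricci-flat Kähler cone metric on $\CC$ with Reeb field $JV$; thus $JV$ is also a critical point of $\vol$. But Theorem \ref{SparksUniqueness} asserts that $\vol$ is strictly convex on the convex set $\Sigma_\CC$, so it has a \emph{unique} critical point. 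Therefore $\xi = JV$, which is the first claim of the corollary.

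It remains to upgrade the equality of Reeb fields to an equality of metrics up to automorphism. Having $\xi = JV$ means $\omega_\CC$ and $\omega_\infty$ are two Ricci-flat Kähler cone metrics on $\CC$ sharing the same Reeb field, the same scaling vector field $r\del_r$, and (by Lemma \ref{SameCalabiYauForm}) the same transverse holomorphic volume form $\Omega_\CC$. The plan is to apply the Bando-Mabuchi uniqueness argument in the Sasaki-Einstein setting, as generalized by Nitta-Sekiya \cite{nittaUniqueness2012} and indicated in the introduction: two Ricci-flat Kähler cone metrics with the same Reeb field differ by the pullback of an automorphism commuting with the scaling action. This produces $\Psi\in\Aut_\Scl(\CC)$ with $\Psi^*\omega_\infty = \omega_\CC$.

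The main obstacle I anticipate is the last step rather than the Reeb-field identification, which follows cleanly from convexity. Verifying that $JV$ genuinely lies in $\Sigma_\CC$ and is a bona fide critical point requires that $\omega_\infty$ be a Ricci-flat Kähler cone metric \emph{compatible with the fixed transverse structure} used to define $\Sigma_\CC$, so one must confirm that the normalizations of Lemma \ref{SameCalabiYauForm} (the common $\Omega_\CC$ and the volume condition $\omega_\infty^m=\omega_\CC^m$) are exactly what Theorem \ref{SparksUniqueness} requires. Then the Bando-Mabuchi/Nitta-Sekiya uniqueness must be applied with care to the non-compact conical setting, ensuring the continuity method or the relevant energy functional behaves well near the apex and at infinity; this is where the bulk of the technical work lies, and I would lean on the uniform equivalence \eqref{LiouvilleConditions} and the cone structure established in Section \ref{SectionLiouvilleTangentCones} to control the geometry there.
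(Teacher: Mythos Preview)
Your proposal is correct and follows essentially the same approach as the paper: use Proposition \ref{AllReebFieldsinOneCone} and Lemma \ref{SameCalabiYauForm} to place both $\xi$ and $JV$ in $\Sigma_\CC$, invoke the strict convexity in Theorem \ref{SparksUniqueness} to force $\xi = JV$, and then apply the Bando--Mabuchi/Nitta--Sekiya uniqueness argument to produce $\Psi\in\Aut_\Scl(\CC)$. The paper's proof is just as terse on the final step and simply cites \cite[Theorem A]{nittaUniqueness2012} for the Sasaki-Einstein uniqueness, so your anticipated concerns about the noncompact setting are not elaborated there either.
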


\begin{proof}
	Proposition \ref{AllReebFieldsinOneCone} and Lemma \ref{SameCalabiYauForm} show that $\xi,JV\in \Sigma_\CC$. Theorem \ref{SparksUniqueness} therefore proves equality as they are both associated with Ricci-flat Kähler cone metrics. As both $ \omega_\CC$ and $ \omega_\infty$ are Ricci-flat Kähler cones with the same scaling vector field and transversal holomorphic structure, the Bando-Mabuchi argument \cite[Theorem A]{bandoUniqueness1987}, suitably generalized by Nitta-Sekiya \cite[Theorem A]{nittaUniqueness2012}, proves the corollary.
\end{proof}

As all of the arguments above also apply to $\epsilon_i\to0$ in \eqref{Rescaling}, i.e. for the tangent cone $ \omega_0$ at $o$, we immediately show Corollary \ref{Liouvilleinfinity} for $ \omega_0$:

\begin{corollary}\label{LiouvilleZero}
	Corollary \ref{Liouvilleinfinity} also applies to $ \omega_0$, the tangent cone at o.
\end{corollary}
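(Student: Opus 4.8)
The plan is to exploit the fact that the entire chain of reasoning from Section \ref{SectionEqualReebFields} up to Corollary \ref{Liouvilleinfinity} depends only on rescaling-invariant data of $\omega$, and therefore transfers verbatim from $\omega_\infty$ to $\omega_0$. The starting point is Corollary \ref{TangentConesConeStructureCorollary}, which already furnishes $\omega_0$ as a Ricci-flat Kähler cone metric on $\CC$; since the uniform equivalence \eqref{LiouvilleConditions} and the normalization $\omega_0^m = \omega_\CC^m$ are preserved under the rescaling \eqref{Rescaling} for $\epsilon_i \to 0$ exactly as for $\epsilon_i \to \infty$, the metric $\omega_0$ satisfies all the hypotheses that were used for $\omega_\infty$. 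Write $V_0$ for the scaling vector field of $\omega_0$ and $\xi$ for the Reeb field of $\omega_\CC$.

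First I would reproduce the argument preceding Theorem \ref{DecompositionHoloFields}: $V_0$ is holomorphic because $\omega_0$ is a $C^\infty_\loc$-limit of $J$-Kähler metrics, its $\omega_\CC$-dual one-form is $\omega_\CC$-harmonic by the Bochner formula, and \eqref{LiouvilleConditions} applied to $\omega_0$ gives $\frac{r}{C} \leq |V_0|_{\omega_\CC} \leq Cr$. Lemma \ref{HajoLemmaB.1Modified} then forces homogeneity, and Theorem \ref{DecompositionHoloFields} yields $[r\del_r, V_0] = 0$ together with $V_0 \in \p \oplus J\p$. Next, the embedding-and-Jordan--Chevalley argument of Proposition \ref{ReebFieldKilling}, applied with $JV_0$ in place of $JV$, shows that $JV_0 \in J\p$ is Killing for $\omega_\CC$ and that $\xi$ is Killing for $\omega_0$; Proposition \ref{AllReebFieldsinOneCone}, applied to the pair $(\omega_\CC, \omega_0)$, then places $JV_0$ in the open dual cone $\CCone_0$. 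Finally, Lemma \ref{SameCalabiYauForm} produces a common holomorphic $(m,0)$-form, so that both $\xi$ and $JV_0$ lie in the set $\Sigma_\CC$ of Definition \ref{ReebFieldsConditions}.

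The conclusion then follows exactly as in Corollary \ref{Liouvilleinfinity}: Theorem \ref{SparksUniqueness} (Martelli--Sparks--Yau) forces $\xi = JV_0$ by uniqueness of the critical Reeb field among Ricci-flat cone metrics, and the Bando--Mabuchi uniqueness theorem in the form of Nitta--Sekiya produces $\Psi \in \Aut_\Scl(\CC)$ with $\Psi^* \omega_0 = \omega_\CC$. The only genuine point requiring verification, and hence the main obstacle, is that no step in this chain secretly uses the direction $\epsilon_i \to \infty$. Inspecting each: the cone structure of $\omega_0$ arises from the entropy monotonicity \eqref{RescalingD}, which is invariant under parabolic rescaling in either direction and is already handled in Corollary \ref{TangentConesConeStructureCorollary}; every subsequent result uses only that $\omega_0$ is a Ricci-flat cone on the same complex manifold, uniformly equivalent to $\omega_\CC$, and satisfying $\omega_0^m = \omega_\CC^m$. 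Since all the intermediate statements were deliberately phrased in terms of these rescaling-invariant hypotheses, the symmetry between $o$ and $\infty$ is immediate and the corollary follows.
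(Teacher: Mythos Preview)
Your proposal is correct and follows essentially the same approach as the paper: the paper's entire argument for this corollary is the single sentence preceding it, namely that all arguments of Section~\ref{SectionEqualReebFields} and Section~\ref{SectionEqualTangentCones} apply verbatim with $\epsilon_i\to 0$ in place of $\epsilon_i\to\infty$. You have simply written out in detail what the paper asserts in one line, and your verification that no step uses the direction of the limit is accurate.
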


\subsection{Combining Tangent Cones to Liouville Theorem}\label{SectionCombiningTangentConesToLiouville}

The last section proved that the tangent cones at $o$ resp. $\infty$ of $\omega$ equal $\Psi_0^*\omega_\CC$ resp. $\Psi_\infty^* \omega_\CC$ for automorphisms $\Psi_0, \Psi_\infty\in \Aut_\Scl(\CC)$. Given this, an analysis of \eqref{HeatKernelW} yields the proof of the Liouville Theorem (Theorem \ref{LiouvilleTheorem}).

\begin{proof}[Proof of Theorem \ref{LiouvilleTheorem}]
	Corollaries \ref{Liouvilleinfinity} and \ref{LiouvilleZero} proved that the asymptotic limits $\omega_0$ and $\omega_\infty$ are conical metrics and that there exists an isomorphism $\tilde{\Psi} \in \Aut_\Scl(\CC)$ such that $\tilde{\Psi}^* \omega_\infty = \omega_0$. By the proof of Theorem \ref{TangentConesConeStructure}, $\W_{\epsilon}(t) = \W(\epsilon_i^2 t)$ is nonincreasing in time (recall that $\W_{\epsilon}(t)$ is \eqref{HeatKernelW} for $\omega_\epsilon$), with $\W_{0}(t) = \text{constant} = \lim_{t \to 0} \W(t)$ and $\W_{\infty}(t) = \text{constant} = \lim_{t \to \infty} \W(t)$. Set $\tilde{r} = \dist_{\omega_\infty}(o, \cdot)$ and choose $c$ in $f_{\infty,t} = \frac{\tilde{r}^2}{4t} +c$ by Theorem \ref{TangentConesConeStructure} such that
	\begin{equation}\label{FractionVolEuclidean}
		\int_\CC H_{\omega_\infty}(o, \cdot) \dvol_{\omega_\infty} = \frac{1}{(4\pi t)^{m}} \int_\CC e^{-f} \dvol_{\omega_\infty} = \frac{1}{(4\pi t)^{m}} \int_\CC e^{-\frac{\tilde{r}^2}{4t} - c} \dvol_{\omega_\infty} = 1.
	\end{equation}
	When $(\CC, \omega_\infty) \cong (\mathbb{R}^{2m} \setminus \{0\}, \omega_\eucl)$, we have $c = 0$. By writing $\dvol_{\omega_\infty} = \tilde{r}^{2m-1} d\tilde{r} \wedge \dvol_{\omega_\infty|_L}$ and comparing with the Euclidean integral, we see that 
	\[
	c = -\log\left(\frac{\vol_{\omega_\infty}(L)}{\vol_{\eucl}(S^{2m-1})}\right).
	\]
	The expression for $\W_\infty(t)$ becomes:
	\begin{align*}
		\W_\infty(t) &= \int_\CC \left(t |\nabla_\omega f_{\infty,t}(o, y)|_\omega^2 + f_{\infty,t}(o, y) - 2m \right) u(t, y) \dvol_{\omega_\infty}(y) \\
		&= \frac{1}{(4\pi t)^{m}} \int_\CC \left( \frac{\tilde{r}^2}{4t} + \frac{\tilde{r}^2}{4t} - 2m - c \right) e^{-\frac{\tilde{r}^2}{4t} - c} \dvol_{\omega_\infty} \\
		&= \frac{1}{(4\pi t)^{m}} \int_\CC \left( \frac{\tilde{r}^2}{2t} - 2m \right) e^{-\frac{\tilde{r}^2}{4t} - c} \dvol_{\omega_\infty} - \frac{c}{(4\pi t)^{m}} \int_\CC e^{-\frac{\tilde{r}^2}{4t} - c} \dvol_{\omega_\infty} \\
		&= 0 + \log\left(\frac{\vol_{\omega_\infty}(L)}{\vol_{\eucl}(S^{2m-1})}\right).
	\end{align*}
	Thus,
	\[
	\lim_{t \to 0} \W(t) = \lim_{t \to \infty} \W(t) = \log\left(\frac{\vol_{\omega_\infty}(L)}{\vol_{\eucl}(S^{2m-1})}\right),
	\]
	since $\omega_\infty = \tilde{\Psi}^* \omega_0$, and therefore they have the same volume for their link $L$. Since $\W(t)$ is nonincreasing, it follows that 
	\[
	\W(t) = \log\left(\frac{\vol_{\omega_\infty}(L)}{\vol_{\eucl}(S^{2m-1})}\right)
	\]
	for all $t$. The arguments in the proof of Theorem \ref{TangentConesConeStructure} conclude that $(\CC, \omega)$ is a Riemannian cone. The remaining arguments in this section imply the existence of an automorphism $\Psi \in \Aut_\Scl(\CC)$ such that $\Psi^* \omega = \omega_\CC$, concluding the proof of the Liouville Theorem.
\end{proof}

\vspace{10mm}

\section{$C^{0,\alpha}$-Type Estimate for Kähler Metrics in a Neighborhood of $o$}\label{SectionCalpha}

Given the Liouville Theorem for Calabi-Yau cones, we prove a $C^{0,\alpha}$-type estimate for Kähler metrics on $B_3(o)$ that are uniformly equivalent to a Calabi-Yau cone metric and have scalar curvature controlled in $L^\infty(B_3(o))$. This requires constructing a novel seminorm analogous to the Hölder-like seminorm defined by Krylov \cite[Theorem 3.3.1]{krylovLectures1996}. The seminorm by Krylov is uniformly equivalent to the standard Hölder norm in $\R^{2m}$ and computes a weighted distance to a specified class of objects, which for $\R^{2m}$ is the set of constants. However, this set does not exist on $\CC$ unless $\CC \cup \{o\} = \R^{2m}$. Furthermore, we want to show that any Kähler metric $\omega$ as above is close to $\omega_\CC$ around the apex (see Corollary \ref{CorollaryOfCalpha}). In order to prove this, we need to prove a Hölder estimate for the Laplacian in a neighborhood of the apex $o$. The method of proof is very similar to the estimate for the complex Monge-Ampère equation in Theorem \ref{TheoremHolderBound} and hence also serves as a model for the proof of the nonlinear estimate. The approach is inspired by \cite{simonSchauder1997} and \cite{heinHigherorder2020}. In the following sections, all constants may change from line to line.

\subsection{$C^{0,\alpha}$-Type Seminorm}\label{SectionHolderIntro}

The estimates for the Laplacian or complex Monge-Ampère equation rely on comparing the functions or 2-forms to a specified set of comparison objects. We define the comparison set and the Hölder-like seminorm, and then discuss how it relates to the usual $C^{0,\alpha}$-seminorm from Definition \ref{DefHolderUsual}.

\begin{definition}[Coordinate Balls]
	Let $p\in \{r=1\}\subset \CC$ be a point, and consider a holomorphic coordinate ball $B_\rho(p)$ around $p$. Cover $\{r=1\}$ with finitely many such balls $B_{\rho_i}(p_i)$ centered at points $p_i \in \{r=1\}$ for $i=1,\dots,n$. Define the rescaling map $\Phi_{\epsilon}(r,x) = (\epsilon r,x)$. Consider the family 
	\[
	\UU \coloneqq \{ B_{\epsilon \rho_i} (\Phi_\epsilon(p_i)) \mid \epsilon>0, \, i=1,\dots,n \},
	\]
	consisting of all rescalings of the balls $B_{\rho_i}(p_i)$ with coordinates inherited by scaling. $\UU$ is then called a family of coordinate balls, and any $U\in \UU$ is a coordinate ball of $\UU$.
\end{definition}

\begin{definition}[Comparison set]\label{DefinitionComparisonSet}
	Let $(\CC,\omega_{\CC})$ be a conical Calabi-Yau manifold $\CC\cong \R_+\times L$. Define the following:
	\begin{enumerate}
		\item For $C \geq 1$, define 
		\begin{equation*}
			\Sigma_{C}^2 \coloneqq \{\Psi^*(\omega_\CC) \mid \Psi\in \Aut_\Scl(\CC),\; \frac{1}{C} \omega_\CC \leq \Psi^*\omega_\CC \leq C \omega_\CC \}
		\end{equation*}
		as the set of pullbacks of $\omega_\CC$ by automorphisms of $\CC$ commuting with scaling and bounded uniformly above and below.
		\item Let $\UU$ be a family of coordinate balls. Let $\Sigma_{\loc}^2$ be the collection of $(1,1)$-forms on any coordinate ball $U$ that is constant in the associated coordinates. Notice that any element in $\Sigma_{\loc}^2$ is only defined on a coordinate ball and not globally. 
		\item For any element $\pi\in \Sigma_\loc^2$ and set $V\subset {\CC}$, define the infinity indicator function
		\begin{equation*}
			\mathds{1}_V(\pi)\coloneqq \begin{cases}
				1 & \text{if $\pi$ is defined on $V$,}\\
				\infty & \text{otherwise}.
			\end{cases}
		\end{equation*}
		More elaborately, $\mathds{1}_V(\pi)=1$ for $\pi\in \Sigma_\loc^2\setminus\{0\}$ if and only if $V$ is contained in the coordinate ball $U$ where $\pi$ is defined, otherwise $\mathds{1}_V(\pi) = \infty$. We assume that $0\in \Sigma_{\loc}^2$ is globally defined and set $\mathds{1}_V(0)=1$ for any set $V \subset \CC$.
	\end{enumerate}
\end{definition}

We define the new $C^{0,\alpha}$-type seminorm using the given comparison set $\Sigma_\loc^2$ and any other set $\Pi$ of globally defined 2-forms on $\CC$

\begin{definition}[${C^{0,\alpha}}'$-seminorm]\label{DefinitionHolderNorm}
	Let $\omega\in \Omega^2(U) $ be an open set $V\subset \CCbar$. Pick a family of coordinate balls $\UU$ as in Definition \ref{DefinitionComparisonSet}. Let $f\colon (0,\infty)\to(0,\infty)$ be a positive function. Define the Hölder-type $C^{0,\alpha}$-seminorm, namely the ${C^{0,\alpha}}'$-seminorm, on $V$ by
	\begin{equation}\label{Holder'}
		\begin{aligned}
			[\omega]_{\alpha,V,f, \Pi\times \Sigma_\loc^2}' = &\sup_{\substack{\rho,\nu\in (0,\infty)\\x\in V}} \rho^{-\alpha} f(\dist_{\omega_{\CC}}(o,(B_\rho(x) \cap V)\setminus B_\nu(x)))\\
			&\times\inf_{(\pi,\eta)\in \Pi \times \Sigma_\loc^2} \mathds{1}_{(B_\rho(x)\cap V)\setminus B_\nu(x)}(\eta) \Norm{\omega-\pi - \eta}{0,(B_\rho(x)\cap V)\setminus B_\nu(x)}.
		\end{aligned}
	\end{equation}
	If $f \equiv 1$, we omit the function in the seminorm and write $[\omega]_{\alpha,V,1, \Pi \times\Sigma_\loc^2}' = 	[\omega]_{\alpha,V, \Pi \times\Sigma_\loc^2}'$.
	The full ${C^{0,\alpha}}'$-norm is
	\begin{equation*}
		\Norm{\omega}{0,\alpha, V,f,\Pi \times \Sigma_\loc^2}' = \Norm{\omega}{0, V} + 	[\omega]_{\alpha,V,f,\Pi \times \Sigma_\loc^2}'.
	\end{equation*}
\end{definition}

\begin{remark}\
	\begin{enumerate}
		\item We consider $\Pi = \Sigma_{3C}^2$ in Theorem \ref{TheoremHolderBound}. In the proofs, we also consider $\Pi = \emptyset$ or $\Pi = \rho^{-\alpha}(\Sigma_{3C}^2 - \pi)$ for some $\pi\in \Sigma_{3C}^2$ and $\rho>0$. In any of these case, the infimum in \eqref{Holder'} is realized and replaced by a minimum.
		\item On $\R^m$ and for $\zeta\in \Omega^k(V)$, the seminorm
		\begin{equation}\label{HolderSemiEuclideanAlt}
			[\zeta]_{\alpha,V,\Sigma_{\text{const}}^k}' = \sup_{\substack{\rho\in (0,\lambda]\\ x\in V}} \rho^{-\alpha} \inf_{\pi\in 
				\Sigma_{\text{const}}^k} \Norm{\zeta-\pi}{0,B_\rho(x)\cap V}
		\end{equation}
		is uniformly equivalent to the usual $C^{0,\alpha}$-seminorm on $\R^m$ (see
		\cite[Theorem 3.3.1]{krylovLectures1996}) if $\Sigma_{\text{const}}^k$ is the set of constant $k$-forms on $\R^m$.
		\item All seminorms in Definition \ref{DefinitionHolderNorm} depend on the family $\UU$ of coordinate balls
		\item The function $f$ is an arbitrary weight function that will be used to ensure that the seminorm a priori exists close to the apex.
		\item The factor $\mathds{1}_{(B_\rho(x)\cap V)\setminus B_\nu(x)}(\pi)$ ensures that if $\pi\in \Sigma_\loc^2$ is a constant 2-form in a coordinate ball $U$, then $\pi$ is actually defined on the set $(B_\rho(x)\cap V)\setminus B_\nu(x)$. Recall that $0\in \Sigma_{\loc}^2$ is globally defined and $\mathds{1}_{(B_\rho(x)\cap V)\setminus B_\nu(x)}(0) = 1$ always.
		\item In this chapter, all balls, radii, and norms are with respect to $\omega_{\CC}$.
	\end{enumerate}
\end{remark}

\subsection{Linear $C^{0,\alpha}$-Estimate}\label{SectionLinear}

The main proposition of the linear case is the following: A $C^{0,\alpha}$-estimate for functions on $B_3(o)$ with a sufficiently regular Laplacian.

\begin{prop}\label{SchauderLinear}
	Let $(\CC, \omega_{\CC})$ be a conical Calabi-Yau manifold. Let $\omega$ be a Kähler metric on $B_3(o)$ such that
	\begin{equation}\label{CorLinearHolderMetricCond}
		\frac{1}{C}\omega_{\CC} \leq \omega \leq C \omega_\CC, \quad \Norm{\Scal(\omega)}{0,B_3(o)} \leq D.
	\end{equation}
	Assume that $\varphi\colon B_3(o)\to \R$ is a smooth function satisfying
	\begin{equation}\label{PhiBound}
		\Norm{\varphi}{0, B_3(o)}\leq C_1, \quad \Norm{\Delta_{\omega} \varphi }{0, B_3(o)} \leq C_2,
	\end{equation}
	\sloppy for $C_1,C_2>0$. Then for all $\alpha \in (0,1)$ there exists a constant $C_3 = C_3({\CC},\omega_{\CC},C, D,C_1,C_2,\alpha)$ such that
	\begin{equation*}
		[\varphi]_{\alpha, B_1(o)} \leq C_3.
	\end{equation*}
\end{prop}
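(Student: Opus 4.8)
The plan is to reduce the global seminorm bound to an oscillation–decay estimate at the apex, and then to prove that estimate by a blow–up/Liouville argument in the spirit of Simon \cite{simonSchauder1997} and Hein–Tosatti \cite{heinHigherorder2020}. For a pair $x,y\in B_1(o)$ write $R=\dist_{\omega_\CC}(o,x)\geq \dist_{\omega_\CC}(o,y)$ and $d=\dist_{\omega_\CC}(x,y)$. If $d\leq R/4$ then $B_{R/2}(x)$ avoids $o$; rescaling this ball to unit size by $\Phi_R$ (so that $R^{-2}\Phi_R^*\omega$ stays uniformly equivalent to $\omega_\CC$ while $\Delta$ picks up a factor $R^2$) and applying Corollary \ref{HolderBoundFunction1Alpha} in local holomorphic coordinates near the link $\{r=1\}$ to $\varphi$ minus its average, one gets
\[
[\varphi]_{\alpha,B_{R/4}(x)} \leq C\,R^{-\alpha}\bigl(R^2 C_2 + \operatorname{osc}_{B_{R/2}(x)}\varphi\bigr).
\]
Thus everything reduces to the oscillation–decay claim $(\star)$: there is $C$ with $\operatorname{osc}_{B_\rho(o)}\varphi\leq C\rho^\alpha$ for all $0<\rho\leq1$. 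Indeed, $(\star)$ makes the bracket $\lesssim R^\alpha$ and absorbs the $R^{-\alpha}$, while the complementary pairs with $d>R/4$ satisfy $x,y\in B_{4d}(o)$ and are controlled directly by $(\star)$ (or trivially by $\Norm{\varphi}{0}\leq C_1$ when $4d>1$).

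To prove $(\star)$ I would argue by contradiction. Supposing it fails uniformly, I would produce $\omega_i,\varphi_i$ obeying \eqref{CorLinearHolderMetricCond}–\eqref{PhiBound} together with scales $\rho_i\in(0,1]$ nearly maximising $\sigma_i:=\rho_i^{-\alpha}\operatorname{osc}_{B_{\rho_i}(o)}\varphi_i\to\infty$. Since $\operatorname{osc}_{B_{\rho_i}(o)}\varphi_i\leq 2C_1$ is bounded while $\sigma_i\to\infty$, the maximising scales force $\rho_i\to0$, so the blow–up is centred at the apex and exhausts the whole cone. Setting $\hat\omega_i=\rho_i^{-2}\Phi_{\rho_i}^*\omega_i$ and $\hat\varphi_i=(\varphi_i\circ\Phi_{\rho_i}-m_i)/(\sigma_i\rho_i^\alpha)$ with $m_i$ chosen so $\inf_{B_1(o)}\hat\varphi_i=0$, the normalisation gives $\operatorname{osc}_{B_1(o)}\hat\varphi_i=1$, maximality yields the growth bound $\operatorname{osc}_{B_\rho(o)}\hat\varphi_i\leq\rho^\alpha$ up to scale $\rho_i^{-1}\to\infty$, and one computes $\tfrac1C\omega_\CC\leq\hat\omega_i\leq C\omega_\CC$, $\Norm{\Scal(\hat\omega_i)}{0}\leq\rho_i^2 D\to0$, and $\Norm{\Delta_{\hat\omega_i}\hat\varphi_i}{0}\leq \rho_i^{2-\alpha}C_2/\sigma_i\to0$.

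The a priori estimates (Proposition \ref{PropRegCMA} and the derivative bounds of Lemma \ref{CurvatureBoundLemma}) give a $C^\infty_\loc(\CC)$ limit $\hat\omega_i\to\omega_\infty$, while the interior Schauder estimate forces $\hat\varphi_i\to\hat\varphi_\infty$ in $C^{1,\alpha}_\loc(\CC)$ with $\Delta_{\omega_\infty}\hat\varphi_\infty=0$. Since $\Scal(\omega_\infty)=0$ and $\tfrac1C\omega_\CC\leq\omega_\infty\leq C\omega_\CC$, the Liouville Theorem \ref{LiouvilleTheorem} identifies $\omega_\infty=\Psi^*\omega_\CC$ as a genuine Ricci–flat cone metric. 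On such a cone $\hat\varphi_\infty$ decomposes into homogeneous harmonic pieces $r^\mu\kappa$ with $\Delta_L\kappa=-\mu(\mu+2m-2)\kappa$; the growth bound $\operatorname{osc}_{B_\rho(o)}\hat\varphi_\infty\leq\rho^\alpha$ for all $\rho>0$ excludes at infinity every component with $\mu\geq\alpha$, while boundedness near $o$ excludes $\mu\leq0$ and logarithmic terms. Because $L$ is Sasaki–Einstein with $\Ric_{\omega_\CC|_L}=2m-2$, Lichnerowicz–Obata gives $\lambda_1(\Delta_L)\geq 2m-1$, so the smallest positive homogeneity is exactly $\mu=1>\alpha$; hence no nonconstant component survives and $\hat\varphi_\infty$ is constant, contradicting $\operatorname{osc}_{B_1(o)}\hat\varphi_\infty=1$. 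This proves $(\star)$ and with it the proposition.

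The main obstacle is twofold. First, the point–scale selection presupposes $\sigma_i<\infty$, which is not clear a priori once $\alpha$ exceeds the De Giorgi–Nash–Moser exponent implicit in \eqref{EqHolderHeat}; I would circumvent this by taking the supremum over a truncated range $\rho\in[\rho_0,1]$, proving the bound uniformly in $\rho_0$ (the argument above only uses the large–scale growth bound, surviving as $\rho_i^{-1}\to\infty$, and boundedness near $o$, both unaffected by truncation) and then letting $\rho_0\to0$. Second, and conceptually decisive: mere uniform equivalence to $\omega_\CC$ does not by itself improve the Hölder exponent past the generic $\gamma$, and the gain up to every $\alpha<1$ comes entirely from the blow–up limit being an honest Ricci–flat cone, where the Sasaki–Einstein eigenvalue gap pins the lowest nonconstant harmonic growth rate at exactly $\mu=1$. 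This is precisely where the hypothesis $\Norm{\Scal(\omega)}{0}\leq D$ enters — it guarantees $\omega_\infty$ is scalar–flat, hence a cone via Theorem \ref{LiouvilleTheorem}.
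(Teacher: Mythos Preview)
Your approach is correct in spirit and genuinely different from the paper's. The paper does not reduce to an apex oscillation decay; instead it introduces the primed seminorm $[\varphi]'_{\alpha,V,f,\C}$ of Definition \ref{DefinitionHolderNorm} (equivalent to the usual seminorm by Lemma \ref{PropAlternativeHolderEquivConstant}), equips it with a weight $f_\delta$ to make it finite a priori, and then runs a blow--up with maximizing data $(x_i,\rho_i,\nu_i)$ that may sit \emph{anywhere} in the ball, leading to a three--by--three case analysis ($\rho_i\to 0,\infty$, or bounded; $\delta_i\to 0,\infty$, or bounded). Your argument always blows up at $o$, which is a real simplification, and the Liouville step (harmonic with $O(r^\alpha)$ growth on a Ricci--flat cone $\Rightarrow$ constant via Lichnerowicz--Obata) is identical in content to the paper's invocation of Proposition \ref{PropGrowthRateToHomogeneousFunction}. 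The paper's extra machinery is not wasted: the primed seminorm with the $\setminus B_\nu(x)$ excision and the weight $f$ is exactly what is reused in the nonlinear Theorem \ref{TheoremHolderBound}, so the longer proof here is deliberately a template.

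There is, however, a genuine gap you should close. Your contradiction hinges on $\operatorname{osc}_{B_1(o)}\hat\varphi_\infty=1$, but your convergence $\hat\varphi_i\to\hat\varphi_\infty$ is only in $C^{1,\alpha}_\loc(\CC)$, i.e.\ away from $o$; the points realising $\sup$ and $\inf$ of $\hat\varphi_i$ on $B_1(o)$ may drift into the apex, so the unit oscillation need not survive in the limit. (This is precisely why the paper's seminorm excises a small ball $B_\nu(x)$.) The fix is cheap once seen: since $\rho_i$ is an exact maximizer over the truncated range, the scale--$\tfrac12$ growth bound gives $\operatorname{osc}_{B_{1/2}(o)}\hat\varphi_i\leq 2^{-\alpha}<1$, which forces at least one extremizer into the compact annulus $\overline{B_1(o)}\setminus B_{1/2}(o)$; comparing with any fixed point of $B_{1/2}(o)\setminus B_{1/4}(o)$ (where $\hat\varphi_i\leq 2^{-\alpha}$) then yields two values $1$ and $\leq 2^{-\alpha}$ at points staying in a compact subset of $\CC$, contradicting constancy of $\hat\varphi_\infty$. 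Equivalently, replace $(\star)$ by the annulus version $\operatorname{osc}_{B_\rho(o)\setminus B_{\rho/2}(o)}\varphi\leq C\rho^\alpha$ from the outset; the dyadic telescoping in your reduction still closes, and the blow--up normalisation is now on a set compactly contained in $\CC$. Note also that this use of the scale--$\tfrac12$ bound requires $\rho_i/2\geq\rho_0^{(i)}$; you should check separately (or arrange by choice of truncation) that the maximizer does not sit within a factor $2$ of the lower cutoff for all $i$.
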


\begin{remark}
	In Proposition \ref{SchauderLinear}, \eqref{CorLinearHolderMetricCond} can be replaced by the assumption that $\omega = \omega_{C(L)}$ is itself a Kähler cone metric, where the manifold $C(L)$ does not necessarily support a conical Calabi-Yau metric. The proof is the same, except we no longer choose a sequence $(\omega_i)$ for $\omega$, but instead keep the metric fixed. However, the constant $C_3$ cannot be chosen uniformly for any choice of Kähler cone metric. Furthermore, the proof now only holds for $\alpha>0$ small enough.
\end{remark}

We will actually prove Proposition \ref{SchauderLinear} via the primed seminorm defined in Definition \ref{DefinitionHolderNorm}, modified to support functions. For any function $\varphi$ on $V$, define
\begin{equation}\label{Holder'Function}
	\begin{aligned}
		[\varphi]_{\alpha,V,f, \C}' = \sup_{\substack{\rho,\nu\in (0,\infty)\\x\in V}} \rho^{-\alpha} &f(\dist_{\omega_{\CC}}(o,(B_\rho(x) \cap V)\setminus B_\nu(x)))\\
		 &\times \min_{\zeta\in \C} \Norm{\varphi-\zeta}{0,(B_\rho(x)\cap V)\setminus B_\nu(x)}.
	\end{aligned}
\end{equation}
The function $\mathds{1}_{(B_\rho(x)\cap V)\setminus B_\nu(x)}$ is not necessary as all constant functions are defined globally. The full norm is
\begin{equation*}
	\Norm{\varphi}{0,\alpha, V,f, \C}' = \Norm{\varphi}{0, V} + [\varphi]_{\alpha,V,f, \C}'.
\end{equation*}
By the following lemma, this is equivalent to the usual seminorm:

\begin{lemma}\label{PropAlternativeHolderEquivConstant}
	On any open $V\subset \CC$ and $\varphi\in C^{0,\alpha}(V)$, then
	\begin{equation*}
		\Norm{\varphi}{0,\alpha,V} \leq 2\Norm{\varphi}{0,\alpha,V,\C}',
	\end{equation*}
	assuming that the right-hand side exists. Conversely, if $\varphi\in C^{0,\alpha}(V)$:
	\begin{equation*}
		\Norm{\varphi}{0,\alpha,V,\C}' \leq \Norm{\varphi}{0,\alpha,V}.
	\end{equation*}
\end{lemma}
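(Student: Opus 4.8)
The plan is to observe that both full norms contain the same $C^0$-term $\Norm{\varphi}{0,V}$, so the statement reduces to comparing the two seminorms $[\varphi]_{\alpha,V}$ and $[\varphi]_{\alpha,V,\C}'$. The one computational fact I would isolate first and use throughout is the oscillation identity: writing $W = (B_\rho(x)\cap V)\setminus B_\nu(x)$ and using that $\varphi$ is real-valued (the case relevant here, as in Proposition \ref{SchauderLinear}), the minimization over $\zeta\in\C$ is realized at a real constant, and one checks $\min_{\zeta\in\C}\Norm{\varphi-\zeta}{0,W} = \tfrac12\operatorname{osc}_W\varphi$, where $\operatorname{osc}_W\varphi = \sup_W\varphi-\inf_W\varphi$ and the optimal $\zeta$ is the midpoint $\tfrac12(\sup_W\varphi+\inf_W\varphi)$. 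This turns every instance of the primed seminorm into a statement about oscillations over annuli.

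For the inequality $\Norm{\varphi}{0,\alpha,V,\C}'\leq\Norm{\varphi}{0,\alpha,V}$ I would fix $\rho,\nu,x$ and bound the oscillation directly: for any $u,w\in W$ we have $|\varphi(u)-\varphi(w)|\leq[\varphi]_{\alpha,V}\,d(u,w)^\alpha\leq[\varphi]_{\alpha,V}\diam(W)^\alpha$, and since $W\subset B_\rho(x)$ forces $\diam(W)\leq 2\rho$, the oscillation identity gives $\rho^{-\alpha}\min_{\zeta}\Norm{\varphi-\zeta}{0,W}\leq 2^{\alpha-1}[\varphi]_{\alpha,V}$. As $\alpha\in(0,1)$ makes $2^{\alpha-1}\leq 1$, taking the supremum over $\rho,\nu,x$ yields $[\varphi]_{\alpha,V,\C}'\leq[\varphi]_{\alpha,V}$, and adding $\Norm{\varphi}{0,V}$ to both sides closes this direction.

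The reverse inequality $\Norm{\varphi}{0,\alpha,V}\leq 2\Norm{\varphi}{0,\alpha,V,\C}'$ is where the main obstacle lies: because $\nu>0$, the annulus $W$ always omits its centre $x$, so I cannot read off the value $\varphi(x)$ from $W$ and must recover it by a limiting argument. Given $x\neq y$ in $V$, set $d=\dist_{\omega_\CC}(x,y)$; for $\rho>d$ and $0<\nu<d$ the point $y$ lies in $W$. Invoking Lemma \ref{LemmaGeodesics} to pick a minimizing geodesic from $x$ to $y$ and letting $x_\nu$ be its point at distance $\nu$ from $x$, one has $x_\nu\in W$ for small $\nu$ (since $V$ is open), whence $\operatorname{osc}_W\varphi\geq|\varphi(x_\nu)-\varphi(y)|$ and therefore $\rho^{-\alpha}\min_{\zeta}\Norm{\varphi-\zeta}{0,W}\geq\tfrac12\rho^{-\alpha}|\varphi(x_\nu)-\varphi(y)|$. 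Letting $\nu\to0$, so that $x_\nu\to x$ and $\varphi(x_\nu)\to\varphi(x)$ by continuity of $\varphi$, and then $\rho\downarrow d$, gives $[\varphi]_{\alpha,V,\C}'\geq\tfrac12|\varphi(x)-\varphi(y)|/d^\alpha$. Taking the supremum over $x\neq y$ produces $[\varphi]_{\alpha,V}\leq 2[\varphi]_{\alpha,V,\C}'$, and combining with the trivial $\Norm{\varphi}{0,V}\leq 2\Norm{\varphi}{0,V}$ gives the claim. The only points needing care beyond this are the nonemptiness of $W$ and the membership $x_\nu\in V$ for small $\nu$, both immediate from openness of $V$.
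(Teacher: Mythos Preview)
Your proof is correct and takes a somewhat different route from the paper's. The paper argues both directions with nothing more than the triangle inequality: for $[\varphi]_{\alpha,V}\leq 2[\varphi]'_{\alpha,V,\C}$ it writes $|\varphi(x)-\varphi(y)|\leq|\varphi(x)-\zeta|+|\zeta-\varphi(y)|\leq 2\Norm{\varphi-\zeta}{0,B_\rho(x)\cap V}$ and takes the minimum over $\zeta$; for the converse it simply chooses $\zeta=\varphi(x)$, so that $\Norm{\varphi-\zeta}{0,B_\rho(x)\cap V}\leq[\varphi]_{\alpha,V}\rho^\alpha$ directly. Your oscillation identity $\min_\zeta\Norm{\varphi-\zeta}{0,W}=\tfrac12\operatorname{osc}_W\varphi$ is a sharper tool (it even yields the better constant $2^{\alpha-1}$ in the second direction), though it is specific to real-valued $\varphi$, which as you note is the only case actually used later. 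Your limiting argument with the geodesic point $x_\nu$ is more explicit than the paper about the constraint $\nu>0$ excluding the centre; the paper tacitly passes to $\nu\to0$ and uses continuity of $\varphi$ to replace $(B_\rho(x)\cap V)\setminus B_\nu(x)$ by $B_\rho(x)\cap V$. Both arguments are valid; the paper's is shorter, yours is more transparent about where the annulus structure matters.
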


\begin{proof}
	Take two points $x,y\in V$ and let $\rho \coloneqq \dist_{\omega_\CC}(x,y)$. Then
	\begin{align*}
		\Norm{\varphi}{0,\alpha,V} &= \Norm{\varphi}{0,V} + \sup_{x\neq y\in V}\frac{|\varphi(x) - \varphi(y)|}{\rho^\alpha}\\ 
		&= \Norm{\varphi}{0,V} + \sup_{x\neq y\in V} \min_{\zeta\in \C} \rho^{-\alpha} \left( |\varphi(x)-\zeta+\zeta-\varphi(y)|\right)\\
		&\leq\Norm{\varphi}{0,V} + \sup_{x\neq y\in V} \min_{\zeta\in \C}\rho^{-\alpha}\left(|\varphi(x)-\zeta|+|\zeta-\varphi(y)|\right)\\
		&\leq \Norm{\varphi}{0,V} + 2 \sup_{\rho>0, x\in V} \min_{\zeta \in \C}\rho^{-\alpha} \Norm{\varphi - \zeta}{0,B_\rho(x)\cap V}\\
		&\leq 2 \Norm{\varphi}{0,\alpha,V,\C}'.
	\end{align*}

	 Conversely:
	\begin{align*}
		\Norm{\varphi}{0,\alpha,V,\C}' =& \Norm{\varphi}{0,V} + [\varphi]_{\alpha,V,\C}'\\
		=& \Norm{\varphi}{0,V} + \sup_{\rho>0, x\in V} \min_{\zeta \in \C} \rho^{-\alpha} \Norm{\varphi - \zeta}{0,B_\rho(x)\cap V}\\
		=& \Norm{\varphi}{0,V} + \sup_{\rho>0, x\in V} \min_{\zeta \in \C} \rho^{-\alpha} \sup_{y\in B_\rho(x)\cap V} |\varphi(y) - \zeta|\\
		\leq& \Norm{\varphi}{0,\alpha,V},
	\end{align*}
	where for every $x\in V$ we choose $\zeta\in \C$ such that $\varphi(x) = \zeta$.
\end{proof}

Next, the condition \eqref{CorLinearHolderMetricCond} on the metric in Proposition \ref{SchauderLinear} shows that any such sequence $(\omega_i)$ has a subconvergent limit when blowing up.

\begin{lemma}\label{LemmaScalarCurvControlSublimit}
	If $(\CC,\omega_\CC)$ is Calabi-Yau and $(\omega_i)$ is a sequence of Kähler metrics on $B_3(o)$ satisfying the estimates
	\begin{equation}\label{UnifBoundScalarControl}
		\frac{1}{C}\omega_{\CC} \leq \omega_i \leq C \omega_\CC, \quad \Norm{ \Scal(\omega_i)}{0,B_3(o)} \leq D,
	\end{equation}
	then the blowup $\epsilon_i^{-2} \Phi^*_{\epsilon_i} \omega_i$ subconverges to $\Psi^*\omega_{\CC}$ in $C^{1,\beta}_\loc(\CC)$ for all $\epsilon_i\to 0$, $\beta<1$, and for some $\Psi\in\Aut_{\Scl}(\CC)$.
\end{lemma}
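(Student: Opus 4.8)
The plan is to blow the sequence up at the apex and identify every sublimit via the Liouville Theorem (Theorem \ref{LiouvilleTheorem}). First I would set $\tilde\omega_i \coloneqq \epsilon_i^{-2}\Phi^*_{\epsilon_i}\omega_i$, which is defined on the preimage $\Phi_{\epsilon_i}^{-1}(B_3(o)) = B_{3\epsilon_i^{-1}}(o)$; since $\epsilon_i\to 0$ these domains exhaust $\CC$. Because $\omega_\CC$ is scale-invariant, $\epsilon_i^{-2}\Phi^*_{\epsilon_i}\omega_\CC = \omega_\CC$, so the two-sided bound $\frac{1}{C}\omega_\CC\leq\tilde\omega_i\leq C\omega_\CC$ is preserved. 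Scalar curvature scales by $\Scal(\tilde\omega_i) = \epsilon_i^2\,\Phi^*_{\epsilon_i}\Scal(\omega_i)$, so
\begin{equation*}
	\Norm{\Scal(\tilde\omega_i)}{0,B_{3\epsilon_i^{-1}}(o)} \leq \epsilon_i^2 D \longrightarrow 0 .
\end{equation*}
Thus any sublimit will be scalar-flat, which is the key gain from blowing up.

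Next I would produce uniform local regularity. Writing $\tilde\omega_i^m = e^{F_i}\omega_\CC^m$, the Ricci-flatness of $\omega_\CC$ gives $\Delta_{\tilde\omega_i}F_i = -\Scal(\tilde\omega_i)$, a uniformly elliptic equation with $\Norm{F_i}{0}$ controlled by the two-sided bound and right-hand side bounded by $\epsilon_i^2 D$. Fix a compact $K\subset\CC$; for large $i$ it lies in the domain and at distance bounded above and below from $o$, where $\omega_\CC$ admits local holomorphic charts of uniformly controlled size in which it is comparable to the Euclidean metric (by scaling from $\{r=1\}$, as in the proof of Lemma \ref{CurvatureBoundLemma}). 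Running the $\omega_\CC$-background analogue of Proposition \ref{PropRegCMA}---Krylov--Safonov for $F_i$, the complex Monge--Amp\`ere $C^{0,\alpha}$-estimate \cite{chenalpha2015}, then $W^{2,p}$ and Schauder bootstrapping---yields uniform bounds $\Norm{\tilde\omega_i}{1,\alpha,K}\leq C_K$ and $\Norm{F_i}{1,\alpha,K}\leq C_K$ (with $F_i$ also bounded in $W^{2,p}_\loc$). By Arzel\`a--Ascoli and a diagonal exhaustion of $\CC$, a subsequence converges $\tilde\omega_i\to\omega_0$ and $F_i\to F_0$ in $C^{1,\beta}_\loc(\CC)$ for every $\beta<\alpha$, with $\omega_0^m = e^{F_0}\omega_\CC^m$ and $\frac{1}{C}\omega_\CC\leq\omega_0\leq C\omega_\CC$.

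I would then upgrade the limit to a smooth cscK metric. Since $\tilde\omega_i\to\omega_0$ in $C^{1,\beta}_\loc$ and $F_i\to F_0$ weakly in $W^{2,p}_\loc$ while $\Scal(\tilde\omega_i)\to 0$, passing to the limit in $\Delta_{\tilde\omega_i}F_i=-\Scal(\tilde\omega_i)$ gives $\Delta_{\omega_0}F_0=0$. Elliptic regularity for this homogeneous equation (uniformly elliptic with $C^{0,\beta}$ coefficients $\omega_0^{-1}$), together with differentiating the Monge--Amp\`ere equation exactly as in Proposition \ref{PropRegCMA}, bootstraps $F_0$ and hence $\omega_0$ to $C^\infty_\loc(\CC)$. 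Therefore $\omega_0$ is a smooth K\"ahler metric on $\CC$ with $\Scal(\omega_0)=-\Delta_{\omega_0}F_0=0$, i.e.\ cscK, and uniformly equivalent to $\omega_\CC$. Theorem \ref{LiouvilleTheorem} then forces $\omega_0=\Psi^*\omega_\CC$ for some $\Psi\in\Aut_\Scl(\CC)$, which is the assertion.

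The main obstacle is precisely this last identification. For each finite $i$ the hypotheses only bound $\Scal(\tilde\omega_i)$ in $L^\infty$, with no control on its derivatives, so a priori the metrics are uniformly regular only in $C^{1,\alpha}_\loc$, and one cannot directly declare the $C^{1,\beta}$-limit to be cscK (let alone smooth) in order to invoke the Liouville Theorem. The resolution is that the scalar curvature \emph{decays} to zero along the blow-up, so the limiting equation degenerates to the homogeneous $\Delta_{\omega_0}F_0=0$, whose solutions are automatically smooth; this is what bridges the gap between the merely $L^\infty$-controlled prelimit and the genuinely cscK limit required by Theorem \ref{LiouvilleTheorem}.
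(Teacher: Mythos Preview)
Your proposal is correct and follows essentially the same route as the paper: rescale so that the uniform equivalence is preserved while the scalar curvature decays like $\epsilon_i^2$, invoke Proposition \ref{PropRegCMA} (transplanted via scaling to charts around $\{r=1\}$) for uniform $C^{1,\alpha}_\loc$ bounds, extract a sublimit, observe that the limiting equation $\Delta_{\omega_0}F_0=0$ is homogeneous so that Schauder and the differentiated Monge--Amp\`ere equation bootstrap $\omega_0$ to $C^\infty$, and conclude by Theorem \ref{LiouvilleTheorem}. The only cosmetic difference is that the paper defines $F_i$ before rescaling and then pulls back, whereas you define it after; your discussion of the ``main obstacle'' is exactly the point the paper handles by noting the limit equation is homogeneous.
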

\begin{proof}
	In the proof, we pass to subsequences several times without explicitly mentioning it. Let $\tilde\omega_i \coloneqq \epsilon_i^{-2} \Phi^*_{\epsilon_i}\omega_i$. Writing $\omega_i^m = e^{F_i}\omega_\CC^m$, the scalar curvature condition in \eqref{UnifBoundScalarControl} is equivalent to
	\begin{equation}\label{LaplacianF}
		\Norm{ \Delta_{\omega_i} F_i}{0,B_3(o)} \leq D.
	\end{equation}
	Defining the rescaling $\tilde F_i \coloneqq \Phi_{\epsilon_i}^* F_i$, then
	\begin{equation}\label{LaplacianFRescaled}
		\Norm{\Delta_{\tilde\omega_i} \tilde F_i}{0,B_{3(\epsilon_i)^{-1}}(o)} \leq D \epsilon_i^{2}.
	\end{equation}
	Proposition \ref{PropRegCMA} shows that $\tilde\omega_i $ is uniformly bounded in $C^{1,\beta}_{\loc}(\CC)$ with uniform bounds for all $i$ and all $0<\beta<1$. Hence, as $i\to \infty$, we obtain a $C^{1,\beta}_\loc(\CC)$-sublimit $\tilde\omega_i \to \omega_\infty$ for all $\beta<1$, satisfying the equation
	\begin{equation*}
		\Scal(\omega_\infty) = -\Delta_{\omega_\infty} \tilde F_\infty = 0
	\end{equation*}
	weakly and with $\omega_\infty^m = e^{\tilde F_\infty}\omega_\CC^m$. Since $\omega_\infty$ and $\tilde F_\infty$ are both in $C^{1,\beta}_\loc(\CC)$, the Schauder estimates \cite[Theorem 9.19]{gilbarg_Elliptic_2001} show that $\tilde F_\infty$ is bounded in $C^{2,\beta}_\loc(\CC)$. Using \eqref{LaplacianCMABootstrap} shows that $\omega_\infty$ is bounded in $C^{2,\beta}_\loc(\CC)$. Bootstrapping finally implies that $\omega_\infty \in C^{\infty}_\loc(\CC)$. We conclude by the Liouville Theorem (Theorem \ref{LiouvilleTheorem}) that $\omega_\infty = \Psi^*\omega_\CC$ for some automorphism $\Psi\in \Aut_\Scl(\CC)$.
\end{proof}


\subsection{Proof of Proposition \ref{SchauderLinear}}

\subsubsection{Picking Weight-Function to make Seminorm Well-Defined}
	
	As $[\varphi]_{\alpha, B_1(o)}$ is equivalent to $[\varphi]_{\alpha, B_1(o), \C}'$ by Lemma \ref{PropAlternativeHolderEquivConstant}, we give the proof for the ${C^{0,\alpha}}'$-seminorm. 
	
	A priori, the regular Hölder seminorm $[\varphi]_{\alpha, B_2(o)}$ does not exist in a neighborhood of $o$. However, we can control the growth of $[\varphi]_{\alpha,B_2(o)\setminus B_\nu(o)}$ as $\nu\to 0$ as follows: Take a point $p\in B_2(o)$ and $\rho = \dist_{\omega_\CC}(o,p)$. Apply a rescaling such that $\Phi^{-1}_\rho(p) = \tilde p$ with $\dist_{\omega_\CC}(o,\tilde p) = 1$. Define $\tilde \varphi \coloneqq \Phi^*_\rho\varphi$. Corollary \ref{HolderBoundFunction1Alpha} provides an $A>0$ such that $[\tilde\varphi]_{\alpha, B_{\frac{1}{2}}(\tilde p)} < A$ for any $\varphi$ satisfying the requirements of Proposition \ref{SchauderLinear}. Scaling back, we see:
	\begin{equation*}
		[\varphi]_{\alpha, B_{\frac{\rho}{2}}(\tilde p)} < A\rho^{-\alpha}.
	\end{equation*}
	Therefore:
	\begin{equation*}
		[\varphi]_{\alpha, B_{\frac{\rho}{2}}(p)} < A\rho^{-\alpha},
	\end{equation*}
	for any $p\in B_2(o)\setminus B_\rho(o)$. The seminorm on $B_2(o)\setminus B_\rho(o)$ is therefore controlled on radii smaller than $\frac{\rho}{2}$. For radii greater than this, take $x,y\in B_2(o)$ with $\dist_{\omega_\CC}(x,y) \geq \frac{\rho}{2}$. Then
	\begin{equation*}
		\frac{|\varphi(x)-\varphi(y)|}{\dist_{\omega_\CC}(x,y)^{\alpha}} \leq 2\left(\frac{\rho}{2}\right)^{-\alpha} \Norm{\varphi}{0, B_3(o)}\leq 4\rho^{-\alpha}C_1.
	\end{equation*}
	We conclude that, for any $\delta>0$:
	\begin{equation*}
		[\varphi]_{\alpha, B_2(o)\setminus B_\delta(o)} \leq A \delta^{-\alpha},
	\end{equation*}
	and so by Lemma \ref{PropAlternativeHolderEquivConstant}:
	\begin{equation}\label{SeminormGeneralBlowupLinear}
		[\varphi]_{\alpha, B_2(o)\setminus B_\delta(o),\C}' \leq A \delta^{-\alpha}.
	\end{equation}
	
	This leads us to define the following function to control the growth of $[\varphi]_{\alpha, B_2(o)\setminus B_\delta(o),\C}'$ as $\delta\to 0$: For any $\delta>0$, define the function $f_\delta \colon [0,2] \to [0,1]$ by
	\begin{equation*}
		f_\delta(r) = \begin{cases}
			\delta^{-\frac{1+\alpha}{2}}r^{\frac{1+\alpha}{2}} & 0 \leq r \leq \delta,\\
			1 &  r\in (\delta,1),\\
			2-r & r \in [1,2].
		\end{cases}	
		\end{equation*}
	The case $r \in [1,2]$ is to keep the seminorm away from the outer boundary. Using this seminorm, it is guaranteed that $[\varphi]_{\alpha, B_2(o), f_{\delta},\C}'$ always exists for any $\delta>0$. As $\alpha<1$, notice that
	\begin{equation*}
		\alpha < \frac{1+\alpha}{2} < 1.
	\end{equation*}
	This will be important later for the application of Proposition \ref{PropGrowthRateToHomogeneousFunction}.
	\\

	Proceeding by contradiction, assume that there exists a sequence of functions $(\varphi_i)$ and metrics $(\omega_i)$ satisfying the assumptions of the proposition and with
	\begin{equation}\label{PhiSeminormToInfty}
		[\varphi_i]_{\alpha, B_2(o), f_{i^{-1}}, \C}' \to \infty
	\end{equation}
	for $\alpha < 1$. As $i\to \infty$, the weight function on $B_1(o)$ becomes identically $1$.\\
	
	For a sequence $\epsilon_i\to 0$, form the rescaled functions $\tilde \varphi_i \coloneqq \Phi_{\epsilon_i}^*\varphi_i$ and metrics $\tilde \omega_i \coloneqq \epsilon_i^{-2}\Phi^*_{\epsilon_i} \omega_i$, now defined on $B_{3(\epsilon_i)^{-1}}(o)$. The $C^0$-norm of $\tilde\varphi_i$ is preserved, but the Hölder seminorm changes:
	\begin{equation}\label{ScalingXi}
		[\tilde\varphi_i]_{\alpha,B_{2(\epsilon_i)^{-1}}(o),  \Phi^*_{\epsilon_i} f_{i^{-1}}, \C}' = \epsilon_i^{\alpha} [\varphi_i ]_{\alpha, B_2(o), f_{i^{-1}}, \C}'.
	\end{equation}
	The Laplacian \eqref{PhiBound} of $\varphi$ scales by
	\begin{equation}\label{PhiLaplacianScaled}
		\Norm{\Delta_{\tilde\omega_i}\tilde \varphi}{0,B_{3(\epsilon_i)^{-1}}(o)} \leq C_2 \epsilon_i^{2}.
	\end{equation}
	Fix the sequence $(\epsilon_i)$ such that
	\begin{equation}\label{PhiSeminormEqual1}
		[\tilde\varphi_i]_{\alpha,B_{2(\epsilon_i)^{-1}}(o),  \Phi^*_{\epsilon_i} f_{i^{-1}}, \C}' = 1
	\end{equation}
	for all $i$. The condition $[\varphi_i]'_{\alpha,B_{2}(o), f_{i^{-1}},\C} \to \infty$ as $i \to \infty$ implies that $\epsilon_i \to 0$. Therefore, the metrics $(\tilde\omega_i)$ and functions $(\tilde \varphi_i)$ are defined on increasingly larger balls as $i \to \infty$, and by Lemma \ref{LemmaScalarCurvControlSublimit}, there exists a $C^{1,\beta}_\loc(\CC)$-sublimit $\omega_\infty = \Psi^*\omega_\CC$ for some $\Psi \in \Aut_\Scl(\CC)$ and for all $\beta < 1$. Furthermore, by Corollary \ref{HolderBoundFunction1Alpha}, we also obtain a limiting function $\varphi_\infty \in C^{1,\beta}_\loc(\CC)$. The next sections utilize this limiting function and metric to obtain a contradiction with \eqref{EqualToKCone}.\\
	
	\subsubsection{Changing Weight-Function and Picking Maximizing Point and Distance.}
	
	The previous choice of weight function was picked to ensure that the seminorm \eqref{PhiSeminormToInfty} exists for any $i>0$. For the proof, it will be convenient to change the function slightly, and also pick points $x_i\in \CC$, radii $\rho_i, \nu_i>0$, and element $\varphi_i\in \C$ realizing the seminorm. We do this as follows: \\
	
	For every $i$, pick a point $x_i \in \overline{B_{2(\epsilon_i)^{-1}}(o)}$, radii $\rho_i, \nu_i > 0$, and an element $\zeta_i \in \C$ realizing at least half of the seminorm $[\tilde \varphi_i]'_{\alpha,B_{2(\epsilon_i)^{-1}}(o),\Phi_{\epsilon_i}^*(f_{i^{-1}}),\C}$ as follows: Define 
	\begin{equation*}
		\delta_i \coloneqq \dist_{\omega_{{\CC}}}(o, B_{\rho_i}(x_i)\setminus B_{\nu_i}(x_i)),
	\end{equation*} 
	then
	\begin{align}\label{DataMaximizingLinear}
	\frac{1}{2} = \frac{1}{2}	[\tilde \varphi_i]'_{\alpha,B_{2(\epsilon_i)^{-1}}(o),\Phi_{\epsilon_i}^*(f_{i^{-1}}),\C} \leq (\rho_i)^{-\alpha}\Phi_{\epsilon_i}^*(f_{i^{-1}})(\delta_i) \Norm{\tilde\varphi_i-\zeta_i}{0,(B_{\rho_i}(x_i)\setminus B_{\nu_i}(x_i))\cap B_{2(\epsilon_i)^{-1}}(o)}.
	\end{align}
	$\zeta_i$ is assumed to be the minimizer on $B_{\rho_i}(x_i)\setminus B_{\nu_i}(x_i)$. A priori, it could be that $\Phi_{\epsilon_i}^*(f_{i^{-1}})(\delta_i)\to 0$ as $i\to\infty$. For convenience in the proof, we redefine $\Phi_{\epsilon_i}^*(f_{i^{-1}})$  to $\tilde f_{i}$ such that this does not occur. This can happen in two places: if $\delta_i< (i\epsilon_i)^{-1}$, or if $\delta_i \in [\epsilon_i^{-1},2\epsilon_i^{-1}]$:\\
	
	If $\delta_i \geq (i\epsilon_i)^{-1}$ already, then define $\tilde f_i \coloneqq \Phi_{\epsilon_i}^*(f_{i^{-1}})$; otherwise if $\delta_i < (i\epsilon_i)^{-1}$:
	\begin{equation*}
		\tilde f_i(r) \coloneqq \begin{cases}
			(\delta_i)^{-\frac{1+\alpha}{2}}r^{\frac{1+\alpha}{2}}, & 0 \leq r \leq \delta_i,\\
			1, &  r \in (\delta_i, \epsilon_i^{-1}),\\
			2-\epsilon_ir, & r \in [\epsilon_i^{-1},2\epsilon_i^{-1}].
		\end{cases}
	\end{equation*}
	For the weight functions, we have the crucial property:
	\begin{equation*}
		\frac{\tilde f_i(r)}{\tilde f_i(\delta_i)} \leq \frac{\Phi_{\epsilon_i}^*(f_{i^{-1}})(r)}{\Phi_{\epsilon_i}^*(f_{i^{-1}})(\delta_i)}
	\end{equation*}
	for any $r \in (0,2\epsilon_i^{-1})$. Thus, for any other $x_i',\rho_i',\nu_i'$ with $\delta_i' \coloneqq \dist_{\omega_\CC}(o, B_{\rho_i'}(x_i')\setminus B_{\nu_i'}(x_i'))$, then 
	\begin{align*}
		&\tilde f_i (\delta_i) \rho_i^{-\alpha} \Norm{\tilde\varphi_i-\zeta_i}{0,(B_{\rho_i}(x_i)\setminus B_{\nu_i}(x_i))\cap B_{2(\epsilon_i)^{-1}}(o)}\\ 
		=& \tilde f_i (\delta_i') \frac{\tilde f_i (\delta_i)}{\tilde f_i (\delta_i')} \rho_i^{-\alpha} \Norm{\tilde\varphi_i-\zeta_i}{0,(B_{\rho_i}(x_i)\setminus B_{\nu_i}(x_i))\cap B_{2(\epsilon_i)^{-1}}(o)}\\
		\geq& \frac{1}{2} \tilde f_i (\delta_i') \frac{\tilde f_i (\delta_i)}{\tilde f_i (\delta_i')} \frac{\Phi_{\epsilon_i}^* f_{i^{-1}} (\delta_i')} {\Phi_{\epsilon_i}^* f_{i^{-1}} (\delta_i)} 
		(\rho_i')^{-\alpha}  \min_{\zeta \in \C}\Norm{\tilde\varphi_i-\zeta}{0,(B_{\rho_i'}(x_i')\setminus B_{\nu_i'}(x_i'))\cap B_{2(\epsilon_i)^{-1}}(o)}\\
		\geq&\frac{1}{2} \tilde f_i (\delta_i')  (\rho_i')^{-\alpha}\min_{\zeta \in \C} \Norm{\tilde\varphi_i-\zeta}{0,(B_{\rho_i'}(x_i')\setminus B_{\nu_i'}(x_i'))\cap B_{2(\epsilon_i)^{-1}}(o)},
	\end{align*}
	with the penultimate inequality following from \eqref{DataMaximizingLinear} and $x_i,\rho_i,\nu_i, \zeta_i$ realizing at least half of the seminorm in \eqref{DataMaximizingLinear}. Thus, $x_i,\rho_i,\nu_i, \zeta_i$ also realizes at least half of $[\tilde\varphi_i]_{\alpha,B_{2(\epsilon_i)^{-1}}(o),\tilde f_i,\C}'$:
	\begin{equation*}
		\frac{1}{2} [\tilde\varphi_i]_{\alpha,B_{2(\epsilon_i)^{-1}}(o),\tilde f_i,\C}' \leq \rho_i^{-\alpha} \tilde f_i(\delta_i) \Norm{\tilde\varphi_i-\zeta_i}{0,(B_{\rho_i}(x_i)\setminus B_{\nu_i}(x_i))\cap B_{2(\epsilon_i)^{-1}}(o)}.
	\end{equation*}
	
	For the case $\delta_i \in [\epsilon_i^{-1}, 2\epsilon_i^{-1}]$, then as $\Norm{\tilde \varphi_i}{0, B_{3(\epsilon_i)^{-1}}(o)} \leq C_1$ and the Laplacian of $\tilde\varphi$ is bounded by \eqref{PhiLaplacianScaled}, Corollary \ref{HolderBoundFunction1Alpha} and Lemma \ref{PropAlternativeHolderEquivConstant} imply that $[\tilde\varphi_i]_{\alpha,B_{2(\epsilon_i)^{-1}}(o) \setminus B_1(o),\C}' \leq \frac{1}{c}$ for some constant $c>0$, weight function identically $1$, and on $B_{2(\epsilon_i)^{-1}}(o) \setminus B_1(o)$. Therefore,
	\begin{equation*}
		\tilde f_i(\delta_i) \geq c,
	\end{equation*}
	i.e. $\tilde f_i(\delta_i)$ is bounded from below. This also further shows that $\delta_i \leq \frac{2-c}{\epsilon_i}$. Assuming that $\rho_i$ stays bounded as $i\to\infty$, the distance $\dist_{\omega_\CC}(\partial (B_{2(\epsilon_i)^{-1}}(o)) , B_{\rho_i}(x_i)\setminus B_{\nu_i}(x_i))\to \infty$ as $i\to\infty$, i.e. our maximizing ball gets infinitely far away from the outer boundary in the limit.\\

	Finally, as $\tilde f_i \geq \Phi_{\epsilon_i}^*(f_{i^{-1}})$, then $[\tilde\varphi_i]_{\alpha,B_{2(\epsilon_i)^{-1}}(o),\tilde f_i,\C}' \geq 	[\tilde \varphi_i]'_{\alpha,B_{2(\epsilon_i)^{-1}}(o),\Phi_{\epsilon_i}^*(f_{i^{-1}}),\C}$. If necessary and without changing notation, make $\epsilon_i\to 0$ even faster such that
	\begin{equation}\label{EqualToKConeLinear}
		[\tilde\varphi_i]_{\alpha,B_{2(\epsilon_i)^{-1}}(o),\tilde f_i,\C}' = 1
	\end{equation}
	for any $i \in \N$. This rescaling preserves the discussion above, in particular the lower bound $\tilde f_i(\delta_i) \geq c$.
	
	\subsubsection{Setting up Contradiction and Obtaining Limiting Function and Metric}
	
	We prove Proposition \ref{SchauderLinear} by showing the following claim:\\
	
	\textbf{Claim:} As $i\to\infty$:
	\begin{equation}\label{PhiSeminormTo0}
		[\tilde\varphi_i]_{\alpha,B_{2(\epsilon_i)^{-1}}(o), \tilde f_i, \C}' \to 0.
	\end{equation}
	
	If the claim is true, we obtain a contradiction with \eqref{EqualToKConeLinear} and hence also \eqref{PhiSeminormToInfty}, proving the theorem.\\
	
	\textbf{Proof:} The first step is to show that there exists a $C^{1,\beta}_\loc(\CC)$-sublimit $\tilde\varphi_i \to \varphi_\infty$ for all $\beta<1$. Due to the convergence $\tilde\omega_i \to \omega_\infty$ in $C^{1,\beta}_{\loc}(\CC)$ (see Lemma \ref{LemmaScalarCurvControlSublimit}), the bounds coming from \eqref{PhiBound}, and rescaling:
	\begin{equation*}
		\Norm{ \Delta_{\tilde\omega_i}\tilde \varphi_i}{0, B_{3(\epsilon_i)^{-1}}(o)} \leq D\epsilon_i^2,
	\end{equation*}
	so Corollary \ref{HolderBoundFunction1Alpha} shows that we obtain a $C^{1,\beta}_\loc(\CC)$-sublimit $\varphi_\infty$. The limit satisfies the equation
	\begin{equation}\label{LaplacePhiLimitIsZero}
		\Delta_{\omega_\infty} \varphi_\infty = 0
	\end{equation}
	weakly. Elliptic regularity implies that $\varphi_\infty \in C_\loc^\infty(\CC)$ and satisfies \eqref{LaplacePhiLimitIsZero} strongly with $L^\infty$-bound coming from \eqref{PhiBound}, so $\varphi_\infty \in \C$ by Proposition \ref{PropGrowthRateToHomogeneousFunction}.\\
	
	We divide the proof of \eqref{PhiSeminormTo0} into three cases:

	\subsubsection{\textbf{Case 1:} $\rho_i\to\infty$}
	
	Since $(\tilde\varphi_i)$ is uniformly bounded by assumption and $0\in \C$, we have
	\begin{equation*}
		\frac{1}{2}[\tilde\varphi_i]_{\alpha,B_{2(\epsilon_i)^{-1}}(o),  \tilde f_i, \C}' \leq \tilde f_i(\delta_i) \rho_i^{-\alpha} \min_{\zeta \in \C}\Norm{\tilde \varphi_i-\zeta}{B_{\rho_i}(x_i)\setminus B_{\nu_i}(x_i)} \leq C_1\rho_i^{-\alpha} \to 0
	\end{equation*}
	as $i\to\infty$, which is as stated in the claim.

	\subsubsection{\textbf{Case 2:} $\rho_i$ is bounded away from $0$ and $\infty$, i.e., there exist constants $b,B>0$ such that $b<\rho_i <B$.}
	
	\begin{enumerate}[\text{Subcase 2.}1)]
		\item $0 < a < \delta_i < A < \infty$ for some $a,A>0$: The convergence $\tilde\varphi_i\to \varphi_\infty \in \C$ in $C^{1,\beta}(K)$ for a compact set $K$ containing all $B_{\rho_i}(x_i)\setminus  B_{\nu_i}(x_i)$ for $i$ sufficiently large implies that
		\begin{equation*}
			\rho_i^{-\alpha} \min_{\zeta\in\C} \Norm{\tilde\varphi_i - \zeta}{0,B_{\rho_i}(x_i)\cap B_{\nu_i}(x_i) } \leq 2b^{-\alpha}\Norm{\tilde\varphi_i - \varphi_\infty}{0,K} \to 0.
		\end{equation*}
		
		\item $\delta_i \to 0$: In this case, $B_{\rho_i}(x_i)\setminus B_{\nu_i}(x_i)$ leaves every compact set in ${\CC}$, and we cannot use the convergence $\tilde\varphi_i\to \varphi_\infty$ in $C^{1,\beta}_\loc(\CC)$ directly. For this part, we assume $\rho_i=1$ at all times to ease notation. Note that $\tilde f_i(\delta_i) = 1$, but this does not play a role here.
		
		Since $\tilde\varphi_i \to \varphi_\infty \in \C$ in $C_\loc^{1,\beta}(\CC)$, for any small but fixed $\mu>0$ and $i$ sufficiently large, we have
		\begin{equation}\label{EqOmega_iConvMuLinear}
			\Norm{\tilde\varphi_i - \varphi_\infty}{0,[\mu, 2] \times L}\leq \mu.
		\end{equation}
		By the Hölder bound \eqref{EqualToKConeLinear} and choosing $x_i = o$, there exists $\psi_i \in \C$ such that
		\begin{equation*}
			\tilde f_i(\delta_i)\Norm{\tilde\varphi_i - \psi_i}{0,[\delta_i,2\mu] \times L} \leq (2\mu)^\alpha.
		\end{equation*}
		The triangle inequality now implies that
		\begin{align*}
			\tilde f_i(\delta_i)\Norm{\varphi_\infty - \psi_i}{0,[\mu,2\mu] \times L} &=\tilde f_i(\delta_i) \Norm{\varphi_\infty - \tilde\varphi_i + \tilde\varphi_i - \psi_i}{0,[\mu,2\mu]\times L}\\ 
            &\leq \tilde f_i(\delta_i)\Norm{\varphi_\infty - \tilde\varphi_i }{0, [\mu,2\mu]\times L} + \tilde f_i(\delta_i)\Norm{ \tilde\varphi_i - \psi_i}{0,[\mu,2\mu]\times L}\\
			&\leq\mu + (2\mu)^\alpha \\
			&\leq 3\mu^\alpha.
		\end{align*}
		As $\varphi_\infty, \psi_i\in \C$, then
		\begin{equation*}
			\tilde f_i(\delta_i) \Norm{\varphi_\infty - \psi_i}{0,(0,2] \times L} \leq  3\mu^\alpha.
		\end{equation*}
		So as $\zeta_i\in \C$ is the minimizer on $B_{\rho_i}(o)\setminus B_{\nu_i}(o)\subset [\delta_i,2]\times L$:
		\begin{align*}
				\tilde f_i(\delta_i)\Norm{\tilde\varphi_i - \zeta_i}{0,B_{\rho_i}(x_i)\setminus B_{\nu_i}(x_i)} &\leq \tilde f_i(\delta_i)\Norm{\tilde\varphi_i - \varphi_\infty}{0,[\delta_i,2] \times L} \\
				&\leq \tilde f_i(\delta_i)(\Norm{\tilde\varphi_i - \varphi_\infty}{0,[\delta_i,2\mu] \times L} + \Norm{\tilde\varphi_i - \varphi_\infty}{0,[2\mu,2] \times L}) \\
				&\leq\tilde f_i(\delta_i) \Norm{\tilde\varphi_i -\psi_i+\psi_i - \varphi_\infty}{0,[\delta_i,2\mu] \times L} + \mu\\ 
				&\leq \tilde f_i(\delta_i)\Norm{\tilde\varphi_i -\psi_i}{0,[\delta_i,2\mu] \times L} + \tilde f_i(\delta_i)\Norm{\psi_i - \varphi_\infty}{0,[\delta_i,2\mu] \times L} + \mu\\
			&\leq 2\mu^\alpha +  3\mu^\alpha + \mu,
		\end{align*}
		as stated by the claim as $\mu$ can be chosen arbitrarily small for $i$ big enough.
		\item $\delta_i \to\infty$: Pick a covering $\UU$ of coordinate balls. As $\dist_{\omega_{\CC}}(x_i,o)\to\infty$, and as the coordinate balls $\UU$ have increasing size moving away from the apex, $B_{\rho_i}(x_i)$ is always contained in a coordinate ball $U$ for $i$ sufficiently large. Fix the associated holomorphic coordinates on $U$, and apply translations such that $x_i=0$ is always the center. We already know that $\dist(x_i, \partial B_{2(\epsilon_i)^{-1}}(o))\to\infty$, so for every $i$, choose an increasing sequence of balls $B_{r_i}(x_i)\subset B_{2(\epsilon_i)^{-1}}(o)$ such that $r_i\to\infty$ and $\frac{r_i}{\epsilon_i^{-1}} \to 0$. This ensures that $B_{r_i}(x_i)$ is always contained in a coordinate ball for $i$ large enough.
		
		By the Liouville Theorem (Theorem \ref{LiouvilleTheorem}) for cscK metrics and the Liouville Theorem for bounded harmonic functions on $\C^m$ \cite[Corollary 3.12]{gilbarg_Elliptic_2001}, $\tilde\omega_i|_{B_{r_i}(x_i)}\to A^*\omega_\eucl$ in $C_\loc^{1,\beta}(\C^m)$ for some $A\in \GL(m,\C)$, and $\tilde\varphi_i|_{B_{r_i}(x_i)}$ converges to a constant $\zeta_{\text{const}}\in \C$ in $C_\loc^{1,\beta}(\C^m)$. Hence,
		\begin{equation*}
			\rho_i^{-\alpha}\min_{\zeta\in \C} \Norm{\tilde\varphi_i - \zeta}{0,B_{\rho_i}(x_i)\setminus B_{\nu_i}(x_i)} \leq b^{-\alpha}\Norm{\tilde\varphi_i -\zeta_{\text{const}}}{0,B_{\rho_i}(x_i)} \to 0, \quad i\to\infty,
		\end{equation*}
		and the result follows.
	\end{enumerate}
	
	\subsubsection{\textbf{Case 3:} $\rho_i\to 0$ }
	
	In this case, we rescale ${\CC}$ at $o$ such that if $\hat x_i$ and $\hat \rho_i$ are the rescaled coordinates of $x_i$ and $\rho_i$, respectively, then $\hat \rho_i = 1$ for all $i$. This rescaling decreases the ${C^{0,\alpha}}'$-seminorm of $\tilde \varphi_i$, but after multiplying by $\rho_i^{-\alpha}$, the seminorm equals 1 in the rescaled coordinates. The $C^0$-norm now blows up, but by subtracting a constant, we can locally control the uniform norm. More precisely, we define:
	\begin{enumerate}
		\item $\hat x_i \coloneqq \Phi_{\rho_i}^{-1}(x_i)$,
		\item $\rho_i^{-2}\Phi_{\rho_i}^*(\omega_\CC) = \omega_\CC$,
		\item $\hat \omega_i \coloneqq \rho_i^{-2}\Phi_{\rho_i}^* \tilde\omega_i$,
		\item $\hat \varphi_i \coloneqq  \rho_i^{-\alpha}\Phi_{\rho_i}^* \tilde \varphi_i$,
		\item $\hat \rho_i \coloneqq \frac{\rho_i}{\rho_i} =1, \hat \nu_i \coloneqq \frac{\nu_i}{\rho_i}< 1, \hat \delta_i \coloneqq \frac{\delta_i}{\rho_i}$,
		\item $\hat f_i \coloneqq \Phi_{\rho_i}^*(\tilde f_i)$.
	\end{enumerate}
	$\omega_\CC$ is invariant under this rescaling as it is a cone metric. As stated above, the maximizing distance is $\hat \rho_i = 1$ and
	\begin{equation}\label{HatXiHolderEqual1}
		[\hat \varphi_i]_{\alpha, B_{2(\epsilon_i\rho_i)^{-1}}(o), \hat f_i,\C}' = 1.
	\end{equation}
	as $\hat \varphi_i \coloneqq  \rho_i^{-\alpha}\Phi_{\rho_i}^* \tilde \varphi_i$. However, a priori,
	\begin{equation*}
		\Norm{ \hat \varphi_i}{0,B_{2(\epsilon_i\rho_i)^{-1}}(o)}\to\infty
	\end{equation*}
	as $i\to\infty$. By construction of $\tilde f_i$, we have
	\begin{equation}\label{fBoundInfinityLinear}
		\hat f_i(\hat \delta_i) \geq c.
	\end{equation}
	Pick a fixed but arbitrary $p\in \{r=1\}$. By defining $\check \varphi_i \coloneqq \hat \varphi_i - \hat \varphi_i(p)$, we ensure that $\check{\varphi}_i(p) = 0$, and it also satisfies the estimate \eqref{HatXiHolderEqual1}:
	\begin{equation}\label{CheckXiHolderEqual1}
		[\check \varphi_i]_{\alpha, B_{2(\epsilon_i\rho_i)^{-1}}(o), \hat f_i,\C}' = 1.
	\end{equation}
	
	We again divide Case 3 into three subcases:
	
	\begin{enumerate}[\text{Subcase 3.}1)]
		\item $0<a < \hat \delta_i < A<\infty$ for some $a,A>0$: 
		We first show that $\check\varphi_i $ is uniformly bounded in $C^{0,\alpha}_\loc(B_{2(\epsilon_i\rho_i)^{-1}}(o))$: Let $\zeta^{R,\nu}\in \C$ be the constant realizing the minimum for $[\check \varphi_i]_{\alpha, B_{2(\epsilon_i\rho_i)^{-1}}(o), \hat f_i,\C}'$ on $B_R(o)\setminus B_\nu(o)$. Then \eqref{CheckXiHolderEqual1} implies that
		\begin{equation}\label{GrowthEstimateXi}
			\begin{aligned}
			\hat f_i(\nu)\Norm{\check \varphi_i}{0,B_R(o)\setminus B_{\nu}(o)} &\leq \hat f_i(\nu)\Norm{\check \varphi_i - \zeta^{R,\nu}}{0,B_R(o)\setminus B_{\nu}(o)} + \hat f_i(\nu)\Norm{\zeta^{R,\nu}}{0,B_R(o)\setminus B_{\nu}(o)}\\
			&\leq R^\alpha + \hat f_i(\nu)|\zeta^{R,\nu}(p)| \\
			&\leq R^\alpha + \hat f_i(\nu)|\zeta^{R,\nu}(p) - \check \varphi_i(p)| + \hat f_i(\nu)|\check \varphi_i(p)|\\
			&\leq 2R^\alpha
		\end{aligned}
		\end{equation}
		as $\check \varphi_i(p) = 0$ by construction. Notice that $\Norm{\zeta^{R,\nu}}{0,B_R(o)\setminus B_{\nu}(o)} = |\zeta^{R,\nu}(p)|$ as $\zeta^{R,\nu}$ is a constant. As $\hat f_i(\hat\delta_i) = 1$ and $\hat\delta_i < A<\infty$ for some $A>0$, the decay condition of $\hat f_i$ and \eqref{GrowthEstimateXi} show that
		\begin{equation}\label{C0LocBoundXi}
			|\check \varphi_i| \leq C \begin{cases}
				r^{-\frac{1+\alpha}{2}} & r \leq 1, \\ r^{\alpha} & r > 1.
			\end{cases}
		\end{equation}
		Due to the $C^{0}_\loc(B_{2(\epsilon_i\rho_i)^{-1}}(o))$-bound \eqref{C0LocBoundXi} of $\check \varphi_i$, $\epsilon_i \to 0$, and
		\begin{equation*}
			\Norm{\Delta_{\hat \omega_i} \check \varphi_i}{0, B_{3(\epsilon_i\rho_i)^{-1}}(o)} \leq C_2\epsilon_i^{2}\rho_i^{2-\alpha},
		\end{equation*}
		we obtain a $C^{1,\beta}_{\loc}(\CC)$-limit $\check\varphi_\infty$ and a $C^{1,\beta}_{\loc}(\CC)$-limit $\hat\omega_\infty = \Psi^*\omega_\CC$ for all $\beta<1$ satisfying
		\begin{equation}\label{EqDeltaXiInfty}
			\Delta_{\hat \omega_\infty} \check \varphi_\infty = 0
		\end{equation}
		weakly, and hence strongly by elliptic regularity. $\check \varphi_\infty$ also has the growth rate \eqref{C0LocBoundXi} as $\hat \delta_i < A$, so by Proposition \ref{PropGrowthRateToHomogeneousFunction}, Ricci-flatness of $\hat\omega_\infty$, and $\alpha< \frac{1+\alpha}{2}< 1$, we conclude that $\check\varphi_\infty\in \C$. As $\hat\rho_i = 1$, then
		\begin{equation*}
			1 = [\check \varphi_i]_{\alpha, B_{2(\epsilon_i\rho_i)^{-1}}(o), \hat f_i,\C} \leq 2 \Norm{\check \varphi_i- \check\varphi_\infty}{0,K} \to 0, \quad i\to \infty,
		\end{equation*}
		for some compact set $K$ containing all $B_1(\hat x_i)\setminus B_{\hat \nu_i}(\hat x_i)$, showing the claim in this case.
		\item $\hat\delta_i = \dist_{\omega_{{\CC}}}(B_1(\hat x_i)\setminus B_{\hat \nu_i}(\hat x_i),o)\to 0$: Subcase 3.1 showed that $\check \varphi_i \to \check \varphi_\infty$ in $C^{1,\beta}_\loc(\CC)$. Given this, the proof is exactly as in subcase 2.2.
		\item $\dist_{\omega_{\CC}}(\hat x_i,o)\to \infty$: As in subcase 2.2, assume that $B_1(\hat x_i)$ is always contained in a coordinate ball for $i$ great enough, and apply a translation in the associated holomorphic coordinates such that $\hat x_i=0$. Fix a ball $B_{r_i}(\hat x_i)$ for any $i$ such that $r_i\to\infty$ and $B_{r_i}(\hat x_i)$ is always contained in a coordinate ball. 
		
		Redefine $\check\varphi_i \coloneqq \hat\varphi_i - \hat\varphi_i(\hat x_i)$, then the estimates of Subcase 3.1 show that
		\begin{equation*}
			\left|\check\varphi_i |_{B_{r_i}(\hat x_i)} \right| \leq Cr^{\alpha}
		\end{equation*}
		for $0\leq r \leq r_i$. The estimate of the Laplacian also holds:
		\begin{equation*}
			\Norm{\Delta_{\hat \omega_i} \check \varphi_i}{0, B_{3(\epsilon_i\rho_i)^{-1}}(o)} \leq C_2\epsilon_i^{2}\rho_i^{2-\alpha}.
		\end{equation*}
		Therefore, we obtain $C^{1,\beta}_{\loc}(\C^m)$-sublimits $\check\varphi_\infty, \hat \omega_\infty$ for $\beta<1$ satisfying $\check \varphi_\infty = \OO(r^\alpha)$ and $\Delta_{\hat \omega_\infty} \check\varphi_\infty = 0$ weakly. By the Liouville Theorem (Theorem \ref{LiouvilleTheorem}), then $\hat\omega_\infty = A^* \omega_{\eucl}$ for some matrix $A\in \GL(m,\C)$, and hence the usual Liouville Theorem for harmonic functions with sublinear growth (or see Proposition \ref{PropGrowthRateToHomogeneousFunction}) implies that $\check\varphi_\infty \in \C$. This again contradicts $\hat \rho_i = 1$ and \eqref{CheckXiHolderEqual1} by the arguments in subcase 2.3.
	\end{enumerate}


\subsection{Nonlinear $C^{0,\alpha}$-Estimate: Preliminaries}\label{SectionNonlinear}

After proving the linear Schauder estimate in Proposition \ref{SchauderLinear}, we are now in a position to prove an analogous Evans-Krylov estimate for the complex Monge-Ampère equation on $(\CC, \omega_\CC)$. The method of proof is very similar to Proposition \ref{SchauderLinear}, except that there no longer is a concept of globally constant 2-forms as there is for functions. Therefore, we include $\Sigma_{\loc}^2$ in the comparison set to handle the case $x_i \to \infty$ and get control over the regular Hölder seminorm. This section provides some necessary results for the regularity of the comparison sets $\Sigma_{3C}^2$ and $ \Sigma_{\loc}^2$.

\begin{lemma}\label{LemmaHolderBound}
	Let $K'\Subset K^\degree$, $K\Subset \CC$ be compact sets, and fix $\beta \in (0,1)$. Take a Kähler form $\omega$ on $K$ with $\frac{1}{C}\omega_\CC \leq \omega \leq C\omega_\CC$. Pick any $\Psi\in\Aut_\Scl(\CC)$ such that $\frac{1}{3C}\omega_\CC \leq \Psi^*\omega_\CC \leq 3C\omega_\CC$.
	Assume that
	\begin{equation*}
		\norm{\Scal \omega}_{0, K} \leq \delta, \quad \Norm{\omega - \Psi^*\omega_\CC}{0,K}\leq \delta
	\end{equation*}
	for $\delta \leq 1$. Then
	\begin{equation*}
		\norm{\omega-\Psi^*\omega_\CC}_{0,\beta, K'} \leq B \delta,
	\end{equation*}
	for any $\beta < 1$ and $B = B(C,K,K',\beta)$ independent of $\Psi$ and $\delta>0$.
\end{lemma}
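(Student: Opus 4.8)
The plan is to reduce the estimate to the linear Hölder estimate of Corollary \ref{HolderBoundFunction1Alpha}, applied to the ratio of volume forms, and then to transfer that control to the full difference $\eta \coloneqq \omega - \Psi^*\omega_\CC$ by way of the linearized complex Monge--Ampère equation; the whole point is that this route produces a genuinely \emph{linear} dependence on $\delta$, which no interpolation argument between the $C^0$-smallness and the a priori higher regularity could give. Write $\omega' \coloneqq \Psi^*\omega_\CC$, which is again Ricci-flat and satisfies $\frac{1}{3C}\omega_\CC \leq \omega' \leq 3C\omega_\CC$. First I would fix intermediate compacts $K' \Subset \tilde K \Subset K^\degree$, cover $K$ by finitely many coordinate balls, and apply Proposition \ref{PropRegCMA} (after the usual rescaling to unit Euclidean balls using the uniform equivalence to $\omega_\CC$) to $\omega$ with scalar-curvature bound $\delta \leq 1$ and to $\omega'$ with scalar curvature $0$. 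This yields $\Norm{\omega}{1,\alpha,\tilde K} + \Norm{\omega'}{1,\alpha,\tilde K} \leq M$ for every $\alpha \in (0,1)$, with $M = M(C,K,\tilde K,\alpha)$ independent of $\Psi$ and $\delta$, since the bound for $\omega'$ depends only on the equivalence constant $3C$, not on the particular automorphism.

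The key step concerns the volume forms. Writing $\omega^m = e^F \omega_\CC^m$ and $(\omega')^m = e^{F'}\omega_\CC^m$, the uniform equivalence gives $\Norm{F - F'}{0,\tilde K} \leq C\delta$, because $F - F' = \tr\log(I + (\omega')^{-1}\eta)$ and $\Norm{\eta}{0} \leq \delta$. Since $\omega'$ is Ricci-flat, $F'$ is pluriharmonic, so $\Delta_\omega F' = \tr_\omega(i\del\delbar F') = 0$; combined with $\Scal(\omega) = -\Delta_\omega F$ this yields the crucial identity
\begin{equation*}
\Delta_\omega (F - F') = -\Scal(\omega), \qquad \Norm{\Scal(\omega)}{0,K} \leq \delta .
\end{equation*}
Now Corollary \ref{HolderBoundFunction1Alpha} applies to the scalar function $F - F'$ and gives $\Norm{F - F'}{1,\alpha,\tilde K} \leq C\big(\Norm{\Scal(\omega)}{0} + \Norm{F-F'}{0}\big) \leq C\delta$ for every $\alpha \in (0,1)$. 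Both inputs are $O(\delta)$, so there is no sublinear loss, and consequently $\Norm{e^F - e^{F'}}{0,\alpha,\tilde K} \leq C\delta$.

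It remains to pass from the volume-form difference to the full tensor $\eta$. On each coordinate ball I would choose potentials $\omega = i\del\delbar\phi$, $\omega' = i\del\delbar\phi'$ with uniform $C^{2,\alpha}$ bounds via \cite[Lemma 2.1]{chenalpha2015}, and set $w \coloneqq \phi - \phi'$, normalized so that $\Norm{w}{0,\tilde K} \leq C\Norm{\eta}{0} \leq C\delta$. Expanding $\det\omega - \det\omega'$ by the mean-value/cofactor identity produces a uniformly elliptic linear equation
\begin{equation*}
a^{i\bar j}\,\del_i\del_{\bar j} w = e^F - e^{F'}, \qquad a^{i\bar j} \coloneqq \frac{1}{\det\omega_\CC}\int_0^1 \mathrm{cof}^{i\bar j}\big((1-t)\omega' + t\omega\big)\,dt ,
\end{equation*}
whose coefficients are uniformly elliptic (the convex combinations stay between $\frac{1}{3C}\omega_\CC$ and $3C\omega_\CC$) and uniformly bounded in $C^{0,\alpha}(\tilde K)$ by the first step. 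Interior Schauder \cite[Theorem 6.2]{gilbarg_Elliptic_2001} then gives $\Norm{w}{2,\alpha,K'} \leq C\big(\Norm{e^F - e^{F'}}{0,\alpha,\tilde K} + \Norm{w}{0,\tilde K}\big) \leq C\delta$, whence $\Norm{\eta}{0,\alpha,K'} = \Norm{i\del\delbar w}{0,\alpha,K'} \leq C\delta$. Choosing $\alpha = \beta$ — legitimate because all the preceding estimates hold for every $\alpha \in (0,1)$ — proves the lemma, with $B$ depending only on $C$, $K$, $K'$ and $\beta$.

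The hard part is not any single calculation but two bookkeeping points: verifying that every constant is uniform in $\Psi$ (which holds because each appeal to Proposition \ref{PropRegCMA}, to the ellipticity of $a^{i\bar j}$, and to Corollary \ref{HolderBoundFunction1Alpha} sees only the equivalence constants $C,3C$), and fixing the correct normalization of the potential $w$ so that $\Norm{w}{0} \leq C\delta$. The conceptual heart, however, is the identity $\Delta_\omega(F-F') = -\Scal(\omega)$, which exploits the pluriharmonicity of $F'$ to feed genuinely $O(\delta)$ data into the linear Hölder estimate and thereby avoids the sublinear loss that a naive interpolation between the $C^0$-smallness of $\eta$ and its a priori $C^{1,\alpha}$-bound would incur.
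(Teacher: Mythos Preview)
Your proof is correct and follows essentially the same route as the paper: establish a priori $C^{1,\alpha}$ bounds on $\omega$ and $\Psi^*\omega_\CC$ via Proposition \ref{PropRegCMA}, use the pluriharmonicity of $F'$ (equivalently, Ricci-flatness of $\Psi^*\omega_\CC$) to obtain $\Delta_\omega(F-F') = -\Scal(\omega)$ and feed this into Corollary \ref{HolderBoundFunction1Alpha}, then linearize the Monge--Amp\`ere equation and apply Schauder to a local potential for $\omega-\Psi^*\omega_\CC$ with $C^0$-norm $O(\delta)$. The only cosmetic difference is in the linearization: the paper Taylor-expands to get $m\Delta_\omega f = (e^G-1) + O((\omega-\Psi^*\omega_\CC)^2)$ and bounds the quadratic remainder by $\|\eta\|_{0}^{l-1}\|\eta\|_{0,\beta}\leq C\delta$ using the a priori H\"older bound, while you absorb this remainder into the variable coefficients $a^{i\bar j}$ via the integral cofactor identity---both are standard and give the same conclusion.
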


\begin{proof}
	Choose a sequence of compact sets
	\begin{equation*}
		K' \Subset (K''')\degree \subset K''' \Subset (K'')\degree \subset K'' \subset K\degree.
	\end{equation*}
	By proposition \ref{PropRegCMA}, then $\norm{\omega}_{1,\beta, K''} \leq A$ for some constant $A > 0$ and any $\beta< 1$ as $\delta \leq 1$.	Similarly, $\norm{\Psi^*\omega_\CC}_{1,\beta, K''}\leq A$ as $\frac{1}{3C}\omega_\CC \leq \Psi^*\omega_\CC \leq 3C\omega_\CC$.  Write $e^G\omega^m =(\Psi^*\omega_\CC)^m$. The condition $\norm{\Scal \omega}_{0, K} \leq \delta$ is equivalent to $\norm{\Delta_\omega G}_{0, K} \leq \delta$. Then:
	\begin{align*}
		e^G\omega^m &= (\Psi^*\omega_\CC)^m\\
		&=  (\omega + (\Psi^*\omega_\CC- \omega))^m \\
		&=  \omega^m + m  \omega^{m-1} \wedge (\Psi^*\omega_\CC - \omega) + \OO((\Psi^*\omega_\CC - \omega)^2).
	\end{align*}
	For every point $p\in K'''$ and by \cite[Lemma 2.1]{chenalpha2015}, there is a ball $B_\epsilon(p)\subset K''$ with a $C^2$-potential $f$ such that $i \del\delbar f = \Psi^*\omega_\CC - \omega$ and $C^{0}(B_{\frac{\epsilon}{2}}(p))$-norm uniformly bounded by $\delta$. Then:
	\begin{equation}\label{LaplacianFLemma}
		m \Delta_{\omega} f = m \tr_{\omega} (\Psi^*\omega_\CC - \omega) = (e^G-1) + \OO((\omega - \Psi^*\omega_\CC )^2).
	\end{equation}
	Cover $K'''$ with finitely many such balls. Since:
	\begin{equation*}
		\Norm{\Delta_\omega G}{0,K''} \leq C\delta, \qquad \Norm{e^{G}-1}{0,K} < C' \Norm{\omega - \Psi^*\omega_\CC }{0,K} \leq C' \delta,
	\end{equation*}
	then $\Norm{G}{0,K}\leq C' \delta$, so Corollary \ref{HolderBoundFunction1Alpha} applied to each $B_\epsilon(p)$ implies that:
	\begin{equation*}
		\begin{aligned}
			\Norm{G}{1, \beta,K'''} &\leq C' \left(\norm{G}_{0, K''} + \Norm{\Delta_\omega G}{0,K''}\right) \leq C' \delta,
		\end{aligned}
	\end{equation*}
	for all $\beta < 1$ and $C'$ depending on $\beta$. Using the product rule, this now lifts to $e^{G}-1$:
	\begin{equation*}
		\Norm{e^{G}-1}{1, \beta,K'''} \leq C' \delta.
	\end{equation*}
	We estimate the remaining terms on the right-hand side as:
	\begin{align*}
		\Norm{({\omega} - \Psi^*\omega_\CC)^l}{0, \beta,K'''} &\leq D' \Norm{{\omega} - \Psi^*{\omega_\CC}}{0,K'''}^{l-1} \Norm{{\omega} - {\Psi^*\omega_\CC}}{0, \beta,K'''}\\ 
		&\leq D' \Norm{{\omega} - \Psi^*\omega_\CC}{0, K''} \\
		& \leq D' \delta
	\end{align*}
	for any $l \geq 2$. The result follows by applying the Schauder estimates for $\Delta_{{\omega}}$ to \eqref{LaplacianFLemma}.
\end{proof}

\begin{lemma}\label{LemmaCompactnessAutomorphisms}
	The set $\{\Psi \in \Aut_{\Scl}(\CC) \mid \frac{1}{C} \leq \Norm{\Psi^*\omega_\CC}{0, K} \leq C \}$ is compact for any constant $C \geq 1$ and compact set $K \in \CC$ such that the interior $K^\degree$ contains a level set $\{r = c\} \subset K^\degree$. 
\end{lemma}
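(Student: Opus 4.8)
The plan is to prove sequential compactness: every sequence $(\Psi_i)$ in the set admits a subsequence converging, in $C^\infty_\loc(\CC)$, to some $\Psi_\infty$ lying again in the set. Closedness is the easy half, which I would dispatch first: if $\Psi_i\to\Psi_\infty$ in $C^\infty_\loc$ then $\Psi_i^*\omega_\CC\to\Psi_\infty^*\omega_\CC$ in $C^0(K)$, and since $\Norm{\cdot}{0,K}$ is continuous under $C^0$-convergence the bounds $\tfrac1C\le\Norm{\Psi_i^*\omega_\CC}{0,K}\le C$ pass to the closed interval $[\tfrac1C,C]$ in the limit. So the real content is extracting the convergent subsequence while keeping the limit inside $\Aut_\Scl(\CC)$.

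First I would exploit the homogeneity forced by commuting with scaling. Since $\Psi_i\circ\Phi_\lambda=\Phi_\lambda\circ\Psi_i$ and $\Phi_\lambda^*\omega_\CC=\lambda^2\omega_\CC$, the pullback $\Psi_i^*\omega_\CC$ is again $2$-homogeneous, so its pointwise norm $|\Psi_i^*\omega_\CC|_{\omega_\CC}$ is constant along the rays of the cone, i.e. a function on the link $L$. Because the interior of $K$ contains a full level set $\{r=c\}$, the sup-norm over $K$ equals the maximum over all of $L$; hence $\Norm{\Psi_i^*\omega_\CC}{0,K}\le C$ yields the global pointwise bound $\Psi_i^*\omega_\CC\le C\omega_\CC$ on all of $\CC$. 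For the matching lower bound I would invoke volume rigidity: as $\Psi_i$ is holomorphic, commutes with $r\del_r$, and hence preserves $\xi$, the $(m,0)$-form of Lemma \ref{SameCalabiYauForm} satisfies $\Psi_i^*\Omega_\CC=c_i\Omega_\CC$ for a constant $c_i$ (a degree-$0$ homogeneous holomorphic function is constant), so $(\Psi_i^*\omega_\CC)^m=|c_i|^2\,\omega_\CC^m$. Combining the eigenvalue upper bound, this constant Jacobian, and the lower bound $\Norm{\Psi_i^*\omega_\CC}{0,K}\ge\tfrac1C$, I would derive a uniform two-sided equivalence $\tfrac1{C'}\omega_\CC\le\Psi_i^*\omega_\CC\le C'\omega_\CC$ with $C'=C'(C,m)$; pinning down this lower bound is the delicate step, discussed below. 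In particular each $\Psi_i$ and each $\Psi_i^{-1}$ is then uniformly bi-Lipschitz.

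Next I would linearize using Lemma \ref{ConeEmbedding}: fixing a torus containing $\xi$, there is an equivariant holomorphic embedding $\Phi\colon\CC\hookrightarrow\C^N$ and a homomorphism $\varphi\colon\Aut_\Scl(\CC)\hookrightarrow\GL(N,\C)$ with $\Phi\circ\Psi=\varphi(\Psi)\circ\Phi$ and $\Isom(\omega_\CC)\cap\Aut_\Scl(\CC)\hookrightarrow\U(N)$. Writing $A_i=\varphi(\Psi_i)$, the uniform two-sided equivalence forces the $A_i$ to move the points of the compact set $\Phi(\{r=c\})$ with controlled distortion, and likewise $A_i^{-1}$; since these points span $\C^N$ linearly, this bounds $A_i$ and $A_i^{-1}$ in $\GL(N,\C)$. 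Hence $A_i\to M$ along a subsequence with $M$ invertible. The limit $M$ preserves $\Phi(\CC)$ and commutes with the diagonal scaling, so $\Psi_\infty:=\Phi^{-1}\circ M\circ\Phi$ is a biholomorphism of $\CC$ commuting with scaling, i.e. $\Psi_\infty\in\Aut_\Scl(\CC)$ with $\varphi(\Psi_\infty)=M$; equivariance then upgrades $A_i\to M$ to $\Psi_i\to\Psi_\infty$ in $C^\infty_\loc(\CC)$, and the closedness step places $\Psi_\infty$ back in the set.

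I expect the main obstacle to be exactly the uniform \emph{lower} bound, i.e. ensuring the limit stays a genuine automorphism rather than degenerating to a singular linear map. This is where the constant-Jacobian rigidity $\Psi_i^*\Omega_\CC=c_i\Omega_\CC$ and the hypothesis that the interior of $K$ contains $\{r=c\}$ (which promotes the sup-norm bound to a global pointwise bound on the link) are indispensable: the upper bound alone permits eigenvalues of $\Psi_i^*\omega_\CC$ collapsing to zero, which would let $A_i^{-1}$ escape to infinity. A secondary technical point is verifying that $M$ is realized by an honest $\Psi_\infty$, which I would handle through the explicit geometric description $\Psi_\infty=\Phi^{-1}\circ M\circ\Phi$ rather than by abstract closedness of $\varphi(\Aut_\Scl(\CC))$ in $\GL(N,\C)$.
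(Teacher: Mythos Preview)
Your overall strategy coincides with the paper's: linearize via Lemma~\ref{ConeEmbedding}, bound the resulting matrices, and appeal to closedness of $\Aut_\Scl(\CC)\hookrightarrow\GL(N,\C)$. The paper is terser---it only bounds the matrix entries of $\Psi$ from the upper estimate and then invokes closedness---while you additionally try to control $\Psi^{-1}$ via a constant-Jacobian argument using $\Psi^*\Omega_\CC=c\,\Omega_\CC$.

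Your worry about the lower bound is not just delicate, it is fatal under the hypothesis as literally written. The condition $\tfrac1C\le\Norm{\Psi^*\omega_\CC}{0,K}\le C$ constrains only the \emph{largest} eigenvalue of $\Psi^*\omega_\CC$ relative to $\omega_\CC$; it does nothing to stop the smallest from collapsing. Concretely, on $\CC=\C^m\setminus\{0\}$ with $\omega_\CC=\omega_{\eucl}$ one has $\Aut_\Scl(\CC)=\GL(m,\C)$, and the family $\Psi_\epsilon=\diag(1,\epsilon,\dots,\epsilon)$ satisfies $\Norm{\Psi_\epsilon^*\omega_{\eucl}}{0,K}\to 1$ as $\epsilon\to 0$ while $\Psi_\epsilon$ degenerates to a singular matrix. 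Thus the set in the statement is not compact, and neither your volume-form trick nor the paper's bound-the-entries-and-cite-closedness argument can close the gap: a bounded sequence in $M_{N\times N}(\C)$ may converge outside $\GL(N,\C)$, and closedness of $\Aut_\Scl(\CC)$ in $\GL(N,\C)$ says nothing about that.

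What rescues the situation is that every use of this lemma in the paper (Lemma~\ref{LemmaAutomorphismSubconvergence}, Corollary~\ref{CorollaryOfCalpha}) actually has the \emph{pointwise} two-sided bound $\tfrac1C\omega_\CC\le\Psi^*\omega_\CC\le C\omega_\CC$ in hand. Under that hypothesis your argument goes through cleanly: the upper bound controls $\Psi$, the lower bound gives $(\Psi^{-1})^*\omega_\CC\le C\omega_\CC$ and hence controls $\Psi^{-1}$, so the sequence is relatively compact in $\GL(N,\C)$, and closedness finishes. You should therefore reformulate the lemma with the pointwise bound; with that change, your proof and the paper's are essentially the same, though yours makes the invertibility of the limit explicit where the paper glosses over it.
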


\begin{proof}
	\sloppy By Lemma \ref{ConeEmbedding}, there is an embedding $\CC \to \C^N$ such that the coordinate functions $z_1,\dots, z_N$ are linearly independent holomorphic and homogeneous functions on $\CC$ and $\Aut_{\Scl}(\CC)\hookrightarrow \GL(N,\C)$. Hence, we identify $\Aut_{\Scl}(\CC)$ with a closed subgroup of $\GL(N,\C)$. Then:
	\begin{align*}
		C \geq \Norm{\Psi^*\omega_\CC}{0,K} &= \max_{p\in K} \max_{\substack{v\in T_p \CC \\ \norm{v}_{\omega_\CC} = 1}} |g_\CC(D\Psi(v), D\Psi(v))|^{\frac{1}{2}} \\
		&\geq \frac{1}{C^2} \max_{p\in K}\max_{\substack{v\in T_p \CC \\ \norm{v}_{\omega_\eucl} = 1}} |g_\eucl(\Psi v,\Psi v)|^{\frac{1}{2}},
	\end{align*}
	where we have identified $T_p\CC \subset \C^N$ and $\Psi\in \GL(N,\C)$ by the embedding $\CC \to \C^N$ above. Next, we show that
	\begin{equation*}
		\Span_{\C}\left\{\bigcup_{p\in K} T_p\CC\right\} = \C^N. 
	\end{equation*}
	Assume not, then there exist constants $\lambda_1,\dots, \lambda_N$ such that
	\begin{equation}\label{CoordinateFunctionsSplit}
		\sum_{i=1}^N \lambda_i z_i(p) = \text{constant}, \quad \text{for all $p\in K$}.
	\end{equation}
	By homogeneity, and since $K^\degree$ contains a level set $\{r = c\}$, it follows that \eqref{CoordinateFunctionsSplit} splits into sums of coordinates with the same homogeneity, all equal to zero. By homogeneity, then
	\begin{equation*}
		\sum_{i=1}^N \lambda_i z_i(p) = 0, \quad \text{for all $p\in \CC$},
	\end{equation*}
	contradicting the fact that $z_1,\dots,z_N$ are linearly independent on $\CC$. The desired result follows, and we conclude that
	\begin{align*}
		C \geq \Norm{\Psi^*\omega_\CC}{0,K}	&= \frac{1}{C^2} \max_{p\in K}\max_{\substack{v\in T_p \CC \\ \norm{v}_{\omega_\eucl} = 1}} |g_\eucl(\Psi v,\Psi v)|^{\frac{1}{2}}\\
		&= D \max_{v\in S^{2N-1}\subset \C^N} |g_\eucl(\Psi v,\Psi v)|^{\frac{1}{2}}\\
		&= D \Norm{\Psi}{L^\infty_{\text{Op}}(K)},
	\end{align*}
	where $\Norm{\Psi}{L^\infty_{\text{Op}}(K)}$ is the operator norm. The constant $\frac{1}{C^2}$ was changed as $\bigcup_{p\in K} T_p\CC$ might not equal $\C^N$, only the span. By linear combinations, we can therefore estimate 
	\begin{equation*}
		\max_{v\in S^{2N-1}\subset \C^N} |g_\eucl(\Psi v,\Psi v)|^{\frac{1}{2}}.
	\end{equation*}
	$GL(N,\C)$ lies in the finite-dimensional vector space of $N\times N$ matrices, and the operator norms extend to this space. All norms are equivalent on a finite-dimensional vector space, so if $\Psi = (g_{ij})$:
	\begin{equation*}
		|g_{ij}| \leq D'.
	\end{equation*}
	Compactness now follows by $\Aut_{\Scl}(\CC)$ being a closed subgroup of $\GL(N,\C)$ under the embedding induced by $\CC \to \C^N$.
\end{proof}

\begin{lemma}\label{LemmaAutomorphismSubconvergence}
	If a sequence $\Phi_i^*\omega_\CC$ with $\frac{1}{C}\omega_\CC \leq \Phi_i^*\omega_\CC \leq C \omega_\CC$ converges to an element $\Psi_\infty^*\omega_\CC$ in $C^\infty_\loc(\CC)$, then there exists a subsequence $(i_k)$ and an isometry $F\in \Isom(\omega_\CC)\cap \Aut_\Scl(\CC)$ such that $\Phi_{i_k}\circ F\to\Psi_\infty$ in $C^{\infty}_\loc(\CC)$ as automorphisms.
\end{lemma}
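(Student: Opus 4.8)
The plan is to recover the maps $\Phi_i$ themselves (not merely their pullbacks) along a subsequence, and then to absorb the residual ambiguity into an isometry. The guiding observation is that the assignment $\Psi\mapsto\Psi^*\omega_\CC$ forgets precisely the $\Isom(\omega_\CC)\cap\Aut_\Scl(\CC)$-information: two automorphisms have the same pullback of $\omega_\CC$ exactly when they lie in one coset of the isometry subgroup. So one first produces a limiting automorphism by compactness, and then measures its discrepancy with $\Psi_\infty$.

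First I would use Lemma \ref{ConeEmbedding} to identify $\Aut_\Scl(\CC)$ with a closed subgroup of $\GL(N,\C)$ via the homomorphism $\varphi$, so that each $\Phi_i$ acts on $\CC\subset\C^N$ by the fixed linear map $\varphi(\Phi_i)$. Choosing a compact $K\Subset\CC$ whose interior contains a level set $\{r=c\}$, the two-sided bound $\frac{1}{C}\omega_\CC\leq\Phi_i^*\omega_\CC\leq C\omega_\CC$ controls $\Norm{\Phi_i^*\omega_\CC}{0,K}$ from above and below, so Lemma \ref{LemmaCompactnessAutomorphisms} confines $\{\varphi(\Phi_i)\}$ to a compact subset of $\GL(N,\C)$. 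Passing to a subsequence, $\varphi(\Phi_{i_k})\to M_\infty$, and closedness of the subgroup gives $M_\infty=\varphi(\Phi_\infty)$ for some $\Phi_\infty\in\Aut_\Scl(\CC)$. Since the $\Phi_{i_k}$ are the restrictions to $\CC$ of these linear maps, convergence of the matrices upgrades to $\Phi_{i_k}\to\Phi_\infty$ in $C^\infty_\loc(\CC)$ as automorphisms.

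Next I would identify the limit and extract the isometry. Pullback of the fixed smooth form $\omega_\CC$ is continuous under $C^\infty_\loc$-convergence of the maps, so $\Phi_{i_k}^*\omega_\CC\to\Phi_\infty^*\omega_\CC$; combined with the hypothesis $\Phi_{i_k}^*\omega_\CC\to\Psi_\infty^*\omega_\CC$, uniqueness of limits yields $\Phi_\infty^*\omega_\CC=\Psi_\infty^*\omega_\CC$. Setting $F\coloneqq\Psi_\infty\circ\Phi_\infty^{-1}$, functoriality of pullback gives $F^*\omega_\CC=(\Phi_\infty^{-1})^*\Psi_\infty^*\omega_\CC=(\Phi_\infty^{-1})^*\Phi_\infty^*\omega_\CC=\omega_\CC$, so $F\in\Isom(\omega_\CC)\cap\Aut_\Scl(\CC)$. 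Then $F\circ\Phi_{i_k}\to F\circ\Phi_\infty=\Psi_\infty$ in $C^\infty_\loc(\CC)$, which is the asserted convergence up to an isometry. (Here the equality $\Phi_\infty^*\omega_\CC=\Psi_\infty^*\omega_\CC$ forces $\Psi_\infty$ and $\Phi_\infty$ into the same left $\Isom(\omega_\CC)\cap\Aut_\Scl(\CC)$-coset, which is why the isometry $F$ is naturally composed on the left; composing on the right would instead require conjugating $F$ by $\Phi_\infty$, which need not preserve $\omega_\CC$.)

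I expect the main obstacle to be the compactness step: one must rule out that the $\Phi_i$ degenerate inside $\GL(N,\C)$ and verify that uniform equivalence of the \emph{metrics}, rather than of the maps, already forces boundedness of the maps. This is exactly the content of Lemma \ref{LemmaCompactnessAutomorphisms}, so the real work is arranging its hypotheses (the two-sided bound on a compact set meeting a full level set) and checking that matrix convergence of $\varphi(\Phi_{i_k})$ genuinely translates into $C^\infty_\loc$-convergence of the automorphisms, and hence of their pullbacks, so that the two limits can be compared. The remaining coset bookkeeping is then routine.
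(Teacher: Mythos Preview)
Your argument is essentially the paper's: invoke Lemma~\ref{LemmaCompactnessAutomorphisms} to extract a subsequential limit $\Phi_\infty\in\Aut_\Scl(\CC)$ of the $\Phi_{i_k}$ (viewed as matrices under the embedding of Lemma~\ref{ConeEmbedding}), deduce $\Phi_\infty^*\omega_\CC=\Psi_\infty^*\omega_\CC$ from continuity of pullback, and then absorb the discrepancy into an isometry. The paper compresses all of this into two sentences but follows the same path.

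The one wrinkle is the side of composition. You prove $F\circ\Phi_{i_k}\to\Psi_\infty$ with $F=\Psi_\infty\circ\Phi_\infty^{-1}$, whereas the lemma as stated asks for $\Phi_{i_k}\circ F\to\Psi_\infty$. Your parenthetical is actually correct: from $\Phi_\infty^*\omega_\CC=\Psi_\infty^*\omega_\CC$ one gets that $\Psi_\infty\circ\Phi_\infty^{-1}$ is an isometry, but the conjugate $\Phi_\infty^{-1}\circ\Psi_\infty$ need not be (take $\CC=\C^2\setminus\{0\}$, $\Phi_\infty=\mathrm{diag}(2,1)$, $\Psi_\infty=\left(\begin{smallmatrix}0&1\\2&0\end{smallmatrix}\right)$). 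The paper's own one-line proof sets $F=\Phi_\infty^{-1}\circ\Psi_\infty$ and declares it ``an isometry by definition,'' so it carries the same slip. For the sole application of this lemma (Corollary~\ref{CorollaryAutomorphismDerivative}), the side on which $F$ is composed is immaterial, so your version suffices.
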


\begin{proof}
	Assume that $\Phi_i^*\omega_\CC\to\Psi_\infty^*\omega_\CC$ in $C^{\infty}_\loc(\CC)$. By Lemma \ref{LemmaCompactnessAutomorphisms}, the set $\{\Phi_i\}$ is compact and subconverges to an element $\Phi_\infty\in \Aut_{\Scl}(\CC)$ such that $\Phi_\infty^*\omega_\CC = \Psi_\infty^* \omega_\CC$. Define $F \coloneqq \Phi_\infty^{-1} \circ \Psi$, an isometry by definition, and the result follows.
\end{proof}

\begin{corollary}\label{CorollaryAutomorphismDerivative}
	Assume that $\Phi_i^*\omega_\CC \to \Phi_\infty^*\omega_\CC$ for $\Phi_i,\Phi_\infty\in \Aut_{\Scl}(\CC)$ with $\frac{1}{C}\omega_\CC \leq \Phi_i^*\omega_\CC \leq C \omega_\CC$ and convergence in $C^{\infty}_\loc(\CC)$. For any closed $(1,1)$-form $\eta \in \Omega^2(\CC)$ such that $\tr_{\Phi_\infty^*\omega_\CC} \eta = \text{constant}$ and $\LL_{r\partial_r} \eta = 2\eta$, by taking a subsequence of $\Phi_i$ we can find a sequence $\Psi_i \in \Aut_\Scl(\CC)$ such that
	\begin{equation*}
		\frac{\Psi_i^*\omega_\CC - \Phi_i^*\omega_\CC}{i^{-1}} - \eta \to 0, \quad i \to \infty.
	\end{equation*}
	in $C^\infty_\loc(\CC)$. 
\end{corollary}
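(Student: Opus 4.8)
The plan is to realize $\eta$ as the velocity at $t=0$ of a curve $t\mapsto (\exp(tZ)\circ\Phi_i)^*\omega_\CC$ lying in the orbit of $\Aut_\Scl(\CC)$ through $\Phi_i^*\omega_\CC$. The first issue is that the hypothesis only gives convergence of the \emph{forms} $\Phi_i^*\omega_\CC\to\Phi_\infty^*\omega_\CC$, not of the automorphisms, so pullbacks of a fixed form need not converge. I would fix this using Lemma \ref{LemmaAutomorphismSubconvergence}: after passing to a subsequence there is an isometry $F\in\Isom(\omega_\CC)\cap\Aut_\Scl(\CC)$ with $\Phi_i\circ F\to\Phi_\infty$ in $C^\infty_\loc(\CC)$, so that $\Phi_i\to\tilde\Phi:=\Phi_\infty\circ F^{-1}$ as maps, and by continuity $\tilde\Phi^*\omega_\CC=\Phi_\infty^*\omega_\CC$. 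This is exactly the subsequence named in the statement; after relabelling I may assume $\Phi_i\to\Phi_\infty$ in $C^\infty_\loc$ as automorphisms, with $\Phi_\infty^*\omega_\CC$ the given limit.

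Set $\eta':=(\Phi_\infty^{-1})^*\eta$. Since $\Phi_\infty$ commutes with $r\del_r$ and the trace is natural under pullback, $\eta'$ is again a closed $(1,1)$-form with $\LL_{r\del_r}\eta'=2\eta'$ and $\tr_{\omega_\CC}\eta'=\mathrm{const}$. The crux will be to produce $Z\in\Lie(\Aut_\Scl(\CC))=\p\oplus J\p$ with $\LL_Z\omega_\CC=\eta'$, i.e. to show every such $\eta'$ is an infinitesimal cone automorphism. Granting this, I would put $\Psi_i:=\exp(i^{-1}Z)\circ\Phi_i\in\Aut_\Scl(\CC)$ and Taylor-expand the flow, $\exp(i^{-1}Z)^*\omega_\CC=\omega_\CC+i^{-1}\LL_Z\omega_\CC+O(i^{-2})$ in $C^\infty_\loc$, so that $\frac{\Psi_i^*\omega_\CC-\Phi_i^*\omega_\CC}{i^{-1}}=\Phi_i^*\eta'+O(i^{-1})$. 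Because $\Phi_i\to\Phi_\infty$ as maps, $\Phi_i^*\eta'\to\Phi_\infty^*\eta'=\eta$ in $C^\infty_\loc$, while the remainder is a fixed smooth form times $i^{-1}$ pulled back by convergent maps, hence tends to $0$; this yields the asserted convergence. The uniform bounds needed for the remainder come from the $C^\infty_\loc$-convergence of $\Phi_i$.

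To build $Z$ I would use the classification behind Theorem \ref{DecompositionHoloFields}. Elements $W\in J\p$ are Killing for $\omega_\CC$, so $\LL_W\omega_\CC=0$ and only the $\p$-part contributes; there one has $\LL_{r\del_r}\omega_\CC=2\omega_\CC$ and, for the gradient field $\nabla\phi_k$ of an $\xi$-invariant $2$-homogeneous harmonic function $\phi_k$, $\LL_{\nabla\phi_k}\omega_\CC=d\,d^c\phi_k$, a constant multiple of $i\del\delbar\phi_k$, since $\omega_\CC$ is closed. Hence the image $\{\LL_Z\omega_\CC:Z\in\p\oplus J\p\}$ equals the span of $\omega_\CC$ together with the forms $i\del\delbar\phi_k$. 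On the target side, $\LL_{r\del_r}\eta'=2\eta'$ and $d\eta'=0$ give $\eta'=\tfrac12\,d(i_{r\del_r}\eta')$, so $\eta'$ is exact and admits a real $2$-homogeneous potential $\eta'=i\del\delbar\phi$; subtracting the multiple of $\omega_\CC$ dictated by the constant trace reduces to $\Delta_{\omega_\CC}\phi=0$, i.e. to $\phi$ a $2$-homogeneous harmonic function.

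The main obstacle is the final matching: showing that a $2$-homogeneous harmonic $\phi$ coincides, modulo a pluriharmonic function (which $i\del\delbar$ annihilates), with an $\xi$-invariant one, so that $i\del\delbar\phi\in\Span\{i\del\delbar\phi_k\}$. I would decompose $\phi$ into Reeb-weight components under the torus generated by $\xi$: the weight-zero part is accounted for by the $\phi_k$, and I expect each nonzero-weight component to be forced into pure holomorphic or antiholomorphic type, hence pluriharmonic, by the interplay of its fixed $r\del_r$-homogeneity and Reeb weight with the Ricci-flat cone Laplacian, exactly as in the eigenvalue analysis of Lemma \ref{HajoLemmaB.1} and \cite[Theorem 2.14]{heinCalabiYau2017}. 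A dimension count supports sharpness: the map $Z\mapsto\LL_Z\omega_\CC$ is injective on $\p$ (its kernel consists of Killing fields in $\p$, hence is trivial), so its image has dimension $\dim\p$, which equals one (for the $\omega_\CC$ direction) plus the number of $\xi$-invariant $2$-homogeneous harmonic functions, matching the target space precisely once the pluriharmonic reduction is established.
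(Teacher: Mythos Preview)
Your approach is correct and matches the paper's proof almost step for step: reduce via Lemma \ref{LemmaAutomorphismSubconvergence} to $\Phi_i\to\Phi_\infty$ as maps, normalize so that the reference form is $\omega_\CC$, find $Z\in\p$ with $\LL_Z\omega_\CC=\eta'$, and set $\Psi_i=\exp(i^{-1}Z)\circ\Phi_i$. The paper handles the trace part slightly differently, by an explicit scaling factor $(1+c/i)$ rather than folding it into $Z$, but this is cosmetic.

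The one point worth flagging is your ``main obstacle'': you correctly locate the difficulty in passing from a general $2$-homogeneous harmonic potential $\phi$ to a $\xi$-invariant one modulo pluriharmonic functions, and you also glide over the prior step of producing a \emph{global} $i\del\delbar$-potential from mere $d$-exactness. Both of these are precisely the content of Proposition \ref{PropGrowthRateToHomogeneousForm}, which the paper cites directly to obtain $\eta=i\del\delbar(cr^2+u)$ with $u$ already $\xi$-invariant, $2$-homogeneous, and harmonic. Your sketch of the Reeb-weight decomposition is the right idea and is exactly how that proposition is proved (via Lemma \ref{Lemma1Form} and the analysis borrowed from \cite[Theorem 2.14]{heinCalabiYau2017}), so there is no gap---you are simply rediscovering a lemma the paper has already isolated.
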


\begin{proof}
	By applying an automorphism, we assume without loss of generality that $\Phi_\infty^*\omega_\CC = \omega_\CC$. By Proposition \ref{PropGrowthRateToHomogeneousForm}, write $\eta = i\del\delbar (u + cr^2)$ for some $c \in \R$ and a 2-homogeneous, $\xi$-invariant function $u$ harmonic with respect to $\omega_\CC$. First assume that $c=0$ so that $\eta$ is trace-free. By Theorem \ref{DecompositionHoloFields}, $\nabla u$ is a holomorphic vector field; hence its flow generates a 1-parameter subgroup $(\Psi_t) \subset \Aut_\Scl(\CC)$. By definition:
	\begin{equation*}
		\omega_\CC(\nabla u, \cdot) = g_\CC(\nabla u, J\cdot) = du(J\cdot) = d^c u,
	\end{equation*}
	so
	\begin{equation}\label{DerivativeAutomorphism}
		\frac{d}{dt}\Bigg|_{t=0}\left(\frac{\Psi_t^*\omega_\CC - \omega_\CC}{t}\right) = \LL_{\nabla u} \omega_\CC = d(\omega_\CC(\nabla u, \cdot)) = dd^c u = \eta.
	\end{equation}
	This proves the corollary if $\Phi_i^*\omega_\CC = \omega_\CC$ is fixed. If not, then by taking a subsequence and precomposing with a fixed $F \in \Aut_\Scl^*(\omega_\CC)$, we assume that $\Phi_i \to \Id \in \Aut_{\Scl}(\CC)$ in $C^\infty_\loc(\CC)$ by Lemma \ref{LemmaAutomorphismSubconvergence}. Therefore:
	\begin{equation*}
		(\Phi_i^* - \Id^*)\left(\frac{\Psi_{\frac{1}{i}}^*\omega_\CC - \omega_\CC}{\frac{1}{i}}\right) \to 0, \quad i \to \infty,
	\end{equation*}
	due to the convergence $\Phi_t \to \Id$. We conclude that $\tilde \Psi_i \coloneqq \Psi_{\frac{1}{i}} \circ \Phi_i$ is the desired sequence for the corollary if $c=0$. If $c\neq 0$, multiply $\tilde \Psi_i$ by $(1+\frac{c}{i})$ such that \eqref{DerivativeAutomorphism} produces an extra $c\omega_\CC = c\,i\del\delbar r^2$ as $i\to\infty$.
\end{proof}

We also need the following regularity result on $\Sigma_{\loc}^2$:

\begin{lemma}\label{LocalConstantsHolder}
	Let $\eta\in \Sigma_{\loc}^2$ be defined on a coordinate ball $B_\rho(x)\in \UU$. Pick $\alpha\in (0,1)$. Denote $\nu$ as $\nu = \dist_{\omega_\CC}(o,x)>0$. Then $\Norm{\eta}{0,\alpha,V} \leq C'(1+\nu^{-\alpha})|\eta(y)|_{\omega_\CC}$ for any $y\in B_\rho(x)$ and for some $C'>0$ independent of $B_\rho(x)$, $y$, and $\eta$.
\end{lemma}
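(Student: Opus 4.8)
The plan is to reduce everything to a fixed \emph{reference} coordinate ball by exploiting the scaling structure of $\UU$, prove the estimate there by a compactness argument, and then transport it back while tracking the scaling weights — which is exactly where the factor $\nu^{-\alpha}$ appears. First I would dispose of the trivial case $\eta=0$ and otherwise recall the construction of $\UU$: every coordinate ball $B_\rho(x)\in\UU$ has the form $B_\rho(x)=\Phi_\epsilon(B_{\rho_i}(p_i))$ for one of the finitely many reference balls $B_{\rho_i}(p_i)$ centred at $p_i\in\{r=1\}$, with holomorphic coordinates inherited by scaling. Since $p_i\in\{r=1\}$ and $\omega_\CC=dr^2+r^2h$, the point $x=\Phi_\epsilon(p_i)$ lies on $\{r=\epsilon\}$, so $\nu=\dist_{\omega_\CC}(o,x)=\epsilon$. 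Writing $\eta'\coloneqq\Phi_\epsilon^*\eta$, the pullback is again a constant $(1,1)$-form in the reference coordinates, since $\Phi_\epsilon$ is a holomorphic map that is linear in these coordinates. Because $\Phi_\epsilon^*\omega_\CC=\epsilon^2\omega_\CC$, and a constant rescaling of a metric leaves both its geodesics and its Levi-Civita connection — hence its parallel transport $\P$ — unchanged, the map $\Phi_\epsilon$ is a homothety intertwining all the data defining the Hölder norm of Definition \ref{DefHolderUsual}.

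Second I would establish the estimate on the fixed reference ball $B_{\rho_i}(p_i)$, whose closure is a compact subset of $\CC$ on which $\omega_\CC$ is a smooth, nondegenerate Hermitian metric. Identify the finite-dimensional real vector space $W$ of constant $(1,1)$-forms with a coefficient vector $N(\eta')$. On the compact reference ball the quadratic form $y'\mapsto|\eta'|_{\omega_\CC}^2(y')$ is uniformly comparable to $|N(\eta')|^2$ from above and below, and the Christoffel symbols of $\omega_\CC$ are bounded, so the parallel-transport difference satisfies $\eta'(q_1')-\P_{q_1'q_2'}\eta'(q_2')=\OO(|N(\eta')|\,\dist(q_1',q_2'))$. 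Consequently $\Norm{\eta'}{0,\alpha,B_{\rho_i}(p_i)}\leq C_0|N(\eta')|\leq C_0'|\eta'|_{\omega_\CC}(y')$ for every $y'$, where the constants are uniform over the finitely many reference balls and over all of $W$ by homogeneity in $N$ (reduce to the compact sphere $\{|N|=1\}$).

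Finally I would transport the bound back through $\Phi_\epsilon$. Using naturality of the pointwise norm under the homothety, together with $|\cdot|_{\epsilon^2\omega_\CC}=\epsilon^{-2}|\cdot|_{\omega_\CC}$ and $\dist_{\omega_\CC}(\Phi_\epsilon\,\cdot,\Phi_\epsilon\,\cdot)=\epsilon\,\dist_{\omega_\CC}(\cdot,\cdot)$, the norms rescale as $\Norm{\eta}{0,B_\rho(x)}=\epsilon^{-2}\Norm{\eta'}{0,B_{\rho_i}(p_i)}$, $[\eta]_{\alpha,B_\rho(x)}=\epsilon^{-2-\alpha}[\eta']_{\alpha,B_{\rho_i}(p_i)}$, and $|\eta|_{\omega_\CC}(y)=\epsilon^{-2}|\eta'|_{\omega_\CC}(y')$ where $y=\Phi_\epsilon(y')$. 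Substituting the reference-ball bound and cancelling $\epsilon^{-2}$ against the pointwise factor leaves $\Norm{\eta}{0,B_\rho(x)}\leq C_0'|\eta|_{\omega_\CC}(y)$ and $[\eta]_{\alpha,B_\rho(x)}\leq\epsilon^{-\alpha}C_0'|\eta|_{\omega_\CC}(y)=\nu^{-\alpha}C_0'|\eta|_{\omega_\CC}(y)$, which sum to the asserted $C'(1+\nu^{-\alpha})|\eta|_{\omega_\CC}(y)$. The main obstacle is bookkeeping rather than conceptual: one must check carefully that the parallel-transport difference in the seminorm of Definition \ref{DefHolderUsual} scales by precisely $\epsilon^{-2-\alpha}$ (the $\epsilon^{-2}$ from the norm and $\epsilon^{-\alpha}$ from the distance), since this single power $\epsilon^{-\alpha}=\nu^{-\alpha}$ is what survives, and that all constants are genuinely uniform over the finite cover and over the whole space of constant forms.
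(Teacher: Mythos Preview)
Your proposal is correct and follows essentially the same approach as the paper: reduce to the finitely many reference balls at $\{r=1\}$ via the scaling map, invoke finite-dimensionality of the space of constant $(1,1)$-forms on each compact reference ball to get a uniform bound there, and scale back, picking up exactly the $\nu^{-\alpha}$ factor from the seminorm. The only cosmetic difference is that the paper normalises the pullback as $\tilde\eta=\nu^{-2}\Phi_\nu^*\eta$ so that pointwise norms are preserved, whereas you keep the unnormalised $\eta'=\Phi_\epsilon^*\eta$ and track the $\epsilon^{-2}$ factors explicitly; both lead to the same conclusion.
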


\begin{proof}
	Recall that the coordinate ball $B_\rho(x)\in\UU$ and $\eta\in \Sigma_{\loc}^2$ were defined by scaling elements and a ball centered at a point $p\in \{r=1\}$. Rescale by $\nu^{-1}$ such that $\Phi_{\nu}^{-1}(x) = p$, $\tilde \rho = \frac{\rho}{\nu}$, and $\tilde \eta = \nu^{-2}\Phi_{\nu}^* (\eta)$. As there are only finitely many coordinate balls centered around $\{r=1\}$, there exists a uniform constant $C'>0$ such that
	\begin{equation*}
		\Norm{\tilde \eta}{0, B_{\tilde \rho}(p)} \leq C' |\tilde\eta(y)|_{\omega_\CC}
	\end{equation*}
	for any $y \in  B_{\tilde \rho}(p)$. As the set of constant $(1,1)$-forms in the associated coordinates of $ B_{\tilde \rho}(p)$ is finite-dimensional and all elements are smooth, there exists a constant $C''>0$ such that
	\begin{equation*}
		\Norm{\tilde \eta}{0, \alpha, B_{\tilde \rho}(p)} \leq C'' \Norm{\tilde \eta}{0, B_{\tilde \rho}(p)}  \leq C' |\tilde\eta(y)|_{\omega_\CC}.
	\end{equation*}
	The lemma now follows by scaling back.
\end{proof}

We finally show that the regular Hölder norm controls the primed norm:

\begin{lemma}\label{LemmaHolderBoundToPrimeBound}
	Let $V\subset \CC$ be open with $\nu \coloneqq \dist_{\omega_\CC}(o,V)>0$. Then for any form $\psi\in \Omega^{1,1}(V)$:
	\begin{equation*}
		\Norm{\psi}{0,\alpha,V, \Sigma_{\loc}^2}' \leq \Norm{\psi}{0, \alpha, V}+\nu^{-\alpha}B \Norm{\psi}{0, V},
	\end{equation*}
	with $B>0$ independent of $V$.
\end{lemma}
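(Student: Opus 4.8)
The plan is to reduce the asserted norm inequality to the corresponding seminorm inequality and then estimate, uniformly, each quantity entering the supremum that defines $[\psi]_{\alpha,V,\Sigma_{\loc}^2}'$. Since $0\in\Sigma_{\loc}^2$ is globally defined with $\mathds{1}_V(0)=1$, and the two full norms differ from their seminorms by the common summand $\Norm{\psi}{0,V}$, it suffices to show
\[
[\psi]_{\alpha,V,\Sigma_{\loc}^2}'\leq[\psi]_{\alpha,V}+\nu^{-\alpha}B\Norm{\psi}{0,V},
\]
where $[\psi]_{\alpha,V}$ is the invariant seminorm of Definition \ref{DefHolderUsual}. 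So I fix $x\in V$, radii $\rho,\sigma>0$, write $A=(B_\rho(x)\cap V)\setminus B_\sigma(x)$ and $d=\dist_{\omega_\CC}(o,x)\geq\nu$, and bound $\rho^{-\alpha}\inf_{\eta\in\Sigma_{\loc}^2}\mathds{1}_A(\eta)\Norm{\psi-\eta}{0,A}$ independently of these choices. First I would extract a Lebesgue-type constant for the covering: since the finitely many base balls $B_{\rho_i}(p_i)$ cover the compact link $\{r=1\}$, there is $c\in(0,1)$ depending only on $\UU$ with $B_c(x')\subset B_{\rho_i}(p_i)$ for some $i$ and every $x'\in\{r=1\}$; I also take these base balls geodesically convex. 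Applying $\Phi_d$ to $x'=\Phi_d^{-1}(x)\in\{r=1\}$ then yields $B_{cd}(x)\subset U$ for some coordinate ball $U\in\UU$.

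The argument splits on the size of $\rho$. In the \textbf{outer regime} $\rho>cd$ I simply take $\eta=0$, so the quantity is at most $\rho^{-\alpha}\Norm{\psi}{0,V}\leq c^{-\alpha}\nu^{-\alpha}\Norm{\psi}{0,V}$, using $\rho>cd\geq c\nu$. The substantive \textbf{inner regime} is $\rho\leq cd$, where $A\subset B_\rho(x)\subset B_{cd}(x)\subset U$, so coordinate-constant forms are admissible ($\mathds{1}_A\equiv1$ on them). Here I rescale by $\lambda=d$, setting $\tilde\psi=d^{-2}\Phi_d^*\psi$ and $\tilde V=\Phi_d^{-1}(V)$; this homothety preserves $C^0$-norms, preserves the Levi--Civita connection and its parallel transport, multiplies the invariant seminorm by $d^\alpha$ (so $[\tilde\psi]_{\alpha,\tilde V}=d^\alpha[\psi]_{\alpha,V}$), carries $U$ to a fixed base ball $\tilde U=B_{\rho_i}(p_i)$ at $r=1$, and sends $A$ into $\tilde A\subset B_{\tilde\rho}(\tilde x)$ with $\tilde\rho=\rho/d\leq c$ and $\tilde x=\Phi_d^{-1}(x)\in\tilde V$. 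I choose $\tilde\eta\in\Sigma_{\loc}^2$ to be the coordinate-constant form on $\tilde U$ whose components equal those of $\tilde\psi$ at the centre $\tilde x$; this is legitimate because $x\in V$ forces $\tilde\psi(\tilde x)$ to be defined. For $z\in\tilde A$, the minimal geodesic to $\tilde x$ is unique and lies in the convex ball $\tilde U$, so with $\tilde\eta(\tilde x)=\tilde\psi(\tilde x)$ the triangle inequality gives
\[
|\tilde\psi(z)-\tilde\eta(z)|\leq|\tilde\psi(z)-\P_{z\tilde x}\tilde\psi(\tilde x)|+|\P_{z\tilde x}\tilde\eta(\tilde x)-\tilde\eta(z)|\leq\bigl([\tilde\psi]_{\alpha,\tilde V}+[\tilde\eta]_{\alpha,\tilde U}\bigr)\dist_{\omega_\CC}(z,\tilde x)^{\alpha}.
\]
The first seminorm is $d^\alpha[\psi]_{\alpha,V}$; the second is $\leq C\,|\tilde\eta(\tilde x)|=C\,|\tilde\psi(\tilde x)|\leq C\Norm{\psi}{0,V}$ by Lemma \ref{LocalConstantsHolder} applied at unit scale, where its $(1+\nu^{-\alpha})$ factor is a fixed constant on $\tilde U$. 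Using $\dist_{\omega_\CC}(z,\tilde x)<\tilde\rho=\rho/d$ and scaling back (where $\eta$ is the coordinate-constant form corresponding to $\tilde\eta$, still in $\Sigma_{\loc}^2$, and $C^0$-norms are preserved), I obtain
\[
\rho^{-\alpha}\Norm{\psi-\eta}{0,A}=\rho^{-\alpha}\Norm{\tilde\psi-\tilde\eta}{0,\tilde A}\leq[\psi]_{\alpha,V}+C\,d^{-\alpha}\Norm{\psi}{0,V}\leq[\psi]_{\alpha,V}+C\,\nu^{-\alpha}\Norm{\psi}{0,V},
\]
since $d\geq\nu$. Taking the supremum over $x,\rho,\sigma$ and setting $B=\max(c^{-\alpha},C)$ yields the claim.

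The main obstacle, and the reason for the rescaling, is quantitative sharpness: I must recover the exact power $\rho^{\alpha}$ with coefficient precisely $1$ on $[\psi]_{\alpha,V}$ and the factor $\nu^{-\alpha}$ on $\Norm{\psi}{0,V}$, rather than a stray $O(1)$ contribution. Invoking Lemma \ref{LocalConstantsHolder} directly on $U$ would produce the innocuous-looking but fatal constant from the ``$1$'' in $(1+d^{-\alpha})$; at unit scale that term is absorbed into a fixed constant, and the correct $d^{-\alpha}$ is reinstated only when scaling back. The second delicate point is that $\Norm{\psi-\eta}{0,A}$ is a genuine supremum over all of $A$, including points where ambient minimal geodesics to $\tilde x$ could fail to be unique; choosing $\UU$ to consist of convex balls removes this, since any two points of $\tilde U$ are then joined by a unique minimal geodesic along which $\P_{z\tilde x}$ is defined. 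The outer regime is immediate from $0\in\Sigma_{\loc}^2$, and the covering/Lebesgue-number reduction is routine.
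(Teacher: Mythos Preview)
Your proof is correct and follows the same two-case strategy as the paper: in the outer regime take $\eta=0$, and in the inner regime choose the coordinate-constant $\eta$ with $\eta(x)=\psi(x)$, then control the $\eta$-term via Lemma \ref{LocalConstantsHolder}. Your rescaling to unit scale is an unnecessary detour, however: the paper applies Lemma \ref{LocalConstantsHolder} at the original scale and uses only its \emph{seminorm} part $[\eta]_{\alpha}\leq C'\nu^{-\alpha}|\eta(y)|$, so the ``fatal'' constant $1$ in $(1+\nu^{-\alpha})$---which comes from the $C^0$-norm contribution you never need---simply does not enter.
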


\begin{proof}
	Take a point $x\in V$ and $\rho>0$. Then 
	\begin{align*}
		\Norm{\psi}{0,\alpha,V,\Sigma_{\loc}^2}' &= \Norm{\psi}{0,V} + [\psi]_{\alpha,V,\Sigma_{\loc}^2}'\\
		&= \Norm{\psi}{0,V} + \sup_{\rho>0, x\in V} \min_{\eta \in \Sigma_{\loc}^2} \rho^{-\alpha} \mathds{1}_{V\cap B_\rho(x)}(\eta) \Norm{\psi - \eta}{0,B_\rho(x)\cap V}\\
		&= \Norm{\psi}{0,V} + \sup_{\rho>0, x\in V} \min_{\eta \in \Sigma_{\loc}^2} \rho^{-\alpha}\mathds{1}_{V\cap B_\rho(x)}(\eta) \sup_{y\in B_\rho(x)\cap V} |\psi(y) - \eta(y)|_{\omega_\CC}\\
		&\leq \Norm{\psi}{0,V} + \sup_{\rho>0, x\in V} \min_{\eta \in \Sigma_{\loc}^2} \rho^{-\alpha} \mathds{1}_{V\cap B_\rho(x)}(\eta) \sup_{y\in  B_\rho(x)\cap V} \Big(|\P_{xy}\psi(y) - \psi(x)|_{\omega_\CC} \\
		&\hspace{60mm} + |\psi(x) - \eta(x)|_{\omega_\CC}+ |\eta(x)  - \P_{xy}\eta(y)|_{\omega_\CC}\Big).
	\end{align*}
	The term
	\begin{equation*}
		\sup_{\rho>0, x\in V} \rho^{-\alpha}  \sup_{y\in  B_\rho(x)\cap V} |\P_{xy}\psi(y) - \psi(x)|_{\omega_\CC} = [\psi]_{\alpha,V}
	\end{equation*}
	equals the regular Hölder seminorm.
	
	To estimate the other terms, fix a choice of $x$ and $\rho$. Consider the first of two possibilities: If $B_\rho(x)\cap V$ is contained in a coordinate ball, then choose some $\eta\in \Sigma_{\loc}^2$ defined in a coordinate ball $U$ containing $B_\rho(x)\cap V$ and such that $\eta(x) = \psi(x)$. Continuing:
	\begin{align*}
		& \Norm{\psi}{0,V} + \min_{\eta \in \Sigma_{\loc}^2} \rho^{-\alpha} \mathds{1}_{V\cap B_\rho(x)}(\eta) \sup_{y\in  B_\rho(x)\cap V} \Big(|\P_{xy}\psi(y) - \psi(x)|_{\omega_\CC} \\
		&\hspace{60mm} + |\psi(x) - \eta(x)|_{\omega_\CC}+ |\eta(x)  - \P_{xy}\eta(y)|_{\omega_\CC}\Big)\\
		&\leq \Norm{\psi}{0,\alpha, V} + \rho^{-\alpha}  \sup_{y\in  B_\rho(x)\cap V} \Big(|\eta(x)  - \P_{xy}\eta(y)|_{\omega_\CC}\Big).
	\end{align*}
	By Lemma \ref{LocalConstantsHolder} and $\eta = \psi(x)\in \Sigma_{\loc}^2$, it follows that
	\begin{equation*}
		\Norm{\eta}{0,\alpha, B_\rho(x)} \leq C' (1+\nu^{-\alpha})|\psi(x)|_{\omega_\CC} \leq C' (1+\nu^{-\alpha}) \Norm{\psi}{0,V},
	\end{equation*}
	so we conclude:
	\begin{equation*}
		\rho^{-\alpha}  \sup_{y\in  B_\rho(x)\cap V} \Big(|\eta(x)  - \P_{xy}\eta(y)|_{\omega_\CC}\Big)  \leq C' \nu^{-\alpha}\Norm{\psi}{0,V}.
	\end{equation*}
	This proves the lemma when $B_\rho(x)$ is contained in a coordinate ball.
	
	Now assume that $B_\rho(x)$ is not contained in a coordinate ball. Then $\eta = 0$ is the only possibility. By the scaling behavior of the coordinate balls, it follows that there exists a constant $b>0$ such that $b\nu\leq \rho$, otherwise $B_\rho(x)$ would be contained in a coordinate ball. Therefore, we conclude that
	\begin{align*}
		& \Norm{\psi}{0,V} + \min_{\eta \in \Sigma_{\loc}^2} \rho^{-\alpha} \mathds{1}_{V\cap B_\rho(x)}(\eta) \sup_{y\in  B_\rho(x)\cap V} \Big(|\P_{xy}\psi(y) - \psi(x)|_{\omega_\CC} \\
		&\hspace{60mm} + |\psi(x) - \eta(x)|_{\omega_\CC}+ |\eta(x)  - \P_{xy}\eta(y)|_{\omega_\CC}\Big)\\
		&\leq \Norm{\psi}{0,\alpha, V} + (1+\nu^{-\alpha}) b^{-\alpha}\Norm{\psi}{0, B_{\rho}(x)\cap V}\\
		&\leq \Norm{\psi}{0,\alpha, V} + (1+\nu^{-\alpha}) b^{-\alpha}\Norm{\psi}{0, V},
	\end{align*}
	finishing the proof of the lemma.
\end{proof}

\subsection{Nonlinear $C^{0,\alpha}$-Estimate and Proof}

\subsubsection{Statement of Main Theorem}

\begin{theorem}\label{TheoremHolderBound}
	Let $(\CC,\omega_\CC)$ be a Calabi-Yau cone with cone metric $\omega_\CC$, and let $\omega$ be a Kähler metric on $B_3(o)\subset \CC$ such that
	\begin{equation}\label{ConditionUniformBound}
		\frac{1}{C}\omega_\CC \leq \omega\leq C\omega_\CC, \quad \Norm{\Scal(\omega)}{0, B_3(o)} \leq D,
	\end{equation}
	for some constants $C,D>0$. Then for any $\alpha = \alpha(\CC,\omega_\CC)>0$ small enough, there exists a constant $C'$ with
	\begin{equation*}
		[\omega]'_{\alpha,B_1(o),\Sigma_{3C}^2\times \Sigma_{\loc}^2}\leq C',
	\end{equation*}
	where $\alpha= \alpha(\CC,\omega_\CC)$ and $C'= C'(\CC,C, D, \alpha,\omega_\CC, \UU)$ are independent of $\omega$.
\end{theorem}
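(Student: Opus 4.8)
The plan is to reprise, at the level of $(1,1)$-forms, the blowup-and-contradiction scheme already carried out for functions in Proposition \ref{SchauderLinear}, the essential new feature being that the linear comparison space $\C$ is replaced by the \emph{curved} finite-dimensional family $\Sigma_{3C}^2$ together with the locally constant forms $\Sigma_{\loc}^2$. First I would settle well-definedness, since a priori $[\omega]'_{\alpha,B_2(o),\Sigma_{3C}^2\times\Sigma_{\loc}^2}$ need not be finite near $o$. Away from the apex Proposition \ref{PropRegCMA} bounds $\omega$ in $C^{1,\beta}_\loc$, so on each unit ball at distance $\approx 1$ from $o$ the form $\omega$ is approximated to order $\rho^\beta$ by a constant form in $\Sigma_{\loc}^2$; Lemma \ref{LemmaHolderBoundToPrimeBound} converts this interior Hölder control into control of the primed seminorm, and rescaling exactly as in \eqref{SeminormGeneralBlowupLinear} yields a bound $[\omega]'_{\alpha,B_2(o)\setminus B_\delta(o)}\leq A\delta^{-\alpha}$. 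Introducing the same weight functions $f_\delta$ as in the linear case then guarantees that $[\omega]'_{\alpha,B_2(o),f_\delta,\Sigma_{3C}^2\times\Sigma_{\loc}^2}$ exists for every $\delta>0$.

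Arguing by contradiction, I would assume a sequence $(\omega_i)$ satisfying \eqref{ConditionUniformBound} with $[\omega_i]'_{\alpha,B_2(o),f_{i^{-1}},\Sigma_{3C}^2\times\Sigma_{\loc}^2}\to\infty$, and rescale $\tilde\omega_i=\epsilon_i^{-2}\Phi_{\epsilon_i}^*\omega_i$ with $\epsilon_i\to 0$ chosen so that the rescaled primed seminorm equals $1$. This normalization is consistent because $\Sigma_{3C}^2$ is invariant under $\epsilon_i^{-2}\Phi_{\epsilon_i}^*$ (automorphisms in $\Aut_\Scl(\CC)$ commute with scaling) and the coordinate balls $\UU$, hence $\Sigma_{\loc}^2$, are scale-invariant by construction. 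As before I then replace $\Phi_{\epsilon_i}^*f_{i^{-1}}$ by a modified weight $\tilde f_i$, and select maximizing data $x_i,\rho_i,\nu_i$ together with comparison elements $(\pi_i,\eta_i)\in\Sigma_{3C}^2\times\Sigma_{\loc}^2$ realizing at least half the seminorm, retaining the crucial lower bound $\tilde f_i(\delta_i)\geq c$. By Lemma \ref{LemmaScalarCurvControlSublimit} the metrics $\tilde\omega_i$ subconverge in $C^{1,\beta}_\loc(\CC)$ to $\omega_\infty=\Psi^*\omega_\CC$ for some $\Psi\in\Aut_\Scl(\CC)$, and the $3C$ slack in the definition of $\Sigma_{3C}^2$ ensures $\Psi^*\omega_\CC\in\Sigma_{3C}^2$.

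Next I would split into the three cases $\rho_i\to\infty$, $\rho_i$ bounded away from $0$ and $\infty$, and $\rho_i\to 0$, paralleling the linear subcase analysis. When $\rho_i\to\infty$, boundedness of $\tilde\omega_i$ and the prefactor $\rho_i^{-\alpha}$ kill the seminorm. When $\rho_i$ is bounded, the split is on $\delta_i=\dist_{\omega_\CC}(o,B_{\rho_i}(x_i)\setminus B_{\nu_i}(x_i))$: for $\delta_i$ bounded away from $0$ and $\infty$ one compares against $\pi=\Psi^*\omega_\CC\in\Sigma_{3C}^2$ and $\eta=0$, using $\tilde\omega_i\to\omega_\infty$ on compacta; for $\delta_i\to 0$ one exploits the cone structure of $\omega_\infty$ and a triangle inequality as in linear subcase $2.2$; and for $\delta_i\to\infty$, so $x_i\to\infty$, one blows up on balls $B_{r_i}(x_i)$ contained in coordinate charts, where $\omega_\CC$ is asymptotically flat, so Theorem \ref{LiouvilleTheorem} together with the Euclidean Liouville theorem forces $\tilde\omega_i\to A^*\omega_\eucl$ with $A\in\GL(m,\C)$, a \emph{constant} form lying in $\Sigma_{\loc}^2$. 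This last point is precisely the reason $\Sigma_{\loc}^2$ must be included in the comparison set. The case $\rho_i\to 0$ is then treated by the further rescaling $\hat\omega_i=\rho_i^{-2}\Phi_{\rho_i}^*\tilde\omega_i$ normalizing $\hat\rho_i=1$, with the analogous three subcases on $\hat\delta_i$ and on $\dist(\hat x_i,o)$.

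The main obstacle, and the genuinely new content beyond Proposition \ref{SchauderLinear}, is the \emph{linearization against the curved family} $\Sigma_{3C}^2$ in the apex subcases of Case $3$. Here I would set $\check\omega_i\coloneqq\rho_i^{-\alpha}(\hat\omega_i-\hat\pi_i)$ and extract a closed real $(1,1)$-form limit $\check\omega_\infty$; subtracting the Ricci-flat relation $\hat\pi_i^m=\omega_\CC^m$ from $\hat\omega_i^m=e^{\hat F_i}\omega_\CC^m$, dividing by $\rho_i^\alpha$, and passing to the limit shows that $\tr_{\omega_\infty}\check\omega_\infty$ is constant, i.e.\ $\check\omega_\infty=i\del\delbar v$ with $\Delta_{\omega_\infty}v$ constant, while the weight function supplies the growth bound $\check\omega_\infty=\OO(r^\alpha)$. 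The crux is then to invoke the homogeneity-Liouville statement (Proposition \ref{PropGrowthRateToHomogeneousForm}, in the role played by Proposition \ref{PropGrowthRateToHomogeneousFunction} in the linear case): for $\alpha$ below the first positive homogeneity gap $\mu_1=\mu_1(\CC,\omega_\CC)>0$ above the degree-$2$ level, $\check\omega_\infty$ must be $2$-homogeneous of the form $i\del\delbar(u+cr^2)$, which by Theorem \ref{DecompositionHoloFields} and Corollary \ref{CorollaryAutomorphismDerivative} is exactly tangent to $\Sigma_{3C}^2$. Moving $\hat\pi_i$ within $\Sigma_{3C}^2$ along this tangent direction improves the comparison and contradicts the normalization of the seminorm to $1$. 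It is this forced restriction to small $\alpha=\alpha(\CC,\omega_\CC)$, dictated by the eigenvalue gap of the linearized operator on the link, that distinguishes the nonlinear estimate from the linear one, where every $\alpha\in(0,1)$ was admissible.
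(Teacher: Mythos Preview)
Your outline matches the paper's proof and correctly identifies the heart of Case 3: linearize $\check\omega_i=\rho_i^{-\alpha}(\hat\omega_i-\pi_i)$, obtain a closed $(1,1)$-form limit with constant trace and controlled growth, apply Proposition \ref{PropGrowthRateToHomogeneousForm} to force $\check\omega_\infty=i\del\delbar(u+cr^2)$, and use Corollary \ref{CorollaryAutomorphismDerivative} to realize this as a tangent direction to $\Sigma_{3C}^2$. Two points where your sketch diverges from the paper and needs tightening:

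\emph{No third subcase in Case 3.} The paper does not handle $\hat\delta_i\to\infty$ as an analogue of linear Subcase 3.3. Instead, \emph{before} linearizing, it proves (equation \eqref{RhoQuotient}) that $\hat\delta_i=\delta_i/\rho_i$ must stay bounded: if $\delta_i\geq b>0$, or if $\delta_i\to 0$ but $\rho_i/\delta_i\to 0$, one rescales so the maximizing ball sits at distance $\sim 1$ from the apex and invokes Lemma \ref{LemmaHolderBound}, which upgrades the $C^0$-smallness of $\hat\omega_i-\pi_i$ to $C^{0,\beta}$-smallness for any $\beta<1$; choosing $\beta>\alpha$ then beats the weight $\rho_i^{-\alpha}$ and contradicts the seminorm being realized at scale $\rho_i$. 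This preliminary step is where Lemma \ref{LemmaHolderBound} enters, and without $\hat\delta_i\leq A$ the growth bound \eqref{EqUniformEstimateOmegaPi} on $\check\omega_i$ near the apex---needed to feed into Proposition \ref{PropGrowthRateToHomogeneousForm}---is not available. Your proposed direct treatment of a third subcase via $\Sigma_{\loc}^2$ is plausible but would require a separate linearization argument against local constants; the paper's route is cleaner.

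\emph{Constancy of the trace.} The claim that $\tr_{\omega_\infty}\check\omega_\infty$ is constant is not automatic. Writing $\hat\omega_i^m=e^{\hat G_i}\pi_i^m$ (the reference is $\pi_i$, not $\omega_\CC$; note $\pi_i^m$ need only equal $\omega_\CC^m$ up to a positive constant, so your relation $\hat\pi_i^m=\omega_\CC^m$ is slightly off, though $\pi_i$ is still Ricci-flat, which is what matters), the linearized right-hand side is $\rho_i^{-\alpha}(e^{\hat G_i}-1)$, and its convergence to a constant is exactly where Proposition \ref{SchauderLinear} is invoked: since $\Delta_{\hat\omega_i}\hat G_i=-\Scal(\hat\omega_i)$ scales to zero, the linear estimate gives $[e^{\hat G_i}]_\beta\leq C\epsilon_i^\beta\rho_i^\beta$ (equation \eqref{EqRescaledG}), forcing the limit to be constant. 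This is the reason the linear case is established first.
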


\subsubsection{Proof: Picking Weight-Function to make Seminorm Well-Defined}

The proof proceeds by contradiction and is similar to Proposition \ref{SchauderLinear}. All constants might change from line to line below. First, for any $\omega$ satisfying \eqref{ConditionUniformBound}, the seminorm $[\omega]_{\alpha, B_1(o), \Sigma_{3C}^2 \times \Sigma_{\loc}^2}'$ might not exist a priori. However, as in Proposition \ref{SchauderLinear}, we can control the growth of $[\omega]_{\alpha, B_1(o)\setminus B_\delta(o), \Sigma_{3C}^2 \times \Sigma_{\loc}^2}'$ as $\delta\to0$ as follows: Take a point $p\in B_2(o)$ with $\rho = \dist_{\omega_\CC}(o,p)$. Apply a rescaling such that $\Phi^{-1}_\rho(p) = \tilde p$ with $\dist_{\omega_\CC}(o,\tilde p) = 1$. Define $\tilde \omega \coloneqq \rho^{-2}\Phi^*_\rho(\omega)$.  The regularity of the complex Monge-Ampère equation provides an $A > 0$ such that $[\tilde\omega]_{\alpha, B_{\frac{1}{2}}(\tilde p)} < A$ for any $\omega$ satisfying the requirements of Theorem \ref{TheoremHolderBound}. Rescaling back, we conclude as in Proposition \ref{SchauderLinear}:
\begin{equation*}
	[\omega]_{\alpha,B_1(o)\setminus B_\delta(o) } < A\delta^{-\alpha}.
\end{equation*}
Lemma \ref{LemmaHolderBoundToPrimeBound} transforms this into an estimate on the primed seminorm:
\begin{equation}\label{SeminormGeneralBlowupRegularHolder}
	[\omega]_{\alpha, B_1(o)\setminus B_\delta(o), \Sigma_{\loc}^2}' \leq A \delta^{-\alpha}.
\end{equation}
For any fixed $\Psi\in \Aut_\Scl(\CC)$ and as $\omega_\CC$ is 2-homogeneous, then by Lemma \ref{LemmaHolderBoundToPrimeBound}:
\begin{equation*}
	[\Psi^*\omega_\CC]_{\alpha,B_1(o)\setminus B_\delta(o), \Sigma_\loc^2 }' < A\delta^{-\alpha}.
\end{equation*}
Hence, the seminorm \eqref{SeminormGeneralBlowupRegularHolder} will not blow up any faster at the apex by including $\Sigma_{3C}^2$. We conclude that:
\begin{equation}\label{SeminormGeneralBlowup}
	[\omega]_{\alpha, B_1(o)\setminus B_\delta(o), \Sigma_{3C}^2\times \Sigma_{\loc}^2}' \leq A \delta^{-\alpha}.
\end{equation}

This leads us to define the following function to control the growth of $[\omega]_{\alpha, B_1(o), \Sigma_{3C}^2 \times \Sigma_{\loc}^2}'$ at $o$: For any $\delta > 0$, define the function $f_\delta \colon [0,2] \to [0,1]$ by
\begin{equation*}
	f_\delta(r) \coloneqq \begin{cases}
		\delta^{-\frac{1+\alpha}{2}}r^{\frac{1+\alpha}{2}} & 0 \leq r \leq \delta,\\
		1 &  r\in (\delta,1),\\
		2-r & r \in [1,2].
	\end{cases}	
\end{equation*}
Using this seminorm, it is guaranteed that $[\omega]_{\alpha, B_2(o), f_{\delta},\Sigma_{3C}^2 \times \Sigma_{\loc}^2}'$ always exists for any $\delta > 0$.

Now, assume that there is a sequence of Kähler metrics $(\omega_i)_{i\in \N}$ on $B_3(o)$ such that the following holds:
\begin{enumerate}
	\item $\frac{1}{C}\omega_\CC \leq \omega_i \leq C\omega_\CC$,
	\item $\Norm{\Scal(\omega_i)}{0, B_3(o)} \leq D$,
	\item $[\omega_i]'_{\alpha,B_2(o), f_{i^{-1}},\Sigma_{3C}^2 \times \Sigma_{\loc}^2} \to \infty$ as $i \to \infty$.
\end{enumerate}
If no such sequence exists, then the theorem follows because $f_{i^{-1}}\to 1$ as $i\to\infty$ in $C_\loc^0(B_1(o))$. In deriving a contradiction, we pass to a subsequence several times without explicitly mentioning it.\\

For a sequence $(\epsilon_i)$ and $\omega_i = e^{F_i}\omega_{\CC}^m$, form the rescaled metric $\tilde \omega_i \coloneqq \Phi^*_{\epsilon_i}(\epsilon_i^{-2}\omega_i)$ and $\tilde F_i \coloneqq \Phi_{\epsilon_i}^* F_i$ as in Proposition \ref{SchauderLinear}, now a metric defined on $B_{3(\epsilon_i)^{-1}}(o)$. This rescaling preserves the $C^0$-norm of $\omega_i$ by scale invariance of $\omega_\CC$, but the ${C^{0,\alpha}}'$-seminorm of $\tilde \omega_i$ shrinks:
\begin{equation}\label{RescalingShrinks}
	\begin{aligned}
		[\tilde \omega_i]'_{\alpha,B_{2(\epsilon_i)^{-1}}(o),\Phi_{\epsilon_i}^*(f_{i^{-1}}),\Sigma_{3C}^2 \times \Sigma_{\loc}^2} = \epsilon_i^{\alpha}[\omega_i]'_{\alpha,B_2(o), f_{i^{-1}},\Sigma_{3C}^2 \times \Sigma_{\loc}^2}.
	\end{aligned}
\end{equation}
Fix the rescaled seminorm along the blowup by choosing $(\epsilon_i)$ such that
\begin{equation}\label{EqualToKConePrel}
	\begin{aligned}
		[\tilde \omega_i]'_{\alpha,B_{2(\epsilon_i)^{-1}}(o),\Phi_{\epsilon_i}^*(f_{i^{-1}}),\Sigma_{3C}^2 \times \Sigma_{\loc}^2} = 1
	\end{aligned}
\end{equation}
for every $i$. The condition $[\omega_i]'_{\alpha,B_{2}(o), f_{i^{-1}},\Sigma_{3C}^2 \times \Sigma_{\loc}^2} \to \infty$ as $i \to \infty$ implies that $\epsilon_i \to 0$. Therefore, the metrics in the sequence $(\tilde\omega_i)$ are defined on increasingly larger balls as $i \to \infty$, and by Lemma \ref{LemmaScalarCurvControlSublimit} there exists a $C^{1,\beta}_\loc$-sublimit $\omega_\infty = \Psi^*\omega_\CC$ for some $\Psi \in \Aut_\Scl(\CC)$ and for all $\beta < 1$. The next sections utilize this limiting metric to obtain a contradiction with \eqref{EqualToKConePrel}.

\subsubsection{Changing Weight-Function and Picking Maximizing Point and Distance.}\label{SectionChangingWeightFunction}

The previous choice of weight function was picked to ensure that the seminorm $[\omega_i]'_{\alpha,B_2(o), f_{i^{-1}},\Sigma_{3C}^2 \times \Sigma_{\loc}^2}$ exists for any $i>0$. For the proof, it will be convenient to change the function slightly, and also pick points $x_i\in \CC$, radii $\rho_i, \nu_i>0$, and elements $\pi_i\in \Sigma_{3C}^2, \eta_i\in  \Sigma_{\loc}^2$ realizing the seminorm. We do this as follows: \\

For every $i$, pick a point $x_i \in \overline{2B_{\epsilon_i^{-1}}(o)}$ and radii $\rho_i, \nu_i > 0$ realizing at least half of the seminorm $[\tilde \omega_i]'_{\alpha,B_{2(\epsilon_i)^{-1}}(o),\Phi_{\epsilon_i}^*(f_{i^{-1}}),\Sigma_{3C}^2 \times \Sigma_{\loc}^2}$. Define $\delta_i \coloneqq \dist_{\omega_{{\CC}}}(o, B_{\rho_i}(x_i)\setminus B_{\nu_i}(x_i))$, then this means:
\begin{align}\label{Datamaximizing}
	\begin{aligned}
		&\frac{1}{2}[\tilde \omega_i]'_{\alpha,B_{2(\epsilon_i)^{-1}}(o),\Phi_{\epsilon_i}^*(f_{i^{-1}}),\Sigma_{3C}^2 \times \Sigma_{\loc}^2} \\
		\leq& \rho_i^{-\alpha}\Phi_{\epsilon_i}^*(f_{i^{-1}})(\delta_i) \min_{(\pi,\eta)\in \Sigma_{3C}^2\times \Sigma_\loc^2}  \mathds{1}_{B_{\rho_i}(x_i)\setminus B_{\nu_i}(x_i)}(\eta)\Norm{\tilde\omega_i-\pi - \eta}{0,(B_{\rho_i}(x_i)\setminus B_{\nu_i}(x_i))\cap B_{2(\epsilon_i)^{-1}}(o)}.
	\end{aligned}
\end{align}
A priori, it could be that $\Phi_{\epsilon_i}^*(f_{i^{-1}})(\delta_i)\to 0$ as $i\to\infty$. For convenience in the proof and as in Proposition \ref{SchauderLinear}, we redefine $\Phi_{\epsilon_i}^*(f_{i^{-1}})$  to $\tilde f_{i}$ such that this does not occur. This can happen in two places: if $\delta_i< (i\epsilon_i)^{-1}$, or if $\delta_i \in [\epsilon_i^{-2},2\epsilon_i^{-1}]$:\\

If $\delta_i \geq (i\epsilon_i)^{-1}$ already, then define $\tilde f_i \coloneqq \Phi_{\epsilon_i}^*(f_{i^{-1}})$; otherwise if $\delta_i < (i\epsilon_i)^{-1}$:
\begin{equation*}
	\tilde f_i(r) \coloneqq \begin{cases}
		(\delta_i)^{-\frac{1+\alpha}{2}}r^{\frac{1+\alpha}{2}}, & 0 \leq r \leq \delta_i,\\
		1, &  r \in (\delta_i, \epsilon_i^{-1}),\\
		2-\epsilon_ir, & r \in [\epsilon_i^{-1},2\epsilon_i^{-1}].
	\end{cases}
\end{equation*}
For the weight functions, we have the crucial property:
\begin{equation*}
	\frac{\tilde f_i(r)}{\tilde f_i(\delta_i)} \leq \frac{\Phi_{\epsilon_i}^*(f_{i^{-1}})(r)}{\Phi_{\epsilon_i}^*(f_{i^{-1}})(\delta_i)}
\end{equation*}
for any $r \in (0,2\epsilon_i^{-1})$. Let $(\pi_i,\eta_i) \in \Sigma_{3C}^2 \times \Sigma_{\loc}^2$ be the minimizers of \eqref{EqualToKConePrel} on $B_{\rho_i}(x_i) \setminus B_{\nu_i}(x_i)$. Thus, for any other $x_i'\in \CC,\rho_i',\nu_i'>0$ with $\delta_i' \coloneqq \dist_{\omega_\CC}(o, B_{\rho_i'}(x_i')\setminus B_{\nu_i}(x_i'))$, then 
\begin{align*}
	&\tilde f_i (\delta_i) \rho_i^{-\alpha} \Norm{\tilde\omega_i-\pi_i - \eta_i}{0,(B_{\rho_i}(x_i)\setminus B_{\nu_i}(x_i))\cap B_{2(\epsilon_i)^{-1}}(o)}\\ 
	=& \tilde f_i (\delta_i') \frac{\tilde f_i (\delta_i)}{\tilde f_i (\delta_i')} \rho_i^{-\alpha} \Norm{\tilde\omega_i-\pi_i - \eta_i}{0,(B_{\rho_i}(x_i)\setminus B_{\nu_i}(x_i))\cap B_{2(\epsilon_i)^{-1}}(o)}\\
	\geq& \frac{1}{2} \tilde f_i (\delta_i') \frac{\tilde f_i (\delta_i)}{\tilde f_i (\delta_i')} \frac{\Phi_{\epsilon_i}^* f_{i^{-1}} (\delta_i')}{\Phi_{\epsilon_i}^* f_{i^{-1}} (\delta_i)} 
	(\rho_i')^{-\alpha} \min_{(\pi,\eta)\in \Sigma_{3C}^2\times \Sigma_\loc^2}\Norm{\tilde\omega_i-\pi - \eta}{0,(B_{\rho_i'}(x_i')\setminus B_{\nu_i'}(x_i'))\cap B_{2(\epsilon_i)^{-1}}(o)}\\
	\geq&\frac{1}{2} \tilde f_i (\delta_i')  (\rho_i')^{-\alpha} \min_{(\pi,\eta)\in \Sigma_{3C}^2\times \Sigma_\loc^2}\Norm{\tilde\omega_i-\pi - \eta}{0,(B_{\rho_i'}(x_i')\setminus B_{\nu_i'}(x_i'))\cap B_{2(\epsilon_i)^{-1}}(o)},
\end{align*}
with the penultimate inequality following from \eqref{Datamaximizing} and $x_i,\rho_i,\nu_i, \pi_i, \eta_i$ realizing at least half of the seminorm. Thus, $x_i,\rho_i,\nu_i, \pi_i, \eta_i$ also realizes at least half of $[\tilde\omega_i]_{\alpha,B_{2(\epsilon_i)^{-1}}(o),\tilde f_i,\Sigma_{3C}^2 \times \Sigma_{\loc}^2}'$:
\begin{equation*}
	\frac{1}{2} [\tilde\omega_i]_{\alpha,B_{2(\epsilon_i)^{-1}}(o),\tilde f_i,\Sigma_{3C}^2 \times \Sigma_{\loc}^2}' \leq \rho_i^{-\alpha} \tilde f_i(\delta_i) \Norm{\tilde\omega_i-\pi_i - \eta_i}{0,(B_{\rho_i}(x_i)\setminus B_{\nu_i}(x_i))\cap B_{2(\epsilon_i)^{-1}}(o)}.
\end{equation*}
\sloppy For the case $\delta_i \in [\epsilon_i^{-1}, 2\epsilon_i^{-1}]$, then as $\Norm{\tilde \omega_i}{0, B_{3(\epsilon_i)^{-1}}(o)} \leq C_1$ and the scalar curvatures scales by:
\begin{equation*}
	\Norm{\Scal(\tilde\omega_i)}{0, B_{3(\epsilon_i)^{-1}(o)}}\leq D\epsilon_i^{2},
\end{equation*}
regularity of the complex Monge-Ampère equation and Lemma \ref{LemmaHolderBoundToPrimeBound} implies that
\begin{align*}
	[\tilde\omega_i]_{\alpha,B_{2(\epsilon_i)^{-1}}(o) \setminus B_1(o),1,\Sigma_{3C}^2 \times \Sigma_{\loc}^2}' &\leq 
	\Norm{\tilde\omega_i}{0,\alpha,B_{2(\epsilon_i)^{-1}}(o) \setminus B_1(o),1,\Sigma_{3C}^2 \times \Sigma_{\loc}^2}'\\
	&\leq \Norm{\tilde\omega_i - \omega_\CC}{0,\alpha,B_{2(\epsilon_i)^{-1}}(o) \setminus B_1(o),1,\Sigma_{\loc}^2}'\\
	&\leq \Norm{\tilde\omega_i}{0,\alpha,B_{2(\epsilon_i)^{-1}}(o) \setminus B_1(o),1,\Sigma_{\loc}^2}' + \Norm{\omega_\CC}{0,\alpha,B_{2(\epsilon_i)^{-1}}(o) \setminus B_1(o),1,\Sigma_{\loc}^2}'\\
	&\leq \Norm{\tilde\omega_i}{0,\alpha,B_{2(\epsilon_i)^{-1}}(o) \setminus B_1(o)} + \Norm{\omega_\CC}{0,\alpha,B_{2(\epsilon_i)^{-1}}(o) \setminus B_1(o)}\\
	 &\leq \frac{1}{c} 
\end{align*}
for some constant $c>0$ and weight function identically $1$. Therefore,
\begin{equation*}
	\tilde f_i(\delta_i) \geq c,
\end{equation*}
i.e. $\tilde f_i(\delta_i)$ is bounded from below. This also further shows that $\delta_i \leq \frac{2-c}{\epsilon_i}$. So if $\rho_i$ stays bounded, the distance $\dist_{\omega_\CC}(\partial(B_{2(\epsilon_i)^{-1}}(o)), B_{\rho_i}(x_i)\setminus B_{\nu_i}(x_i))\to \infty$ as $i\to\infty$, i.e. our maximizing ball becomes infinitely far away from the outer boundary in the limit.\\

Finally, as $\tilde f_i \geq \Phi_{\epsilon_i}^*(f_{i^{-1}})$, then
\begin{equation*}
	[\tilde\omega_i]_{\alpha,B_{2(\epsilon_i)^{-1}}(o),\tilde f_i,\Sigma_{3C}^2 \times \Sigma_{\loc}^2}' \geq 	[\tilde \omega_i]'_{\alpha,B_{2(\epsilon_i)^{-1}}(o),\Phi_{\epsilon_i}^*(f_{i^{-1}}),\Sigma_{3C}^2 \times \Sigma_{\loc}^2} = 1.
\end{equation*}
If necessary and without changing notation, make $\epsilon_i\to 0$ even faster such that
\begin{equation}\label{EqualToKCone}
	[\tilde\omega_i]_{\alpha,B_{2(\epsilon_i)^{-1}}(o),\tilde f_i,\Sigma_{3C}^2 \times \Sigma_{\loc}^2}' = 1
\end{equation}
for any $i \in \N$. This rescaling preserves the discussion above, in particular the lower bound $\tilde f_i(\delta_i) \geq c$.

\subsubsection{Setting up Contradiction and Obtaining Limiting Function and Metric}

Due to Lemma \ref{LemmaScalarCurvControlSublimit}, there exists a sublimit $\tilde\omega_i \to\omega_\infty = \Psi^*\omega_\CC$ in $C^{1,\beta}_\loc(\CC)$ for all $\beta <1$ and for some $\Psi\in \Aut_\Scl(\CC)$. Using this, we prove the following claim:\\

\textbf{Claim:} The sequence $(\tilde \omega_i)$ satisfies
\begin{equation*}
	[\tilde\omega_i]_{\alpha,B_{2(\epsilon_i)^{-1}}(o),\tilde f_i,\Sigma_{3C}^2 \times \Sigma_{\loc}^2}' \to 0, \quad i \to \infty.
\end{equation*}
This is obviously a contradiction with \eqref{EqualToKCone}, thereby proving that 
\begin{equation*}
	[\omega_i]'_{\alpha,B_2(o), f_{i^{-1}},\Sigma_{3C}^2 \times \Sigma_{\loc}^2} \leq C'
\end{equation*}
for some constant $C'$ and all $i > 0$ if the claim is true, and hence proving Theorem \ref{TheoremHolderBound}.\\

\textbf{Proof:} We prove the claim by studying three cases:\\

\subsubsection{\textbf{Case 1:} $\rho_i \to \infty$ as $i \to \infty$} $\Norm{\tilde\omega_i}{0,\CC} \leq C$ and $\omega_\CC \in \Sigma_{3C}^2$ implies that
\begin{equation*}
	\tilde f_i(\delta_i) \rho_i^{-\alpha} \min_{\pi\in \Sigma_{3C}^2}\Norm{\tilde\omega_i - \pi}{0,B_{\rho_i}(x_i)} \leq (C+1) \rho_i^{-\alpha} \to 0, \quad i \to \infty,
\end{equation*}
showing the claim if $\rho_i\to\infty$.
\vspace{2mm}

\subsubsection{\textbf{Case 2:} $\rho_i$ is bounded away from $0$ and $\infty$, i.e., there exist $b, B > 0$ such that $b < \rho_i < B$.}\label{SectionCase2}

\begin{enumerate}[\text{Subcase 2.}1)]
	\item $0 < a < \delta_i < A < \infty$ for some $a,A>0$: The convergence $\tilde\omega_i \to \omega_\infty \in \Sigma_{3C}^2$ in $C^{1,\beta}(K)$ for a compact set $K$ containing all $B_{\rho_i}(x_i) \setminus B_{\nu_i}(x_i)$ implies that
	\begin{equation*}
		\tilde f_i(\delta_i) \rho_i^{-\alpha} \min_{\pi\in \Sigma_{3C}^2}\Norm{\tilde\omega_i - \pi}{0, B_{\rho_i}(x_i) \setminus B_{\nu_i}(x_i)} \leq b^{-\alpha} \Norm{\tilde\omega_i - \omega_\infty}{0, K} \to 0.
	\end{equation*}
	
	\item $\delta_i \to 0$: In this case, $B_{\rho_i}(x_i) \setminus B_{\nu_i}(x_i)$ leaves every compact set in $\CC$, and so we cannot use the convergence $\tilde\omega_i \to \omega_\infty$ in $C^{1,\beta}_\loc(\CC)$ directly. For this part, we assume that $\rho_i = 1$ at all times to ease notation. As $B_1(x_i)$ is not contained in a coordinate ball for $i$ large enough, no element in $\Sigma_{\loc}^2$ other than 0 appear. Note that $\tilde f_i(\delta_i) = 1$, but this does not play a role here.
	
	As $\tilde\omega_i \to \omega_\infty \in \Sigma_{3C}^2$ in $C_\loc^{1,\beta}(\CC)$, then for any small but fixed $\mu > 0$ and $i$ large enough:
	\begin{equation}\label{EqOmega_iConvMu}
		\Norm{\tilde\omega_i - \omega_\infty}{0, [\mu, 2] \times L} \leq \mu.
	\end{equation}
	By the Hölder bound \eqref{EqualToKCone} and choosing $x = o$, $\rho = 2\mu$, and $\nu = \delta_i$ in the seminorm \eqref{EqualToKCone}, there exists $\psi_i \in \Sigma_{3C}^2$ such that
	\begin{equation*}
		\tilde f_i(\delta_i)\Norm{\tilde\omega_i - \psi_i}{0, [\delta_i, 2\mu] \times L} \leq (2\mu)^\alpha.
	\end{equation*}
	The triangle inequality now implies that
	\begin{align*}
		\tilde f_i(\delta_i)\Norm{\omega_\infty - \psi_i}{0, [\mu, 2\mu] \times L} &= \tilde f_i(\delta_i)\Norm{\omega_\infty - \tilde\omega_i + \tilde\omega_i - \psi_i}{0, [\mu, 2\mu] \times L}\\ 
		&\leq \tilde f_i(\delta_i)\Norm{\omega_\infty - \tilde\omega_i }{0, [\mu, 2\mu] \times L} + \tilde f_i(\delta_i)\Norm{\tilde\omega_i - \psi_i}{0, [\mu, 2\mu] \times L}\\ 
		&\leq \mu + (2\mu)^\alpha \\
		&\leq 3\mu^\alpha.
	\end{align*}
	$|\omega_{\infty} - \psi_i|_{\omega_\CC}$ is invariant under scaling, so it follows that
	\begin{equation*}
		\tilde f_i(\delta_i)\Norm{\omega_\infty - \psi_i}{0, (0, 2] \times L} \leq 3\mu^\alpha.
	\end{equation*}
	Hence,
	\begin{align*}
		\tilde f_i(\delta_i) \min_{\pi\in \Sigma_{3C}^2}\Norm{\tilde\omega_i -\pi}{0, B_{\rho_i}(x_i) \setminus B_{\nu_i}(x_i)} &\leq \tilde f_i(\delta_i)\Norm{\tilde\omega_i - \omega_\infty}{0, [\delta_i, 2] \times L}\\
		&\leq \tilde f_i(\delta_i)\left(\Norm{\tilde\omega_i - \omega_\infty}{0, [\delta_i, 2\mu] \times L} + \Norm{\tilde\omega_i - \omega_\infty}{0, [2\mu, 2] \times L}\right)\\
		&\leq \tilde f_i(\delta_i)\Norm{\tilde\omega_i - \psi_i + \psi_i - \omega_\infty}{0, [\delta_i, 2\mu] \times L}+ \mu\\ 
		&\leq \tilde f_i(\delta_i)\Norm{\tilde\omega_i - \psi_i}{0, [\delta_i, 2\mu] \times L} + \tilde f_i(\delta_i)\Norm{\psi_i - \omega_\infty}{0, [\delta_i, 2\mu] \times L}+ \mu\\
		&\leq (2\mu)^\alpha + 3\mu^\alpha + \mu,
	\end{align*}
	proving the claim in this case as $x_i,\rho_i,\nu_i$ realizes at least half of the seminorm \eqref{Datamaximizing}.
	\item $\delta_i\to\infty$. As $\dist_{\omega_{\CC}}(x_i,o)\to\infty$, and as the coordinate balls in $\UU$ have increasing size moving away from the apex, $B_{\rho_i}(x_i)$ is always contained in a coordinate ball for $i$ sufficiently large. Fix the associated holomorphic coordinates on any such coordinate ball, and apply translations such that $x_i=0$ is always the center. In these coordinates, $\Sigma_{\loc}^2$ is by definition the set of constant $(1,1)$-forms. Recall that $\dist(x_i, \partial (B_{2(\epsilon_i)^{-1}}(o))) \to \infty$, so for every $i$, choose an increasing sequence of balls $B_{r_i}(x_i) \subset B_{2(\epsilon_i)^{-1}}(o)$ containing $B_{\rho_i}(x_i)$ such that $r_i \to \infty$ and $\dist_{\omega_{{\CC}}}(B_{r_i}(x_i),\partial( B_{2(\epsilon_i)^{-1}}(o)))\to\infty$. Also ensure $r_i\to\infty$ so slowly such that $B_{r_i}(x_i)$ is always contained in a coordinate ball.
	
	By the Liouville Theorem (Theorem \ref{LiouvilleTheorem}), $\omega_\CC|_{B_{r_i}(x_i)} \to A^*\omega_\eucl$ in $C_\loc^\infty(\C^m)$ and $\tilde\omega_i|_{B_{r_i}(x_i)} \to B^*\omega_\eucl$ in $C_\loc^{1,\beta}(\C^m)$ for some $A,B \in \GL(m,\C)$. Picking $\pi = \omega_\CC$ and $\eta = -A^*\omega_\eucl + B^*\omega_\eucl \in \Sigma_{\loc}^2$, then
	\begin{equation*}
		\tilde f_i(\delta_i) \rho_i^{-\alpha} \min_{(\pi,\eta)\in \Sigma_{3C}^2\times \Sigma_\loc^2}\Norm{\tilde\omega_i - \pi - \eta}{0, B_{\rho_i}(x_i)} \leq b^{-\alpha} \Norm{\tilde\omega_i - \omega_\CC + A^*\omega_\eucl - B^*\omega_\eucl}{0, B_{\rho_i}(x_i)} \to 0
	\end{equation*}
	as $i\to\infty$, and the result follows.
\end{enumerate}

\subsubsection{\textbf{Case 3:} $\rho_i \to 0$. Investigating the relationship between $\delta_i$ and $\rho_i$} In this case, we linearize the complex Monge-Ampère equation and apply Proposition \ref{SchauderLinear}. This part of the proof also differs the most from the proof of Proposition \ref{SchauderLinear}. Since we cannot directly cite a Liouville theorem as in Case 2, we first need to build up the necessary estimates. \\

The first thing that needs to be checked is the relationship between $\delta_i$ and $\rho_i$. Due to the regularity of the complex Monge-Ampère equation, we show that, under the condition $\rho_i \to 0$, then
\begin{equation}\label{RhoQuotient}
	\delta_i \to 0, \quad \text{and} \quad \frac{\delta_i}{\rho_i} \leq A < \infty,
\end{equation}
as $i\to\infty$ and for some $A>0$. $\delta_i$ therefore goes to zero, and goes to zero at least as fast as $\rho_i$. To show this, we check two cases:\\

\begin{enumerate}
	\item $b \leq \delta_i$: The $C^{1,\beta}_\loc(\CC)$-regularity of $\tilde\omega_i$ trivially implies a $C^{0,\beta}_\loc(\CC)$-bound on $\tilde\omega_i$ for any $\beta < 1$. As $\delta_i\geq b$ by assumption, this estimate is uniform on $B_{\frac{b}{2}}( x_i)$.	Lemma \ref{LemmaHolderBoundToPrimeBound} shows a similar bound for the primed seminorm:
	\begin{equation*}
		[\tilde\omega_i]_{\beta, B_{\frac{b}{2}}(\hat x_i), \Sigma_{\loc}^2}' \leq D.
	\end{equation*}
	The same is true for any element in $\Sigma_{3C}^2$, especially $\omega_\CC$. Pick $\beta > \alpha$. As $B_{\rho_i}(x_i)\subset B_{\frac{b}{2}}( x_i)$ for $i$ big enough, it follows that:
	\begin{equation}\label{EqHigherBetaContradiction}
		\begin{aligned}
			\frac{1}{2} &\leq \rho_i^{-\alpha} \min_{(\pi,\eta)\in \Sigma_{3C}^2\times \Sigma_\loc^2} \Norm{\tilde \omega_i - \pi - \eta}{0,B_{\rho_i}(x_i)} \\
			&\leq \rho_i^{\beta-\alpha} \rho_i^{-\beta}\min_{\eta \in \Sigma_\loc^2} \Norm{\tilde \omega_i - \omega_\CC - \eta}{0,B_{\rho_i}(x_i)} \\
			&\leq \rho_i^{\beta-\alpha} D_K \to 0,
		\end{aligned}
	\end{equation}
	which leads to a contradiction.
	
	\item $\delta_i \to 0$, but $\frac{\rho_i}{\delta_i} \to 0$: Blow up by $\delta_i^{-1}$ such that the maximizing ball of the seminorm has distance 1 to the apex:
	\begin{itemize}
		\item $x_i^\sharp \coloneqq \Phi_{\delta_i}^{-1}(x_i)$,
		\item $\delta_i^{-2}\Phi_{\delta_i}^*(\omega_\CC) = \omega_\CC$,
		\item $ \omega_i^\sharp \coloneqq \delta_i^{-2}\Phi_{\delta_i}^*(\tilde\omega_i)$,
		\item $ \rho_i^\sharp \coloneqq \frac{\rho_i}{\delta_i}$, $\nu_i^\sharp \coloneqq \frac{\nu_i}{\delta_i} < 1$, $ \delta_i^\sharp \coloneqq \frac{\delta_i}{\delta_i} = 1$,
		\item $\Phi_{\delta_i}^*(\tilde f_i) \coloneqq f_i^\sharp$.
	\end{itemize}
	By assumption, $\rho_i^\sharp \to 0$ as $i \to \infty$, while $1 = \delta_i^\sharp = \dist_{\omega_\CC}(o, B_{\rho_i^\sharp}(x_i^\sharp))$. The blowup by $\delta_i^{-1}$ also scales the seminorm \eqref{EqualToKCone}:
	\begin{equation}\label{SeminormDeltaBlowup}
		[\omega_i^\sharp]_{\alpha,B_{2(\delta_i\epsilon_i)^{-1}}(o), f_i^\sharp,\Sigma_{3C}^2 \times \Sigma_{\loc}^2}' = \delta_i^{\alpha},
	\end{equation}
	with the ball $B_{\rho_i^\sharp}(x_i^\sharp)$ realizing at least half of the seminorm in \eqref{SeminormDeltaBlowup}. Let $\pi_i^\sharp \in \Sigma_{3C}^2$ be the minimizing element in \eqref{SeminormDeltaBlowup} on $B_{3}(o)\setminus B_{\frac{1}{4}}(o)$. As $B_3(o)\setminus B_{\frac{1}{4}}(o)$ is not contained in any coordinate ball $U$, there is no contribution from $\Sigma_{\loc}^2$. \eqref{SeminormDeltaBlowup} shows that:
	\begin{equation}\label{DifferenceOmegaPiSharpLinfty}
		\Norm{\omega_i^\sharp - \pi_i^\sharp}{0,B_3(o)\setminus B_{\frac{1}{4}}(o) } \leq 3^\alpha f_i^\sharp\left(\frac{1}{4}\right)^{-1} \delta_i^\alpha,
	\end{equation}
	where $3^\alpha f_i^\sharp(\frac{1}{4})^{-1} \leq C < \infty$ uniformly by the condition $f_i^\sharp(1) = 1$. By Lemma \ref{LemmaHolderBound} and Lemma \ref{LemmaHolderBoundToPrimeBound}, then
	\begin{align*}
		\Norm{\omega_i^\sharp - \pi_i^\sharp}{0,\beta,B_2(o)\setminus B_{\frac{1}{2}}(o), \Sigma_\loc^2 }' &\leq C'\Norm{\omega_i^\sharp - \pi_i^\sharp}{0,\beta,B_2(o)\setminus B_{\frac{1}{2}}(o) } \\
		&\leq C'\Norm{\omega_i^\sharp - \pi_i^\sharp}{0,B_3(o)\setminus B_{\frac{1}{4}}(o) } \\
		&\leq C' \delta_i^\alpha
	\end{align*}
	for all $\beta < 1$ and $C'$ depending on $\beta$.
	
	As $\rho_i^\sharp \to 0$, all $B_{\rho_i^\sharp}(x_i^\sharp)$ are contained in $B_2(o)\setminus B_{\frac{1}{2}}(o)$ for $i$ sufficiently large. Hence, on $B_2(o)\setminus B_{\frac{1}{2}}(o)$, we proceed as in \eqref{EqHigherBetaContradiction}:
	\begin{align*}
		\frac{\delta_i^\alpha}{2} &\leq (\rho_i^\sharp)^{-\alpha} \min_{(\pi,\eta)\in \Sigma_{3C}^2\times \Sigma_\loc^2}\Norm{\omega_i^\sharp - \pi - \eta}{0, B_{\rho_i^\sharp}(x_i^\sharp)} \\
		&\leq (\rho_i^\sharp)^{-\alpha} \min_{\eta\in  \Sigma_\loc^2}\Norm{\omega_i^\sharp - \pi_i^\sharp - \eta}{0, B_{\rho_i^\sharp}(x_i^\sharp)} \\
		&=  (\rho_i^\sharp)^{\beta-\alpha}(\rho_i^\sharp)^{-\beta} \min_{\eta\in  \Sigma_\loc^2}\Norm{\omega_i^\sharp - \pi_i^\sharp - \eta}{0, B_{\rho_i^\sharp}(x_i^\sharp)}\\
		&\leq (\rho_i^\sharp)^{\beta-\alpha} C'\delta_i^\alpha.
	\end{align*}
	As $\alpha < 1$ and if we pick $\beta > \alpha$, then:
	\begin{equation*}
		\frac{1}{2} \leq C' (\rho_i^\sharp)^{\beta-\alpha} \to 0,
	\end{equation*}
	a clear contradiction.
\end{enumerate}

From the above three cases, \eqref{RhoQuotient} follows.

\subsubsection{\textbf{Case 3:} $\rho_i \to 0$. Linearizing the Complex Monge-Ampère Equation and Proof}
First, we rescale $\tilde\omega_i$ such that $\rho_i$ equals 1 in the rescaled coordinates, denoted by $\hat\cdot$. More precisely, define:
\begin{itemize}
	\item $\hat x_i \coloneqq \Phi_{\rho_i}^{-1}(x_i)$,
	\item $\rho_i^{-2}\Phi_{\rho_i}^*(\omega_\CC) = \omega_\CC$,
	\item $\hat \omega_i \coloneqq \rho_i^{-2}\Phi_{\rho_i}^*(\tilde\omega_i)$,
	\item $\hat \rho_i \coloneqq \frac{\rho_i}{\rho_i} = 1$, $\hat \nu_i \coloneqq \frac{\nu_i}{\rho_i} < 1$, $\hat \delta_i \coloneqq \frac{\delta_i}{\rho_i} \leq A$,
	\item $\hat f_i \coloneqq \Phi_{\rho_i}^*(\tilde f_i)$.
\end{itemize}
$\omega_\CC$ is invariant under this rescaling as it is a cone metric. The rescaling has the effect of decreasing the ${C^{0,\alpha}}'$-seminorm: 
\begin{equation}\label{SeminormDoubleBlowup}
	[\hat\omega_i]_{\alpha, B_{2(\epsilon_i\rho_i)^{-1}}(o),\hat f_i, \Sigma_{3C}^2 \times \Sigma_{\loc}^2}' = \rho_i^\alpha [\tilde{\omega}_i]_{\alpha, B_{2(\epsilon_i)^{-1}}(o),\tilde f_i, \Sigma_{3C}^2 \times \Sigma_{\loc}^2}' = \rho_i^\alpha.
\end{equation}
We know that $\hat \omega_i \to \hat \omega_\infty$ in $C_\loc^{1,\beta}(\CC)$ for some $\hat \omega_\infty = \hat \Psi^*\omega_\CC$, but the rate of convergence is not known. Notice that by \eqref{RhoQuotient}, then $\hat \delta_i\leq A<\infty$. The proof proceeds by modifying $\hat \omega_i$ to $\check\omega_i$ with a nonvanishing primed seminorm. First, we subtract an element $\pi_i \in \Sigma_{3C}^2$ such that we obtain uniform $C^{0,\beta}_{\loc}$-estimates of $\rho_i^{-\alpha}(\hat \omega_i - \pi_i)$ for any $\beta < 1$. For this, pick $\pi_i^{R,\nu} \in \Sigma_{3C}^2$ realizing \eqref{SeminormDoubleBlowup} on $B_R(o)\setminus B_\nu(o)$, i.e.,
\begin{equation*}
	\Norm{\hat \omega_i - \pi_i^{R,\nu}}{0, B_R(o)\setminus B_\nu(o)} \leq R^{\alpha}\hat f_i(\nu)^{-1} \rho_i^\alpha,
\end{equation*}
where $\hat f_i(\nu)^{-1}$ has an upper bound because $\hat\delta_i \leq A$ uniformly. Set $\pi_i \coloneqq \pi_i^{2,1}$. Then \eqref{SeminormDoubleBlowup} implies:
\begin{equation}\label{EqInftyNormOmegaPi}
	\begin{aligned}
		\hat f_i(\nu)\Norm{\hat \omega_i - \pi_i}{B_R(o)\setminus B_{\nu}(o)} &\leq \hat f_i(\nu)\Norm{\hat \omega_i - \pi_i^{R,\nu}}{B_R(o)\setminus B_{\nu}(o)} + \hat f_i(\nu)\Norm{\pi_i^{R,\nu} - \pi_i}{B_R(o)\setminus B_{\nu}(o)}\\
		&\leq R^\alpha\rho_i^\alpha + \hat f_i(\nu)\Norm{\pi_i^{R,\nu} - \pi_i}{B_2(o)\setminus B_{1}(o)} \\
		&\leq R^\alpha\rho_i^\alpha + \hat f_i(\nu)\Norm{\pi_i^{R,\nu} - \hat\omega_i}{B_2(o)\setminus B_{1}(o)} + \hat f_i(\nu)\Norm{\hat\omega_i - \pi_i}{B_2(o)\setminus B_{1}(o)}\\
		&\leq (2R^\alpha + 2)\rho_i^\alpha.
	\end{aligned}
\end{equation}
Notice that $\Norm{\pi_i^{R,\nu} - \pi_i}{B_R(o)\setminus B_{\nu}(o)} = \Norm{\pi_i^{R,\nu} - \pi_i}{B_2(o)\setminus B_{1}(o)}$ as $\pi_i^{R,\nu}, \pi_i\in \Sigma_{3C}^2$ are invariant under scaling. Hence, by Lemma \ref{LemmaHolderBound}, on any compact set $K\Subset \CC$ and for any $\beta < 1$, there exists a $D_K>0$ such that 
\begin{equation}\label{HatOmegaHolderBound}
	\Norm{\hat \omega_i - \pi_i}{0,\beta,K} \leq D_K \rho_i^\alpha.
\end{equation}
Given this, define $\check \omega_i \coloneqq \rho_i^{-\alpha}(\hat \omega_i - \pi_i)$ with
\begin{equation}\label{EqCheckOmegaRegularity}
	[\check\omega_i]'_{\alpha, B_{2(\epsilon_i\rho_i)^{-1}}(o), \hat f_i, (\rho_i^{-\alpha}(\Sigma_{3C}^2 - \pi_i)) \times \Sigma_{\loc}^2} = 1.
\end{equation}
Notice the new comparison set incorporating $\pi_i$ and multiplication by $\rho_i^{-\alpha}$, defined as 
\begin{equation*}
	\rho_i^{-\alpha}(\Sigma_{3C}^2 - \pi_i) \coloneqq \{\rho_i^{-\alpha}(\Psi^*\omega_\CC - \pi_i) \in \Omega^2(\CC) \mid \Psi^*\omega_\CC \in \Sigma_{3C}^2 \}.
\end{equation*}
As $\hat f_i(\delta_i) = 1$ and $\hat \delta_i < A<\infty$ for some $A>0$, the decay condition of $\hat f_i$ and \eqref{EqInftyNormOmegaPi} show that
\begin{equation}\label{EqUniformEstimateOmegaPi}
	|\check \omega_i| \leq C \begin{cases}
		r^{-\frac{1+\alpha}{2}} & r \leq 1, \\ r^{\alpha} & r > 1.
	\end{cases}
\end{equation}
If $\hat \delta_i \to 0$, then $r^{-\frac{1+\alpha}{2}}$ gets replaced by the constant function $1$ in the limit. \eqref{HatOmegaHolderBound} shows that $\check\omega_i\in C^{0,\beta}(K)$ for any compact set $K\Subset \CC$.\\

To obtain a differential equation for $\check\omega_i$, rewrite $\hat \omega_i^m = \frac{\hat\omega_i^m}{\pi_i^m}\pi_i^m$ as
\begin{equation}\label{EqEtaBounded}
	\begin{aligned}
		\frac{\hat\omega_i^m}{\pi_i^m}\pi_i^m &= \hat\omega_i^m = \rho_i^{\alpha \cdot m} (\rho_i^{-\alpha}(\hat\omega_i - \pi_i) + \rho_i^{-\alpha}\pi_i)^m \\
		&= \rho_i^{\alpha \cdot m} (\check\omega_i + \rho_i^{-\alpha}\pi_i)^m \\
		&= \pi_i^m + m\rho_i^\alpha\pi_i^{m-1}\wedge \check\omega_i + \OO(\rho_i^{2\alpha}).
	\end{aligned}
\end{equation}
Subtracting $\pi_i^m$ and dividing by $\rho_i^\alpha\pi_i^m$ implies that
\begin{equation}\label{EqTrEta2}
	\begin{aligned}
		m\tr_{\pi_i}\check\omega_i + \OO(\rho_i^\alpha)
		= \rho_i^{-\alpha}\left(\frac{\hat\omega_i^m}{\pi_i^m}-1\right).
	\end{aligned}
\end{equation}
On the left-hand side, $\tr_{\pi_i}\check\omega_i$ is controlled in $C^{0,\beta}_\loc(\CC)$, and all terms in $\OO(\rho_i^\alpha)$ converge to $0$ in $C^{0,\beta}_\loc(\CC)$ as $\check\omega_i$ is controlled in $C^{0,\beta}_\loc(\CC)$ for $i$ big enough due to \eqref{HatOmegaHolderBound}. On the right-hand side, write
\begin{equation*}
	\omega_i^m = e^{G_i}\pi_i^m, \quad \hat\omega_i^m = e^{\hat G_i}\pi_i^m.
\end{equation*}
\eqref{EqTrEta2} becomes:
\begin{equation}\label{EqTrEta}
	m\tr_{\pi_i} \check\omega_i = \rho_i^{-\alpha}\left(e^{\hat G_i}-1\right) + \OO(\rho_i^\alpha).
\end{equation}
The blowups by $(\epsilon_i)$ and $(\rho_i)$ imply, by the uniform bound \eqref{ConditionUniformBound}, Ricci-flatness of $\pi_i$, and Proposition \ref{SchauderLinear}, that
\begin{equation}\label{EqRescaledG}
	[e^{\hat G_i}]_{0,\beta, B_{2(\epsilon_i\rho_i)^{-1}}(o)} \leq C_3 \epsilon_i^\beta\rho_i^\beta.
\end{equation}
As the right-hand side of \eqref{EqTrEta2} is bounded in $C^{0,\beta}_\loc(\CC)$, \eqref{EqRescaledG} shows that the right-hand side of \eqref{EqTrEta2} converges to a constant in $C^{0,\beta}_\loc(\CC)$ as $i \to \infty$. 

Taking $i\to\infty$, we obtain a limit $\check\omega_\infty$ in $C^{0,\beta}_{\loc}(\CC)$, satisfying
\begin{equation}\label{LaplacePhiInfty}
	\tr_{\hat\omega_\infty} \check\omega_\infty =  \text{constant},
\end{equation}
and with $\pi_i\to\hat \omega_\infty$ in $C^\infty_\loc(\CC)$. $\check\omega_\infty$ also satisfies the growth condition \eqref{EqUniformEstimateOmegaPi}, so as $-\frac{1+\alpha}{2}>-1$ and $\alpha$ is very small, Proposition \ref{PropGrowthRateToHomogeneousForm} shows that $\LL_{r\partial_r}\check\omega_\infty = 2\check\omega_\infty$ and $\check\omega_\infty = i\del\delbar (cr^2 + u)$ for some constant $c$ and 2-homogeneous, $\xi$-invariant, harmonic $u$.\\

To finally show the claim, we consider the following two cases: \\

\begin{enumerate}[\text{Subcase 3.}1)]
	\item $\hat \delta_i>a>0$ for some $a > 0$. Then:
	\begin{equation}\label{EqualityBlowupDistance1}
		\frac{1}{2} \leq \min_{\theta \in \rho_i^{-\alpha}(\Sigma_{3C}^2 - \pi_i) }\hat f_i(\hat \delta_i)\Norm{\check\omega_i - \theta}{0,B_{1}(\hat x_i)\setminus B_{\hat \nu_i}(\hat x_i)} \leq \min_{\theta \in \rho_i^{-\alpha}(\Sigma_{3C}^2 - \pi_i)} \Norm{\check\omega_i - \theta}{0,K},
	\end{equation}
	for some compact set $K\Subset\CC$ containing all $B_1(\hat x_i)\setminus B_{\hat \nu_i}(\hat x_i)$. Furthermore,
	\begin{equation*}
		\Norm{\check\omega_i - \check\omega_\infty}{0,K} \to 0, \quad i \to \infty,
	\end{equation*}
	where, by Corollary \ref{CorollaryAutomorphismDerivative}, for any $\mu > 0$ and $i$ big enough, there is an element $\theta_i \in \rho_i^{-\alpha}(\Sigma_{3C}^2 - \pi_i)$ such that
	\begin{equation*}
		\Norm{\check\omega_\infty - \theta_i}{0,K} \leq \mu,
	\end{equation*}
	showing the claim in this case by the triangle inequality.
	
	\item $\hat \delta_i \to 0$: We use the above arguments combined with the Hölder bound of \eqref{EqCheckOmegaRegularity} to extend the proof down to $o$ as in subcase 2.2. The proof is almost the same. As before, $B_{1}(\hat x_i)\setminus B_{\hat \nu_i}(\hat x_i)$ leaves every compact set in $\CC$, and so we cannot use the convergence $\check\omega_i \to \check \omega_\infty$ in $C^{0,\beta}_\loc(\CC)$ directly.
	
	As $\check \omega_i \to \check\omega_\infty$ in $C_\loc^{0,\alpha}(\CC)$, then for any small but fixed $\mu > 0$ and $i$ big enough:
	\begin{equation}\label{EqOmega_iConvMu2}
		\Norm{\check\omega_i - \check\omega_\infty}{0,[\mu,2] \times L} \leq \mu.
	\end{equation}
	By the Hölder bound \eqref{EqCheckOmegaRegularity} and choosing the set $[\hat \delta_i,2\mu]\times L$ in the seminorm, there exists $\psi_i \in \rho_i^{-\alpha}(\Sigma_{3C}^2 - \pi_i)$ such that
	\begin{equation*}
		\hat f_i(\hat \delta_i)\Norm{\check\omega_i - \psi_i}{0,[\hat \delta_i,2\mu] \times L} \leq (2\mu)^\alpha.
	\end{equation*}
	The triangle inequality now implies that
	\begin{equation}\label{ClosenessPsiCheckOmega}
		\begin{aligned}
		\hat f_i(\hat \delta_i)\Norm{\check\omega_\infty - \psi_i}{0,[\mu,2\mu] \times L} &= \hat f_i(\hat \delta_i)\Norm{\check\omega_\infty - \check\omega_i + \check\omega_i -  \psi_i}{0,[\mu,2\mu] \times L}\\ 
		&\leq \hat f_i(\hat \delta_i)\Norm{\check\omega_\infty - \check\omega_i }{0,[\mu,2\mu] \times L} + \hat f_i(\hat \delta_i)\Norm{\check\omega_i -  \psi_i}{0,[\mu,2\mu] \times L}\\ 
		&\leq \mu + (2\mu)^\alpha \\
		&\leq 3\mu^\alpha.
	\end{aligned}
	\end{equation}
	$|\check \omega_{\infty} - \psi_i|_{\omega_\CC}$ is invariant under scaling, so it follows that
	\begin{equation*}
		\hat f_i(\hat \delta_i)\Norm{\check\omega_\infty - \psi_i}{0,(0,2] \times L} \leq 3\mu^\alpha.
	\end{equation*}
	Therefore,
	\begin{align*}
		\hat f_i(\hat \delta_i)\Norm{\check\omega_i - \psi_i}{0,B_1(\hat x_i)\setminus B_{\hat \nu_i}(\hat x_i) } &\leq \hat f_i(\hat \delta_i)\Norm{\check\omega_i - \psi_i}{0,[\hat \delta_i,2\mu] \times L} + \hat f_i(\hat \delta_i)\Norm{\check\omega_i - \psi_i}{0,[2\mu,2] \times L}\\
		&\leq(2\mu)^{\alpha} + \hat f_i(\hat \delta_i)\Norm{\check\omega_i -\check\omega_\infty + \check\omega_\infty - \psi_i}{0,[2\mu,2] \times L}\\
		&\leq(2\mu)^{\alpha} + \hat f_i(\hat \delta_i)\Norm{\check\omega_i -\check\omega_\infty}{0,[2\mu,2] \times L} + \hat f_i(\hat \delta_i)\Norm{ \check\omega_\infty - \psi_i}{0,[2\mu,2] \times L}\\
		&\leq (2\mu)^{\alpha} + \mu + 3\mu^\alpha
	\end{align*}
	showing the claim in this case by choosing $\mu>0$ small enough.
\end{enumerate}

\subsection{Asymptotics of the Kähler Metric at $o$}

The following corollary follows from Theorem \ref{TheoremHolderBound}:

\begin{corollary}[{Corollary \ref{mCorollaryOfCalpha}}]\label{CorollaryOfCalpha}
Let $(\CC,\omega_\CC)$ be a Calabi-Yau cone with cone metric $\omega_\CC$, and let $\omega$ be a Kähler metric on $B_3(o)\subset \CC$ such that
\begin{equation}\label{ConditionUniformBoundCor}
	\frac{1}{C}\omega_\CC \leq \omega\leq C\omega_\CC, \quad \Norm{\Scal(\omega)}{0,B_3(o)} \leq D,
\end{equation}
for some constants $C,D>0$. Then, for any $\alpha>0$ small enough, there exists an automorphism $\Psi\in \Aut_\Scl(\CC)$ with
\begin{equation*}
	|\Psi^*\omega- \omega_\CC|_{\omega_\CC} \leq C' r^{\alpha}
\end{equation*}
for $r \leq 1$. $\alpha = \alpha(\CC,\omega_\CC)$ and $C'= C'(\CC,C, D, \alpha,\omega_\CC, \UU)$ are independent of $\omega$.\\

If $\Scal(\omega) = 0$, then
\begin{equation*}
	|\nabla_{\omega_\CC}^k (\Psi^*\omega) - \omega_\CC |_{\omega_\CC} \leq C_k' r^{\alpha - k}, \qquad k \in \mathbb{N}_0,
\end{equation*}
for $r\leq 1$ and $C_k' = C_k'(\CC, C, D, k, \alpha, \omega_\CC, \UU)$
\end{corollary}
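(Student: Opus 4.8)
The plan is to extract pointwise decay from the scale-invariant seminorm bound of Theorem \ref{TheoremHolderBound} by a dyadic annulus argument, and then to upgrade to derivative bounds by rescaling and interior elliptic estimates.

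\textbf{Part 1 (the $C^0$ estimate).} First I would apply Theorem \ref{TheoremHolderBound} to obtain $[\omega]'_{\alpha, B_1(o), \Sigma_{3C}^2 \times \Sigma_\loc^2} \leq C'$ for $\alpha>0$ small. For $k \in \N$, $k \geq 1$, consider the dyadic annuli
\[
A_k \coloneqq B_{2^{-k+1}}(o) \setminus B_{2^{-k-1}}(o) \subset B_1(o),
\]
which arise in the seminorm by taking $x=o$, $\rho = 2^{-k+1}$, $\nu = 2^{-k-1}$, $V = B_1(o)$. Since an annulus around the apex is never contained in a coordinate ball of $\UU$, the only admissible element of $\Sigma_\loc^2$ is $0$, so the seminorm bound produces some $\pi_k = \Psi_k^*\omega_\CC \in \Sigma_{3C}^2$ with
\[
\Norm{\omega - \pi_k}{0, A_k} \leq C' (2^{-k+1})^\alpha \leq C 2^{-k\alpha}.
\]

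Next I would show the $\pi_k$ form a Cauchy sequence. On the overlap $A_k \cap A_{k+1}$ the triangle inequality gives $\Norm{\pi_k - \pi_{k+1}}{0, A_k \cap A_{k+1}} \leq C 2^{-k\alpha}$. The crucial point is that $\pi_k - \pi_{k+1}$ is $2$-homogeneous (both $\pi_k$ and $\pi_{k+1}$ satisfy $\LL_{r\del_r}\pi = 2\pi$) and $\omega_\CC$ is $2$-homogeneous, so $|\pi_k - \pi_{k+1}|_{\omega_\CC}$ is constant along rays; hence its norm on the annulus equals its global $C^0$-norm on $\CC$, giving $\Norm{\pi_k - \pi_{k+1}}{0, \CC} \leq C 2^{-k\alpha}$. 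As $\sum_k 2^{-k\alpha} < \infty$, the sequence $(\pi_k)$ is Cauchy in $C^0(\CC)$. By Lemma \ref{LemmaCompactnessAutomorphisms} the $\Psi_k$ lie in a compact subset of $\Aut_\Scl(\CC)$, and Lemma \ref{LemmaAutomorphismSubconvergence} identifies the limit as $\pi_\infty = \Psi^*\omega_\CC$ for some $\Psi \in \Aut_\Scl(\CC)$ with $\Psi^*\omega_\CC \in \Sigma_{3C}^2$. Telescoping yields $\Norm{\pi_k - \pi_\infty}{0, \CC} \leq C 2^{-k\alpha}$, so combined with the annulus bound $\Norm{\omega - \Psi^*\omega_\CC}{0, A_k} \leq C 2^{-k\alpha}$, whence $|\omega - \Psi^*\omega_\CC|_{\omega_\CC} \leq C r^\alpha$ for $r \leq 1$. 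Finally, pulling this back by $\Psi^{-1} \in \Aut_\Scl(\CC)$ and using that $(\Psi^{-1})^*\omega_\CC$ is uniformly equivalent to $\omega_\CC$ (and that elements of $\Aut_\Scl(\CC)$ in the $3C$-range distort $r$ only by a bounded factor) converts the estimate into $|(\Psi^{-1})^*\omega - \omega_\CC|_{\omega_\CC} \leq C' r^\alpha$, which is the assertion after renaming $\Psi^{-1}$ as $\Psi$.

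\textbf{Part 2 (derivatives when $\Scal(\omega)=0$).} Write $\beta \coloneqq \Psi^*\omega - \omega_\CC$, so $|\beta|_{\omega_\CC} \leq C r^\alpha$. For each $k$ I rescale to unit scale via
\[
\hat\omega_k \coloneqq 2^{2k}\Phi_{2^{-k}}^*(\Psi^*\omega),
\]
which is uniformly equivalent to $\omega_\CC$ (since $2^{2k}\Phi_{2^{-k}}^*\omega_\CC = \omega_\CC$), has $\Scal(\hat\omega_k)=0$, and satisfies $\Norm{\hat\omega_k - \omega_\CC}{0, A_0'} \leq \epsilon_k$ on a fixed unit annulus $A_0'$, where $\epsilon_k \coloneqq C 2^{-k\alpha}$, because the renormalized pointwise norm is $0$-homogeneous. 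Writing $\hat\omega_k^m = e^{\hat F_k}\omega_\CC^m$, the condition $\Scal(\hat\omega_k)=0$ is equivalent to $\Delta_{\hat\omega_k}\hat F_k = 0$, and $\Norm{\hat F_k}{0} \leq C\epsilon_k$. Bootstrapping the coupled system $\{\hat\omega_k^m = e^{\hat F_k}\omega_\CC^m,\ \Delta_{\hat\omega_k}\hat F_k = 0\}$ by the Schauder estimates as in Lemma \ref{LemmaHolderBound} and Proposition \ref{PropRegCMA}, with the nonlinear errors absorbed because $\epsilon_k \to 0$, gives for every $j$ a bound
\[
\Norm{\nabla^j_{\omega_\CC}(\hat\omega_k - \omega_\CC)}{0, A_0''} \leq C_j \epsilon_k
\]
on a slightly smaller annulus $A_0''$. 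Undoing the rescaling and using that $\Phi_{2^{-k}}$ is a homothety (so $\nabla^j_{\omega_\CC}$ commutes with $\Phi_{2^{-k}}^*$, and each covariant derivative of a $2$-homogeneous form contributes a factor $r^{-1}$) translates this into $|\nabla^k_{\omega_\CC}\beta|_{\omega_\CC} \leq C_k' r^{\alpha-k}$ at scale $r \sim 2^{-k}$, hence for all $r \leq 1$, after the same $\Psi \to \Psi^{-1}$ bookkeeping as before.

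\textbf{Main obstacle.} The delicate step is the derivative estimate in Part 2: the interior elliptic bounds must be \emph{proportional} to the $C^0$ smallness $\epsilon_k$ rather than merely bounded, since it is exactly this proportionality that carries the $r^\alpha$ factor through every order of differentiation. This forces a careful linearization of the zero-scalar-curvature equation (equivalently $\Delta_{\hat\omega_k}\hat F_k=0$ coupled to Monge--Amp\`ere) around $\omega_\CC$, with Schauder estimates applied uniformly in $k$ and the quadratic remainder absorbed using $\Norm{\hat\omega_k - \omega_\CC}{0}\to 0$. In Part 1 the only subtlety is the convergence of the approximating automorphisms, which is precisely where the $2$-homogeneity of the elements of $\Sigma_{3C}^2$ — making $|\pi_k - \pi_{k+1}|_{\omega_\CC}$ scale-invariant and the overlap errors summable — together with the compactness Lemmas \ref{LemmaCompactnessAutomorphisms} and \ref{LemmaAutomorphismSubconvergence} is essential.
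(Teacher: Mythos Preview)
Your proposal is correct and follows essentially the same strategy as the paper: apply Theorem \ref{TheoremHolderBound} with $x=o$ so that only $\Sigma_{3C}^2$ contributes, exploit the scale-invariance of $|\pi-\pi'|_{\omega_\CC}$ for $\pi,\pi'\in\Sigma_{3C}^2$ to control the approximants, pass to a limiting $\Psi^*\omega_\CC$ via the compactness of Lemma \ref{LemmaCompactnessAutomorphisms}, and then upgrade to derivatives by rescaling to unit scale and bootstrapping the coupled system $\hat\omega^m=e^{\hat F}\omega_\CC^m$, $\Delta_{\hat\omega}\hat F=0$ through Lemma \ref{LemmaHolderBound} and Schauder estimates.

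The one tactical difference is in how the limiting automorphism is produced in Part 1. The paper works with full balls $B_\rho(o)$, extracts a sublimit of the blow-ups $\rho^{-2}\Phi_\rho^*\omega$ via Proposition \ref{PropRegCMA} and the Liouville Theorem to identify some $\Psi^*\omega_\CC$, and then verifies a posteriori that the estimate holds for this fixed $\Psi$ along the subsequence and hence (by uniqueness of the limit) for all $\rho$. Your dyadic overlap-and-telescope argument is more direct: it shows $(\pi_k)$ is Cauchy purely from the seminorm bound and $2$-homogeneity, without re-invoking Liouville, and the limit drops out of the telescoping sum. Both arguments hinge on the same scale-invariance observation; yours is slightly more self-contained, while the paper's makes the connection to the tangent-cone picture more explicit.
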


From Corollary \ref{CorollaryOfCalpha}, one immediately concludes that the tangent cone at $o$ of any $\omega$ satisfying the requirements of Theorem \ref{TheoremHolderBound} is unique.

\begin{proof}
By Theorem \ref{TheoremHolderBound}, there is a bound $[\omega]'_{\alpha, B_1(o), \Sigma_{3C}^2 \times \Sigma_{\loc}^2} \leq C'$ on the ${C^{0,\alpha}}'$-seminorm. The seminorm contains a supremum over all points $x \in \CCbar$, so fix $x = o$ as the apex. The only achievable elements in the comparison set lie in $\Sigma_{3C}^2$, so by Theorem \ref{TheoremHolderBound} and for all $0 < \rho \leq 1$, there exists $\pi_\rho \in \Sigma_{3C}^2$ such that:
\begin{equation}\label{EqRhoRegularity}
	\Norm{\omega - \pi_\rho}{0, B_\rho(o)} \leq C'\rho^\alpha.
\end{equation}
As $\omega$ is bounded, $(\pi_\rho)$ is also bounded. Define the blowup $\tilde\omega_{\rho} \coloneqq \rho^{-2}\Phi_{\rho}^*(\omega)$. $\pi_{\rho}$ is invariant under this rescaling as $\pi_{\rho} \in \Sigma_{3C}^2$. Thus, \eqref{EqRhoRegularity} becomes:
\begin{equation*}
	\Norm{\tilde\omega_{\rho} - \pi_{\rho}}{0, B_1(o)} \leq C'\rho^\alpha.
\end{equation*}
Proposition \ref{PropRegCMA} and the Liouville Theorem (Theorem \ref{LiouvilleTheorem}) shows that there exists a $C^{1,\beta}_{\loc}(\CC)$-sublimit $\tilde\omega_{\rho_i} \to \Psi^*\omega_\CC$ for some $\Psi \in \Aut_{\Scl}(\CC)$ and $\rho_i \to 0$. The triangle inequality yields:
\begin{equation*}
	\left|\Norm{\tilde\omega_{\rho_i} - \Psi^*\omega_\CC}{0, [\frac{1}{2}, 1] \times L} - \Norm{\Psi^*\omega_\CC - \pi_{\rho_i}}{0, [\frac{1}{2}, 1] \times L} \right| \leq \Norm{\tilde\omega_{\rho_i} - \pi_{\rho_i}}{0, [\frac{1}{2}, 1] \times L} \leq C'\rho_i^\alpha.
\end{equation*}
Since $\Norm{\tilde\omega_{\rho_i} - \Psi^*\omega_\CC}{0, [\frac{1}{2}, 1] \times L} \to 0$ as $i \to \infty$, we conclude that $\Norm{\Psi^*\omega_\CC - \pi_{\rho_i}}{0, [\frac{1}{2}, 1] \times L} = \Norm{\Psi^*\omega_\CC - \pi_{\rho_i}}{0, \CC} \to 0$ due to the $r\del_r$-invariance of $\Psi^*\omega_\CC$ and $\pi_{\rho_i}$.

\textbf{Claim:} $\Psi^*\omega_\CC$ satisfies:
\begin{equation}\label{BlowupMetricExistence}
	\Norm{\omega - \Psi^*\omega_\CC}{0, B_{\rho_i}(o)} \leq 4C' \rho_i^\alpha.
\end{equation}

\textbf{Proof:} For each $\rho > 0$, define the norm:
\begin{equation*}
	\Norm{\omega}{\rho} \coloneqq \rho^{-\alpha} \Norm{\omega}{0, B_{\rho}(o)}.
\end{equation*}
In this notation, \eqref{EqRhoRegularity} reads:
\begin{equation}\label{EqRhoRegularityAlternative}
	\Norm{\omega - \pi_{\rho_i}}{\rho_i} \leq C'.
\end{equation}
For fixed $k$, and in each norm $\Norm{\cdot}{\rho_k}$, we know that $\pi_{\rho_i} \to \Psi^*\omega_\CC$. Thus, for each $k$, there exists $i_k \geq k$ with $\rho_{i_k} \leq \rho_k$ such that:
\begin{equation*}
	\Norm{\Psi^*\omega_\CC - \pi_{\rho_{i_k}}}{\rho_k} \leq C'.
\end{equation*}
Therefore:
\begin{equation}\label{EqOmegaPi0Control}
	\begin{aligned}
		\Norm{\omega - \Psi^*\omega_\CC}{\rho_k} &\leq \Norm{\omega - \pi_{\rho_k}}{\rho_k} + \Norm{\pi_{\rho_k} - \pi_{\rho_{i_k}}}{\rho_k} + \Norm{\pi_{\rho_{i_k}} - \Psi^*\omega_\CC}{\rho_k}\\
		&\leq C' + \Norm{\pi_{\rho_k} - \pi_{\rho_{i_k}}}{\rho_k} + C'.
	\end{aligned}
\end{equation}
Hence, to prove the claim, we only need to control $\Norm{\pi_{\rho_k} - \pi_{\rho_{i_k}}}{\rho_k}$. As $\pi_{\rho_k}$ and $\pi_{\rho_{i_k}}$ are invariant under $r\del_r$, we can restrict the $C^0$-norm to $B_{\rho_{i_k}}(o) \subset B_{\rho_k}(o)$:
\begin{equation}\label{EqPiKSubsequenceControl}
	\begin{aligned}
		\Norm{\pi_{\rho_k} - \pi_{\rho_{i_k}}}{\rho_k} &= \left(\frac{\rho_k}{\rho_{i_k}} \right)^{-\alpha} \Norm{\pi_{\rho_k} - \pi_{\rho_{i_k}}}{\rho_{i_k}} \\
		&\leq \left(\frac{\rho_k}{\rho_{i_k}} \right)^{-\alpha} \left(\Norm{\omega - \pi_{\rho_k}}{\rho_{i_k}} + \Norm{\omega - \pi_{\rho_{i_k}}}{\rho_{i_k}} \right)\\
		&\leq \Norm{\omega - \pi_{\rho_k}}{\rho_{k}} + \left(\frac{\rho_k}{\rho_{i_k}} \right)^{-\alpha}\Norm{\omega - \pi_{\rho_{i_k}}}{\rho_{i_k}}\\
		&\leq 2C',
	\end{aligned}
\end{equation}
by \eqref{EqRhoRegularityAlternative} and as $\left(\frac{\rho_k}{\rho_{i_k}} \right)^{-\alpha} \leq 1$. Combining \eqref{EqOmegaPi0Control} and \eqref{EqPiKSubsequenceControl} proves the claim. $\square$

To show that \eqref{BlowupMetricExistence} holds for any $0 < \rho \leq 1$, take another subsequence $\tilde\rho_i \to 0$ with sublimit $\tilde \Psi^*\omega_\CC$ and consider:
\begin{align*}
	\Norm{\Psi^*\omega_\CC - \tilde\Psi^*\omega_\CC}{0, \CC} &\leq \Norm{\Psi^*\omega_\CC - \omega}{0, B_{\rho_i}(o)} + \Norm{\omega - \tilde\Psi^*\omega_\CC}{0, B_{\tilde\rho_i}(o)} \\
	&\leq 4C'(\rho_i^\alpha + \tilde\rho_i^\alpha).
\end{align*}
The first inequality is again due to the $r\del_r$-invariance of $\Psi^*\omega_\CC$ and $\tilde\Psi^*\omega_\CC$. Letting $i \to \infty$, it follows that $\Psi^*\omega_\CC = \tilde \Psi^*\omega_\CC$, and hence proceeding by contradiction, \eqref{BlowupMetricExistence} holds for any $\rho > 0$:
\begin{equation}\label{ConvergenceBall}
	\Norm{\omega - \Psi^*\omega_\CC}{0, B_{\rho}(o)} \leq 4C'\rho^\alpha.
\end{equation}
A basic triangle inequality shows that $\frac{1}{2C}\leq \Norm{\Psi^*\omega_\CC}{0, \CC} \leq 2C$, and Lemma \ref{LemmaCompactnessAutomorphisms} shows that this set is compact.
Hence, we can apply $\Psi^{-1}$ to \eqref{ConvergenceBall} and increase $C'$ such that:
\begin{equation*}
	\Norm{(\Psi^{-1})^*\omega - \omega_\CC}{0, B_{\rho}(o)} \leq C'\rho^\alpha,
\end{equation*}
with $C'$ independent of $\omega$. Redefine $\Psi^{-1}$ to $\Psi$, and we have proved the corollary without derivatives.\\

For the rest of the proof, assume $\Psi= \Id$. To extend the estimate to derivatives, assume $\Scal(\omega) = 0$ and fix $p \in \CC$ with $\dist_{\omega_\CC}(o, p) = R < \frac{1}{2}$. Pull back by $R^{-1}$ such that $\tilde p \coloneqq \Phi_R^* (p)$ with $\dist_{\omega_\CC}(o, \tilde p) =1$. Assume that $B_\delta(\tilde p)$ is trivial and pick holomorphic normal coordinates $z_1 = x_1 + ix_2,\dots,z_m = x_{2m-1} + ix_{2m}$ for $\omega_\CC$. Let $\tilde \omega = R^{-2}\Phi_{R}^*(\Psi^*\omega)$ and write $\tilde \omega^m = e^{\tilde F}\omega_\CC^m$. Then
\begin{equation*}
	\Norm{\tilde \omega - \omega_\CC}{0, B_{2}(o)} \leq C'R^\alpha
\end{equation*}
by \eqref{ConvergenceBall} and the assumption $\Psi = \Id$. The uniform bound implies that
\begin{equation*}
	\Norm{e^{\tilde F}- 1}{0, B_{2}(o)} \leq  C''\Norm{\omega - \omega_\CC}{0, B_{2}(o)} \leq  C''R^\alpha,
\end{equation*}
for some constant $C''>0$. By the scalar-flatness of $\tilde \omega$ and Ricci-flatness of $\omega_\CC$, then
\begin{equation}\label{LaplacianTildeF}
	\Delta_{\tilde \omega} \log e^{\tilde F} = 0.
\end{equation}
To estimate $|\log e^{\tilde F}|$, assume first that $e^{\tilde F} \geq 1$. Then the inequality $\log(1+x) \leq x$ for $x \geq 0$ implies that
\begin{equation*}
	\log e^{\tilde F} \leq e^{\tilde F} - 1 \leq C''R^\alpha.
\end{equation*}
The case $e^{\tilde F} < 1$ is similar, using $|\log(x)| = \log(x^{-1})$. By Lemma \ref{LemmaHolderBound}:
\begin{equation*}
	\Norm{\tilde \omega-\omega_\CC}{0,\alpha, B_{\delta}(\tilde p)} \leq C'' R^\alpha.
\end{equation*}
Hence, using the Schauder estimates and bootstrapping, it follows that
\begin{equation*}
	\Norm{e^{\tilde F}-1}{2,\alpha, B_{\frac{\delta}{2}}(\tilde p)} \leq C'' R^\alpha
\end{equation*}
for some constant $C''>0$.\\

Given the regularity of $e^{\tilde F}$, we upgrade the estimate to $\omega-\omega_\CC$. 
\cite[Lemma 2.1]{chenalpha2015} shows that there exists a potential $\phi\in C_\loc^{\infty}(B_{\frac{\delta}{2}}(\tilde p))$ such that
\begin{equation*}
	i\del\delbar \phi = \tilde\omega - \omega_\CC, \quad \Norm{\phi}{2,\alpha, B_{\frac{\delta}{2}}(\tilde p)} \leq \Norm{\tilde \omega - \omega_\CC}{0,\alpha, B_{\delta}(\tilde p)} < C'R^\alpha.
\end{equation*}

For higher-order regularity, differentiate the complex Monge-Ampère equation:
\begin{equation*}
	\frac{\partial}{\partial x_i}\tilde\omega^m = \frac{\partial}{\partial x_i} e^{\tilde F}\omega_\CC^m,
\end{equation*}
to find
\begin{equation}\label{BootStrapCorollary}
	m \Delta_{\tilde\omega} \frac{\partial \phi}{\partial x_i} = \frac{\omega_\CC^m}{\tilde \omega^m}\frac{\partial}{\partial x_i} \left(e^{\tilde F} - 1\right) + \frac{m}{\tilde\omega^m}\left(\left(e^{\tilde F} - 1  \right) \omega_\CC^{m-1} \wedge \frac{\partial}{\partial x_i} \omega_\CC \right).
\end{equation}
Due to the above estimates, the $C^{0,\alpha}(B_{\frac{\delta}{2}}(\tilde p))$-norm of the right-hand side is bounded by $C R^{\alpha}$ for $R>0$ small enough. Hence, the Schauder estimates imply that $\Norm{\phi}{3,\alpha, B_{\frac{\delta}{3}}(\tilde p)} \leq CR^\alpha$. Differentiating \eqref{BootStrapCorollary} and bootstrapping \eqref{LaplacianTildeF} and \eqref{BootStrapCorollary}, we incrementally increase the regularity of $(e^{\tilde F} - 1)$ and $(\tilde\omega - \omega_\CC)$ with $C^{k,\alpha}(B_{\frac{\delta}{4}}(\tilde p))$-bounds bounded by $R^{\alpha}$. We conclude that $\Norm{\phi}{k,\alpha, B_{\frac{\delta}{4}}(\tilde p)} \leq C_k R^\alpha$.
Covering $\{r=1\}$ with finitely many such balls and scaling back implies the desired result:
\begin{equation*}
	|\nabla_{\omega_\CC}^k((\Psi^{-1})^*\omega - \omega_\CC)|_{\omega_\CC} \leq C_k R^{\alpha - k},
\end{equation*}
as $R\to 0$.
\end{proof}

\appendix

\section{Sobolev and Nash Inequalities on ${C(L)}$}

\begin{prop}\label{SobolevInequality}
	Let $({C(L)},g_{C(L)})$ be a Riemannian cone of real dimension $n$ with completion $\overline{C(L)} = {C(L)} \cup \{o\}$ and with the distance metric $\dist_{g_{{C(L)}}}$ on ${C(L)}$ extended to $\overline{C(L)}$. Let $g$ be another metric uniformly equivalent to ${C(L)}$, i.e., $\frac{1}{C'}g_{C(L)} \leq g \leq C' g_{C(L)}$. Then there exists a constant $C$ such that
	\begin{equation}\label{EqSobolevIneq}
		\left(\int_{B_r(p)} |u|^{\frac{2n}{n-2}} \dvol_{g}\right)^{\frac{n-2}{n}} \leq C \int_{B_r(p)} |\nabla_g u|_{g}^2 \dvol_{g},
	\end{equation}
	for $B_r(p)\subset {C(L)}$, $p\in \overline{C(L)}$, $0<r\leq \infty$, and $u\in C_0^\infty({C(L)})$.
\end{prop}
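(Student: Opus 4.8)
The plan is to reduce to the exact cone metric and then prove the corresponding \emph{global} Sobolev inequality on $(C(L),g_{C(L)})$, from which the stated inequality follows for free. First I would dispense with two reductions. Since $\frac{1}{C'}g_{C(L)}\le g\le C'g_{C(L)}$ forces $\dvol_g\asymp\dvol_{g_{C(L)}}$ and $|\nabla_g u|_g^2\asymp|\nabla_{g_{C(L)}}u|^2$ with constants depending only on $n$ and $C'$, it suffices to prove \eqref{EqSobolevIneq} for $g=g_{C(L)}$, at the cost of enlarging $C$ by a fixed power of $C'$. Next, for $u\in C_0^\infty(B_r(p))$ one extends by zero to all of $C(L)$; neither side of \eqref{EqSobolevIneq} changes if the domain of integration is then replaced by $C(L)$. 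Thus (including the case $r=\infty$) it is enough to establish the global inequality $\left(\int_{C(L)}|u|^{2n/(n-2)}\dvol\right)^{(n-2)/n}\le C\int_{C(L)}|\nabla u|^2\dvol$ for all $u\in C_0^\infty(C(L))$, where from now on all quantities are taken with respect to $g_{C(L)}=dr^2+r^2g_L$ on $\R_+\times L$.

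The core is a dyadic annular decomposition exploiting the scaling $\Phi_\lambda(r,x)=(\lambda r,x)$, under which $\Phi_\lambda^*g_{C(L)}=\lambda^2g_{C(L)}$. Set $A_k\coloneqq\{2^k\le r<2^{k+1}\}$ for $k\in\Z$; these tile $C(L)$, and each $\Phi_{2^k}$ carries the fixed annulus $A_0$ onto $A_k$. On the compact manifold-with-boundary $\overline{A_0}$ the standard Sobolev embedding $W^{1,2}\hookrightarrow L^{2n/(n-2)}$ (valid since $n\ge3$) gives $\left(\int_{A_0}|v|^{2n/(n-2)}\right)^{(n-2)/n}\le C_0\left(\int_{A_0}|\nabla v|^2+\int_{A_0}|v|^2\right)$ for all $v\in W^{1,2}(A_0)$. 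Applying this to $v=u\circ\Phi_{2^k}$ and scaling back, I would track the powers of $2^k$: because the exponent $2n/(n-2)$ is exactly critical, the $L^{2n/(n-2)}$ term and the gradient term both scale by $2^{k(n-2)}$ and match, while the zeroth-order term scales as $2^{-2k}\int_{A_k}|u|^2$, which on $A_k$ is bounded by $4\int_{A_k}r^{-2}|u|^2$ since $r\in[2^k,2^{k+1})$. This yields, annulus by annulus, $\left(\int_{A_k}|u|^{2n/(n-2)}\right)^{(n-2)/n}\le C_0\bigl(\int_{A_k}|\nabla u|^2+4\int_{A_k}r^{-2}|u|^2\bigr)$ with $C_0$ independent of $k$.

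It then remains to reassemble and to absorb the zeroth-order term. Summing over $k$ and using that $t\mapsto t^{(n-2)/n}$ is subadditive, so $(\sum_k a_k)^{(n-2)/n}\le\sum_k a_k^{(n-2)/n}$, I obtain $\left(\int_{C(L)}|u|^{2n/(n-2)}\right)^{(n-2)/n}\le C_0\bigl(\int_{C(L)}|\nabla u|^2+4\int_{C(L)}r^{-2}|u|^2\bigr)$. The last integral is handled by the Hardy inequality on the cone: from $|\nabla u|^2\ge(\partial_r u)^2$ and $\dvol=r^{n-1}\,dr\,\dvol_{g_L}$, the one–dimensional Hardy inequality in $r$ gives $\int_{C(L)}r^{-2}|u|^2\le\left(\tfrac{2}{n-2}\right)^2\int_{C(L)}|\nabla u|^2$. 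Combining the two displays finishes the proof with $C=C_0\bigl(1+16(n-2)^{-2}\bigr)$.

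The main obstacle is precisely the zeroth-order term $\int|u|^2$ that the single–annulus Sobolev inequality forces on us: it does not scale critically, and summed over the annuli accumulating at the apex it would diverge were it not reweighted to $\int r^{-2}|u|^2$ and then absorbed by Hardy. This is the only step where $n\ge3$ is genuinely used (Hardy needs $n-2>0$), and it is also what renders the apex harmless: each $A_k$ is a smooth compact piece, any compactly supported $u$ meets only finitely many of them, so all sums are finite and $C$ is uniform. I would only need to be mildly careful that the embedding constant $C_0$ on $\overline{A_0}$ is genuinely the same for every $k$, which it is by the isometry $\Phi_{2^k}\colon A_0\to A_k$.
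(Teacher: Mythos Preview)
Your proof is correct, but it takes a genuinely different route from the paper. Both arguments reduce to the cone metric and to the global inequality on $C(L)$, and both decompose into dyadic annuli $A_k=[2^k,2^{k+1})\times L$. From there, however, the strategies diverge. The paper (following Hein, Grigor'yan--Saloff-Coste, Minerbe) splits $u$ on each annulus into its mean-zero part plus its average $u_i$: the mean-zero part is controlled by the Neumann Sobolev inequality on $A_0$ (scaled), while the averages are handled by a telescoping trick---comparing $u_i$ with $u_{i+K}$ for a large fixed shift $K$, absorbing $\sum u_{i+K}^2|A_i|^{(n-2)/n}$ back into the left-hand side via the volume ratio $|A_i|/|A_{i+K}|=2^{-nK}$, and bounding $(u_i-u_{i+K})^2$ by the Poincar\'e--Wirtinger inequality on the union $B_{i,K}=\bigcup_{j=0}^K A_{i+j}$.

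Your approach is more direct: you keep the full $W^{1,2}\hookrightarrow L^{2n/(n-2)}$ embedding on each annulus, observe that under scaling the unwanted $\|u\|_{L^2(A_k)}^2$ term becomes a weighted integral $\int_{A_k}r^{-2}|u|^2$, sum, and absorb the total $\int_{C(L)}r^{-2}|u|^2$ by the one-dimensional Hardy inequality in the radial variable. This is shorter and uses only two standard ingredients. What the paper's argument buys is robustness: the telescoping/Poincar\'e patching technique does not rely on the radial Hardy inequality, and so adapts to weighted or more general settings where Hardy is unavailable or where the exponent is not exactly critical. For the statement as given, your argument is entirely sufficient and arguably cleaner.
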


\begin{proof}
	See \cite{heinWeighted2011}, \cite{grigoryanStability2005}, and \cite{minerbeWeighted2009} for the method of proof. Take $u\in C_0^\infty(B_r(p))$. We assume $u\geq 0$ as $|\nabla |u||\leq |\nabla u|$. By the uniform equivalence, it is enough to show \eqref{SobolevInequality} for $g_{C(L)}$ and for $B_r(p) = {C(L)}$ (i.e., $r=\infty$), as $C_0^\infty(B_{r'}(p))\subset C_0^\infty({C(L)})$ for any $r'\leq \infty$. Let $A_i = [2^i,2^{i+1}]\times L$ for $i\in \Z$ and define the average with respect to $g_{C(L)}$ by
	\begin{equation*}
		u_i = \fint_{A_i} u.
	\end{equation*}
	Then
	\begin{align}
		\left(\int_{C(L)} u^{\frac{2n}{n-2}} \dvol_{g_{C(L)}}\right)^{\frac{n-2}{n}} &= \left(\sum_{i\in \Z} \int_{A_i} u^{\frac{2n}{n-2}} \dvol_{g_{C(L)}} \right)^{\frac{n-2}{n}} \nonumber\\
		&\leq \sum_{i\in \Z} \left( \int_{A_i}u^{\frac{2n}{n-2}}\dvol_{g_{C(L)}} \right)^{\frac{n-2}{n}} \nonumber\\
		&\leq \sum_{i\in \Z} 2\left( \left(\int_{A_i} (u-u_i)^{\frac{2n}{n-2}}\dvol_{g_{C(L)}}\right)^{\frac{n-2}{n}} + u_i^2 |A_i|^{\frac{n-2}{n}} \right). \label{ExpansionU}
	\end{align}
	Using the Neumann-type Sobolev inequality on the first term (scaled up or down from $A_0$) on $g_{C(L)}$, then
	\begin{align*}
		\left(\int_{C(L)} u^{\frac{2n}{n-2}}\dvol_{g_{C(L)}} \right)^{\frac{n-2}{n}} &\leq \sum_{i\in \Z} \left(C \left(\int_{A_i} |\nabla_{g_{C(L)}} u|^2 \dvol_{g_{C(L)}}\right)+ 2u_i^2 |A_i|^{\frac{n-2}{n}} \right)\\
		&= C\int_{C(L)} |\nabla_{g_{C(L)}} u|^2\dvol_{g_{C(L)}} + 2\sum_{i\in \Z} u_i^2 |A_i|^{\frac{n-2}{n}}.
	\end{align*}
	For some large $K\in \N$ not yet defined, expand the second term as
	\begin{align*}
		2\sum_{i\in \Z} u_i^2 |A_i|^{\frac{n-2}{n}} &\leq 4\sum_{i\in \Z} (u_i-u_{i+K})^2 |A_i|^{\frac{n-2}{n}} + 4\sum_{i\in \Z} u_{i+K}^2 |A_i|^{\frac{n-2}{n}} \\
		&=  4\sum_{i\in \Z} (u_i-u_{i+K})^2 |A_i|^{\frac{n-2}{n}} + 4\sum_{i\in \Z} \left(\frac{|A_{i}|}{|A_{i+K}|}\right)^{\frac{n-2}{n}} u_{i+K}^2 |A_{i+K}|^{\frac{n-2}{n}}.
	\end{align*}
	As $|A_i|/|A_{i+K}| = 2^{-nK}$, making $K$ large means that the second term above can be absorbed into the left-hand side by increasing the constant 2. For the first term, one estimates each term in the sum as follows: define $B_{i,K} \coloneqq \left( \bigcup_{j=0}^K A_{i+j}\right)$, then
	\begin{align*}
		(u_i-u_{i+K})^2 &= \left(\frac{1}{|A_i|}\int_{A_i} u(x) \dvol_{g_{C(L)}}(x) - \frac{1}{|A_{i+K}|}\int_{A_{i+K}} u(y) \dvol_{g_{C(L)}}(y) \right)^2\\
		&= \left(\frac{1}{|A_i||A_{i+k}|} \int_{A_i\times A_{i+K}} (u(x)-u(y)) \dvol_{g_{C(L)}}^2(x,y) \right)^2 \\
		&\leq \frac{1}{|A_i||A_{i+k}|}\int_{A_i\times A_{i+K}} |u(x)-u(y)|^2 \dvol_{g_{C(L)}}^2(x,y)\\
		&\leq \frac{1}{|A_i||A_{i+k}|}\int_{B_{i,K}\times B_{i,K}} |u(x)-u(y)|^2 \dvol_{g_{C(L)}}^2(x,y) \\
		&= \frac{1}{|A_i||A_{i+k}|}\int_{B_{i,K}\times B_{i,K}} |u(x)-u_{B_{i,K}} + u_{B_{i,K}} - u(y)|^2 \dvol_{g_{C(L)}}^2(x,y) \\
		&\leq \frac{4|B_{i,K}|}{|A_i||A_{i+k}|}\int_{B_{i,K}} |u(x)-u_{B_{i,K}}|^2 \dvol_{g_{C(L)}} \\
		&\leq \frac{|B_{i,K}|}{|A_i||A_{i+k}|} C 2^{2i} \int_{B_{i,K}} |\nabla_{g_{C(L)}} u|^2 \dvol_{g_{C(L)}},
	\end{align*}
	where $u_{B_{i,K}}$ denotes the average of $u$ over $B_{i,K}$. The last inequality is the Poincaré–Wirtinger inequality of $g_{C(L)}$ applied to each $B_{i,K}$ suitably scaled. Therefore, the first term in \eqref{ExpansionU} becomes
	\begin{align*}
		\sum_{i\in \Z} (u_i-u_{i+K})^2 |A_i|^{\frac{n-2}{n}} \leq C \sum_{i\in \Z} |A_i|^{\frac{n-2}{n}} \frac{|B_{i,K}|}{|A_i||A_{i+K}|} 2^{2(i+K)} \int_{B_{i,K}} |\nabla_{g_{C(L)}} u|^2.
	\end{align*}
	\sloppy The volumes are estimated as follows: $|A_i|^{\frac{n-2}{n}} \sim 2^{i(n-2)}, |B_{i,K}| \sim K\sum_{j=0}^K 2^{in},( |A_i||A_{i+K}|)^{-1}\sim 2^{-ni}2^{-n(i+K)}$, and ordering all the powers of $2$:
	\begin{equation*}
		2^{i(n-2)} \cdot 2^{-ni}2^{-n(i+K)} \cdot 2^{(i+j)n} \cdot 2^{2i} 2^K =  \text{constant}.
	\end{equation*}
	Furthermore,
	\begin{equation*}
		\sum_{i\in \Z} \int_{B_{i,K}} |\nabla_{g_{C(L)}} u|^2 \leq K \int_{C(L)} |\nabla_{g_{C(L)}} u|^2,
	\end{equation*}
	finishing the proof for the metric $g_{C(L)}$.
\end{proof}

\begin{prop}\label{NashInequality}
	Under the same assumptions as in Proposition \ref{SobolevInequality} and for all $u\in C_0^\infty(B_r(p))$, then
	\begin{equation*}
		\left(\int_{B_r(p)} u^2 \dvol_{g}\right)^{1+\frac{2}{n}} \leq C \left(\int_{B_r(p)} |\nabla_g u|^2 \dvol_{g}\right) \left(\int_{B_r(p)} |u| \dvol_{g} \right)^{\frac{4}{n}}.
	\end{equation*}
\end{prop}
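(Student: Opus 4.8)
The plan is to obtain the Nash inequality directly from the Sobolev inequality of Proposition \ref{SobolevInequality} by Hölder interpolation of $L^p$-norms, working throughout with the metric $g$ on the single set $B_r(p)$. Since Proposition \ref{SobolevInequality} is already stated for a metric uniformly equivalent to $g_{C(L)}$ and for balls whose closure may contain the apex, no further cutoff or limiting argument near $o$ is needed. Set the Sobolev exponent $q = \frac{2n}{n-2}$, so that Proposition \ref{SobolevInequality} reads $\Norm{u}{L^q}^2 \leq C \int_{B_r(p)} |\nabla_g u|_g^2\,\dvol_g$ for all $u \in C_0^\infty(B_r(p))$. The key observation is that the exponent $2$ interpolates between $1$ and $q$: solving $\tfrac12 = \theta + (1-\theta)\tfrac{1}{q}$ gives $\theta = \tfrac{2}{n+2}$, whence the standard $L^p$-interpolation inequality yields $\Norm{u}{L^2} \leq \Norm{u}{L^1}^{2/(n+2)}\,\Norm{u}{L^q}^{n/(n+2)}$.

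First I would square this interpolation bound and substitute the Sobolev inequality into the second factor, which produces $\Norm{u}{L^2}^2 \leq C'\,\Norm{u}{L^1}^{4/(n+2)}\bigl(\int_{B_r(p)} |\nabla_g u|_g^2\,\dvol_g\bigr)^{n/(n+2)}$, with $C'$ depending only on the Sobolev constant $C$ and hence ultimately only on the uniform equivalence constant. Then I would raise both sides to the power $\tfrac{n+2}{n}$. Checking the exponents, $2\cdot\tfrac{n+2}{n} = 2\bigl(1+\tfrac{2}{n}\bigr)$ and $\tfrac{4}{n+2}\cdot\tfrac{n+2}{n} = \tfrac{4}{n}$, so the estimate becomes exactly $\bigl(\int_{B_r(p)} u^2\,\dvol_g\bigr)^{1+2/n} \leq C\,\bigl(\int_{B_r(p)} |\nabla_g u|_g^2\,\dvol_g\bigr)\bigl(\int_{B_r(p)} |u|\,\dvol_g\bigr)^{4/n}$, which is the claim. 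As in the Sobolev proof one may assume $u \geq 0$ since $|\nabla_g |u|| \leq |\nabla_g u|$, and the inequality is trivial when $u \equiv 0$.

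Since the whole argument is pure interpolation over a fixed domain with a fixed measure, there is no genuine analytic obstacle once Proposition \ref{SobolevInequality} is in hand; the constant never degenerates as $r \to \infty$ or as $B_r(p)$ approaches the apex, because it is inherited verbatim from the Sobolev constant. The only point requiring care—and the closest thing to a difficulty—is the exponent bookkeeping in the interpolation and in the final rescaling by $\tfrac{n+2}{n}$, which I have verified above; everything else is routine.
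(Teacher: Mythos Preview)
Your proposal is correct and is essentially identical to the paper's proof: the paper writes $\int u^2 = \int u^a u^b$ and applies H\"older with $ap = \tfrac{2n}{n-2}$, $bq = 1$, $a+b=2$, $\tfrac1p+\tfrac1q=1$, which is exactly the $L^1$--$L^{2n/(n-2)}$ interpolation you state, followed by the Sobolev inequality and the same final exponent bookkeeping.
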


\begin{proof}
	We again assume that $u\geq 0$. By Proposition \ref{SobolevInequality} and Hölder's inequality:
	\begin{align*}
		\int u^2\dvol_{g} &= \int_{B_r(p)} u^a u^b\dvol_{g} \\
		&\leq \left(\int_{B_r(p)} u^{ap}\dvol_{g} \right)^{\frac{1}{p}} \left(\int_{B_r(p)} u^{bq}\dvol_{g}  \right)^{\frac{1}{q}}\\
		&= C\left(\int_{B_r(p)} |\nabla_g u|^2\dvol_{g} \right)^{\frac{n}{p(n-2)}} \left(\int_{B_r(p)} u^{bq} \dvol_{g}\right)^{\frac{1}{q}},
	\end{align*}
	with $ap = \frac{2n}{n-2}$, $bq = 1$, $a+b=2$, and $\frac{1}{p} + \frac{1}{q} = 1$.
\end{proof}

\section{Gradient estimate of the Heat Kernel}

The following proposition is known for the Ricci flow as the Bernstein-Bando-Shi estimate \cite[Theorem 14.5]{chowRicci2007}, but we record the statement for the heat equation. See \cite[Proposition 2.4]{heingravitational2010} for the statement on elliptic differential equations.

\begin{prop}\label{GradientEstimate}
	Let $(M,g)$ be a Riemannian manifold of dimension $n$. Assume that $\Omega'\subset \Omega \subset M$ are open sets such that $\overline{\Omega'}\subset \Omega$ and $\Omega$ has compact closure in $M$. Let $0<t_1<t_2<t_3<t_4$. Assume that there exist Lipschitz continuous functions $\chi\colon M \times \R_+ \to \R, \beta\colon \Omega\to \R$ such that $\chi \equiv 1$ on $\Omega'\times [t_2,t_3]$, with support in $\Omega\times [t_1,t_4]$, and such that there exist constants $s>0,C>1$ with
	\begin{equation}\label{EqChiBeta}
		s \chi^{-\frac{1}{2}} |\nabla \chi| + s^2(\Delta \chi)^- + s^2|\del_t \chi| \leq C, \quad C\Delta \beta \geq 1, \quad 0\leq \beta < Cs^2,
	\end{equation}
	where the differential inequalities are to be interpreted in the weak sense. Let $\Rm$ be the curvature tensor of $g$, and assume that on $\Omega$:
	\begin{equation}\label{CurvatureEstimate}
		\sum_{j=0}^{k} s^{j+2}|\nabla^j \Rm| \leq C.
	\end{equation}
	Furthermore, on $\Omega\times[t_1,t_4]$, assume that $u$ is a smooth function satisfying $(\del_t - \Delta) u = 0$, and that there is $C_0>0$ such that
	\begin{equation}\label{HeatSolutionInductiveEstimate}
		\sum_{j=0}^{k-1} s^j|\nabla^j u| \leq C_0.
	\end{equation}
	Then $s^k|\nabla^k u| \leq C'C_0$ on $\Omega'\times [t_2,t_3]$, where $C'>1$ only depends on $k, \Omega$, and $\Omega'$.
\end{prop}

\begin{proof}
	By Bochner's formula:
	\begin{equation}\label{HeatEquationBochner}
		(\del_t - \Delta) \frac{1}{2} |\nabla^k u|^2 = - |\nabla^{k+1} u|^2 - \gen{[\Delta,\nabla^k]u,\nabla^k u},
	\end{equation}
	for $k\in \N_0$ and because $u$ solves the heat equation. The constant $C$ may change in the proof but is independent of estimates on $u$. Define $\Phi \coloneqq \frac{1}{2}(\chi^2 |\nabla^k u|^2 + s^{-2}C_1 |\nabla^{k-1}u|^2)$. Using \eqref{HeatEquationBochner}, one computes
	\begin{equation}\label{PhiExpansion1}
		\begin{aligned}
			(\del_t - \Delta)\Phi =& (\chi(\del_t \chi - \Delta \chi)- |\nabla\chi|^2 - s^2C_1)|\nabla^k u|^2 - \chi\gen{\nabla \chi, \nabla |\nabla^k u|^2} - \chi^2|\nabla^{k+1} u|^2 \\
			&- \chi^2\gen{[\Delta,\nabla^k] u, \nabla^k u} - C_1s^{-2}\gen{[\Delta,\nabla^{k-1}]u,\nabla^{k-1}u}.
		\end{aligned}
	\end{equation}
	By the inequality $ |\chi\gen{\nabla \chi, \nabla |\nabla^k u|^2}| \leq \epsilon\chi^2|\nabla^{k+1}u|^2 + \frac{4}{\epsilon}|\nabla\chi|^2|\nabla^k u|^2$ for every $\epsilon>0$ and the estimates for the commutator \cite[Lemma 2]{bandoReal1987}:
	\begin{equation*}
		|[\Delta,\nabla^k]T| \leq C(p+1)\sum_{j=0}^k\frac{(k+2)!}{(j+2)!(k-j)!}|\nabla^j \Rm||\nabla^{k-j}T|,
	\end{equation*}
	where $C$ only depends on $\dim M$ and $T$ is any $p$-tensor, \eqref{PhiExpansion1} can be estimated as
	\begin{align*}
		(\del_t - \Delta)\Phi =& (\chi(\del_t \chi - \Delta \chi)- |\nabla\chi|^2 - s^2C_1)|\nabla^k u|^2 - \chi\gen{\nabla \chi, \nabla |\nabla^k u|^2} - \chi^2|\nabla^{k+1} u|^2 \\
		&- \chi^2\gen{[\Delta,\nabla^k] u, \nabla^k u} - C_1s^{-2}\gen{[\Delta,\nabla^{k-1}]u,\nabla^{k-1}u}\\
		\leq& (\chi(\del_t \chi - \Delta \chi) -  |\nabla\chi|^2 - s^{-2}C_1)|\nabla^k u|^2  + \frac{4}{\epsilon}|\nabla\chi|^2|\nabla^k u|^2 \\
		&+\chi^2 C |\nabla^k u|\sum_{j=0}^k |\nabla^j\Rm||\nabla^{k-j}u| + C_1s^{-2}C |\nabla^{k-1} u| \sum_{j=0}^{k-1}|\nabla^j\Rm||\nabla^{k-j-1}u| \\
		\leq&  (\chi(\del_t \chi - \Delta \chi) - |\nabla\chi|^2 - s^{-2}C_1)|\nabla^k u|^2  + \frac{4}{\epsilon}|\nabla\chi|^2|\nabla^k u|^2\\
		&+\chi^2 C C_0 |\nabla^k u|\sum_{j=0}^{k-1} s^{-2-j} s^{-k+j} + C_1 C_0 s^{-2}C s^{-k+1} \sum_{j=0}^{k-1}s^{-j-2}s^{-k+j+1} \\
		&+ \chi^2 C s^{-2}|\nabla^k u|^2 \\
		\leq&  (\chi(\del_t \chi - \Delta \chi)-  |\nabla\chi|^2 - s^{-2}C_1)|\nabla^k u|^2  + \frac{4}{\epsilon}|\nabla\chi|^2|\nabla^k u|^2\\
		&+\chi^2 C C_0s^{-2k-2} |\nabla^k u| + CC_1 C_0 s^{-2k-2} + \chi^2 C s^{-2}|\nabla^k u|^2 + \chi^2 C s^{-2}|\nabla^k u|^2 \\
		\leq&  (\chi(\del_t \chi - \Delta \chi) - |\nabla\chi|^2 - s^{-2}C_1)|\nabla^k u|^2  + \frac{4}{\epsilon}|\nabla\chi|^2|\nabla^k u|^2\\
		&+\chi^2 C C_0s^{-2k-2} + \chi^2 C C_0 s^{-2} |\nabla^k u|^2 + CC_1 C_0 s^{-2k-2} + \chi^2 C s^{-2}|\nabla^k u|^2\\
		\leq&  (\chi(\del_t \chi - \Delta \chi)  - |\nabla\chi|^2 + \chi^2 C C_0 +  \chi^2 C s^{-2} + \frac{4}{\epsilon}|\nabla\chi|^2- s^{-2}C_1) |\nabla^k u|^2 \\
		&+(\chi^2+C_1) C C_0s^{-2k-2},
	\end{align*}
	where we have estimated $|\nabla^j\Rm||\nabla^{k-j}u|$ via \eqref{CurvatureEstimate} and \eqref{HeatSolutionInductiveEstimate}. By choosing $C_1\geq 1$ big enough, the first parenthesis above is nonpositive, and hence
	\begin{equation*}
		(\del_t - \Delta)\Phi \leq C C_1 C_0 s^{-2k-2}.
	\end{equation*}
	By the construction of $\beta$, then
	\begin{equation*}
		(\del_t - \Delta)(\Phi - C C_1 C_0 s^{-2k-2}\beta) \leq 0.
	\end{equation*}
	Using the maximum principle for parabolic partial differential equations, it follows that
	\begin{equation*}
		\sup_{\Omega' \times [t_2,t_3]} |\nabla^k u| \leq C s^{-k}.
	\end{equation*}
\end{proof}

\section{Harmonic Functions and Constant-Trace Forms of Prescribed Growth}

Throughout this section, $(C(L),g_{C(L)})$ denotes a $n$-dimensional Riemannian conical manifold (not necessarily Ricci-flat) of dimension $n \geq 3$. For Lemma \ref{Lemma1Form} and Proposition \ref{PropGrowthRateToHomogeneousForm}, we need to assume that $(C(L),\omega_{C(L)})$ is Ricci-flat and Kähler. We show that harmonic functions and constant trace-forms of growth $\OO(r^{\beta})$ as $r\to 0$ and $\OO(r^{\alpha})$ as $r\to\infty$ for $\beta \leq \alpha$ satisfy nice regularity properties. On ${C(L)} \cong \R_+\times L$ with coordinates $(r,x)$, define for $\beta \leq \alpha$ the function $\theta_{\beta,\alpha}\colon {C(L)} \to \R_+$ by
\begin{equation}\label{Theta}
	\theta_{\beta,\alpha}(r,x) = \begin{cases}
		r^{\beta} & r \leq 1,\\ r^{\alpha} & r > 1.
	\end{cases}
\end{equation}
The estimation constants may change in all lines below. Denote by $\Delta_L$ the Laplacian of $g_{C(L)}|_{\{r=1\}}$ and by $\dvol_L \coloneqq \dvol_{g_{C(L)}|_{\{r=1\}}}$ the volume form on the link. On the link $L \cong \{r = 1\}$ with Laplacian $\Delta_L$, let $(\phi_j)$ be an orthonormal basis of eigenfunctions of $-\Delta_L$ with eigenvalues $0 = \lambda_0 < \lambda_1 \leq \cdots$, counted with multiplicities.

\begin{prop}\label{PropGrowthRateToHomogeneousFunction}
	Let $f \colon {C(L)}\to \R$ be a smooth function such that $\Delta_{g_{{C(L)}}} f= 0$ and $|f|\leq C\theta_{\beta,\alpha}$. Then $f$ splits into a finite sum
	\begin{equation*}
		f = \sum_{j=0}^\infty (a_j^+ r^{c_j^+} + a_j^- r^{c_j^-}) \phi_j, \quad c_j^+ \geq 0, c_j^- \leq 0,
	\end{equation*}
	where $a_j^{\pm}\neq 0$ only if $c_j^{\pm}\in [\beta,\alpha]$. 
	
	If $(C(L), g_{C(L)})$ has nonnegative Ricci curvature, then $c_j^-\leq 1-n$ and $c_j^+ \geq 1$ for $j\geq 1$ with equality if and only if $(C(L)\cup \{o\},g_{C(L)} ) \cong (\R^n, g_\eucl)$.
\end{prop}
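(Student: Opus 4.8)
\medskip

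The plan is to exploit separation of variables for the cone Laplacian, then use the two-sided growth bound $|f|\leq C\theta_{\beta,\alpha}$ to discard all but finitely many modes. On $C(L)\cong \R_+\times L$ with metric $dr^2+r^2h$, the Laplacian takes the form
\begin{equation*}
	\Delta_{g_{C(L)}} = \partial_r^2 + \frac{n-1}{r}\partial_r + \frac{1}{r^2}\Delta_L .
\end{equation*}
First I would expand $f(r,\cdot)$ in the orthonormal eigenbasis $(\phi_j)$ of $-\Delta_L$, writing $f(r,x)=\sum_j f_j(r)\phi_j(x)$ with $f_j(r)=\int_L f(r,\cdot)\phi_j\,\dvol_L$. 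The eigenfunction equation $\Delta_{g_{C(L)}}f=0$ then forces each coefficient to satisfy the Euler ODE $f_j''+\frac{n-1}{r}f_j'-\frac{\lambda_j}{r^2}f_j=0$, whose solutions are $r^{c_j^{\pm}}$ with characteristic exponents
\begin{equation*}
	c_j^{\pm} = \frac{-(n-2)\pm\sqrt{(n-2)^2+4\lambda_j}}{2},
\end{equation*}
so that $c_j^+\geq 0\geq c_j^-$ and $c_j^+ + c_j^- = 2-n$. (For $\lambda_0=0$ one gets $c_0^+=0$, $c_0^-=2-n$; the degenerate repeated-root case does not occur for $\lambda_j>0$ when $n\geq 3$.) Hence $f_j(r)=a_j^+ r^{c_j^+}+a_j^- r^{c_j^-}$.

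\medskip

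Next I would extract the growth restriction. Testing the bound $|f|\leq C\theta_{\beta,\alpha}$ against $\phi_j$ and using the $L^2(L)$ orthonormality gives $|f_j(r)|\leq C\theta_{\beta,\alpha}(r)$ for every $j$, with a constant independent of $j$ once $\|\phi_j\|_{L^1(L)}$ is controlled (alternatively, pair against $\phi_j$ and bound via Cauchy--Schwarz, absorbing $\vol(L)^{1/2}$ into $C$). As $r\to 0$ the bound is $Cr^{\beta}$, which kills the $r^{c_j^-}$ term unless $c_j^-\geq\beta$, and kills the $r^{c_j^+}$ term unless $c_j^+\geq\beta$; as $r\to\infty$ the bound $Cr^{\alpha}$ forces any surviving exponent to be $\leq\alpha$. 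Combining, $a_j^{\pm}\neq 0$ only if $c_j^{\pm}\in[\beta,\alpha]$. Since the exponents $c_j^{\pm}$ are strictly monotone in $\lambda_j$ and $\lambda_j\to\infty$, only finitely many $j$ admit an exponent in the compact window $[\beta,\alpha]$, so the sum is finite; smoothness across the apex and elliptic regularity justify that the termwise-defined $f_j$ reassemble to the given smooth $f$.

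\medskip

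For the second assertion, assume $\Ric_{g_{C(L)}}\geq 0$. The cone Ricci-flatness/curvature condition forces a Lichnerowicz-type lower bound $\lambda_1\geq n-1$ on the first nonzero eigenvalue of the link (this is exactly the Obata/Lichnerowicz estimate, since $\Ric_h=(n-2)h$ on $L$ when $\Ric_{C(L)}\geq 0$ on the cone). Substituting $\lambda_j\geq\lambda_1\geq n-1$ into the formula for the exponents yields $c_j^+\geq 1$ and $c_j^-\leq 1-n$ for all $j\geq 1$, with equality precisely when $\lambda_1=n-1$. The rigidity case $\lambda_1=n-1$ is the equality case of Obata's theorem, which gives $(L,h)\cong(S^{n-1},g_{\mathrm{round}})$ and hence $(C(L)\cup\{o\},g_{C(L)})\cong(\R^n,g_{\eucl})$. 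The main obstacle I anticipate is the bookkeeping at the threshold values $\beta,\alpha$ and the degenerate exponent matching when an exponent sits exactly on an endpoint, together with justifying uniform-in-$j$ constants so that the growth bound genuinely transfers to each Fourier mode; the ODE analysis and the rigidity step are otherwise standard once $\lambda_1\geq n-1$ is in hand.
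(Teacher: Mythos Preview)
Your proposal is correct and follows essentially the same approach as the paper: eigenfunction expansion on the link, the Euler ODE for each radial coefficient, transferring the growth bound to each mode via the $L^2(L)$ pairing (the paper uses Cauchy--Schwarz exactly as you suggest, so the uniform-in-$j$ constant is immediate), and the Lichnerowicz--Obata theorem for the rigidity statement. The threshold bookkeeping you worry about is not an issue here since one only needs $a_j^{\pm}=0$ when $c_j^{\pm}\notin[\beta,\alpha]$, which follows directly from letting $r\to 0$ and $r\to\infty$.
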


\begin{proof}
	Let $f \colon {C(L)} \to \R$ be harmonic with respect to $\Delta_{g_{{C(L)}}}$. On the link $L = \{r = 1\}$ with Laplacian $\Delta_L$, let $(\phi_j)$ be an orthonormal basis of eigenfunctions of $-\Delta_L$ with eigenvalues $0 = \lambda_0 < \lambda_1 \leq \cdots$. Expand $f$ in terms of these functions:
	\begin{equation*}
		f = \sum_{j=0}^\infty g_j(r) \phi_j(x),
	\end{equation*}
	with the sum converging in $L^2_\loc({C(L)})$. As $f$ is smooth, the convergence can be upgraded to $W^{2,k}_\loc({C(L)})$ for any $k\geq 0$, and hence also to $C^\infty_\loc({C(L)})$ by the Morrey embedding. The full Laplacian $\Delta_{g_{{C(L)}}}$ is:
	\begin{equation}\label{LaplacianLink}
		\Delta_{g_{{C(L)}}} f = \del_r^2 f + \frac{n-1}{r}\del_r f + \Delta_L f.
	\end{equation}
	We solve \eqref{LaplacianLink} for each eigenfunction $\phi_j$, and it follows that:
	\begin{equation}\label{ExpansionLaplacian}
		f = \sum_{j=0}^\infty (a_j^+ r^{c_j^+} + a_j^- r^{c_j^-}) \phi_j,
	\end{equation}
	with
	\begin{equation}\label{ExpressionCi}
		c_j^\pm = \frac{2-n\pm \sqrt{(n-2)^2 + 4\lambda_j}}{2}.
	\end{equation}
	Denote by $\gen{f,\phi_j}_L$ the $L^2$-inner product of $f$ and $\phi_j$ at every level set $\{r=R\}$ with the volume form at $\{r=1\}$: 
	\begin{equation}\label{L2InnerProduct}
		\gen{f,\phi_j}_L(R) = \int_{\{r=R\}} f(r,x) \phi_j(x) \dvol_L(x).
	\end{equation}
	Then
	\begin{equation*}
		|a_j^+ r^{c_j^+} + a_j^- r^{c_j^-}| = |\gen{f,\phi_j}_L| \leq |\gen{f,f}_L|^{\frac{1}{2}} \leq C\theta_{\beta,\alpha}(r).
	\end{equation*}
	Taking $r\to 0$ or $r\to \infty$, we see that $a_j^{\pm}\neq 0$ only if $c_j^\pm \in [\beta,\alpha]$.
	
	If $(C(L), g_{C(L)})$ has nonnegative Ricci curvature, then $(L, g_L)$ has Ricci curvature $(n - 2)$. The Lichnerowicz-Obata Theorem \cite[Theorem 2.1]{ivanovLichnerowicz2015} shows that $\lambda_1 \geq n - 1$ with equality if and only if $(C(L) \cup \{o\}, g_{C(L)}) \cong (\mathbb{R}^{n}, g_\eucl)$. Substituting this result into \eqref{ExpressionCi} shows that $c_j^+ \geq 1$ with equality if and only if $(C(L) \cup \{o\}, g_{C(L)}) \cong (\mathbb{R}^{2m}, g_\eucl)$. The same argument shows $c_j^- \leq 1-n$.
\end{proof}

A similar statement holds for exact 2-forms of constant trace in Kähler case. To prove this, we need the following two lemmas:

\begin{lemma}[Poincaré Lemma with Estimates]\label{PoincareLemma}
	Let $\eta\in \Omega^2({C(L)})$ satisfy $d\eta = 0$ and 
	\begin{equation*}
		|\nabla_{g_{C(L)}}^l \eta | \leq Cr^{-l}\theta_{\beta,\alpha}(r), \quad l=0,\dots,k,
	\end{equation*}
	for some $k\in \N$, constant $C>0$, and any $\beta > -2$. Then there exists a form $\zeta\in \Omega^1({C(L)})$ such that $\eta = d\zeta$ and with the estimates:
	\begin{equation*}
		|\nabla_{g_{C(L)}}^l \zeta| \leq C' r^{1-l}\theta_{\beta,\alpha}(r), \quad l=0,\dots,k,
	\end{equation*}
	for $C' = C'(C,k,\alpha)>0$.
\end{lemma}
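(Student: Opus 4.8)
The plan is to prove this Poincaré lemma with quantitative estimates by writing down an explicit homotopy operator adapted to the cone structure and then checking the decay estimates it produces. The natural operator is radial integration along the $\R_+$-factor of $C(L) \cong \R_+ \times L$. Concretely, decompose $\eta = dr \wedge \eta_1 + \eta_2$, where $\eta_1$ is a (time-dependent) $1$-form on $L$ and $\eta_2$ is a $2$-form on $L$, both depending on $r$. Since $\eta$ is exact and we want $\zeta$ with $d\zeta = \eta$, I would define $\zeta$ by integrating the $dr$-component radially, e.g.\ $\zeta = \int_{r_0}^r \eta_1(s,\cdot)\, ds$ plus a correction, choosing the basepoint of integration as $r_0 = 0$ when $\beta > -2$ makes the integral converge at the apex, and as $r_0 = \infty$ otherwise (or splitting into two pieces across $r=1$). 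This is the standard cone homotopy formula, and the closedness $d\eta = 0$ is exactly what guarantees $d\zeta = \eta$ after the radial integration.

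**Next** I would verify the growth and derivative estimates. The key point is dimensional: $\eta$ has the weight of a $2$-form while $\zeta$ has the weight of a $1$-form, so one gains a factor of $r$, which is precisely the $r^{1-l}$ versus $r^{-l}$ discrepancy in the statement. For the $C^0$ estimate, integrating $|\eta_1(s,\cdot)| \leq C\, s^{-1}\theta_{\beta,\alpha}(s)$ from $0$ to $r$ (in the region $r \le 1$) gives $\int_0^r s^{-1} s^\beta\, ds = \frac{1}{\beta+2} r^{\beta+1}$ after accounting for the $s^{-1}$ Jacobian weight coming from the cone metric $g_{C(L)} = dr^2 + r^2 h$ — here the hypothesis $\beta > -2$ is exactly the convergence condition that makes the radial integral finite at the apex, which is why it appears in the statement. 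The higher derivative bounds $|\nabla^l \zeta| \leq C' r^{1-l}\theta_{\beta,\alpha}$ follow by commuting $\nabla_{g_{C(L)}}$ past the integral, using the scaling behavior of the connection on the cone (derivatives in the link directions carry a $r^{-1}$ weight) together with the assumed bounds on $\nabla^l \eta$ for $l = 0,\dots,k$.

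**The main obstacle** I anticipate is bookkeeping the weights correctly across the link-versus-radial directions and handling the region $r > 1$ consistently with $r \le 1$. On a cone the metric mixes scales: a link-direction derivative of a form scales differently from a radial one, and the decomposition $\eta = dr\wedge\eta_1 + \eta_2$ must be tracked componentwise through all $k$ derivatives, so the cleanest route is to first establish the estimate for the rescaled form $\tilde\eta = \lambda^{-2}\Phi_\lambda^*\eta$ at the unit level $\{r=1\}$ using a fixed homotopy operator there, and then scale back — the homogeneity $\theta_{\beta,\alpha}$ being chosen precisely so that this rescaling is compatible. The transition at $r=1$ between the $r^\beta$ and $r^\alpha$ regimes requires choosing the integration basepoint separately in each region and patching with a cutoff, but since both regimes satisfy the same structural bound the patched $\zeta$ inherits the stated estimate after enlarging the constant.

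**Finally**, I would note that the Kähler/Ricci-flat hypotheses flagged in the section preamble are \emph{not} needed for this particular lemma — it is a purely metric/topological statement about closed $2$-forms on a Riemannian cone, valid for the general $(C(L), g_{C(L)})$ of dimension $n \ge 3$ — so the proof should avoid invoking any complex-geometric structure and rely only on the product structure $\R_+ \times L$ and the explicit form of $\Delta_{g_{C(L)}}$ and $\nabla_{g_{C(L)}}$ on the cone.
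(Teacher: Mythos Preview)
Your approach and the paper's are the same idea --- the radial homotopy operator on the cone --- but the paper's implementation is cleaner and avoids the component bookkeeping you flag as the main obstacle. Rather than decomposing $\eta = dr\wedge\eta_1 + \eta_2$, the paper writes the operator invariantly via Cartan's formula: with $\Theta_t\colon (r,x)\mapsto (e^t r, x)$ the flow of $r\partial_r$, one sets $\zeta = \int_{-\infty}^0 \Theta_t^*\, i(r\partial_r)\eta\, dt$, and $d\zeta = \eta$ follows immediately from $\frac{d}{dt}\Theta_t^*\eta = d\,\Theta_t^* i(r\partial_r)\eta$ and $d\eta=0$. The derivative estimates then come out in one line because the Levi--Civita connection of $g_{C(L)}$ is invariant under $\Theta_t$ (the pullback metric is a constant multiple of $g_{C(L)}$), so $\nabla^l$ passes through $\Theta_t^*$ with only an explicit power of $e^{t}$ --- no separate treatment of link versus radial directions is needed. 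Two small corrections to your plan: there is no need to patch across $r=1$ with different basepoints, since the single basepoint $r_0=0$ works for all $r$ (for $r>1$ the integral simply produces an extra lower-order term of size $r^{-1-l}$, dominated by $r^{1+\alpha-l}$); and your displayed computation $\int_0^r s^{-1}s^\beta\,ds = \tfrac{1}{\beta+2}r^{\beta+1}$ is miscounted (the left side is $r^\beta/\beta$), though the final bound $|\zeta|\lesssim r\,\theta_{\beta,\alpha}$ you claim is correct once the cone-metric weight on $\eta_1$ is tracked properly. Your closing remark that no K\"ahler or Ricci-flat hypothesis is needed here is correct.
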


\begin{proof}
	The proof mimics the classical proof of the Poincaré Lemma using the Lie derivative. Let $\Theta_{t}\colon {C(L)}\to {C(L)}$ denote the one-parameter family of scaling maps $p = (r',x)\mapsto (e^t\cdot r',x)$ with generating vector field $r\del_r$. Then
	\begin{equation*}
		\frac{d}{dt}\Theta_t^*\eta = \Theta_t^*\LL_{r\del_r}\eta.
	\end{equation*}
	Integrating both sides and using the Cartan formula:
	\begin{equation}\label{IntegralStokes}
		\begin{aligned}
			\Theta_{t_1}^*\eta - \Theta_{t_0}^*\eta &= d\int_{t_0}^{t_1}\Theta_t^*i(r\del_r)\eta \, dt + \int_{t_0}^{t_1}\Theta_t^*i(r\del_r)d\eta\, dt\\
		&= d\int_{t_0}^{t_1}\Theta_t^*i(r\del_r)\eta \, dt,
		\end{aligned}
	\end{equation}
	as $d\eta = 0$. Write $r = e^t$ and $p = (e^{t'},x)$. We estimate the integrand by
	\begin{equation*}
		|\Theta_t^*i(r\del_r)\eta_{(e^{t'},x)}| \leq e^t \Theta_t^*|i(r\del_r)\eta_{(e^{t'},x)}| \leq  e^t \Theta_t^*(|r\del_r||\eta_{(e^{t'},x)}|) \leq Ce^{2t+t'}\theta_{\beta,\alpha}(e^{t+t'}).
	\end{equation*}
	For any fixed $t_1\in \R$, $\beta > -2$ ensures that the limit of \eqref{IntegralStokes} exists as $t_0\to -\infty$. Hence, setting $t_1 = 0$ and $t_0\to-\infty$, we find:
	\begin{equation*}
		\eta = d\int_{-\infty}^{0}\Theta_t^*i(r\del_r)\eta \, dt \eqqcolon d \zeta.
	\end{equation*}
	By using the Koszul formula, we see that $\nabla_{g_{C(L)}} r\del_r = \Id$ regarded as a map $\X({C(L)})\to \X({C(L)})$. Hence $\nabla_{g_{C(L)}}^l r\del_r = 0$ for $l \geq 2$ and $|\nabla_{g_{C(L)}} r\del_r| = 1$. Therefore, we estimate $\zeta$ and its derivatives using the product rule:
	\begin{equation}\label{IntegralStokesDerivative}
		\begin{aligned}
			|\nabla_{g_{C(L)}}^l \zeta|_{(e^{t'},x)} &= \left|\nabla_{g_{C(L)}}^l \int_{-\infty}^{0}\Theta_t^*i(r\del_r)\eta_{(e^{t'},x)} \, dt\right|\\
			&\leq\int_{-\infty}^{0} \left|\nabla_{g_{C(L)}}^l \left(\Theta_t^*i(r\del_r)\eta_{(e^{t'},x)} \right)\right| \, dt \\
			&=\int_{-\infty}^{0} \left| \Theta_t^* \left(\nabla_{(\Theta_t^{-1})^*g_{C(L)}}^l\left(i(r\del_r)\eta_{(e^{t'},x)}\right)\right)\right| \, dt\\
			&=\int_{-\infty}^{0}\left| \Theta_t^* \left(\nabla_{g_{C(L)}}^l\left(i(r\del_r)\eta_{(e^{t'},x)}\right)\right)\right| \, dt\\
			&=\int_{-\infty}^{0} e^{(l+1)t}  \Theta_t^*\left|\nabla_{g_{C(L)}}^l(i(r\del_r)\eta)\right| \, dt\\
			&\leq \int_{-\infty}^{0} C e^{(1+l)t} e^{(1-l)(t'+t)}\theta_{\beta,\alpha}(e^{t'+t})  \, dt\\
			&= C\int_{-\infty}^{0} e^{(1-l)t'}e^{2t}\theta_{\beta,\alpha}(e^{t'+t})  \, dt\\
			&= C \begin{cases}
				\frac{e^{(1+\beta-l)t'}}{2+\beta} & t' \leq 0,\\
				\left(\frac{e^{(-1 -l)t'}}{2+\beta}-\frac{e^{(-1 - l) t'}}{2+\alpha}\right) + \frac{ e^{(1+\alpha-l)t'}}{2+\alpha} & t'>0,
			\end{cases}
		\end{aligned}
	\end{equation}
	for $l=0,\dots,k$. These are exactly the desired estimates.
\end{proof}

For the next lemma, we need the following definition: Let $\E = \{c_j^\pm \in \R \mid \text{$c_j^\pm$ is a solution of \eqref{ExpressionCi}} \}$, i.e. the set of homogeneity factors of harmonic functions on $(C(L), g_{C(L)})$.
\begin{lemma}\label{PotentialNiceChoice}
	Take $\beta \leq 0 < \alpha$ and such that $2+\alpha, 2+\beta \notin \E$. If $f\colon {C(L)} \to \R$ is a smooth function satisfying
	\begin{equation*}
		|\nabla^l f| \leq Cr^{-l}\theta_{\beta,\alpha}(r), \quad l=0,\dots, k,
	\end{equation*}
	for some $C>0$, $k \geq3n+1$, $\beta \leq 0 < \alpha$, then there exists a smooth function $u\colon {C(L)} \to \R$ such that 
	\begin{equation*}
		\Delta_{g_{C(L)}} u = f, \qquad |\nabla^{l}u| \leq C'r^{2-l}\theta_{\beta,\alpha}(r), \quad l=0,\dots, k+2,
	\end{equation*}
	for some constant $C' = C'(C,k,\alpha,\beta)>0$.
\end{lemma}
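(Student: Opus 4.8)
The plan is to separate variables against the eigenbasis $(\phi_j)$ of $-\Delta_L$ and solve the resulting radial ODEs explicitly, then pass from mode-wise estimates to pointwise weighted bounds. Writing $f=\sum_{j\ge 0}f_j(r)\phi_j$ with $f_j(r)=\gen{f,\phi_j}_L(r)$ and seeking $u=\sum_{j\ge0}u_j(r)\phi_j$, the equation $\Delta_{g_{C(L)}}u=f$ decouples, by \eqref{LaplacianLink}, into
\[
u_j''+\frac{n-1}{r}u_j'-\frac{\lambda_j}{r^2}u_j=f_j,\qquad j\ge 0 .
\]
The homogeneous solutions are $r^{c_j^+},r^{c_j^-}$ with $c_j^\pm$ as in \eqref{ExpressionCi}, and weighted Wronskian $d_j:=c_j^+-c_j^-=\sqrt{(n-2)^2+4\lambda_j}$. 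I would solve each mode by the radial Green's function
\[
u_j(r)=-\frac{1}{d_j}\left(r^{c_j^-}\int_0^r s^{c_j^++n-1}f_j(s)\,ds+r^{c_j^+}\int_r^\infty s^{c_j^-+n-1}f_j(s)\,ds\right),
\]
using for the finitely many low modes, where the indicial roots $c_j^\pm$ straddle $2+\beta$ or $2+\alpha$, the variant based at a fixed radius and then subtracting homogeneous solutions to fix the growth. The hypothesis $2+\beta,2+\alpha\notin\E$ is precisely the non-resonance condition ensuring $c_j^\pm\neq 2+\beta,2+\alpha$ for every $j$, so each $u_j$ is unambiguously defined, carries no logarithmic term, and obeys $|u_j(r)|\le C d_j^{-1}M_j\,\big(\min_{\pm}|c_j^\pm-(2+\beta)|,|c_j^\pm-(2+\alpha)|\big)^{-1} r^2\theta_{\beta,\alpha}(r)$, where $M_j:=\sup_r|f_j(r)|/\theta_{\beta,\alpha}(r)$.

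Next I would establish decay of the mode data. Because $\Delta_L$ is the fixed unit-link operator and tangential covariant derivatives at radius $r$ carry a compensating factor $r$, the hypotheses $|\nabla^l f|\le Cr^{-l}\theta_{\beta,\alpha}$ give $|\Delta_L^m f(r,\cdot)|\le C\theta_{\beta,\alpha}(r)$ on the link whenever $2m\le k$. Integrating by parts, $\lambda_j^m f_j(r)=\gen{\Delta_L^m f,\phi_j}_L(r)$, so $M_j\le C\lambda_j^{-m}$ for every $m$ with $2m\le k$. Since for large $j$ both resonance distances grow like $d_j\sim\sqrt{\lambda_j}$, the per-mode bound improves to $|u_j(r)|\le C\lambda_j^{-m-1}r^2\theta_{\beta,\alpha}(r)$. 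Using Weyl's law on the $(n-1)$-dimensional link ($\lambda_j\sim j^{2/(n-1)}$) together with the Sobolev sup-bounds $\|\phi_j\|_{C^2(L)}\lesssim\lambda_j^{(n-2)/4+1}$, the condition $k\ge 3n+1$ makes $\sum_j|u_j(r)|\,\|\phi_j\|_{C^2(L)}$ converge absolutely and locally uniformly in $r$. Hence the series for $u$ converges in $C^2_{\loc}(C(L))$, solves $\Delta_{g_{C(L)}}u=f$, and satisfies the weighted bound $|u|\le C'r^2\theta_{\beta,\alpha}(r)$. (Alternatively, this $C^0$ bound can be obtained directly from the radial barrier $w=C(r^{2+\beta}+r^{2+\alpha})$, for which $\Delta_{g_{C(L)}}w\ge c\,\theta_{\beta,\alpha}$, and the maximum principle on an exhaustion by annuli.)

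Finally, I would promote the weighted $C^0$ bound to weighted bounds on all derivatives up to order $k+2$ by rescaling. Since $g_{C(L)}$ is cone-invariant, $R^{-2}\Phi_R^*g_{C(L)}=g_{C(L)}$, so on a dyadic annulus $\{R/2\le r\le 2R\}$ the normalized functions $\tilde u=(R^2\theta_{\beta,\alpha}(R))^{-1}\,u\circ\Phi_R$ and $\tilde f=\theta_{\beta,\alpha}(R)^{-1}\,f\circ\Phi_R$ live on the fixed unit annulus, satisfy $\Delta_{g_{C(L)}}\tilde u=\tilde f$ with $\|\tilde u\|_{C^0}\le C'$ and, by the derivative hypotheses on $f$, $\|\tilde f\|_{C^{k}}\le C$ uniformly in $R$. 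Interior elliptic estimates for the fixed smooth operator $\Delta_{g_{C(L)}}$ bound $\tilde u$ in $C^{k+2}$ on a slightly smaller annulus, and unwinding the scaling gives exactly $|\nabla^l u|\le C'r^{2-l}\theta_{\beta,\alpha}(r)$ for $0\le l\le k+2$.

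The main obstacle is the uniform-in-$j$ analysis of the mode-wise Green's functions together with summability over the infinitely many modes. One must show the resonance-distance factors are bounded below — which for the finitely many near-resonant low modes is exactly where $2+\beta,2+\alpha\notin\E$ is used, while for high modes it follows from $c_j^\pm\to\pm\infty$ — and that sufficiently many $\Delta_L$-derivatives of $f$, guaranteed by $k\ge 3n+1$, force decay $M_j\lesssim\lambda_j^{-m}$ that beats the eigenfunction growth dictated by Weyl's law. Once existence and the weighted $C^0$ bound are secured, the higher derivative estimates are routine, since the two-order elliptic gain of the rescaled interior estimates matches precisely the passage from $C^k$ control of $f$ to $C^{k+2}$ control of $u$.
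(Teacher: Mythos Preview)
Your proposal is correct and follows essentially the same strategy as the paper: separate variables against the link eigenbasis, solve the radial Euler ODEs by variation of parameters, use the non-resonance hypothesis $2+\alpha,2+\beta\notin\E$ to fix the homogeneous parts without logarithms, exploit $\lambda_j^{-m}$ decay of the Fourier coefficients to sum, and finish with rescaled interior elliptic estimates for the higher derivatives. The only cosmetic difference is that the paper bases its particular solution at $r=1$ and then adds explicit correction constants $\Lambda_j^\pm$ case by case, whereas you write the Green's function with endpoints $0$ and $\infty$ for high modes and note the low-mode modification in words; these are the same construction.
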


\begin{proof}
	As in the proof of Proposition \ref{PropGrowthRateToHomogeneousFunction}, pick an orthonormal basis of eigenfunctions $(\phi_j)$ of $-\Delta_L$ with eigenvalues $0 = \lambda_0 < \lambda_1 \leq \cdots$. On ${C(L)} \cong \R_+\times L$ with coordinates $(r,x)$, expand $f$ in terms of these functions:
	\begin{equation*}
		f = \sum_{j=0}^\infty g_j(r) \phi_j(x),
	\end{equation*}
	with the sum converging in $L^2_\loc({C(L)})$. As $f$ is smooth, the convergence can be upgraded to $W^{2,k}_\loc({C(L)})$ for any $k\geq 0$, and hence also to $C^\infty_\loc({C(L)})$ by the Morrey embedding. Let $\gen{f,\phi_j}_L$ be as in \eqref{L2InnerProduct}. The growth property of $f$ implies that 
	\begin{equation}\label{AsymptoticsG}
		|g_j| = |\gen{f,\phi_j}_L| \leq C \theta_{\beta,\alpha}(r).
	\end{equation}
	We seek a solution $u$ of the same form:
	\begin{equation*}
		u = \sum_{j=0}^\infty h_j(r) \phi_j(x).
	\end{equation*}
	Write $\Delta_{g_{C(L)}} u= \del_r^2 u + \frac{n-1}{r}\del_r u + \frac{1}{r^2}\Delta_L u$. We apply the Laplacian on each $h_j(r) \phi_j(x)$ to obtain an equation of the form:
	\begin{equation}\label{ODEj}
		(\del_r^2 h_j + \frac{n-1}{r}\del_r h_j - \frac{\lambda_j}{r^2} h_j)\phi_j(x) = g_j(r) \phi_j(x).
	\end{equation}
	The homogeneous solutions are $r^{c_j^+}$ and $r^{c_j^-}$ with
	\begin{equation}\label{HomogeneityCoefficients}
		c_j^\pm = \frac{2-n\pm \sqrt{(n-2)^2 + 4\lambda_j}}{2}.
	\end{equation}
	To solve for the inhomogeneous part, the Wronskian is:
	\begin{equation*}
		W_j(r) = \begin{vmatrix}
			r^{c_j^+} & r^{c_j^-} \\ c_j^+r^{c_j^+-1} & c_j^-r^{c_j^--1}
		\end{vmatrix},
	\end{equation*}
	giving the general solution of \eqref{ODEj}:
	\begin{equation*}
		h_j = \hat h_j + \Lambda_j^+ r^{c_j^+} + \Lambda_j^- r^{c_j^-},
	\end{equation*}
	with
	\begin{equation}\label{HatH}
		\hat h_j = 	\frac{1}{c_j^- - c_j^+}\left(-r^{c_j^+}\int_1^r s^{1-c_j^+}g_j(s) \, ds + r^{c_j^-}\int_1^r s^{1-c_j^-}g_j(s) \, ds  \right).
	\end{equation}
	We need to fix $\Lambda_j^\pm$ such that $h_j$ has the desired asymptotics. Set
	\begin{equation*}
		\Lambda_j^+ \coloneqq \begin{cases}
			0 & c_j^+ < 2+\alpha, \\
				\frac{1}{c_j^- - c_j^+}\left(r^{c_j^+}\int_1^\infty s^{1-c_j^+}g_j(s) \, ds \right) & c_j^+ > 2+ \alpha,
		\end{cases}
	\end{equation*}
	and
	\begin{equation*}
		\Lambda_j^- \coloneqq \begin{cases}
			0 & c_j^- > 2+\beta, \\
			\frac{-1}{c_j^- - c_j^+}\left(r^{c_j^-}\int_1^0 s^{1-c_j^-}g_j(s) \, ds \right) & c_j^- < 2+ \beta,
		\end{cases}
	\end{equation*}
	$2+\alpha \neq c_j^+$ and $2+\beta \neq c_j^-$ by assumption.
	The integrals of $\Lambda_j^\pm$ exist by the asymptotics of $g_j$. With these choices of $\Lambda_j^\pm$, it follows that
	\begin{equation*}
		|h_j| \leq C \theta_{\beta,\alpha}(r).
	\end{equation*}
	We show the the proof for $\Lambda_j^+$. If $c_j^+ < 2+\alpha$, then
	\begin{align*}
		\left|r^{c_j^+}\int_1^r s^{1-c_j^+}g_j(s) \, ds  \right| &\leq \left|Cr^{c_j^+}\int_1^r s^{1-c_j^+ + \alpha}\right| \\
		&= Cr^{c_j^+}\left| \left[ \frac{s^{2-c_j^+ + \alpha}}{2-c_j^+ + \alpha} \right]_1^r\right|\\
		&= C\left|\frac{r^{2 + \alpha} - r^{c_j^+}}{2-c_j^+ + \alpha}\right| \\
		&\leq Cr^2\theta_{\beta,\alpha}(r).
	\end{align*}
	 For $c_j^+ > 2 + \alpha$, then
	 \begin{align*}
	 	\left|- r^{c_j^+}\int_1^r s^{1-c_j^+}g_j(s)\, ds + r^{c_j^+}\int_1^\infty s^{1-c_j^+}g_j(s) \, ds  \right| &= \left|r^{c_j^+}\int_r^\infty s^{1-c_j^+}g_j(s) \, ds  \right|\\
	 	&\leq Cr^{c_j^+}\left|\left[\frac{s^{2-c_j^+ + \alpha}}{2-c_j^+ + \alpha}  \right]_r^\infty\right|\\
	 	&= C\frac{-r^{2 + \alpha}}{2-c_j^+ + \alpha} \leq C\theta_{\beta,\alpha}(r).
	 \end{align*}
	Next, we bound $h_j$ in terms of $g_j$ and the associated eigenvalue of $\phi_j$. First, for $j\neq 0$:
	\begin{equation}\label{AbsoluteG}
		\begin{aligned}
			|g_j(r)| &= \lambda_j^{-k}\left|\gen{f,\Delta_L^{k}\phi_j}_L\right|\\ 
			&= \lambda_j^{-k}\left|\gen{\Delta_L^{k} f,\phi_j}_L\right|\\
			&\leq C\lambda_j^{-k}r^{2k}\left|\gen{\Delta_{g_{{C(L)}}}^{k} f,\Delta_{{g_{{C(L)}}}}^k f}_L\right|^{\frac{1}{2}}\\
			&\leq C \lambda_j^{-k} \theta_{\beta,\alpha}(r),
		\end{aligned}
	\end{equation}
	for any $k\in \N$. By \eqref{HatH} and \eqref{AbsoluteG}, there exists a constant $C>0$ such that
	\begin{equation*}
		|h_j(r)| + r|h_j'(r)| + r^2|h_j''(r)| \leq C \lambda_j^{-k}r^{2} \theta_{\beta,\alpha}(r).
	\end{equation*} 
	
	The expression
	\begin{equation}\label{FormalU}
		u = \sum_{j=0}^\infty h_j(r)\phi_j(x)
	\end{equation}
	now formally satisfies $\Delta_{g_{C(L)}} u = f$. To show that $u$ is an actual solution, we need to show that the sum \eqref{FormalU} converges in $C^{2}_\loc({C(L)})$. The Morrey embedding \cite[10. Theorem]{krylovLectures2008} shows that $\Norm{\phi_j}{2l-\frac{n}{2},L}\leq C\Norm{\phi_j}{W^{2l,2}(L)}$ for $l\in \N_0$. As $-\Delta_L \phi_j = \lambda_j \phi_j$ and $\Norm{\phi_j}{L^2(\{r=1\})} = 1$ we obtain:
	\begin{equation*}
		\Norm{\phi_j}{2,L} \leq C(1+\lambda_j)^{l}.
	\end{equation*}
	for some constant $C>0$ depending only on $(C(L),g_{C(L)})$ and $l \geq 1 + \frac{n}{4}$ a natural number.
	
	By Weyl's Asymptotic Formula \cite[p. 155]{chavelHeatKernel1984}, $\lambda_j \sim j^{\frac{1}{2n}}$, so
	\begin{equation}\label{AbsoluteSumF}
		\sum_{j=1}^\infty |\nabla_{g_{{C(L)}}}^l(h_j(r)\phi_j(x))| \leq C\theta_{\beta,\alpha}(r) r^{2-k} \sum_{j=1}^\infty j^{\frac{-k+l}{2n}}, \quad l = 0,1,2.
	\end{equation}
	Choose $k$ such that $\frac{-k+l}{2n}< -1$, and the sum \eqref{FormalU} converges in $C^2_\loc({C(L)})$. For this, it is sufficient to choose $k = 3n+1$. Smoothness of $u$ follows by elliptic regularity, and the growth property of $\nabla_{g_{{C(L)}}}^lu$ follows from \eqref{AbsoluteSumF} for $l=0,1,2$. Growth of the higher order terms follow by the Schauder estimates and rescaling.
\end{proof}

\begin{lemma}\label{Lemma1Form}
	On the Ricci-flat Kähler cone $(\CC, \omega_\CC)$, let $\eta \in \Omega^1(L)$ be a real, co-closed 1-form such that $r^2 \eta$ is harmonic on $(\CC, \omega_\CC)$ and the dual vector field $\eta^\sharp \in \mathfrak{X}(\CC)$ is a Killing field on $(\CC, \omega_\CC)$. Then there exists a 2-homogeneous, $\xi$-invariant function $q \colon \CC \to \R$ with constant Laplacian such that
	\begin{equation*}
		\del(r^2 \eta)^{(0,1)} = i \del \delbar q.
	\end{equation*}
\end{lemma}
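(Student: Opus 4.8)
The plan is to reduce the statement to Proposition \ref{PropGrowthRateToHomogeneousForm}. Write $V \coloneqq \eta^\sharp$, $\beta \coloneqq r^2\eta = g_{\CC}(V,\cdot)$, and set $\phi \coloneqq \del(r^2\eta)^{(0,1)} = \del\beta^{(0,1)}$, which is a $(1,1)$-form by construction. I claim it suffices to show that $\phi$ is real, closed, homogeneous of degree two ($\LL_{r\del_r}\phi = 2\phi$), and of constant $\omega_{\CC}$-trace. Granting this, Proposition \ref{PropGrowthRateToHomogeneousForm} produces a constant $c$ and a $2$-homogeneous, $\xi$-invariant, $\omega_{\CC}$-harmonic function $u$ with $\phi = i\del\delbar(u + cr^2)$; setting $q \coloneqq u + cr^2$ then yields $\del(r^2\eta)^{(0,1)} = i\del\delbar q$, where $q$ is $2$-homogeneous, $\xi$-invariant and of constant Laplacian (since $\Delta_{\omega_{\CC}}r^2$ is constant on a cone). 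In particular the $\xi$-invariance of $q$ is delivered by the proposition, so I will not establish invariance of $\phi$ under the Reeb flow directly.

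The first two properties come from the Killing equation for $\beta = V^\flat$. In a local holomorphic frame its mixed component reads $\nabla_l\beta_{\bar k} + \nabla_{\bar k}\beta_l = 0$; since $(\del\beta^{(0,1)})_{l\bar k} = \nabla_l\beta_{\bar k}$ and $(\delbar\beta^{(1,0)})_{l\bar k} = -\nabla_{\bar k}\beta_l$, this gives $\del\beta^{(0,1)} = \delbar\beta^{(1,0)}$, so $\phi = \tfrac12(d\beta)^{(1,1)}$ is real. For closedness I would use the purely antiholomorphic Killing component $\nabla_{\bar l}\beta_{\bar k} + \nabla_{\bar k}\beta_{\bar l} = 0$, which identifies the $(0,2)$-part $(d\beta)^{(0,2)} = \delbar\beta^{(0,1)}$ with components $2\nabla_{\bar l}\beta_{\bar k}$. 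Differentiating once more and invoking the second-order identity $\nabla\nabla\beta = \Rm * \beta$ valid for Killing fields, the relevant contraction $\nabla_m\nabla_{\bar l}\beta_{\bar k}$ is expressed through curvature components of the form $R_{\bar k\bar l m\,\cdot}$, which vanish identically on a Kähler manifold (only the $R_{i\bar j k\bar l}$-type survives, as $R(\del_i,\del_j)\del_{\bar k} = 0$). Hence $\del(d\beta)^{(0,2)} = 0$, and since $d(d\beta)=0$ forces $d(d\beta)^{(1,1)} = -\del(d\beta)^{(0,2)} - \delbar(d\beta)^{(2,0)}$, I get $d\phi = 0$.

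The remaining two properties use harmonicity together with homogeneity. Because $V$ is Killing, $\nabla\beta$ is skew, so $d^*\beta = 0$; thus $\Delta\beta = d^*d\beta$ vanishes precisely because $r^2\eta$ is harmonic, and $F \coloneqq d\beta$ is a harmonic $2$-form. From $\phi = \tfrac12(d\beta)^{(1,1)}$ and the fact that the trace sees only the $(1,1)$-part, $\tr_{\omega_{\CC}}\phi = \tfrac12\Lambda F$. On a Kähler manifold $\Lambda$ commutes with the Laplacian, so $\Lambda F$ is a harmonic function; and since $\eta$ is $r$-independent, $\LL_{r\del_r}\beta = 2\beta$ gives $\LL_{r\del_r}F = 2F$, while $\Lambda$ lowers the scaling weight by two, so $\Lambda F$ is homogeneous of degree zero. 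A degree-zero homogeneous harmonic function restricts to a harmonic function on the compact link, hence is constant (cf. Proposition \ref{PropGrowthRateToHomogeneousFunction}); thus $\tr_{\omega_{\CC}}\phi$ is constant. Finally $r\del_r$ is real-holomorphic ($J(r\del_r) = \xi$), so $\LL_{r\del_r}$ commutes with the $(0,1)$-projection and with $\del$, whence $\LL_{r\del_r}\phi = \del(\LL_{r\del_r}\beta)^{(0,1)} = 2\phi$.

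The step I expect to require the most care is the closedness of $\phi$: although the vanishing of $\nabla_m\nabla_{\bar l}\beta_{\bar k}$ is clean once the Killing second-derivative identity and the Kähler curvature symmetries are in place, one must be attentive to sign conventions in that identity and, more importantly, to the fact that all of the Hodge-theoretic manipulations (the splitting of $\Delta$ into types, $[\Lambda,\Delta]=0$, and ``degree-zero homogeneous harmonic $\Rightarrow$ constant'') are applied on the non-compact cone $\CC$ rather than on a closed manifold. The homogeneity of $\beta$ and the prescribed growth of $r^2\eta$ are exactly what make these local Kähler identities and the link-reduction legitimate here. Once $\phi$ is shown to be a real, closed, $2$-homogeneous $(1,1)$-form of constant trace, Proposition \ref{PropGrowthRateToHomogeneousForm} closes the argument as above.
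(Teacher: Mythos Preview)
Your reduction is circular. In the paper, Lemma \ref{Lemma1Form} is invoked \emph{inside} the proof of Proposition \ref{PropGrowthRateToHomogeneousForm}: after producing the harmonic $(0,1)$-form $\xi = \delbar\upsilon - \zeta^{(0,1)}$ and decomposing it as $d\chi + r^2\hat\xi$, the proof applies Lemma \ref{Lemma1Form} to the piece $r^2\hat\xi$ in order to write $\del(r^2\hat\xi)^{(0,1)} = i\del\delbar q$ and thereby build the global potential $\psi$. So you cannot appeal to Proposition \ref{PropGrowthRateToHomogeneousForm} to establish Lemma \ref{Lemma1Form} without first proving the lemma by other means. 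Your four properties of $\phi$ (real, closed, $2$-homogeneous, constant trace) may well be correct, but they do not by themselves give a potential; the mechanism in Proposition \ref{PropGrowthRateToHomogeneousForm} that converts such a form into $i\del\delbar q$ is precisely the step that consumes Lemma \ref{Lemma1Form}.

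The paper's argument is entirely different and avoids this loop. It works directly with the Killing vector field $(r^2\eta)^\sharp$ rather than with the $(1,1)$-form $\phi$. If $(\CC,g_\CC)$ is not hyperk\"ahler, Tanno's theorem forces every Killing field to be holomorphic; then $(r^2\eta)^\sharp$ has linear growth, commutes with $r\del_r$, and by the Hein--Sun classification (Theorem \ref{DecompositionHoloFields}) equals $J\nabla q$ for a $2$-homogeneous $\xi$-invariant function $q$ of constant Laplacian, from which $\del(r^2\eta)^{(0,1)} = -\del(d^c q)^{(0,1)} = -i\del\delbar q$ follows immediately. In the hyperk\"ahler case, the only non-holomorphic Killing fields (up to the holomorphic part already handled) are $J(r\del_r)$ and $K(r\del_r)$, and a direct computation using $\Hess(r^2) = 2g_\CC$ shows $\del(r^2\eta)^{(0,1)} = 0$ for these. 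This route never touches Proposition \ref{PropGrowthRateToHomogeneousForm}, and in fact supplies the missing ingredient for that proposition.
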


\begin{proof}
	First, if $(\CC \cup \{o\}, \omega_\CC) \cong (\C^m,\omega_\eucl)$, then $\del(r^2 \eta)^{(0,1)}$ is a $2$-homogeneous harmonic form on $(\C^m,\omega_\eucl)$. $\del(r^2 \eta)^{(0,1)}$ is therefore a constant $(1,1)$-form, and the lemma is trivial.
	
	Assuming $(\CC \cup \{o\}, \omega_\CC) \neq (\C^m,\omega_\eucl)$, let $g_\CC$ be the Riemannian metric associated to $\omega_\CC$, and extend to a $\C$-bilinear form on the complexified tangent bundle $T_\C \CC$. By Tanno \cite[Theorem 4.4]{tannoisometry1970}, if $(\CC, g_\CC)$ does not admit a hyperkähler structure, then all Killing vector fields are holomorphic. Since the dual vector field $(r^2 \eta)^\sharp$ is a homogeneous Killing field of linear growth on $\CC$, it commutes with the scaling vector field $r \del_r$ (see the proof of Theorem \ref{DecompositionHoloFields}):
	\begin{equation*}
		\left[(r^2 \eta)^\sharp, r \del_r \right] = \LL_{r \del_r} (r^2 \eta)^\sharp = (1 - 1)(r^2 \eta)^\sharp = 0.
	\end{equation*}
	
	By \cite[Theorem 2.14]{heinCalabiYau2017} (or see Theorem \ref{DecompositionHoloFields}), the Killing field $(r^2 \eta)^\sharp$ can be expressed as
	\begin{equation*}
		(r^2 \eta)^\sharp = J \nabla q,
	\end{equation*}
	for some $\xi$-invariant, 2-homogeneous function $q$ of constant Laplacian. Therefore:
	\begin{equation*}
		\del (r^2 \eta)^{(0,1)} = - \del (d^c q)^{(0,1)} = -i \del \delbar q.
	\end{equation*}
	
	Now consider the case where $(\CC, g_\CC)$ is hyperkähler, with complex structures $I$, $J$, and $K$, where $I$ corresponds to the already given complex structure on $\CC$. Suppose $(r^2 \eta)^\sharp$ is not holomorphic. Then, up to linear combination and dropping the holomorphic part, we must have $(r^2 \eta)^\sharp = J(r \del_r)$ or $(r^2 \eta)^\sharp = K(r \del_r)$ by \cite[Theorem 4.4]{tannoisometry1970}. Assume $(r^2 \eta)^\sharp = J(r \del_r)$. Then:
	\begin{equation*}
		r^2 \eta = (J(r \del_r))^\flat = g_\CC(J(r \del_r), \cdot),
	\end{equation*}
	and its $(0,1)$-part is given by:
	\begin{align*}
		(r^2 \eta)^{(0,1)} &= g_\CC(J(r \del_r), \cdot) + i g_\CC(J(r \del_r), I \cdot) \\
		&= g_\CC(J(r \del_r) - i I J(r \del_r), \cdot) \\
		&= g_\CC(J(r \del_r + i I(r \del_r)), \cdot).
	\end{align*}
	Define the vector field:
	\begin{equation*}
		W = r \del_r + i I(r \del_r) \in \Gamma(T^{0,1} \CC).
	\end{equation*}
	Now compute $\del(r^2 \eta)^{(0,1)}$ for $X \in \Gamma(T^{1,0} \CC)$ and $Y \in \Gamma(T^{0,1} \CC)$:
	\begin{align*}
		(\del(r^2 \eta^{(0,1)}))(X, Y) &= 	(d(r^2 \eta^{(0,1)}))(X, Y)\\
		&= \nabla_X(r^2 \eta^{(0,1)}) (Y) - \nabla_Y(r^2 \eta^{(0,1)})(X)\\
		&= g_\CC(\nabla_X JW, Y) + g_\CC(\nabla_Y JW, X).
	\end{align*}
	As $J \colon T^{1,0}\CC \to  T^{0,1}\CC$ and $J \colon T^{0,1}\CC \to  T^{1,0}\CC$, and $\nabla_Z$ preserves the decomposition $T_\C \CC = T^{1,0}\CC \oplus T^{0,1}\CC$ for any vector field $Z$, then $\nabla_Y JW, X \in \Gamma(T^{1,0}\CC)$. As $g_\CC$ is the bilinear extension of the metric on $T\CC$, it follows that $g_\CC(\nabla_Y JW, X) = 0$. Hence, using compatibility with $J$:
	\begin{equation*}
		(\del(r^2 \eta^{(0,1)}))(X, Y) = g_\CC(\nabla_X JW, Y) = -g_\CC(\nabla_X W, JY).
	\end{equation*}
	Decomposing $W$ into real components, we find:
	\begin{align*}
		(\del(r^2 \eta)^{(0,1)})(X, Y) &= - g_\CC(\nabla_X r \del_r, JY) + i g_\CC(\nabla_X r \del_r, IJ Y) \\
		&= - \frac{1}{2}g_\CC(\nabla_X \nabla r^2, JY) + \frac{i}{2}g_\CC(\nabla_X \nabla r^2, K Y) \\
		&= - \frac{1}{2}\Hess(r^2)(X, JY) + \frac{i}{2}\Hess(r^2)(X, KY).
	\end{align*}
	The Hessian satisfies:
	\begin{equation}\label{HessianIsMetric}
		\Hess(r^2)(U, V) = 2g_\CC(U,V)
	\end{equation}
	for any $U,V \in\Gamma ( T\CC)$. Decomposing $X$ and $Y$ into real and imaginary components, using \eqref{HessianIsMetric}, and substituting back, we obtain:
	\begin{align*}
		(\del(r^2 \eta)^{(0,1)})(X, Y) &= - g_\CC(X, JY) + i g_\CC(X, KY).
	\end{align*}
	However, since $X, JY, KY \in \Gamma(T^{1,0} \CC)$ and $g_\CC$ is the bilinear extension of the metric on $T\CC$, then:
	\begin{equation*}
		(\del(r^2 \eta)^{(0,1)})(X, Y) = - g_\CC(X, JY) + i g_\CC(X, KY) = 0.
	\end{equation*}
\end{proof}

\begin{prop}\label{PropGrowthRateToHomogeneousForm}
	Pick $\beta \leq 0 < \alpha$ such that $\beta \in (-1,0]$ and $\alpha>0$ is small. Given a Ricci-flat Kähler cone $(\CC,\omega_\CC)$ of complex dimension $m\geq 2$, if $\eta\in \Omega^2({\CC})$ is a closed $(1,1)$-form satisfying $\tr_{\omega_{{\CC}}} \eta = A$ for some constant $A$, and $|\eta|\leq C\theta_{\beta,\alpha}$, then
	\begin{equation}\label{ExpansionTwoForm}
		\eta = i\del\delbar \left(\frac{A}{2m}r^2 + r^2 \phi \right),
	\end{equation}
	where $r^2\phi$ is harmonic and $\phi$ is an eigenfunction on $L$.
\end{prop}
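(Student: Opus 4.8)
The plan is to reduce the statement to the classification of harmonic one-forms on the cone provided by Lemma \ref{HajoLemmaB.1}, together with the identification supplied by Lemma \ref{Lemma1Form}. First I would note that a closed $(1,1)$-form $\eta$ with $\tr_{\omega_\CC}\eta = A$ constant is automatically co-closed: for a $(1,1)$-form $d\eta = 0$ forces $\del\eta = \delbar\eta = 0$, and the K\"ahler identities express $\del^*\eta,\delbar^*\eta$ through $\del(\Lambda\eta)$ and $\delbar(\Lambda\eta)$, both of which vanish since $\Lambda\eta = A$ is constant. Hence $(dd^*+d^*d)\eta = 0$. Since $\omega_\CC$ is parallel, hence harmonic, with $\tr_{\omega_\CC}\omega_\CC = m$, the form $\tilde\eta := \eta - \tfrac{A}{m}\omega_\CC$ is a real, closed, trace-free, harmonic $(1,1)$-form with $|\tilde\eta|\le C\theta_{\beta,\alpha}$ (as $\beta\le 0<\alpha$ makes $\theta_{\beta,\alpha}\ge 1$). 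Interior elliptic estimates for $(dd^*+d^*d)\tilde\eta = 0$, applied on balls $B_{r/2}(p)$ with $\dist_{\omega_\CC}(o,p)=r$ and rescaled to unit scale using the scale-invariance of $\omega_\CC$, upgrade this to $|\nabla_{\omega_\CC}^l\tilde\eta|\le C_l r^{-l}\theta_{\beta,\alpha}(r)$ for as many $l$ as the later lemmas require. Since $\beta>-1>-2$, Lemma \ref{PoincareLemma} then yields a primitive $\tilde\eta = d\zeta$ with $|\nabla_{\omega_\CC}^l\zeta|\le Cr^{1-l}\theta_{\beta,\alpha}(r)$, so $\zeta$ grows like $\theta_{1+\beta,1+\alpha}$.

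Next I would put $\zeta$ in harmonic gauge. Choosing $\alpha$ small and, if necessary, perturbing $\beta$ so that $2+\alpha,2+\beta\notin\E$, Lemma \ref{PotentialNiceChoice} solves $\Delta_{\omega_\CC}f = d^*\zeta$ with $|\nabla^l f|\le Cr^{2-l}\theta_{\beta,\alpha}$, using that $|\nabla^l d^*\zeta|\le Cr^{-l}\theta_{\beta,\alpha}$. Setting $\zeta' := \zeta - df$ gives $d\zeta' = \tilde\eta$ and $d^*\zeta' = 0$, hence $(dd^*+d^*d)\zeta' = d^*\tilde\eta = 0$: thus $\zeta'$ is a harmonic one-form of growth $\theta_{1+\beta,1+\alpha}$. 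Separating variables in the eigenforms of the link, exactly as in the proofs of Proposition \ref{PropGrowthRateToHomogeneousFunction} and Lemma \ref{HajoLemmaB.1Modified}, $\zeta'$ decomposes into finitely many homogeneous harmonic one-forms, each of degree $c$ with norm growth $r^{c-1}$, so $c\in[2+\beta,2+\alpha]$ (note $m\ge 2$ gives $n=2m\ge 3$, so the relevant lemmas apply). Invoking Lemma \ref{HajoLemmaB.1}: a homogeneous harmonic one-form of rate strictly below $1$ (degree $c\in[2+\beta,2)$) must be of gradient type $d(r^\mu\kappa)$, hence exact; the rate-$1$ (degree $2$) pieces are $d(r^2\kappa)$, $rdr = \tfrac12 dr^2$, or a Killing term $r^2\eta_L$ with $\eta_L^\sharp$ Killing, of which only the last is non-exact. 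Applying $d$, all exact summands drop out and $\tilde\eta = d(r^2\eta_L)$.

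It then remains to identify $d(r^2\eta_L)$. Writing $r^2\eta_L = \zeta^{1,0}+\overline{\zeta^{1,0}}$ and using that $\tilde\eta$ is of type $(1,1)$, the $(2,0)$ and $(0,2)$ parts of $d(r^2\eta_L)$ vanish and $\tilde\eta = \delbar\zeta^{1,0}+\del\overline{\zeta^{1,0}}$. Lemma \ref{Lemma1Form} gives $\del(r^2\eta_L)^{(0,1)} = i\del\delbar q$ for a $2$-homogeneous, $\xi$-invariant $q$ of constant Laplacian; conjugating and using $\delbar\del = -\del\delbar$ on the real function $q$ shows $\delbar\zeta^{1,0} = i\del\delbar q$ as well, so $\tilde\eta = 2i\del\delbar q = i\del\delbar(r^2\phi)$ with $r^2\phi := 2q$. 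Trace-freeness forces $\Delta_{\omega_\CC}q = 0$, so $r^2\phi$ is a $2$-homogeneous $\xi$-invariant harmonic function and $\phi$ is a $\Delta_L$-eigenfunction. Finally, since $i\del\delbar(\tfrac{A}{2m}r^2) = \tfrac{A}{m}\omega_\CC$, one obtains $\eta = \tfrac{A}{m}\omega_\CC + \tilde\eta = i\del\delbar(\tfrac{A}{2m}r^2 + r^2\phi)$, which is the assertion.

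The main obstacle is the upper end of the spectral window: I must rule out non-exact homogeneous harmonic one-forms of degree in $(2,2+\alpha]$, which would otherwise produce spurious non-degree-$2$ contributions to $\tilde\eta$. This is precisely where the hypothesis that $\alpha = \alpha(\CC,\omega_\CC)$ be small enters — the admissible degrees form a discrete set, degree $2$ is attained (by the Killing term, $rdr$, and gradient pieces), and choosing $\alpha$ below the gap above $2$ leaves no room for such forms; the exact gradient pieces $d(r^\mu\kappa)$ of degree in $(2,2+\alpha]$ that may still occur are harmless, being annihilated by $d$. A secondary technical point is arranging the genericity $2+\alpha,2+\beta\notin\E$ so that Lemma \ref{PotentialNiceChoice} applies, and verifying that the homogeneous-decomposition argument for one-forms transfers verbatim from the scalar computation in Proposition \ref{PropGrowthRateToHomogeneousFunction}.
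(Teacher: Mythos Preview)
Your proposal is correct and follows essentially the same route as the paper's proof: obtain derivative bounds on $\eta$, pass to a primitive via the Poincar\'e lemma with estimates (Lemma~\ref{PoincareLemma}), put the primitive in harmonic gauge using Lemma~\ref{PotentialNiceChoice}, decompose the resulting harmonic $1$-form via the Cheeger--Tian/Hein--Sun classification (Lemma~\ref{HajoLemmaB.1}), and identify the Killing contribution through Lemma~\ref{Lemma1Form}. The only organizational differences are that you subtract the trace part $\tfrac{A}{m}\omega_\CC$ at the outset and work with the real co-closed gauge $\zeta' = \zeta - df$, whereas the paper works with the $(0,1)$-component and solves $\Delta\upsilon = \delbar^*\zeta^{(0,1)}$; your variant slightly streamlines the endgame, since the gradient-type summands of $\zeta'$ are killed directly by $d$ and you avoid the paper's closing appeal to pluriharmonicity of sub-$2$-homogeneous harmonic functions.
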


\begin{proof}
	Pick $\alpha$ so small such that $(2,2+\alpha]\cap \E = \emptyset$. If $\beta \in E$, decrease $\beta$ such that no new elements of $\E$ enter $[\beta,\alpha]$. The first step is to obtain estimates on the derivatives of $\eta$. Pick a point $p\in {\CC}$ and let $R = \dist_{\omega_{\CC}}(o,p)$. Let $\tilde p \coloneqq \Phi^*(p)$ with $\dist_{\omega_{\CC}}(0,\tilde p) = 1$. Pulling $\eta$ back by $\Phi_R$, then by \cite[Lemma 2.1]{chenalpha2015}, for $\epsilon$ small enough, there exists a function $v\in C^{2,\alpha}(B_\epsilon(\tilde p))$ such that
	\begin{equation*}
		i\del\delbar v = \eta, \quad \Norm{v}{0, B_{\frac{\epsilon}{2}}(\tilde p)} \leq C \Norm{\Phi_R^* \eta}{0, B_{\epsilon}(\tilde p)},
	\end{equation*}
	where the constant $C$ only depends on $\epsilon$. Taking the trace, we find $\Delta_{\omega_{\CC}} v = A$, and bootstrapping the Schauder estimates shows that
	\begin{equation*}
		\Norm{v}{k+2, \alpha, B_{\frac{\epsilon}{4}}(\tilde p)} \leq C_k \Norm{\Phi_R^* \eta}{0, B_{\epsilon}(\tilde p)},
	\end{equation*}
	for any $k\in \N_0$. Hence,
	\begin{equation*}
		\Norm{\Phi_R^*\eta}{k, B_{\frac{\epsilon}{4}}(\tilde p)} \leq C_k \Norm{\Phi_R^* \eta}{0, B_{\epsilon}(\tilde p)}.
	\end{equation*}
	Pull back by $\Phi_{R^{-1}}$ to conclude that
	\begin{equation*}
		|\nabla_{\omega_{\CC}}^k \eta| \leq C_kr^{-k}|\eta|\leq Cr^{-k}\theta_{\beta,\alpha}(r)
	\end{equation*}
	for some constant $C>0$ and all $k \in \N_0$.\\
	
	Given the estimate on $\nabla_{\omega_{\CC}}\eta$, by Lemma \ref{PoincareLemma}, there exists a 1-form $\zeta = \zeta^{1,0} + \zeta^{(0,1)}\in \Omega^{1,0}({\CC}) \oplus \Omega^{0,1}({\CC})$ such that $d\zeta = \eta$ and $|\nabla_{\omega_{\CC}}^k \zeta| \leq C_k r^{1-k}\theta_{\beta,\alpha}(r)$. By Lemma \ref{PotentialNiceChoice} there exists a function $\upsilon$ such that $\Delta_{\omega_{\CC}} \upsilon = \delbar^*\zeta^{0,1}$ satisfying $|\nabla_{\omega_{\CC}}^k\upsilon| \leq C_kr^{2-k} \theta_{\beta,\alpha}(r)$. Consider
	\begin{equation*}
		\xi \coloneqq \delbar \upsilon - \zeta^{(0,1)}.
	\end{equation*}
	$\delbar \xi = \delbar^* \xi = 0$ and so $\Delta_{\omega_{\CC}}\xi = 0$ by the Kähler identities. By decomposing the Laplacian acting on harmonic 1-forms as done in the proofs of Lemma \ref{HajoLemmaB.1Modified} and Lemma \ref{DecompositionHoloFields} (for more details, see \cite[Lemma B.1]{heinCalabiYau2017}, more specifically p. 122) and picking $\alpha>0$ small enough, there exists a function $\chi$ such that $\xi = d\chi + r^2\hat \xi$ and $|\nabla_{\omega_{\CC}}^k \chi| \leq C_k r^{2-k}\theta_{\beta,\alpha}(r)$ for $k\in \N_0$. $r^2\hat \xi$ satisfies the properties of Lemma \ref{Lemma1Form}. If $\beta$ was very negative, or $\alpha>0$ was not assumed to be very small, terms of the form $r^{\tau}\eta'$ for $\eta'$ co-closed and $\tau\neq 0$ could appear. These terms do not necessarily satisfy Lemma \ref{Lemma1Form}.	Ricci-flatness of $\omega_\CC$ ensures that the term satisfying $\tau=2$ satisfies Lemma \ref{Lemma1Form}, i.e.
	\begin{equation*}
		\del (r^2 \hat \xi)^{(0,1)} = i\del\delbar q,
	\end{equation*}
	for some 2-homogeneous function $q$ with constant Laplacian.\\
	
	Define:
	\begin{equation*}
		\psi \coloneqq -i( \upsilon - \chi - iq),
	\end{equation*}
	then
	\begin{align*}
		i\del\delbar \psi &= \del\left(\delbar \upsilon - \delbar\upsilon + \zeta^{(0,1)} + (r^2\hat \xi)^{(0,1)} - i\delbar q \right)  \\
		&=\del \zeta^{(0,1)} + i\del\delbar q - i\del\delbar q\\
		&= \del \zeta^{(0,1)}.
	\end{align*}
	We conclude that
	\begin{equation*}
		i\del\delbar\, 2\re(\psi) = i\del\delbar (\psi + \overline \psi) = \del \zeta^{(0,1)} + \delbar \zeta^{1,0} = d\zeta = \eta,
	\end{equation*}
	 so $2\re(\psi)$ is a potential of $\eta$ satisfying $|\nabla_{\omega_\CC}^k 2\re(\psi)| \leq C_k'r^{-k}\theta_{\beta,\alpha}(r)$ for all $k\in \N_0$.\\
	
	To show that $\eta$ is actually 2-homogeneous, note that by assumption:
	\begin{equation*}
			\tr_{\omega_{{\CC}}}\eta = \tr_{\omega_{{\CC}}} i\del\delbar\, 2\re(\psi) = \Delta_{\omega_{{\CC}}} 2\re(\psi) = A,
	\end{equation*}
	for some constant $A\in \R$. Define $\tilde \psi = 2\re(\psi) - A\frac{r^2}{2m}$, then $ \Delta_{\omega_{{\CC}}} \tilde\psi = 0$. We conclude that $\tilde \psi$ has as expansion as given in Proposition \ref{PropGrowthRateToHomogeneousFunction}. If $c_j^\pm \in (1,2)$, then $r^{c_j^\pm}\phi_j$ is a pluriharmonic function by \cite[Theorem 2.14]{heinCalabiYau2017} and as $(\CC,\omega_\CC)$ is Ricci-flat.
\end{proof}
\vspace{10mm}

\renewcommand*{\bibfont}{\small}
\printbibliography

@book{aubinNonlinear1982,
  title = {Nonlinear {{Analysis}} on {{Manifolds}}. {{Monge-Ampère Equations}}},
  author = {Aubin, Thierry},
  date = {1982},
  series = {Grundlehren Der Mathematischen {{Wissenschaften}}},
  volume = {252},
  publisher = {Springer},
  location = {New York, NY},
  doi = {10.1007/978-1-4612-5734-9},
  url = {0681859,0512.53044},
  keywords = {curvature,differential geometry,Eigenvalue,Interpolation,Jacobi field,manifold,matrix theory,Riemannian geometry,Riemannian manifold,Tensor},
  file = {C:\Users\johan\Zotero\storage\GV6BRZLY\Aubin - 1982 - Nonlinear Analysis on Manifolds. Monge-Ampère Equa.pdf}
}

@book{ballmannLectures2006,
  title = {Lectures on {{Kähler Manifolds}}},
  author = {Ballmann, Werner},
  date = {2006},
  publisher = {European Mathematical Society},
  url = {2243012,1101.53042},
  abstract = {These notes are based on lectures the author gave at the University of Bonn and the Erwin Schrodinger Institute in Vienna. The aim is to give a thorough introduction to the theory of Kahler manifolds with special emphasis on the differential geometric side of Kahler geometry. The exposition starts with a short discussion of complex manifolds and holomorphic vector bundles and a detailed account of the basic differential geometric properties of Kahler manifolds. The more advanced topics are the cohomology of Kahler manifolds, Calabi conjecture, Gromov's Kahler hyperbolic spaces, and the Kodaira embedding theorem. Some familiarity with global analysis and partial differential equations is assumed, in particular in the part on the Calabi conjecture. There are appendices on Chern-Weil theory, symmetric spaces, and \$L\textasciicircum 2\$-cohomology.},
  isbn = {978-3-03719-025-8},
  langid = {english},
  pagetotal = {190},
  keywords = {Mathematics / General}
}

@article{bandoReal1987,
  title = {Real Analyticity of Solutions of {{Hamilton}}'s Equation},
  author = {Bando, Shigetoshi},
  date = {1987-03},
  journaltitle = {Math Z},
  volume = {195},
  number = {1},
  pages = {93--97},
  doi = {10.1007/BF01161602},
  url = {0888130,0606.58051},
  file = {C:\Users\johan\Zotero\storage\DZDQZ5X5\Bando - 1987 - Real analyticity of solutions of Hamilton's equati.pdf}
}

@article{bandoUniqueness1987,
  title = {Uniqueness of {{Einstein Kähler Metrics Modulo Connected Group Actions}}},
  author = {Bando, Shigetoshi and Mabuchi, Toshiki},
  date = {1987-01-01},
  journaltitle = {Adv. Stud. Pure Math.},
  volume = {10},
  pages = {11--41},
  publisher = {Mathematical Society of Japan},
  doi = {10.2969/aspm/01010011},
  url = {0946233,0641.53065},
  abstract = {Advanced Studies in Pure Mathematics},
  file = {C\:\\Users\\johan\\Zotero\\storage\\VGVC4QML\\Bando and Mabuchi - 1987 - Uniqueness of Einstein Kähler Metrics Modulo Conne.pdf;C\:\\Users\\johan\\Zotero\\storage\\KGHHUTM6\\01010011.html}
}

@book{buragoCourse2001,
  title = {A {{Course}} in {{Metric Geometry}}},
  author = {Burago, Dmitri and Burago, Yuri and Ivanov, Sergei},
  date = {2001-06-12},
  series = {Graduate {{Studies}} in {{Mathematics}}},
  volume = {33},
  publisher = {American Mathematical Society},
  url = {1835418,0981.51016},
  abstract = {Advancing research. Creating connections.},
  isbn = {978-0-8218-2129-9},
  file = {C:\Users\johan\Zotero\storage\2AVEZMK3\033.html}
}

@article{busernote1982,
  title = {A Note on the Isoperimetric Constant},
  author = {Buser, Peter},
  date = {1982},
  journaltitle = {Annales scientifiques de l'École Normale Supérieure},
  volume = {15},
  number = {2},
  pages = {213--230},
  doi = {10.24033/asens.1426},
  url = {0683635,0501.53030},
  file = {C:\Users\johan\Zotero\storage\XI36TAU2\Buser - 1982 - A note on the isoperimetric constant.pdf}
}

@article{caffarelliInterior1989,
  title = {Interior a {{Priori Estimates}} for {{Solutions}} of {{Fully Non-Linear Equations}}},
  author = {Caffarelli, Luis A.},
  date = {1989},
  journaltitle = {Annals of Mathematics},
  volume = {130},
  number = {1},
  pages = {189--213},
  publisher = {Annals of Mathematics},
  doi = {10.2307/1971480},
  url = {1005611,0692.35017},
  file = {C:\Users\johan\Zotero\storage\JBN2YBVV\Caffarelli - 1989 - Interior a Priori Estimates for Solutions of Fully.pdf}
}

@incollection{chavelHeatKernel1984,
  title = {Chapter {{VI}} - the Heat Kernel for Compact Manifolds},
  booktitle = {Eigenvalues in Riemannian Geometry},
  author = {Chavel, Isaac and Randol, Burton and Dodziuk, Jozef},
  date = {1984},
  series = {Pure and Applied Mathematics},
  volume = {115},
  pages = {134--157},
  publisher = {Elsevier},
  issn = {0079-8169},
  doi = {10.1016/S0079-8169(08)60814-4},
  url = {0768584,0551.53001},
  file = {C:\Users\johan\Zotero\storage\CULNKC9G\Chavel et al. - 1984 - Chapter VI - the heat kernel for compact manifolds.pdf}
}

@article{cheegercone1994,
  title = {On the Cone Structure at Infinity of {{Ricci}} Flat Manifolds with {{Euclidean}} Volume Growth and Quadratic Curvature Decay},
  author = {Cheeger, Jeff and Tian, Gang},
  date = {1994-12},
  journaltitle = {Invent Math},
  volume = {118},
  number = {1},
  pages = {493--571},
  doi = {10.1007/BF01231543},
  url = {1296356,0814.53034},
  file = {C:\Users\johan\Zotero\storage\TV9VU6C8\Cheeger and Tian - 1994 - On the cone structure at infinity of Ricci flat ma.pdf}
}

@article{chengEigenvalue1975,
  title = {Eigenvalue Comparison Theorems and Its Geometric Applications},
  author = {Cheng, Shiu-Yuen},
  date = {1975-10-01},
  journaltitle = {Math Z},
  volume = {143},
  number = {3},
  pages = {289--297},
  doi = {10.1007/BF01214381},
  url = {0378001,0329.53035},
  keywords = {Comparison Theorem,Eigenvalue Comparison,Eigenvalue Comparison Theorem,Geometric Application},
  file = {C:\Users\johan\Zotero\storage\VZ3LMWNP\Cheng - 1975 - Eigenvalue comparison theorems and its geometric a.pdf}
}

@article{chenlong2018,
  title = {On the Long Time Behaviour of the Conical {{Kähler}}–{{Ricci}} Flows},
  author = {Chen, Xiuxiong and Wang, Yuanqi},
  date = {2018-11-01},
  journaltitle = {Journal für die reine und angewandte Mathematik (Crelles Journal)},
  volume = {2018},
  number = {744},
  pages = {165--199},
  doi = {10.1515/crelle-2015-0103},
  url = {3871443,1403.53060},
  langid = {english},
  file = {C:\Users\johan\Zotero\storage\PSDPC7VZ\Chen and Wang - 2018 - On the long time behaviour of the conical Kähler–R.pdf}
}

@article{chenregularity2017,
  title = {On the Regularity Problem of Complex {{Monge}}–{{Ampere}} Equations with Conical Singularities},
  author = {Chen, Xiuxiong and Wang, Yuanqi},
  date = {2017},
  journaltitle = {Annales de l'Institut Fourier},
  volume = {67},
  number = {3},
  pages = {969--1003},
  doi = {10.5802/aif.3102},
  url = {3668765,1381.35056},
  file = {C\:\\Users\\johan\\Zotero\\storage\\IKMFVTMJ\\Chen and Wang - 2017 - On the regularity problem of complex Monge–Ampere .pdf;C\:\\Users\\johan\\Zotero\\storage\\S5IN5DUS\\AIF_2017__67_3_969_0.html}
}

@article{chiuHigher2023,
  title = {Higher Regularity for Singular {{Kähler}}–{{Einstein}} Metrics},
  author = {Chiu, Shih-Kai and Székelyhidi, Gábor},
  date = {2023-12},
  journaltitle = {Duke Mathematical Journal},
  volume = {172},
  number = {18},
  pages = {3521--3558},
  doi = {10.1215/00127094-2022-0107},
  url = {4718434,1545.32058},
  abstract = {We study singular Kähler–Einstein metrics that are obtained as noncollapsed limits of polarized Kähler–Einstein manifolds. Our main result is that if the metric tangent cone at a point is locally isomorphic to the germ of the singularity, then the metric converges to the metric on its tangent cone at a polynomial rate on the level of Kähler potentials. When the tangent cone at the point has a smooth cross section, then the result implies polynomial convergence of the metric in the usual sense, generalizing a result due to Hein and Sun. We show that a similar result holds even in certain cases where the tangent cone is not locally isomorphic to the germ of the singularity. Finally, we prove a rigidity result for complete ∂∂¯-exact Calabi–Yau metrics with maximal volume growth. This generalizes a result of Conlon and Hein, which applies to the case of asymptotically conical manifolds.},
  keywords = {32Q20,32Q25,53C25,Calabi–Yau manifolds,maximal volume growth,noncollapsed limits,singular Kähler–Einstein metrics},
  file = {C:\Users\johan\Zotero\storage\KYV9WC2A\Chiu and Székelyhidi - 2023 - Higher regularity for singular Kähler–Einstein met.pdf}
}

@inproceedings{chowRicci2007,
  title = {The {{Ricci Flow}}: {{Techniques}} and {{Applications}}. {{Part II}}: {{Analytical Aspects}}},
  author = {Chow, Bennett and Chu, Sun-Chin and Glickenstein, David and Guenther, Christine and Isenberg, James and Ivey, Tom and Knopf, Dan and Lu, Peng and Luo, Feng and Ni, Lei},
  date = {2007-12-05},
  series = {Mathematical {{Surveys}} and {{Monographs}}},
  volume = {144},
  publisher = {American Mathematical Society},
  doi = {10.1090/surv/144},
  url = {2365237,1157.53035},
  abstract = {Contents Preface ix What Part II is about ix Highlights and interdependences of Part II xi Acknowledgments xiii Contents of Part II of Volume Two xvii Notation and Symbols xxiii Chapter 10. Weak Maximum Principles for Scalars, Tensors, and Systems 1 1. Weak maximum principles for scalars and symmetric 2-tensors 2 2. Vector bundle formulation of the weak maximum principle for systems 9 3. Spatial maximum function and its Dini derivatives 24 4. Convex sets, support functions, ODEs preserving convex sets 32 5. Proof of the WMP for systems: time-dependent sets and avoidance sets 43 6. Maximum principles for weak solutions of heat equations 47 7. Variants of maximum principles 56 8. Notes and commentary 65 Chapter 11. Closed Manifolds with Positive Curvature 67 1. Multilinear algebra related to the curvature operator 69 2. Algebraic curvature operators and Rm 77 3. A family of linear transformations and their effect on R 2 + R\# 89 4. Proof of the main formula for D a \textasciicircum (R) 94 5. The convex cone of 2-nonnegative algebraic curvature operators 105 6. A pinching family of convex cones in the space of algebraic curvature operators 116 7. Obtaining a generalized pinching set from a pinching family and the proof of Theorem 11.2 126 8. Summary of the proof of the convergence of Ricci flow 134 9. Notes and commentary 136 Chapter 12. Weak and Strong Maximum Principles on Noncompact Manifolds 139 vi CONTENTS}
}

@article{demoraesMoment1997,
  title = {Moment Maps on Symplectic Cones},
  author = {de Moraes, Suzana Falcão B. and Tomei, Carlos},
  options = {useprefix=true},
  date = {1997-12-01},
  journaltitle = {Pacific J. Math.},
  volume = {181},
  number = {2},
  pages = {357--375},
  doi = {10.2140/pjm.1997.181.357},
  url = {1486536,0902.58011},
  file = {C:\Users\johan\Zotero\storage\GAGDZYK8\de Moraes and Tomei - 1997 - Moment maps on symplectic cones.pdf}
}

@article{deturckRegularity1981,
  title = {Some {{Regularity Theorems}} in {{Riemannian Geometry}}},
  author = {Deturck, Dennis M. and Kazdan, Jerry L.},
  date = {1981},
  journaltitle = {Ann. Sci. École Norm. Sup.},
  volume = {14},
  number = {3},
  pages = {249--260},
  doi = {10.24033/asens.1405},
  url = {0644518,0727.53025},
  file = {C:\Users\johan\Zotero\storage\PB4RSLWQ\Deturck and Kazdan - 1981 - Some regularity theorems in riemannian geometry.pdf}
}

@article{donaldsonGromov2017,
  title = {Gromov–{{Hausdorff}} Limits of {{Kähler}} Manifolds and Algebraic Geometry, {{II}}},
  author = {Donaldson, Simon and Sun, Song},
  date = {2017-10},
  journaltitle = {Journal of Differential Geometry},
  volume = {107},
  number = {2},
  pages = {327--371},
  doi = {10.4310/jdg/1506650422},
  url = {3707646,1388.53074},
  abstract = {We study Gromov–Hausdorff limits of Kähler–Einstein manifolds, in particular, their singularities, and connections with algebraic geometry. This is a continuation of our previous work.},
  file = {C\:\\Users\\johan\\Zotero\\storage\\IKNCXMAB\\Donaldson and Sun - 2017 - Gromov–Hausdorff limits of Kähler manifolds and al.pdf;C\:\\Users\\johan\\Zotero\\storage\\JXB2GXNU\\1506650422.html}
}

@book{gilbarg_Elliptic_2001,
  title = {Elliptic {{Partial Differential Equations}} of {{Second Order}}},
  author = {Gilbarg, David and Trudinger, Neil S.},
  date = {2001},
  series = {Classics in {{Mathematics}}},
  edition = {2},
  publisher = {Springer-Verlag},
  location = {Berlin Heidelberg},
  url = {0737190,0562.35001},
  abstract = {From the reviews:"This is a book of interest to any having to work with differential equations, either as a reference or as a book to learn from. The authors have taken trouble to make the treatment self-contained. It (is) suitable required reading for a PhD student. Although the material has been developed from lectures at Stanford, it has developed into an almost systematic coverage that is much longer than could be covered in a year's lectures". Newsletter, New Zealand Mathematical Society, 1985 " ... as should be clear from the previous discussion, this book is a bibliographical monument to the theory of both theoretical and applied PDEs that has not acquired any flaws due to its age. On the contrary, it remains a crucial and essential tool for the active research in the field. In a few words, in my modest opinion, “. . . this book contains the essential background that a researcher in elliptic PDEs should possess the day s/he gets a permanent academic position. . . .” SIAM Newsletter},
  isbn = {978-3-540-41160-4},
  langid = {english}
}

@incollection{grigoryanEstimates1999,
  title = {Estimates of Heat Kernels on {{Riemannian}} Manifolds},
  booktitle = {Spectral {{Theory}} and {{Geometry}}},
  author = {Grigor'yan, Alexander},
  editor = {Davies, E. Brian and Safarov, Yuri},
  date = {1999-09-30},
  edition = {1},
  pages = {140--225},
  publisher = {Cambridge University Press},
  doi = {10.1017/CBO9780511566165.008},
  url = {1736863,0985.58007},
  file = {C:\Users\johan\Zotero\storage\IF3ADJAR\Grigor'yan - 1999 - Estimates of heat kernels on Riemannian manifolds.pdf}
}

@book{grigoryanHeat2009,
  title = {Heat {{Kernel}} and {{Analysis}} on {{Manifolds}}},
  author = {Grigor’yan, Alexander},
  date = {2009},
  series = {{{AMS}}/{{IP Studies}} in {{Advanced Mathematics}}},
  volume = {47},
  publisher = {American Mathematical Society},
  url = {2569498,1206.58008},
  abstract = {Advancing research. Creating connections.},
  isbn = {978-0-8218-9393-7},
  file = {C:\Users\johan\Zotero\storage\KCU3H7GE\047.html}
}

@article{grigoryanStability2005,
  title = {Stability Results for {{Harnack}} Inequalities},
  author = {Grigor'yan, Alexander and Saloff-Coste, Laurent},
  date = {2005},
  journaltitle = {Annales de l'Institut Fourier},
  volume = {55},
  number = {3},
  pages = {825--890},
  doi = {10.5802/aif.2116},
  url = {2149405,1115.58024},
  file = {C:\Users\johan\Zotero\storage\V3CIDHR5\Grigor'yan and Saloff-Coste - 2005 - Stability results for Harnack inequalities.pdf}
}

@book{hanElliptic1997,
  title = {Elliptic {{Partial Differential Equations}}},
  author = {Han, Qing and Lin, Fanghua},
  date = {1997},
  publisher = {Courant Institute of Mathematical Sciences, New York University},
  url = {2777537,1051.35031},
  abstract = {Based on PDE courses given by the authors at the Courant Institute \&amp; at the University of Notre Dame, this volume presents basic methods for obtaining various a priori estimates for second-order equations of elliptic type with emphasis on maximal principles, Harnack inequalities \&amp; their applications.},
  isbn = {978-0-8218-6958-1},
  pagetotal = {161},
  keywords = {Mathematics / General}
}

@article{heinCalabiYau2017,
  title = {Calabi-{{Yau}} Manifolds with Isolated Conical Singularities},
  author = {Hein, Hans-Joachim and Sun, Song},
  date = {2017-11-01},
  journaltitle = {Publ.math.IHES},
  volume = {126},
  number = {1},
  pages = {73--130},
  doi = {10.1007/s10240-017-0092-1},
  url = {3735865,1397.32009},
  abstract = {Let \$X\$be a complex projective variety with only canonical singularities and with trivial canonical bundle. Let \$L\$be an ample line bundle on \$X\$. Assume that the pair \$(X,L)\$is the flat limit of a family of smooth polarized Calabi-Yau manifolds. Assume that for each singular point \$x \textbackslash in X\$there exist a Kähler-Einstein Fano manifold \$Z\$and a positive integer \$q\$dividing \$K\_\{Z\}\$such that \$-\textbackslash frac\{1\}\{q\}K\_\{Z\}\$is very ample and such that the germ \$(X,x)\$is locally analytically isomorphic to a neighborhood of the vertex of the blow-down of the zero section of \$\textbackslash frac\{1\}\{q\}K\_\{Z\}\$. We prove that up to biholomorphism, the unique weak Ricci-flat Kähler metric representing \$2\textbackslash pi c\_\{1\}(L)\$on \$X\$is asymptotic at a polynomial rate near \$x\$to the natural Ricci-flat Kähler cone metric on \$\textbackslash frac\{1\}\{q\}K\_\{Z\}\$constructed using the Calabi ansatz. In particular, our result applies if \$(X, \textbackslash mathcal\{O\}(1))\$is a nodal quintic threefold in \$\textbackslash mathbf \{P\}\textasciicircum\{4\}\$. This provides the first known examples of compact Ricci-flat manifolds with non-orbifold isolated conical singularities.},
  file = {C:\Users\johan\Zotero\storage\BZQEXZPB\Hein and Sun - 2017 - Calabi-Yau manifolds with isolated conical singula.pdf}
}

@thesis{heingravitational2010,
  type = {phdthesis},
  title = {On Gravitational Instantons},
  author = {Hein, Hans-Joachim},
  date = {2010},
  institution = {Princeton University},
  url = {2813955,},
  abstract = {A gravitational instanton is a complete, noncompact, Ricci-flat, Kähler manifold of real dimension four whose curvature tensor has finite L2 norm. Such spaces would seem to play a key role in the structure theory of Riemannian 4-manifolds with bounded Ricci curvature, but their classification has remained largely elusive except for the asymptotically locally Euclidean (ALE) case. It has been conjectured that there exist four main families (ALE, ALF, ALG, ALH) supposedly characterized by having local [special characters omitted] × T4–k asymptotics. However, examples with k ∈ \{1, 2\} are scarce. (1) We construct new families of ALG and ALH gravitational instantons by removing fibers from rational elliptic surfaces, and we examine their geometry. We expect these families to be exhaustive up to specific deformations. The construction also yields a somewhat unexpected class of examples with zero injectivity radius, volume growth rate [special characters omitted], and precisely quadratic curvature decay. (2) In the course of this construction, we obtain sharp weighted Sobolev inequalities on a fairly general class of complete Riemannian manifolds of polynomial volume growth. We apply these to prove a precise existence result for bounded solutions to the complex Monge-Amp`ere equation, thus improving work due to Tian-Yau, and also to obtain reasonable decay rates for these solutions. (3) We show that a Riemannian metric with Scal ≥ 0 on [special characters omitted] which converges to the Taub-NUT ALF gravitational instanton at a rate of r–1–δ and is uniformly sufficiently close to Taub-NUT must already be isometric to Taub-NUT. A stronger result with a much simpler proof was independently obtained by Vincent Minerbe [86].},
  keywords = {Applied sciences,Curvature tensor,Dimension four,Gravitational instantons,Manifolds,Pure sciences},
  file = {C:\Users\johan\Zotero\storage\IT8M8RCV\Hein - On gravitational instantons.pdf}
}

@article{heinHigherorder2020,
  title = {Higher-Order Estimates for Collapsing {{Calabi-Yau}} Metrics},
  author = {Hein, Hans-Joachim and Tosatti, Valentino},
  date = {2020},
  journaltitle = {Cambridge Journal of Mathematics},
  volume = {8},
  number = {4},
  pages = {683--773},
  doi = {10.4310/CJM.2020.v8.n4.a1},
  url = {4220370,1469.32020},
  abstract = {We prove a uniform C\textasciicircum alpha estimate for collapsing Calabi-Yau metrics on the total space of a proper holomorphic submersion over the unit ball in C\textasciicircum m. The usual methods of Calabi, Evans-Krylov, and Caffarelli do not apply to this setting because the background geometry degenerates. We instead rely on blowup arguments and on linear and nonlinear Liouville theorems on cylinders. In particular, as an intermediate step, we use such arguments to prove sharp new Schauder estimates for the Laplacian on cylinders. If the fibers of the submersion are pairwise biholomorphic, our method yields a uniform C\textasciicircum infinity estimate. We then apply these local results to the case of collapsing Calabi-Yau metrics on compact Calabi-Yau manifolds. In this global setting, the C\textasciicircum 0 estimate required as a hypothesis in our new local C\textasciicircum alpha and C\textasciicircum infinity estimates is known to hold thanks to earlier work of the second-named author.},
  keywords = {53C55 14J32 32Q25 58J05,Mathematics - Analysis of PDEs,Mathematics - Complex Variables,Mathematics - Differential Geometry},
  file = {C\:\\Users\\johan\\Zotero\\storage\\VC83M6GR\\Hein and Tosatti - 2020 - Higher-order estimates for collapsing Calabi-Yau m.pdf;C\:\\Users\\johan\\Zotero\\storage\\7GH2F9EH\\1803.html}
}

@article{heinLiouville2019,
  title = {A {{Liouville Theorem}} for the {{Complex Monge-Ampère Equation}} on {{Product Manifolds}}},
  author = {Hein, Hans-Joachim},
  date = {2019},
  journaltitle = {Communications on Pure and Applied Mathematics},
  volume = {72},
  number = {1},
  pages = {122--135},
  doi = {10.1002/cpa.21751},
  url = {3882223,1429.32028},
  abstract = {Let Y be a closed Calabi-Yau manifold. Let ω be the Kähler form of a Ricci-flat Kähler metric on . We prove that if ω is uniformly bounded above and below by constant multiples of , where is the standard flat Kähler form on and ωY is any Kähler form on Y, then ω is a product Kähler form up to a certain automorphism of . © 2018 Wiley Periodicals, Inc.},
  file = {C\:\\Users\\johan\\Zotero\\storage\\XGK8MI3A\\Hein - 2019 - A Liouville Theorem for the Complex Monge-Ampère E.pdf;C\:\\Users\\johan\\Zotero\\storage\\EIXLQEVB\\cpa.html}
}

@article{heinWeighted2011,
  title = {Weighted {{Sobolev}} Inequalities under Lower {{Ricci}} Curvature Bounds},
  author = {Hein, Hans-Joachim},
  date = {2011-01-04},
  journaltitle = {Proc. Amer. Math. Soc.},
  volume = {139},
  number = {8},
  pages = {2943--2955},
  doi = {10.1090/S0002-9939-2011-10799-8},
  url = {2801635,1220.53055},
  abstract = {We obtain sharp weighted Poincar´e and Sobolev inequalities over complete, noncompact Riemannian manifolds with polynomial volume growth and a quadratically decaying lower bound on Ricci. This improves and extends earlier work of Tian-Yau and Minerbe. We deduce a sharp existence result for bounded solutions of the Poisson equation on such manifolds, highlighting the well-known distinction between spaces of volume growth ≤ 2 and {$>$} 2 in terms of their Green’s functions. We also show that if the manifold is nonparabolic and carries a smooth function which behaves like the radius function of a cone, then these solutions almost decay at the rates expected from a cone.},
  file = {C:\Users\johan\Zotero\storage\PERYUHWN\Hein - 2011 - Weighted Sobolev inequalities under lower Ricci cu.pdf}
}

@article{ivanovLichnerowicz2015,
  title = {The {{Lichnerowicz}} and {{Obata}} First Eigenvalue Theorems and the {{Obata}} Uniqueness Result in the {{Yamabe}} Problem on {{CR}} and Quaternionic Contact Manifolds},
  author = {Ivanov, Stefan and Vassilev, Dimiter},
  date = {2015-10},
  journaltitle = {Nonlinear Analysis},
  series = {Sub-{{Riemannian Geometric Analysis}} and {{PDEs}}},
  volume = {126},
  pages = {262--323},
  doi = {10.1016/j.na.2015.06.024},
  url = {3388882,1325.53041},
  abstract = {We report on some aspects and recent progress in certain problems in the sub-Riemannian CR and quaternionic contact (QC) geometries. The focus are the corresponding Yamabe problems on the round spheres, the Lichnerowicz-Obata first eigenvalue estimates, and the relation between these two problems. A motivation from the Riemannian case highlights new and old ideas which are then developed in the settings of Iwasawa sub-Riemannian geometries.},
  file = {C:\Users\johan\Zotero\storage\IEIRAN99\Ivanov and Vassilev - 2015 - The Lichnerowicz and Obata first eigenvalue theore.pdf}
}

@article{jerisonPoincare1986,
  title = {The {{Poincaré}} Inequality for Vector Fields Satisfying {{Hörmander}}’s Condition},
  author = {Jerison, David},
  date = {1986-06},
  journaltitle = {Duke Mathematical Journal},
  volume = {53},
  number = {2},
  pages = {503--523},
  doi = {10.1215/S0012-7094-86-05329-9},
  url = {0850547,0614.35066},
  abstract = {Duke Mathematical Journal},
  keywords = {35B45,58G05},
  file = {C:\Users\johan\Zotero\storage\4X4N24YZ\Jerison - 1986 - The Poincaré inequality for vector fields satisfyi.pdf}
}

@book{krylovLectures1996,
  title = {Lectures on {{Elliptic}} and {{Parabolic Equations}} in {{Hölder Spaces}}},
  author = {Krylov, N.},
  date = {1996-09-03},
  series = {Graduate {{Studies}} in {{Mathematics}}},
  volume = {12},
  publisher = {American Mathematical Society},
  url = {1406091,0865.35001},
  abstract = {Advancing research. Creating connections.},
  isbn = {978-0-8218-0569-5 978-0-8218-7205-5 978-1-4704-2070-3},
  file = {C:\Users\johan\Zotero\storage\6EFFBMVB\012.html}
}

@book{krylovLectures2008,
  title = {Lectures on {{Elliptic}} and {{Parabolic Equations}} in {{Sobolev Spaces}}},
  author = {Krylov, N.},
  date = {2008-08-20},
  series = {Graduate {{Studies}} in {{Mathematics}}},
  volume = {96},
  publisher = {American Mathematical Society},
  url = {2435520,1147.35001},
  abstract = {Advancing research. Creating connections.},
  isbn = {978-0-8218-4684-1},
  langid = {english},
  file = {C:\Users\johan\Zotero\storage\2HLKVWKC\096.html}
}

@article{liMean2020,
  title = {A {{Mean Value Formula}} and a {{Liouville Theorem}} for the {{Complex Monge}}–{{Ampère Equation}}},
  author = {Li, Chao and Li, Jiayu and Zhang, Xi},
  date = {2020-02-07},
  journaltitle = {International Mathematics Research Notices},
  volume = {2020},
  number = {3},
  pages = {853--867},
  doi = {10.1093/imrn/rny035},
  url = {4073196,1434.53040},
  abstract = {In this paper, we prove a mean value formula for bounded subharmonic Hermitian matrix valued function on a complete Riemannian manifold with nonnegative Ricci curvature. As its application, we obtain a Liouville type theorem for the complex Monge–Ampère equation on product manifolds.},
  file = {C:\Users\johan\Zotero\storage\FLHE268I\Li et al. - 2020 - A Mean Value Formula and a Liouville Theorem for t.pdf}
}

@article{martelliSasaki2008,
  title = {Sasaki–{{Einstein Manifolds}} and {{Volume Minimisation}}},
  author = {Martelli, Dario and Sparks, James and Yau, Shing-Tung},
  date = {2008-06-01},
  journaltitle = {Commun. Math. Phys.},
  volume = {280},
  number = {3},
  pages = {611--673},
  doi = {10.1007/s00220-008-0479-4},
  url = {2399609,1161.53029},
  abstract = {We study a variational problem whose critical point determines the Reeb vector field for a Sasaki–Einstein manifold. This extends our previous work on Sasakian geometry by lifting the condition that the manifolds are toric. We show that the Einstein–Hilbert action, restricted to a space of Sasakian metrics on a link L in a Calabi–Yau cone X, is the volume functional, which in fact is a function on the space of Reeb vector fields. We relate this function both to the Duistermaat–Heckman formula and also to a limit of a certain equivariant index on X that counts holomorphic functions. Both formulae may be evaluated by localisation. This leads to a general formula for the volume function in terms of topological fixed point data. As a result we prove that the volume of a Sasaki–Einstein manifold, relative to that of the round sphere, is always an algebraic number. In complex dimension n~=~3 these results provide, via AdS/CFT, the geometric counterpart of a–maximisation in four dimensional superconformal field theories. We also show that our variational problem dynamically sets to zero the Futaki invariant of the transverse space, the latter being an obstruction to the existence of a Kähler–Einstein metric.},
  file = {C:\Users\johan\Zotero\storage\JYRIVGU8\Martelli et al. - 2008 - Sasaki–Einstein Manifolds and Volume Minimisation.pdf}
}

@article{minerbeWeighted2009,
  title = {Weighted {{Sobolev Inequalities}} and {{Ricci Flat Manifolds}}},
  author = {Minerbe, Vincent},
  date = {2009-02-01},
  journaltitle = {GAFA Geom. funct. anal.},
  volume = {18},
  number = {5},
  pages = {1696--1749},
  doi = {10.1007/s00039-009-0701-3},
  url = {2481740,1166.53028},
  abstract = {In this paper, we prove a weighted Sobolev inequality and a Hardy inequality on manifolds with nonnegative Ricci curvature satisfying a reverse volume doubling condition. It enables us to obtain rigidity results for Ricci flat manifolds.},
  keywords = {46E35,53C21,53C25,asymptotically flat manifolds,Einstein manifolds,Sobolev and Hardy inequalities},
  file = {C:\Users\johan\Zotero\storage\HKQMJ6TW\Minerbe - 2009 - Weighted Sobolev Inequalities and Ricci Flat Manif.pdf}
}

@article{moserharnack1964,
  title = {A Harnack Inequality for Parabolic Differential Equations},
  author = {Moser, Jürgen},
  date = {1964},
  journaltitle = {Communications on Pure and Applied Mathematics},
  volume = {17},
  number = {1},
  pages = {101--134},
  doi = {10.1002/cpa.3160170106},
  file = {C\:\\Users\\johan\\Zotero\\storage\\XZBUSEWA\\Moser - 1964 - A harnack inequality for parabolic differential eq.pdf;C\:\\Users\\johan\\Zotero\\storage\\SLHRA7E2\\cpa.html}
}

@article{moserpointwise1971,
  title = {On a Pointwise Estimate for Parabolic Differential Equations},
  author = {Moser, Jürgen},
  date = {1971},
  journaltitle = {Communications on Pure and Applied Mathematics},
  volume = {24},
  number = {5},
  pages = {727--740},
  doi = {10.1002/cpa.3160240507},
  url = {0288405,0227.35016},
  file = {C\:\\Users\\johan\\Zotero\\storage\\GX4XGNNL\\Moser - 1971 - On a pointwise estimate for parabolic differential.pdf;C\:\\Users\\johan\\Zotero\\storage\\N9Y9SDR7\\cpa.html}
}

@article{nittaUniqueness2012,
  title = {Uniqueness of {{Sasaki-Einstein}} Metrics},
  author = {Nitta, Yasufumi and Sekiya, Ken'ichi},
  date = {2012-01},
  journaltitle = {Tohoku Mathematical Journal},
  volume = {64},
  number = {3},
  pages = {453--468},
  doi = {10.2748/tmj/1347369373},
  url = {2979292,1253.53047},
  abstract = {In this paper, we shall prove the uniqueness of Sasaki-Einstein metrics on compact Sasaki manifolds modulo the action of the identity component of the automorphism group for the transverse holomorphic structure. This generalizes the result of Cho, Futaki and Ono for compact toric Sasaki manifolds.},
  keywords = {53C55,53D10,Einstein metrics,Sasaki manifolds,transverse Kähler geometry},
  file = {C\:\\Users\\johan\\Zotero\\storage\\V8M7IYD2\\Nitta and Sekiya - 2012 - Uniqueness of Sasaki-Einstein metrics.pdf;C\:\\Users\\johan\\Zotero\\storage\\BE97WYJ3\\1347369373.html}
}

@article{orneaEmbeddings2007,
  title = {Embeddings of Compact {{Sasakian}} Manifolds},
  author = {Ornea, Liviu and Verbitsky, Misha},
  date = {2007-07},
  journaltitle = {Math. Res. Lett.},
  volume = {14},
  number = {4},
  pages = {703--710},
  doi = {10.4310/MRL.2007.v14.n4.a15},
  url = {2335996,1140.53035},
  abstract = {International Press of Boston - publishers of scholarly mathematical and scientific journals and books},
  file = {C\:\\Users\\johan\\Zotero\\storage\\8GXNYQ46\\Ornea and Verbitsky - 2007 - Embeddings of compact Sasakian manifolds.pdf;C\:\\Users\\johan\\Zotero\\storage\\EIH8S8QQ\\a015.html}
}

@online{perelmanentropy2002,
  title = {The Entropy Formula for the {{Ricci}} Flow and Its Geometric Applications},
  author = {Perelman, Grisha},
  date = {2002-11-11},
  eprint = {math/0211159},
  eprinttype = {arXiv},
  abstract = {We present a monotonic expression for the Ricci flow, valid in all dimensions and without curvature assumptions. It is interpreted as an entropy for a certain canonical ensemble. Several geometric applications are given. In particular, (1) Ricci flow, considered on the space of riemannian metrics modulo diffeomorphism and scaling, has no nontrivial periodic orbits (that is, other than fixed points); (2) In a region, where singularity is forming in finite time, the injectivity radius is controlled by the curvature; (3) Ricci flow can not quickly turn an almost euclidean region into a very curved one, no matter what happens far away. We also verify several assertions related to Richard Hamilton's program for the proof of Thurston geometrization conjecture for closed three-manifolds, and give a sketch of an eclectic proof of this conjecture, making use of earlier results on collapsing with local lower curvature bound.},
  pubstate = {prepublished},
  keywords = {Mathematics - Differential Geometry},
  file = {C\:\\Users\\johan\\Zotero\\storage\\HIHRCZ6M\\Perelman - 2002 - The entropy formula for the Ricci flow and its geo.pdf;C\:\\Users\\johan\\Zotero\\storage\\QC7CBJP8\\0211159.html}
}

@article{riebesehlpriori1984,
  title = {A Priori Estimates and a {{Liouville}} Theorem for Complex {{Monge-Ampère}} Equations},
  author = {Riebesehl, Dieter and Schulz, Friedmar},
  date = {1984-03-01},
  journaltitle = {Math Z},
  volume = {186},
  number = {1},
  pages = {57--66},
  doi = {10.1007/BF01215491},
  url = {0735051,0566.32013},
  file = {C:\Users\johan\Zotero\storage\9YURU98Y\Riebesehl and Schulz - 1984 - A priori estimates and a Liouville theorem for com.pdf}
}

@article{safonovClassical1989,
  title = {On the {{Classical Solution}} of {{Nonlinear Elliptics Equations}} of {{Second Order}}},
  author = {Safonov, M. V.},
  date = {1989-06-30},
  journaltitle = {Math. USSR Izv.},
  volume = {33},
  number = {3},
  pages = {597},
  doi = {10.1070/IM1989v033n03ABEH000858},
  url = {0984219,0682.35048}
}

@article{saloff-costeOperateurs1991,
  title = {Opérateurs Uniformément Sous-Elliptiques Sur Les Groupes de {{Lie}}},
  author = {Saloff-Coste, Laurent and Stroock, Daniel W.},
  date = {1991-05-15},
  journaltitle = {Journal of Functional Analysis},
  volume = {98},
  number = {1},
  pages = {97--121},
  doi = {10.1016/0022-1236(91)90092-J},
  url = {1111195,0734.58041},
  abstract = {Résumé Etant donnés un groupe de Lie connexe unimodulaire G, et des champs de vecteurs invariants à gauche sur G, X1, …, Xk qui engendrent algébriquement l'algèbre de Lie de G, on considère l'opérateur La=∑i,j=1kXiaijXj est une fonction b∞à valeurs dans l'ensemble des matrices réelles symétriques vérifiant, pour un α ϵ ]0, 1]: α ¦ξ¦2 ⩽ ∑i, j = 1k aij(x) ξiξj ⩽ α−1 ¦ξ¦2, ∀x ϵ G, ∀ξϵRk. Quand G est à croissance du volume polynômiale on obtient, relativement à la distance du contrôle associée aux Xi, des estimations gaussiennes supérieures et inférieures pour le noyau du semi-groupe de la chaleur etLa. Ces estimations ne dépendent de a qu'à travers α. On obtient aussi des inégalités de Harnack et la régularité höldérienne des solutions. Finalement d'autres situations sous-elliptiques sont aussi considérées. Given a connected unimodular Lie group G having polynomial volume growth, and left invariant vector fields X1, …, Xk which generate the Lie algebra of G, consider the operator La=∑i,j=1kXiaijXj, where a:G→Rk⊗Rk is a smooth symmetric matrix valued function which satisfies a α ¦ξ¦2 ⩽ ∑i, j = 1k aij(x) ξiξj ⩽ α−1 ¦ξ¦2, x ϵ G, ξϵRk, for some α ϵ ]0, 1 ]. What we show is that the corresponding heat-flow semigroup etLa admits a kernel which satisfies (two-sided) Gaussian estimates in terms of the control distance determined by the Xi's. Moreover, the estimates can be made to depend on a only through α. We also prove Harnack inequalities and holder regularity of solutions. We end with a discussion of some other sub-elliptic situations.},
  file = {C\:\\Users\\johan\\Zotero\\storage\\WLDEAH2K\\Saloff-Coste and Stroock - 1991 - Opérateurs uniformément sous-elliptiques sur les g.pdf;C\:\\Users\\johan\\Zotero\\storage\\QE2UH3TI\\002212369190092J.html}
}

@article{saloff-costeUniformly1992,
  title = {Uniformly Elliptic Operators on {{Riemannian}} Manifolds},
  author = {Saloff-Coste, Laurent},
  date = {1992-01},
  journaltitle = {Journal of Differential Geometry},
  volume = {36},
  number = {2},
  pages = {417--450},
  doi = {10.4310/jdg/1214448748},
  url = {1180389,0735.58032},
  abstract = {Journal of Differential Geometry},
  keywords = {35J15,58G11,58G30},
  file = {C\:\\Users\\johan\\Zotero\\storage\\XZ2V28D6\\Saloff-Coste - 1992 - Uniformly elliptic operators on Riemannian manifol.pdf;C\:\\Users\\johan\\Zotero\\storage\\2U8UQNI2\\1214448748.html}
}

@article{simonSchauder1997,
  title = {Schauder Estimates by Scaling},
  author = {Simon, Leon},
  date = {1997-07-01},
  journaltitle = {Calculus of Variations and Partial Differential Equations},
  volume = {5},
  number = {5},
  pages = {391--407},
  doi = {10.1007/s005260050072},
  url = {1459795,0946.35017},
  langid = {english},
  file = {C:\Users\johan\Zotero\storage\SS2J5FQH\Simon - 1997 - Schauder estimates by scaling.pdf}
}

@article{tannoisometry1970,
  title = {On the Isometry Groups of {{Sasakian}} Manifolds},
  author = {Tanno, Shûkichi},
  date = {1970},
  journaltitle = {Journal of the Mathematical Society of Japan},
  volume = {22},
  number = {4},
  pages = {579--590},
  doi = {10.2969/jmsj/02240579},
  url = {0271874,0197.48004},
  file = {C\:\\Users\\johan\\Zotero\\storage\\8RBUJUGB\\Tanno - 1970 - On the isometry groups of Sasakian manifolds.pdf;C\:\\Users\\johan\\Zotero\\storage\\HHNSJFCI\\_article.html}
}

@article{tashiroComplete1965,
  title = {Complete {{Riemannian Manifolds}} and {{Some Vector Fields}}},
  author = {Tashiro, Yoshihiro},
  date = {1965},
  journaltitle = {Transactions of the American Mathematical Society},
  volume = {117},
  pages = {251--275},
  doi = {10.2307/1994206},
  url = {0174022,1371.53001},
  file = {C:\Users\johan\Zotero\storage\5SHTAL9P\Tashiro - 1965 - Complete Riemannian Manifolds and Some Vector Fiel.pdf}
}

@article{vancoeveringExamples2011,
  title = {Examples of Asymptotically Conical {{Ricci-flat Kähler}} Manifolds},
  author = {van Coevering, Craig},
  options = {useprefix=true},
  date = {2011-02-01},
  journaltitle = {Math. Z.},
  volume = {267},
  number = {1},
  pages = {465--496},
  doi = {10.1007/s00209-009-0631-7},
  url = {2772262,1211.53072},
  abstract = {Previously the author has proved that a crepant resolution π : Y → X of a Ricci-flat Kähler cone X admits a complete Ricci-flat Kähler metric asymptotic to the cone metric in every Kähler class in \$\$\{H\textasciicircum 2\_c(Y,\textbackslash mathbb R)\}\$\$. These manifolds can be considered to be generalizations of the Ricci-flat ALE Kähler spaces known by the work of P. Kronheimer, D. Joyce and others. This article considers further the problem of constructing examples. We show that every 3-dimensional Gorenstein toric Kähler cone admits a crepant resolution for which the above theorem applies. This gives infinitely many examples of asymptotically conical Ricci-flat manifolds. Then other examples are given of which are crepant resolutions hypersurface singularities which are known to admit Ricci-flat Kähler cone metrics by the work of C. Boyer, K. Galicki, J. Kollár, and others. We concentrate on 3-dimensional examples. Two families of hypersurface examples are given which are distinguished by the condition b3(Y) = 0 or b3(Y) ≠ 0.},
  keywords = {14M25,Calabi-Yau manifold,Einstein metric,Primary 53C25,Ricci-flat manifold,Sasaki manifold,Secondary 53C55,Toric varieties},
  file = {C:\Users\johan\Zotero\storage\MUPG3QJA\van Coevering - 2011 - Examples of asymptotically conical Ricci-flat Kähl.pdf}
}

@article{vancoeveringStability2013,
  title = {Stability of {{Sasaki-Extremal Metrics}} under {{Complex Deformations}}},
  author = {van Coevering, Craig},
  options = {useprefix=true},
  date = {2013-01-01},
  journaltitle = {International Mathematics Research Notices},
  volume = {2013},
  number = {24},
  pages = {5527--5570},
  doi = {10.1093/imrn/rns210},
  url = {3144173,1294.53044},
  abstract = {We consider the stability of Sasaki-extremal metrics under deformations of the transversal complex structure of the foliation  induced by the Reeb vector field ξ. Let g be a Sasaki-extremal metric on M, G a compact connected subgroup of the automorphism group of the Sasaki structure, and suppose the reduced scalar curvature satisfies . And consider a G-equivariant deformation  of the transversely holomorphic foliation preserving  as a smooth foliation. Provided the Futaki invariant relative to G of g is nondegenerate, the existence of Sasaki-extremal metrics is preserved under small variations of  and of the Reeb vector  in the center of . If G=T⊆Aut(g,ξ) is a maximal torus, the nondegeneracy of the Futaki invariant is automatic. So such deformations provide the easiest examples.When the initial metric g is Sasaki–Einstein and G=T⊆Aut(g,ξ) is a maximal torus a slice of the above family of Sasaki-extremal metrics is Sasaki–Einstein. Thus, for each  there is a  so that the Sasaki-extremal metric with Reeb vector field ξt is Sasaki–Einstein. We apply this to deformations of toric 3-Sasaki 7-manifolds to obtain new families of Sasaki–Einstein metrics on these manifolds, which are deformations of 3-Sasaki metrics.},
  file = {C\:\\Users\\johan\\Zotero\\storage\\JXTIUXVP\\van Coevering - 2013 - Stability of Sasaki-Extremal Metrics under Complex.pdf;C\:\\Users\\johan\\Zotero\\storage\\8CK3XK9Z\\705462.html}
}

@article{chenalpha2015,
	title = {$C^{2,\alpha}$-estimate for Monge-Ampère equations with Hölder-continuous right hand side},
	volume = {49},
	doi = {10.1007/s10455-015-9487-8},
	journaltitle = {Annals of Global Analysis and Geometry},
	author = {Chen, Xiuxiong and Wang, Yuanqi},
	date = {2015-11},
	url = {3464220,1347.35118},
}

@article{chenKahlerEinstein2014a,
	title = {Kähler-{{Einstein}} Metrics on {{Fano}} Manifolds. {{II}}: {{Limits}} with Cone Angle Less than $2\pi$},
	author = {Chen, Xiuxiong and Donaldson, Simon and Sun, Song},
	date = {2014-03-28},
	journaltitle = {J. Amer. Math. Soc.},
	volume = {28},
	number = {1},
	pages = {199--234},
	doi = {10.1090/S0894-0347-2014-00800-6},
	url = {3264767,1312.53097}
}

\end{document}